\RequirePackage[l2tabu, orthodox]{nag}

\documentclass[11pt]{amsart}
\usepackage{stmaryrd}
\usepackage{amsmath}
\usepackage{amsfonts}
\usepackage{amsthm}
\usepackage{mathtools}
\usepackage{txfonts}
\usepackage{hyperref}
\usepackage{graphicx,amssymb,amsfonts,amsmath,amscd}
\usepackage{textcomp}
\usepackage[all,ps,cmtip]{xy}
\usepackage{amscd}
\usepackage{xspace}
\usepackage{mathrsfs}
\usepackage[varg]{pxfonts}
\usepackage[usenames,dvipsnames]{xcolor}
\hypersetup{colorlinks=true,citecolor=NavyBlue,linkcolor=Brown,urlcolor=Orange}

\usepackage{dsfont}
\usepackage{xcolor}
\usepackage{tikz}\usetikzlibrary{decorations.markings,matrix,arrows}

\setlength{\parindent}{.4 in}
\setlength{\textwidth}{6.5 in}
\setlength{\topmargin} {-.3 in}
\setlength{\evensidemargin}{0 in}
\setlength{\oddsidemargin}{0 in}
\setlength{\footskip}{.3 in}
\setlength{\headheight}{.3 in}
\setlength{\textheight}{8.8 in}
\setlength{\parskip}{.09 in}

\newtheorem{thm}{Theorem}[section]
\newtheorem{prop}[thm]{Proposition}
\newtheorem{lemma}[thm]{Lemma}
\newtheorem{cor}[thm]{Corollary}
\newtheorem{conj}[thm]{Conjecture}
\newtheorem{meta-conj}[thm]{Meta-conjecture}

\newtheorem{def-prop}[thm]{Definition-Proposition}
\newtheorem{prop-def}[thm]{Proposition-Definition}


\theoremstyle{definition} 
\newtheorem{defi}[thm]{Definition}
\newtheorem{rmk}[thm]{Remark}
\newtheorem{rmks}[thm]{Remarks}
\newtheorem{expl}[thm]{Example}
\newtheorem{expls}[thm]{Examples}

\newcommand{\C}{{\textbf C}}

\newcommand{\Z}{{\textbf Z}}
\newcommand{\N}{{\textbf N}}
\newcommand{\Q}{{\textbf Q}}

\newcommand{\sA}{{\mathscr A}}

\newcommand{\sO}{{\mathscr O}}
\newcommand{\sP}{{\mathscr P}}


\newcommand{\rM}{{\mathcal M}}

\newcommand{\rX}{{\mathcal X}}

\newcommand{\ie}{\textit {i.e.}~}
\newcommand{\cf}{\textit {cf.}~}
\newcommand{\vs}{\textit {vs.}~}
\newcommand{\loccit}{\textit {loc.cit.}~}

\newcommand{\apriori}{\textit{a priori} }

\newcommand{\etc}{\textit {etc.}}
\newcommand{\resp}{\textit {resp.}~}

\newcommand{\CH}{\mathop{\rm CH}\nolimits} 
\newcommand{\ch}{\mathop{\rm ch}\nolimits} 
\newcommand{\cl}{\mathop{\rm cl}\nolimits} 
\newcommand{\codim}{\mathop{\rm codim}\nolimits} 
\newcommand{\Coker}{\mathop{\rm Coker}\nolimits} 
\newcommand{\dual}{\mathop{^\vee}\nolimits} 
\newcommand{\End}{\mathop{\rm End}\nolimits} 
\newcommand{\Hilb}{\mathop{\rm Hilb}\nolimits}
\newcommand{\Hom}{\mathop{\rm Hom}\nolimits}
\newcommand{\id}{\mathop{\rm id}\nolimits} 
\newcommand{\im}{\mathop{\rm Im}\nolimits} 
\newcommand{\Ker}{\mathop{\rm Ker}\nolimits} 
\newcommand{\Obj}{\mathop{\rm Obj} \nolimits} 
\newcommand{\pr}{\mathop{\rm pr}\nolimits} 

\newcommand{\rk}{\mathop{\rm rk}\nolimits}
\newcommand{\Spec}{\mathop{\rm Spec}\nolimits}
\newcommand{\Sym}{\mathop{\rm Sym}\nolimits} 
\newcommand{\Tor}{\mathop{\rm Tor}\nolimits} 


\newcommand{\GL}{\mathop{\rm GL}\nolimits}

\newcommand{\SL}{\mathop{\rm SL}\nolimits}




\renewcommand{\bar}{\overline}
\newcommand{\inj}{\hookrightarrow}
\newcommand{\surj}{\twoheadrightarrow}

\newcommand{\lra}{\xrightarrow}

\newcommand{\cart}{\ar@{}[dr]|\square} 
\renewcommand{\dual}{^{\vee}} 
\newcommand{\isom}{\simeq} 

\renewcommand{\tilde}{\widetilde}

\newcommand{\deff}{\mathrel{:=}}


\newcommand{\age }{\mathop{\rm {age}}\nolimits}
\newcommand{\sAV }{\mathop{\mathscr {AV}}\nolimits}
\newcommand{\h}{\mathop{\mathfrak {h}}\nolimits} 
\newcommand{\CHM }{\mathop{\rm {CHM}}\nolimits}
\renewcommand{\1}{\mathop{\mathds{1}}\nolimits} 
\newcommand{\p}{\mathop{\mathfrak {p}}\nolimits} 

\newcommand{\Smproj }{\mathop{\rm {SmProj}}\nolimits}

\newcommand{\gS }{\mathop{\mathfrak{S}}\nolimits} 
\newcommand{\atts}{\textit {a.t.t.s.}~}

\begin{document}
	
	\title[Motivic hyperK\"ahler resolution conjecture for generalized Kummer
	varieties]{Motivic hyperK\"ahler resolution conjecture\,: I. \\
		Generalized Kummer varieties}
	\author{Lie Fu}
	\address{Universit\'e Claude Bernard Lyon 1, France}
	\email{fu@math.univ-lyon1.fr}
	
	\author{Zhiyu Tian}
	\address{Beijing International Center for Mathematical Research, Peking University, Beijing, 100871, China}
	\email{zhiyutian@bicmr.pku.edu.cn}
	
	\author{Charles Vial}
	\address{Universit\"at Bielefeld, Germany}
	\email{vial@math.uni-bielefeld.de}
	
	\thanks{Lie Fu is supported by the Agence Nationale de la Recherche (ANR)
		through ECOVA (ANR-15-CE40-0002) and LABEX MILYON (ANR-10-LABEX-0070). Lie Fu
		and Zhiyu Tian are supported by ANR HodgeFun (ANR-16-CE40-0011) and \emph{Projet
			Exploratoire Premier Soutien} (PEPS) Jeunes chercheur-e-s 2016 operated by Insmi
		and  \emph{Projet Inter-Laboratoire} 2016, 2017 and 2018 by F\'ed\'eration de
		Recherche en Math\'ematiques Rh\^one-Alpes/Auvergne CNRS 3490. Charles Vial was
		supported by  EPSRC Early Career Fellowship EP/K005545/1.}

	\begin{abstract}
		Given a smooth projective variety $M$ endowed with a faithful action of a finite
		group $G$, following Jarvis--Kaufmann--Kimura \cite{MR2285746} and
		Fantechi--G\"ottsche \cite{MR1971293}, we define the orbifold motive (or
		Chen--Ruan motive) of the quotient stack $[M/G]$ as an algebra object in the
		category of Chow motives. Inspired by Ruan \cite{MR1941583}, one can formulate a
		motivic version of his  Cohomological HyperK\"ahler Resolution Conjecture
		(CHRC). We prove this motivic version, as well as its K-theoretic analogue
		conjectured in \cite{MR2285746}, in two situations related to an abelian surface
		$A$ and a positive integer $n$. Case (A) concerns Hilbert schemes of points of
		$A$\,: the Chow motive of $A^{[n]}$ is isomorphic as algebra objects, up to a
		suitable sign change, to the orbifold motive of the quotient stack
		$[A^{n}/\mathfrak{S}_{n}]$. Case (B) for generalized Kummer varieties\,: the
		Chow motive of the generalized Kummer variety $K_n(A)$ is isomorphic as algebra
		objects, up to a suitable sign change, to the orbifold motive of the quotient
		stack $[A_{0}^{n+1}/\mathfrak {S}_{n+1}]$, where $A_{0}^{n+1}$ is the kernel
		abelian variety of the summation map $A^{n+1}\to A$. 
		As a byproduct, we prove the original Cohomological HyperK\"ahler Resolution
		Conjecture for generalized Kummer varieties. As an application, we provide
		multiplicative Chow--K\"unneth decompositions for Hilbert schemes of abelian
		surfaces and for generalized Kummer varieties. In particular, we have a
		multiplicative direct sum decomposition of their Chow rings with rational
		coefficients, which is expected to be the splitting of the conjectural
		Bloch--Beilinson--Murre filtration. The existence of such a splitting for
		holomorphic symplectic varieties is conjectured by Beauville \cite{MR2187148}.
		Finally, as another application, we prove that over a non-empty Zariski open
		subset of the base, there exists a decomposition isomorphism $R\pi_{*}\Q\isom
		\oplus R^{i}\pi_{*}\Q[-i]$ in $D^{b}_{c}(B)$ which is compatible with the
		cup-products on both sides, where $\pi: \mathcal K_{n}(\mathcal A)\to B$ is the
		relative generalized Kummer variety associated to a (smooth) family of abelian
		surfaces $\mathcal A\to B$.
	\end{abstract}
	
	\maketitle
	\setcounter{tocdepth}{1}
	\vspace{-33pt}
	\tableofcontents

	\section{Introduction}\label{sect:intro}
	\subsection{Motivation 1\,: Ruan's hyperK\"ahler resolution conjectures}
	In \cite{MR2104605}, Chen and Ruan construct the orbifold cohomology ring
	$H_{orb}^*(\rX)$ for any complex orbifold $\rX$. As a $\Q$-vector space, it is
	defined to be the cohomology of its inertia variety $H^*(I\rX)$ (with degrees
	shifted by some rational numbers called \emph{age}), but is endowed with a
	highly non-trivial ring structure coming from moduli spaces of curves mapping to
	$X$. An algebro-geometric treatment is contained in
	Abramovich--Graber--Vistoli's work \cite{MR2450211}, based on the construction
	of moduli stack of twisted stable maps in \cite{MR1862797}. In the global
	quotient case\footnote{In this paper, by `global quotient', we always mean the
		quotient of a smooth projective variety by a \emph{finite} group.}, some
	equivalent definitions are available\,: see for example \cite{MR1971293},
	\cite{MR2285746}, \cite{MR2428359} and \S \ref{sec:OrbDef}.
	
	Originating from the topological string theory of orbifolds in \cite{MR818423},
	\cite{MR851703}, one observes that the stringy topological invariants of an
	orbifold, \emph{e.g.} the orbifold Euler number and the orbifold Hodge numbers,
	should be related to the corresponding invariants of a crepant resolution
	(\cite{MR1672108}, \cite{MR1404917}, \cite{MR2027195}, \cite{MR2069013}). A much
	deeper relation was brought forward by Ruan, who made, among others, the
	following Cohomological HyperK\"ahler Resolution Conjecture (CHRC) in
	\cite{MR1941583}. For more general and sophisticated versions of this
	conjecture, see \cite{MR2234886}, \cite{MR2483931}, \cite{MR3112518}.
	\begin{conj}[Ruan's CHRC]\label{conj:CHRC}
		Let $\rX$ be a compact complex orbifold with underlying variety $X$ being
		Gorenstein. If there is a crepant resolution $Y\to X$ with $Y$ being
		hyperK\"ahler, then we have an isomorphism of graded commutative
		$\C$-algebras\,: $H^*(Y, \C)\isom H^*_{orb}(\rX,\C)$.
	\end{conj}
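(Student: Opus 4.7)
Since the full conjecture covers arbitrary Gorenstein orbifolds admitting a hyperK\"ahler crepant resolution, and since every known irreducible hyperK\"ahler variety is either deformation-equivalent to $S^{[n]}$ for $S$ a K3 surface, to a generalized Kummer variety $K_n(A)$, or to one of the two O'Grady examples, the plan is to reduce to the global-quotient orbifolds coming from surfaces with trivial canonical bundle. The relevant cases are $[\mathrm{Sym}^n S / \mathfrak{S}_n]$ with crepant resolution $S^{[n]}$, and $[A_{0}^{n+1}/\mathfrak{S}_{n+1}]$ with crepant resolution $K_n(A)$, the latter being the focus of this paper.

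The strategy would have three main steps. First, reformulate $H^*_{orb}$ for a global quotient $[M/G]$ via the Jarvis--Kaufmann--Kimura and Fantechi--G\"ottsche description: as a graded vector space it is $\bigl(\bigoplus_{g \in G} H^*(M^g)[-2\age(g)]\bigr)^{G}$, with product defined by push-pull along the three-pointed evaluation combined with multiplication by the Euler class of the obstruction bundle on $M^{g_1} \cap M^{g_2} \cap M^{g_3}$. Second, additively match the two sides. For the Hilbert scheme case one produces, for each partition $\lambda$ of $n$, a copy of $H^*(A^{\ell(\lambda)})$ inside $H^*(A^{[n]})$ with the expected degree shift, using either the stratification of $A^{[n]}$ by nested Hilbert schemes or the Nakajima creation operators; this matches the contribution of conjugacy classes of cycle type $\lambda$, whose age is precisely $n-\ell(\lambda)$. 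Third, for the Kummer case, restrict the Hilbert scheme picture along the summation map $A^{[n+1]} \to A$ whose fiber over the origin is $K_n(A)$, and correspondingly along $A_0^{n+1} \hookrightarrow A^{n+1}$.

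The main obstacle will be multiplicativity. On the resolution side, the cup product on $H^*(A^{[n]})$ is controlled by the Lehn--Sorger or Nakajima operators; on the orbifold side it involves the Euler class of the obstruction bundle. The matching requires an explicit identification of these obstruction bundles for the $\mathfrak{S}_n$-action on a surface with trivial canonical bundle, together with a verification that the two sets of structure constants coincide. A secondary subtlety is the sign twist needed to reconcile the strict commutativity of the orbifold product with the graded commutativity of cup product, which should be handled by a Koszul-type sign normalization on the summands indexed by odd-age sectors. Once the Hilbert scheme case is established, the descent to $K_n(A)$ via the summation map should be essentially formal, and lifting everything from cohomology to Chow motives should then come for free from the correspondence-level nature of the arguments.
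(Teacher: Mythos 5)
The statement you are trying to prove is labelled \emph{Conjecture} in the paper, and for good reason: no one, including the authors, proves it in the generality stated. The paper establishes CHRC only in the generalized Kummer case (Theorem \ref{cor:CHRCKummer}), alongside its motivic strengthening (Theorems \ref{thm:mainAb} and \ref{thm:mainKummer}); the Hilbert-scheme-of-a-surface cases were already known (Fantechi--G\"ottsche, Uribe, Lehn--Sorger). Your opening ``reduction'' is where the argument breaks irreparably, and this is worth understanding concretely.

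First, the classification you invoke is not a theorem: it is open whether every irreducible holomorphic symplectic manifold is deformation-equivalent to one of the four known types, and the conjecture's hypothesis on $Y$ is, in the paper's convention, merely that $Y$ is holomorphic symplectic, not irreducible holomorphic symplectic (so tori and products are also allowed). Second, even if the classification were known, ``deformation-equivalent to $S^{[n]}$'' does not mean ``equal to $S^{[n]}$,'' and the orbifold side $\rX$ is not determined by the resolution $Y$: the conjecture quantifies over all Gorenstein orbifolds, which need not be global quotients $[M/G]$ at all, and a given crepant resolution can dominate non-isomorphic singular models. So there is simply no way to reduce the general statement to the two global-quotient families you list.

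Within those families your sketch is broadly aligned with the paper's approach — additive identification via incidence correspondences (de Cataldo--Migliorini rather than Nakajima operators, though these are closely related), trivial obstruction bundles, and a sign twist (the ``discrete torsion'' of Definition \ref{def:dt}). But two of your ``should be''s hide the real content. The descent from $A^{[n+1]}$ to $K_n(A)$ is not formal: the paper's multiplicativity check for the Kummer case hinges on Nieper--Wi\ss{}kirchen's nontrivial theorem that $H^*(A^{[n+1]})\otimes_{H^*(A)}\C \to H^*(K_n(A))$ is a ring isomorphism. And the lift to Chow motives does not ``come for free'' from correspondences; the paper's key device is O'Sullivan's theory of symmetrically distinguished cycles (extended to abelian torsors with torsion structure), which upgrades a cohomological equality of cycles to a rational equivalence. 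If you restrict your ambitions to the generalized Kummer case and replace the hand-waving at these two points with those inputs, you would recover the paper's proof of Theorem \ref{cor:CHRCKummer}; as a proof of Conjecture \ref{conj:CHRC} as stated, the proposal cannot work.
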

	
	As the construction of the orbifold product can be expressed using algebraic
	correspondences (\cf \cite{MR2450211} and \S \ref{sec:OrbDef}), one has the
	analogous definition of the orbifold Chow ring $\CH_{orb}(\rX)$ (see Definition
	\ref{def:OrbChow} for the global quotient case)
	of a smooth proper Deligne--Mumford stack $\rX$. Motivated by the study of
	algebraic cycles on hyperK\"ahler varieties, we propose to investigate the
	Chow-theoretic analogue of Conjecture \ref{conj:CHRC}. For reasons which will
	become clear shortly, it is more powerful and fundamental to consider the
	following \emph{motivic} version of Conjecture \ref{conj:CHRC}. Let $\CHM_{\C}$
	be the category of Chow motives with complex coefficients and $\h$ be the
	(contravariant) functor that associates to a smooth projective variety its Chow
	motive. 
	
	\begin{meta-conj}[MHRC]\label{conj:MHRC}
		Let $\rX$ be a smooth proper complex Deligne--Mumford stack with underlying
		coarse moduli space $X$ being a (singular) symplectic variety. If there is a
		symplectic resolution $Y\to X$, then we have an isomorphism $\h(Y)\isom
		\h_{orb}(\rX)$ of commutative algebra objects in $\CHM_{\C}$, hence in
		particular an isomorphism of graded $\C$-algebras\,: $\CH^{*}(Y)_{\C}\isom
		\CH_{orb}^{*}(\rX)_{\C}$.
	\end{meta-conj}
	See Definition \ref{def:SympSing} for generalities on symplectic singularities
	and symplectic resolutions. The reason why it is only a \emph{meta-}conjecture
	is that the definition of orbifold Chow motive for a smooth proper
	Deligne--Mumford stack in general is not available in the literature and we will
	not develop the theory in this generality in this paper (see however Remark
	\ref{rmk:OrbMotDM}). From now on, let us restrict ourselves to the case where
	the Deligne--Mumford stack in question is of the form of a global quotient
	$\rX=[M/G]$, where $M$ is a smooth projective variety with a faithful action of
	a finite group $G$, in which case we will define the \emph{orbifold Chow motive}
	$\h_{orb}(\rX)$ in a very explicit way in Definition \ref{def:OrbMot}.
	
	The \emph{Motivic HyperK\"ahler Resolution Conjecture} that we are interested in
	is the following more precise statement, which would contain all situations
	considered in this paper and its sequel.
	\begin{conj}[MHRC\,: global quotient case]\label{conj:MHRCbis}
		Let $M$ be a smooth projective holomorphic symplectic variety equipped with a faithful action of a finite group $G$ by symplectic automorphisms of $M$. If $Y$ is a
		symplectic resolution of the quotient variety $M/G$, then we have an isomorphism
		of (commutative) algebra objects in the category  of Chow motives with
		\emph{complex} coefficients\,: $$\h(Y)\isom \h_{orb}\left([M/G]\right) \text{ in
		} \CHM_{\C}.$$ In particular, we have an isomorphism of graded $\C$-algebras
		$$\CH^{*}(Y)_{\C}\isom \CH_{orb}^{*}([M/G])_{\C}.$$
	\end{conj}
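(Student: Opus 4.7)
The plan is to construct an explicit isomorphism of algebra objects $\Phi\colon \h(Y) \isom \h_{orb}([M/G])$ in $\CHM_{\C}$, from which the ring isomorphism $\CH^*(Y)_\C \isom \CH^*_{orb}([M/G])_\C$ follows by applying $\Hom_{\CHM_\C}(\1,-)$. Let $\pi\colon M \to M/G$ denote the quotient and $\tau\colon Y \to M/G$ the symplectic resolution. By the definition to be given in \S\ref{sec:OrbDef}, $\h_{orb}([M/G])$ is the $G$-invariant part of $\bigoplus_{g\in G}\h(M^g)(-\age(g))$ equipped with the Jarvis--Kaufmann--Kimura/Fantechi--G\"ottsche orbifold product, involving diagonal restrictions along $M^{g_1}\cap M^{g_2}\inj M^{g_1 g_2}$ twisted by the Chern class of an obstruction bundle. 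For each conjugacy class $[g]$, I would build a correspondence $\Phi_g \in \CH^*(Y\times M^g)_\C$ out of an appropriate component of the reduced fibre product $Y\times_{M/G}M^g$, corrected by a characteristic-class/sign factor on the normal bundle of $M^g \subset M$. Summing (and symmetrising) over conjugacy classes yields the candidate $\Phi$.

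Next I would prove that $\Phi$ is an isomorphism of motives (\ie additively). The strategy is a motivic incarnation of the Bridgeland--King--Reid--Kawamata derived McKay equivalence $D^b(Y)\isom D^b_G(M)$, combined with the semi-orthogonal decomposition of $D^b_G(M)$ indexed by conjugacy classes of fixed loci. Concretely, one stratifies $Y$ over the image of each $M^g/C(g)$ in $M/G$ and constructs an inverse correspondence $\Psi_g \in \CH^*(M^g\times Y)_\C$ using the (shifted) tautological bundles of the exceptional strata. The desired identities $\Phi\circ\Psi = \id$ and $\Psi\circ\Phi = \id$ reduce to a computation in the cohomology of fibres of $\tau$, which are iterated projective bundles when $G$ acts symplectically.

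The crucial step is verifying that $\Phi$ intertwines the two algebra structures. The product on $\h(Y)$ is the small diagonal $\delta_Y \in \CH^*(Y^3)_\C$; on $\h_{orb}([M/G])$ it is the orbifold product whose defining triple correspondence is supported on the image of $\bigsqcup_{g_1,g_2}M^{\langle g_1,g_2\rangle}$, weighted by the top Chern class of the obstruction bundle $R(g_1,g_2)$. Equality $\Phi_{g_1g_2}\circ(\Phi_{g_1}\otimes\Phi_{g_2}) = \delta_Y\circ(\Phi_{g_1}\otimes\Phi_{g_2})$ is to be established by an excess intersection computation on $Y$: one shows that the excess bundle arising from the failure of transversality in $\tilde Y \times_Y \tilde Y \to \tilde Y$ along the fixed stratum matches $R(g_1,g_2)$. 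This is precisely the symplectic phenomenon highlighted in Ruan's original CHRC\,: the symplectic form trivialises half of the normal contribution, so both sides are governed by the same natural bundle.

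The main obstacle is the last step in full generality, as no model-independent matching between excess intersection on $Y$ and the orbifold obstruction bundle is presently available. Consequently, one must appeal to the (partly conjectural) classification of pairs $(M,G)$ for which $M/G$ admits a symplectic resolution, and argue case by case. The present paper carries out the program in the two families where the geometry of $\tau\colon Y\to M/G$ is most explicit—$Y = A^{[n]} \to A^{(n)}$ for an abelian surface $A$, and $Y = K_n(A) \to A_0^{(n+1)}$—using the Lehn--Sorger/Fantechi--G\"ottsche presentation of the cup product on $Y$, which is then identified with the Jarvis--Kaufmann--Kimura orbifold product on $[M/G]$ up to an explicit discrete-torsion sign, and finally lifted from cohomology to Chow motives via the multiplicative Chow--K\"unneth decompositions constructed in the body of the paper.
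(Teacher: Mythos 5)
You are right to flag that the statement is a conjecture and that the paper only establishes it for $Y=A^{[n]}$ and $Y=K_n(A)$; your choice of correspondences (components of the reduced fibre products $Y\times_{M/G}M^g$) and your additive step do match the paper's Step~(i), where the incidence varieties $U^g$ (resp.\ $V^g$) together with de Cataldo--Migliorini's semismall decomposition theorem give the additive isomorphism $\phi$ — no appeal to Bridgeland--King--Reid or semi-orthogonal decompositions is needed or made. The genuine gap is in your multiplicativity step. You propose to prove $\Phi_{g_1g_2}\circ(\Phi_{g_1}\otimes\Phi_{g_2})=\delta_Y\circ(\Phi_{g_1}\otimes\Phi_{g_2})$ by an excess intersection computation on $Y$ identifying the excess bundle with the orbifold obstruction bundle $F_{g_1,g_2}$. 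No such direct computation is carried out in the paper, and it is not clear it can be: the triple self-correspondences of the Hilbert scheme supported on the incidence varieties are not accessible by a clean excess-intersection formula (already the cohomological identity requires the full Lehn--Sorger and Nieper-Wi{\ss}kirchen machinery). The paper's actual mechanism is entirely different: it shows that both cycles to be compared — the pushforward $W$ of the small diagonal $\delta_Y$ under the correspondences, and the cycle $Z$ defining the orbifold product — are \emph{symmetrically distinguished} in the sense of O'Sullivan on products of abelian varieties (resp.\ on abelian torsors with torsion structure in the Kummer case), so that their equality in $\CH$ follows from their equality in cohomology, which is exactly the known CHRC. This is the device that lifts the statement from cohomology to Chow groups, and it is the reason the argument is confined to the abelian-surface cases.

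A secondary but real error: you say the multiplicativity is ``lifted from cohomology to Chow motives via the multiplicative Chow--K\"unneth decompositions constructed in the body of the paper.'' The logical order is the reverse — the multiplicative Chow--K\"unneth decompositions of $A^{[n]}$ and $K_n(A)$ (Theorem \ref{thm:MCK}) are \emph{consequences} of Theorems \ref{thm:mainAb} and \ref{thm:mainKummer}, obtained by transporting the Beauville--Deninger--Murre decompositions of the abelian (torsor) fixed loci through the isomorphism $\phi$. Using them as an input to prove the main theorems would be circular. As written, your proposal therefore lacks the one idea (O'Sullivan's theory of symmetrically distinguished cycles, suitably extended to non-connected ``abelian torsors with torsion structure'') that makes the passage from the cohomological to the motivic statement possible.
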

	
	The definition of the \emph{orbifold motive} of $[M/G]$ as a \emph{(commutative)
		algebra object} in the category of Chow motives with rational
	coefficients\footnote{Strictly speaking, the orbifold Chow motive of $[M/G]$ in
		general lives in the larger category of Chow motives with \emph{fractional} Tate
		twists. However, in our cases of interest, namely when there exists a crepant
		resolution, for the word `crepant resolution' to make sense we understand that
		the underlying singular variety $M/G$ is at least Gorenstein, in which case all
		age shiftings are integers and we stay in the usual category of Chow motives.
		See Definitions \ref{def:Mot} and \ref{def:OrbMot} for the general notions.} is
	particularly down-to-earth\,; it is the $G$-invariant sub-algebra object of some
	explicit algebra object\,:
	$$\h_{orb}\left([M/G]\right) \deff \left(\bigoplus_{g\in G}
	\h(M^{g})\left(-\age(g)\right), \star_{orb}\right)^{G},$$ 
	where for each $g\in G$, $M^{g}$ is the subvariety of fixed points of $g$ and the orbifold
	product $\star_{orb}$ is defined by using natural inclusions and Chern classes
	of normal bundles of various fixed loci\,; see Definition \ref{def:OrbMot} (or
	(\ref{eqn:OrbChowProdIntro}) below) for the precise formula of $\star_{orb}$ as
	well as the Tate twists by age (Definition~\ref{def:age}) and the $G$-action.
	The orbifold Chow ring\footnote{The definition of the orbifold Chow ring has
		already appeared in Page 211 of Fantechi--G\"ottsche \cite{MR1971293} and proved
		to be equivalent to Abramovich--Grabber--Vistoli's construction in
		\cite{MR2450211} by Jarvis--Kaufmann--Kimura in \cite{MR2285746} .} is then
	defined as the following commutative algebra
	$$\CH^{*}_{orb}([M/G]):=\bigoplus_{i} \Hom_{\CHM}(\1(-i), \h_{orb}([M/G])),$$
	or, equivalently and more explicitly,
	\begin{equation}\label{eqn:OrbChowDefi}
	\CH^{*}_{orb}\left([M/G]\right) \deff \left(\bigoplus_{g\in G}
	\CH^{*-\age(g)}(M^{g}), \star_{orb}\right)^{G},
	\end{equation}
	where $\star_{orb}$ is as follows\,: for two elements $g, h \in G$ and
	$\alpha\in \CH^{i-\age(g)}(M^{g})$, $\beta\in \CH^{j-\age(h)}(M^{h})$, their
	orbifold product is the following element in $\CH^{i+j-\age(gh)}(M^{gh})$\,:
	\begin{equation}\label{eqn:OrbChowProdIntro}
	\alpha\star_{orb}\beta\deff
	\iota_{*}\left(\left.\alpha\right\vert_{M^{<g,h>}}\cdot \beta|_{M^{<g,h>}}\cdot
	c_{top}(F_{g,h})\right),
	\end{equation}
	where $M^{<g,h>} := M^g \cap M^h$, $\iota: M^{<g,h>}\inj M^{gh}$ is the natural
	inclusion and $F_{g,h}$ is the obstruction bundle.
	This construction is completely parallel to the construction of orbifold
	cohomology due to Fantechi--G\"ottsche \cite{MR1971293} which is further
	simplified in Jarvis--Kaufmann--Kimura \cite{MR2285746}. 
	
	With the orbifold Chow theory briefly reviewed above, we see that in Conjecture
	\ref{conj:MHRCbis}, the fancy side of $[M/G]$ is actually the easier side which
	can be used to study the motive and cycles of the hyperK\"ahler variety $Y$.
	Let us turn this idea into the following working principle, which will be
	illustrated repeatedly in examples in the rest of the introduction.
	
	\noindent\textbf{Slogan :} \emph{The cohomology theories\footnote{In the large
			sense\,: Weil cohomology, Chow rings, K-theory, motivic cohomology, \etc {} and
			finally, motives.} of a holomorphic symplectic variety can be understood via the
		hidden stack structure of its singular symplectic models.}

	Interesting examples of symplectic resolutions appear when considering the
	Hilbert--Chow morphism of a smooth projective surface. More precisely, in his
	fundamental paper \cite{MR730926}, Beauville provides such examples\,: \\
	
	\noindent\emph{Example 1.}\\ 
	Let $S$ be a complex projective K3 surface or an abelian surface. Its Hilbert
	scheme of length-$n$ subschemes, denoted by $S^{[n]}$, is a symplectic crepant
	resolution of the symmetric product $S^{(n)}$ \emph{via} the Hilbert--Chow morphism.
	The corresponding Cohomological HyperK\"ahler Resolution Conjecture was proved
	independently by Fantechi--G\"ottsche in \cite{MR1971293} and Uribe in
	\cite{MR2154668} making use of Lehn--Sorger's work \cite{MR1974889} computing
	the ring structure of $H^{*}(S^{[n]})$. The Motivic HyperK\"ahler Resolution
	Conjecture \ref{conj:MHRCbis} in the case of K3 surfaces will be proved in
	\cite{MHRCK3} and the case of abelian surfaces is our first main result\,:
	
	\begin{thm}[MHRC for $A^{[n]}$]\label{thm:mainAb}
		Let $A$ be an abelian surface and $A^{[n]}$ be its Hilbert scheme as before.
		Then we have an isomorphism of commutative algebra objects in the category
		$\CHM$ of Chow motives with rational coefficients\,:
		$$\h\left(A^{[n]}\right)\isom \h_{orb,dt}\left(\left[A^{n}/\gS_{n}\right]\right),$$  where on the left
		hand side, the product structure is given by the small diagonal of
		$A^{[n]}\times A^{[n]}\times A^{[n]}$ while on the right hand side, the product
		structure is given by the orbifold product $\star_{orb}$ with a suitable sign
		change, called \emph{discrete torsion} in Definition\ref{def:dt}. In particular, we have
		an isomorphism of commutative graded $\Q$-algebras\,:
		\begin{equation}\label{eqn:mainAb}
		\CH^{*}\left(A^{[n]}\right)_{\Q}\isom \CH^{*}_{orb,dt}([A^{n}/\gS_{n}]).
		\end{equation}
	\end{thm}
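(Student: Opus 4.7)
The overall plan is to reduce Theorem \ref{thm:mainAb} to a precise comparison of two explicit motivic decompositions: the de Cataldo--Migliorini decomposition of $\h(A^{[n]})$ indexed by partitions of $n$, and the evident decomposition of $\h_{orb,dt}([A^n/\gS_n])$ into $\gS_n$-isotypic components indexed by conjugacy classes in $\gS_n$. Since conjugacy classes in $\gS_n$ are in bijection with partitions of $n$, there is a natural ``bookkeeping'' identification of the underlying graded motives; the real content is to show that this identification is an isomorphism of algebra objects once the discrete torsion sign is inserted on the orbifold side.

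First I would produce the additive isomorphism. Using de Cataldo--Migliorini's correspondences between $A^{[n]}$ and the strata of $A^{(n)}$ labelled by partitions $\lambda\vdash n$, one obtains a canonical splitting of $\h(A^{[n]})$ into summands of the form $\h(A^{(\lambda)})(-n+\ell(\lambda))$. On the orbifold side, for a permutation $g\in\gS_n$ with cycle type $\lambda$, the fixed locus $(A^n)^g$ is canonically isomorphic to $A^{\ell(\lambda)}$, the centralizer $Z(g)$ acts through its quotient $\gS_\lambda \deff \prod_i \gS_{m_i}$ permuting cycles of equal length, and the age of $g$ equals $n-\ell(\lambda)$. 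Averaging $\bigoplus_g \h((A^n)^g)(-\age(g))$ over $\gS_n$ therefore produces exactly the same graded motive $\bigoplus_\lambda \h(A^{(\lambda)})(-n+\ell(\lambda))$. I would fix the identification on each summand so that it is compatible with the natural inclusions and projections used by de Cataldo--Migliorini.

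The core of the proof is the multiplicativity. On the Hilbert scheme side, the product is implemented by the small diagonal of $A^{[n]}\times A^{[n]}\times A^{[n]}$, which, after transport by the de Cataldo--Migliorini correspondences, becomes a sum of explicit cycles supported on nested Hilbert schemes. On the orbifold side, the orbifold product is given by the explicit formula \eqref{eqn:OrbChowProdIntro} involving restrictions to $M^{\langle g,h\rangle}$, push-forward along $\iota:M^{\langle g,h\rangle}\inj M^{gh}$, and cup-product with the top Chern class $c_{top}(F_{g,h})$ of the obstruction bundle. For $M=A^n$ with its $\gS_n$-action, both the fixed loci and the obstruction bundle can be described very concretely in terms of partitions refining each other, so the orbifold product reduces to a pushforward along the diagonal of $A^{\ell(\lambda\vee\mu)}\to A^{\ell(\lambda\cdot\mu)}$ weighted by a Chern class on $A$. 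The plan is to verify, stratum by stratum, that each such elementary orbifold product corresponds under the chosen identification to the corresponding intersection of de Cataldo--Migliorini cycles via the small diagonal, with the sign $\varepsilon(g,h)$ from the discrete torsion accounting for the discrepancy between $\wedge^{top}$ on normal bundles and the natural symmetric-power conventions on the Hilbert scheme side.

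The hardest step, and the one on which the whole proof hinges, is this last stratum-by-stratum identification of triple products: one must show that the combinatorics of nested Hilbert schemes and their fundamental classes match the combinatorics of double cosets $\gS_n/\langle g,h\rangle\backslash\gS_n$ and obstruction bundles, including the precise sign coming from discrete torsion. I expect this to be reducible, after the de Cataldo--Migliorini transport, to a universal identity on a single factor $A$ between the class of a suitable small diagonal in a power of $A$ and an Euler-class expression. Crucially, because $A$ is an abelian surface, one can invoke multiplicative Chow--K\"unneth decompositions for $A^n$ (compatible with the $\gS_n$-action) to perform the cycle-level computations, whereas for general surfaces the same formulas only hold cohomologically. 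Once this universal identity is established, the global statement follows by assembling the local pieces $\gS_n$-equivariantly, and \eqref{eqn:mainAb} is then obtained by applying $\Hom_{\CHM}(\1(-i),-)$ to both sides.
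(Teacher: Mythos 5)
Your Step (i), the additive isomorphism via the de Cataldo--Migliorini correspondences matched with the $\gS_n$-conjugacy-class bookkeeping of $\bigoplus_g \h((A^n)^g)(-\age(g))$, is exactly what the paper does, including the $(-1)^{\age(g)}$ sign normalization.

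Where the proposal has a real gap is in proving multiplicativity. You propose a ``stratum-by-stratum'' direct match between the transported small diagonal and the orbifold product, hoping to reduce it to a universal identity in a power of $A$. But you offer no mechanism for actually computing the cycle $(U^{g_1}\times U^{g_2}\times U^{g_3})_*(\delta_{A^{[n]}})$ at the Chow level; this is genuinely hard and is not a consequence of having a multiplicative Chow--K\"unneth decomposition on $A^n$. The paper circumvents the direct computation with two ingredients you don't mention. First, by Voisin's result (\cite[Prop.~5.6]{MR3447105}) the transported small diagonal is a polynomial in big diagonals of $A^N$, hence \emph{symmetrically distinguished} in O'Sullivan's sense, and the orbifold-product cycle $Z$ is symmetrically distinguished for trivial reasons (trivial obstruction bundles, diagonal pushforwards). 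Second, O'Sullivan's theorem states that on an abelian variety the subring $\CH(\cdot)_{sd}$ injects into $\bar\CH$, so two symmetrically distinguished cycles agreeing numerically (or homologically) agree rationally. This reduces the Chow-level multiplicativity to the already-known Cohomological HRC for Hilbert schemes of surfaces (Fantechi--G\"ottsche / Uribe via Lehn--Sorger), which the paper verifies matches the de Cataldo--Migliorini realization by a Nakajima-operator computation. Your invocation of ``multiplicative Chow--K\"unneth decompositions for $A^n$'' gestures in the right direction, because the Deninger--Murre projectors are symmetrically distinguished, but an MCK decomposition by itself does not give the injectivity $\CH_{sd}\hookrightarrow\bar\CH$ that makes the reduction-to-cohomology argument work; without O'Sullivan's theorem your plan requires an explicit Chow-level identity that you have not established and that has no known direct proof.
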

	
	\vspace{1cm}
	\noindent\emph{Example 2.}\\
	Let $A$ be a complex abelian surface. The composition of the Hilbert--Chow
	morphism followed by the summation map $A^{[n+1]}\to A^{(n+1)}\to A$ is an
	isotrivial fibration. The \emph{generalized Kummer variety} $K_{n}(A)$ is by
	definition the fiber of this morphism over the origin of $A$. It is a
	hyperK\"ahler resolution of the quotient $A^{n+1}_{0}/\gS_{n+1}$, where
	$A_{0}^{n+1}$ is the kernel abelian variety of the summation map $A^{n+1}\to A$.
	The second main result of the paper is the following theorem confirming the Motivic
	HyperK\"ahler Resolution Conjecture \ref{conj:MHRCbis} in this situation.
	
	\begin{thm}[MHRC for $K_{n}(A)$]\label{thm:mainKummer}
		Let $K_{n}(A)$ be the $2n$-dimensional generalized Kummer variety associated to
		an abelian surface $A$. Let $A^{n+1}_{0}\deff \Ker\left(+: A^{n+1}\to A\right)$
		endowed with the natural $\gS_{n+1}$-action. Then we have an isomorphism of
		commutative algebra objects in the category $\CHM$ of Chow motives with rational
		coefficients\,:
		$$\h\left(K_{n}(A)\right)\isom \h_{orb,dt}\left(\left[A^{n+1}_{0}/\gS_{n+1}\right]\right),$$  where on
		the left hand side, the product structure is given by the small diagonal while
		on the right hand side, the product structure is given by the orbifold product
		$\star_{orb}$ with the sign change given by discrete torsion in \ref{def:dt}. In
		particular, we have an isomorphism of commutative graded $\Q$-algebras\,:
		\begin{equation}\label{eqn:mainKummer}
		\CH^{*}\left(K_{n}(A)\right)_{\Q}\isom
		\CH^{*}_{orb,dt}\left(\left[A^{n+1}_{0}/\gS_{n+1}\right]\right).
		\end{equation}
	\end{thm}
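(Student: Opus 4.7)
The plan is to deduce Theorem~\ref{thm:mainKummer} from Theorem~\ref{thm:mainAb} via the summation-map fibration. The key geometric input is the pair of compatible $A[n+1]$-Galois \'etale covers
\begin{equation*}
\mu: A\times K_n(A)\to A^{[n+1]},\ (a,\xi)\mapsto t_a(\xi), \qquad \nu: A\times A_0^{n+1}\to A^{n+1},\ (a,x)\mapsto x+(a,\ldots,a),
\end{equation*}
coming from the exact sequence $0\to A_0^{n+1}\to A^{n+1}\to A\to 0$ trivialized after pullback by $[n+1]\colon A\to A$. The cover $\nu$ is $\gS_{n+1}$-equivariant with trivial action on the $A$-factor. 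Taking $A[n+1]$-invariants yields identifications in $\CHM$ of commutative algebra objects
\begin{equation*}
\h(A^{[n+1]})\isom\bigl(\h(A)\otimes\h(K_n(A))\bigr)^{A[n+1]}
\end{equation*}
and
\begin{equation*}
\h_{orb,dt}([A^{n+1}/\gS_{n+1}])\isom\bigl(\h(A)\otimes\h_{orb,dt}([A_0^{n+1}/\gS_{n+1}])\bigr)^{A[n+1]},
\end{equation*}
where for the second one uses that each $g$-fixed locus decomposes via $\nu$ as $A\times (A_0^{n+1})^g$ modulo $A[n+1]$, and that the age, obstruction bundle, and discrete-torsion sign all depend only on the $A_0^{n+1}$-factor (since $A$ is fixed by $\gS_{n+1}$).

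Next I would verify that the isomorphism of Theorem~\ref{thm:mainAb} is $A$-equivariant -- in particular $A[n+1]$-equivariant -- for the translation action on both sides. The correspondences underlying Theorem~\ref{thm:mainAb} (Hilbert--Chow incidences, inertia subvarieties, tautological and obstruction bundles) all commute with translations on $A$, so by inspection the induced isomorphism of algebra objects carries the full translation action through the $\gS_{n+1}$-invariants and the discrete-torsion sign twists. Combined with the previous step, this yields an isomorphism of $\h(A)$-algebras
\begin{equation*}
\bigl(\h(A)\otimes\h(K_n(A))\bigr)^{A[n+1]}\isom\bigl(\h(A)\otimes\h_{orb,dt}([A_0^{n+1}/\gS_{n+1}])\bigr)^{A[n+1]},
\end{equation*}
with $\h(A)$-structures coming from $\Sigma^*$ and from $\sigma^*$ (for $\sigma\colon A^{n+1}\to A$ the summation).

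The final step is to cancel the $\h(A)$-factor, which one does by restricting to the fiber at the origin -- equivalently, applying the Chow--K\"unneth projector onto the unit summand $\1\subset\h(A)$ from the Beauville--Deninger--Murre decomposition. This extracts the desired isomorphism $\h(K_n(A))\isom\h_{orb,dt}([A_0^{n+1}/\gS_{n+1}])$ of commutative algebra objects in $\CHM$. The main technical difficulty will be the equivariance verification: tracing the $A$-action through the explicit construction of Theorem~\ref{thm:mainAb}, which mixes age shifts, permutation actions, and discrete-torsion signs, is delicate. A secondary subtlety is ensuring that the cancellation of $\h(A)$ respects the algebra structure, which reduces to checking that the orbifold product on $\h_{orb,dt}([A^{n+1}/\gS_{n+1}])$ factors compatibly under the tensor decomposition -- a fixed-locus and obstruction-bundle computation enabled by the splitting $A^{n+1}\cong A\times A_0^{n+1}$ (up to the $A[n+1]$-isogeny).
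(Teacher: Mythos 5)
Your reduction strategy is genuinely different from the paper's proof, which establishes Case~(B) directly by running the same three-step machine as Case~(A) (de Cataldo--Migliorini's semi-small decomposition for the map $K_n(A)\to A_0^{n+1}/\gS_{n+1}$, symmetric distinguishedness, and cohomological realization), the new ingredient being the category of \emph{abelian torsors with torsion structure} needed because $(A_0^{n+1})^g$ is a non-connected extension of $A[\gcd(g)]$ by an abelian variety. Your idea of using the $A[n+1]$-Galois covers $\mu$ and $\nu$ is natural and is in fact exactly what the paper exploits in Step~(iii), but only at the level of Betti cohomology via Nieper--Wi\ss kirchen's isomorphism $H^*(K_n(A),\C)\cong H^*(A^{[n+1]},\C)\otimes_{H^*(A)}\C$. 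Attempting it at the level of Chow motives runs into a concrete obstruction in your final step.

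The problem is the ``cancel the $\h(A)$-factor'' step. The action of $\tau\in A[n+1]$ on $\h(A)\otimes\h(K_n(A))$ is $t_{-\tau}^*\otimes t_\tau^*$; by Lemma~\ref{lemma:TorsionTranslation} the first factor $t_{-\tau}^*$ is the identity on $\h(A)$, so the $A[n+1]$-invariant submotive is literally $\h(A)\otimes\h(K_n(A))^{A[n+1]}$, and likewise on the orbifold side. Projecting onto the unit summand $\1\subset\h(A)$ therefore yields an isomorphism $\h(K_n(A))^{A[n+1]}\isom\h_{orb,dt}([A_0^{n+1}/\gS_{n+1}])^{A[n+1]}$, \emph{not} the desired isomorphism. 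This invariant part is genuinely smaller: already for $n=1$, $A[2]$ permutes the $16$ exceptional curve classes on the Kummer K3 surface $K_1(A)$ via the regular representation of $(\Z/2)^4$, so $\dim H^*(K_1(A))^{A[2]}=9$ while $\dim H^*(K_1(A))=24$. In other words, the $A[n+1]$-action on $\h(K_n(A))$ is highly non-trivial, and the Chow-motivic analogue of the cohomological base-change $-\otimes_{H^*(A)}\C$ that Nieper--Wi\ss kirchen uses is not realized by a projector onto $\1\subset\h(A)$. Recovering the full motive $\h(K_n(A))$ from $\h(A^{[n+1]})$ would require controlling the non-invariant isotypic pieces as well, which your plan does not address; this is precisely the gap that the paper's \atts\ formalism and symmetric distinguishedness argument are designed to bridge.
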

	
	\subsection{Consequences}
	We get some by-products of our main results.
	
	Taking the Betti cohomological realization, we confirm Ruan's original
	Cohomological HyperK\"ahler Resolution Conjecture \ref{conj:CHRC}  in the 
	case of generalized Kummer varieties\,:

	\begin{thm}[CHRC for $K_{n}(A)$]\label{cor:CHRCKummer}
		Let the notation be as in Theorem \ref{thm:mainKummer}. We have an isomorphism of
		graded commutative $\Q$-algebras\,:
		$$H^{*}\left(K_{n}(A)\right)_{\Q}\isom
		H^{*}_{orb,dt}\left(\left[A^{n+1}_{0}/\gS_{n+1}\right]\right).$$
	\end{thm}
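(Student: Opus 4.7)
The plan is to deduce Theorem \ref{cor:CHRCKummer} directly from the motivic statement of Theorem \ref{thm:mainKummer} by applying the Betti realization functor. Specifically, I would invoke the Betti cohomology realization as a $\Q$-linear symmetric monoidal functor $H^{*}\colon \CHM \to \Vect_{\Q}^{gr}$ from rational Chow motives to graded $\Q$-vector spaces, sending a smooth projective variety $X$ to $H^{*}(X,\Q)$ and an algebraic correspondence to its induced action on Betti cohomology. Because this functor is symmetric monoidal, it carries (commutative) algebra objects to (commutative) graded algebras and isomorphisms of algebra objects to isomorphisms of graded algebras. So the first step is simply to apply this functor to the isomorphism $\h(K_{n}(A)) \isom \h_{orb,dt}([A_{0}^{n+1}/\gS_{n+1}])$ provided by Theorem \ref{thm:mainKummer}.

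The next step is to identify both realizations with their intended cohomological meaning. On the left-hand side, $\h(K_{n}(A))$ is equipped with the algebra structure coming from the small diagonal in $K_{n}(A)^{3}$, and realization sends this correspondence to the cup product on $H^{*}(K_{n}(A),\Q)$, so the ordinary cohomology ring is recovered immediately. On the right-hand side, I need to verify that the Betti realization of the orbifold Chow motive $\h_{orb,dt}([A_{0}^{n+1}/\gS_{n+1}])$ reproduces the orbifold cohomology ring of Fantechi--G\"ottsche \cite{MR1971293} and Jarvis--Kaufmann--Kimura \cite{MR2285746} with the discrete-torsion sign.

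This last identification is where the only real content of the deduction lies, but it is essentially built into the construction. By Definition \ref{def:OrbMot}, the orbifold motive is a $\gS_{n+1}$-invariant direct summand of $\bigoplus_{g} \h\bigl((A_{0}^{n+1})^{g}\bigr)(-\age(g))$ whose product is specified by explicit algebraic correspondences assembled from the inclusions $(A_{0}^{n+1})^{\langle g,h\rangle} \inj (A_{0}^{n+1})^{gh}$, the restriction maps to $(A_{0}^{n+1})^{\langle g,h\rangle}$, and the top Chern class of the obstruction bundle $F_{g,h}$. The Betti realizations of these operations --- pushforward along closed immersions, pullback under inclusions, cup product with $c_{\mathrm{top}}(F_{g,h})$ --- agree term by term with the operations defining the orbifold cup product in the topological setting. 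Since $\gS_{n+1}$ acts by symplectic automorphisms on $A_{0}^{n+1}$, all ages are integers and no fractional Tate twists arise; moreover, passage to $\gS_{n+1}$-invariants is effected by an algebraic projector and therefore commutes with realization, and the discrete torsion is a sign twist applied uniformly on both sides.

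The main obstacle is not in this deduction itself: the real work is entirely concentrated in Theorem \ref{thm:mainKummer}. Granting the routine comparison of realizations described above, combining the isomorphism of algebra objects in $\CHM$ with its Betti realization yields the desired isomorphism of graded commutative $\Q$-algebras $H^{*}(K_{n}(A),\Q) \isom H^{*}_{orb,dt}\bigl([A_{0}^{n+1}/\gS_{n+1}]\bigr)$. Extending scalars to $\C$ then recovers Ruan's original Cohomological HyperK\"ahler Resolution Conjecture \ref{conj:CHRC} for generalized Kummer varieties, with the bonus that the isomorphism is already defined over $\Q$.
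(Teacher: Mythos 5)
Your proposal is correct and takes essentially the same route as the paper: the paper's own derivation of this theorem is literally the one-line remark that it is ``obtained by applying the cohomological realization functor to Theorem~\ref{thm:mainKummer},'' exactly the symmetric-monoidal-realization argument you spell out. One small structural observation worth being aware of: in the paper the cohomological ring isomorphism $\bar\phi$ is actually established first (Proposition~\ref{prop:realisation-B}, via Nieper--Wi\ss kirchen's description of $H^*(K_n(A))$) as the Step~(iii) ingredient needed to prove the motivic Theorem~\ref{thm:mainKummer}, so the ``corollary'' is in some sense restating a fact already obtained en route; your argument treats Theorem~\ref{thm:mainKummer} as a black box and is logically sound either way.
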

	The CHRC has never been proved in the case of generalized Kummer varieties in
	the literature. Related work on the CHRC in this case are Nieper--Wi\ss
	kirchen's description of the cohomology ring $H^{*}(K_{n}(A),\C)$ in
	\cite{MR2578804}, which plays an important r\^ole in our proof\,; and Britze's
	thesis \cite{Britze} comparing $H^{*}(A\times K_{n}(A),\C)$ and the computation
	of the orbifold cohomology ring of $[A\times A^{n+1}_{0}/\gS_{n+1}]$ in
	Fantechi--G\"ottsche \cite{MR1971293}. See however Remark \ref{rmk:CHRCKummer}.
	
	From the K-theoretic point of view, we also have the following closely related
	conjecture (KHRC) in \cite[Conjecture 1.2]{MR2285746}, where the \emph{orbifold
		K-theory} is defined in a similar way with top Chern class in
	(\ref{eqn:OrbChowProdIntro}) replaced by the K-theoretic Euler class\,; see
	Definition \ref{def:OrbK} for details.
	\begin{conj}[K-theoretic HyperK\"ahler Resolution Conjecture
		\cite{MR2285746}]\label{conj:KHRC}
		In the same situation as in Conjecture \ref{conj:MHRC}, we have isomorphisms of
		$\C$-algebras\,:
		\begin{eqnarray*}
			K_{0}(Y)_{\C} \isom K_{orb}(\rX)_{\C}\,;\\
			K^{top}(Y)_{\C} \isom K^{top}_{orb}(\rX)_{\C}.\\
		\end{eqnarray*}
	\end{conj}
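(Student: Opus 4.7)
The plan is to deduce both statements of Conjecture \ref{conj:KHRC}, in our two cases (A) $M=A^n$, $G=\gS_n$, $Y=A^{[n]}$ and (B) $M=A^{n+1}_0$, $G=\gS_{n+1}$, $Y=K_n(A)$, from their motivic and cohomological counterparts already established in Theorems \ref{thm:mainAb}, \ref{thm:mainKummer} and \ref{cor:CHRCKummer}. The bridge between K-theory and Chow theory (respectively between topological K-theory and Betti cohomology) will be a suitable \emph{orbifold Chern character} that is multiplicative with respect to the twisted products on both sides.

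Recall that for any smooth projective variety $Y$, the classical Chern character is a ring isomorphism $\ch\colon K_0(Y)_\C \isom \CH^*(Y)_\C$, and similarly the topological Chern character gives $K^{top}(Y)_\C \isom H^*(Y,\C)$. The crux is to produce analogous ring isomorphisms on the orbifold side,
\[
\widetilde{\ch}_{orb}\colon K_{orb}([M/G])_\C \isom \CH^*_{orb}([M/G])_\C \quad \text{and} \quad K^{top}_{orb}([M/G])_\C \isom H^*_{orb}([M/G])_\C,
\]
compatible with the sign change from discrete torsion (Definition \ref{def:dt}). Once in hand, composing these with the MHRC of Theorems \ref{thm:mainAb}--\ref{thm:mainKummer} (respectively with the CHRC of Theorem \ref{cor:CHRCKummer}) immediately yields the two claimed isomorphisms.

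The construction of $\widetilde{\ch}_{orb}$ is where the real content lies. A sector-by-sector application of $\ch$ fails to be multiplicative because the two orbifold products use different ``Euler classes'' of the obstruction bundle $F_{g,h}$\,: the Chow formula (\ref{eqn:OrbChowProdIntro}) employs $c_{top}(F_{g,h})$, whereas its K-theoretic analogue in Definition \ref{def:OrbK} employs the class $\lambda_{-1}(F_{g,h}\dual) = \sum_i (-1)^i \Lambda^i F_{g,h}\dual$. These are related by the Grothendieck--Riemann--Roch identity $\ch(\lambda_{-1}(F\dual)) = c_{top}(F)\cdot \td(F)^{-1}$, forcing one to twist $\ch$ on each fixed-point stratum $M^g$ by a Todd-type correction built from the eigenbundle decomposition of the normal bundle $N_{M^g/M}$ under $\langle g\rangle$. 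This is essentially the stringy/orbifold Chern character introduced by Jarvis--Kaufmann--Kimura \cite{MR2285746}; multiplicativity with respect to $\star_{orb}$ on the two sides follows from a direct computation on $M^{\langle g,h\rangle}$ using the explicit description of $F_{g,h}$, after which the isomorphism descends to $G$-invariants. The topological version proceeds identically using the topological Chern character.

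The main obstacle I expect is this multiplicativity check: one has to verify that the Todd-type twists on the three sectors $M^g$, $M^h$, $M^{gh}$ involved in the product $\alpha \star_{orb} \beta$ balance out precisely via GRR applied to the closed immersion $M^{\langle g,h\rangle} \inj M^{gh}$, and, in our setting, that the discrete torsion cocycle used to sign-twist the Chow side matches the one needed on the K-theoretic side so that the comparison survives passage to $\gS_n$-- (respectively $\gS_{n+1}$--)invariants.
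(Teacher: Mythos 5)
Your proposal follows the paper's proof essentially exactly: compose the classical Chern character on $Y$, the Jarvis--Kaufmann--Kimura orbifold Chern character $\ch_{orb}\colon K_{orb}([M/G])_\Q \isom \CH^*_{orb}([M/G])_\Q$, and the MHRC (resp.\ the topological variants together with the CHRC of Theorem~\ref{cor:CHRCKummer}). Two small remarks on points you flag as potential obstacles: the multiplicativity of $\ch_{orb}$ with respect to the two orbifold products is not something you need to re-derive via GRR --- it is precisely \cite[Main result~3]{MR2285746}, which the paper simply invokes; and the discrete-torsion compatibility you worry about never arises, because once one tensors with $\C$ the paper appeals to the version of MHRC/CHRC \emph{without} discrete torsion (Conjecture~\ref{conj:MHRCbis}, obtained from the discrete-torsion version by multiplying each $g$-sector by $\sqrt{-1}^{\,\age(g)}$ as in Remark~\ref{rmk:roots}), which matches the untwisted $\star_{orb}$ that $\ch_{orb}$ intertwines.
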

	
	Using Theorems \ref{thm:mainAb} and \ref{thm:mainKummer}, we can confirm
	Conjecture \ref{conj:KHRC} in the two cases considered here\,:
	\begin{thm}[KHRC for $A^{[n]}$ and $K_{n}(A)$]\label{thm:KHRC}
		Let $A$ be an abelian surface and $n$ be a natural number. There are isomorphisms
		of commutative $\C$-algebras\,:
		\begin{eqnarray*}
			K_{0}\left(A^{[n]}\right)_{\C}&\isom& K_{orb}\left(\left[A^{n}/\gS_{n}\right]\right)_{\C}\,;\\
			K^{top}\left(A^{[n]}\right)_{\C}&\isom& K^{top}_{orb}\left(\left[A^{n}/\gS_{n}\right]\right)_{\C}\,;\\
			K_{0}\left(K_{n}(A)\right)_{\C}&\isom& K_{orb}\left(\left[A_{0}^{n+1}/\gS_{n+1}\right]\right)_{\C}\,;\\
			K^{top}\left(K_{n}(A)\right)_{\C}&\isom&
			K^{top}_{orb}\left(\left[A_{0}^{n+1}/\gS_{n+1}\right]\right)_{\C}.
		\end{eqnarray*}
	\end{thm}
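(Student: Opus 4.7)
The plan is to transport the motivic isomorphisms of Theorems~\ref{thm:mainAb} and~\ref{thm:mainKummer} to K-theory via appropriate Chern characters on both sides. For a smooth projective complex variety $Y$, the classical Chern character gives a ring isomorphism $\ch\colon K_{0}(Y)_{\C} \xrightarrow{\isom} \CH^{*}(Y)_{\C}$, and the topological Chern character yields $\ch\colon K^{top}(Y)_{\C} \xrightarrow{\isom} H^{*}(Y,\C)$. On the orbifold side, the central result of Jarvis--Kaufmann--Kimura \cite{MR2285746} is that there is an analogous orbifold Chern character which is a ring isomorphism between orbifold K-theory and the orbifold Chow ring with complex coefficients, together with a topological variant comparing topological orbifold K-theory and orbifold cohomology.

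For the algebraic K-theory statements, I would extract from Theorem~\ref{thm:mainAb} (and analogously from Theorem~\ref{thm:mainKummer}) the underlying graded ring isomorphism on Chow groups~\eqref{eqn:mainAb}, base-change to $\C$, and then compose with the inverse Chern character on the left-hand side and the inverse orbifold Chern character of \cite{MR2285746} on the right-hand side. For the topological K-theory statements, I would apply the Betti realization functor to the motivic isomorphism of Theorem~\ref{thm:mainAb} (and analogously Theorem~\ref{thm:mainKummer}) to obtain a ring isomorphism of cohomology rings (which in the Kummer case is precisely Theorem~\ref{cor:CHRCKummer}), and then compose with the inverse topological Chern characters on both sides.

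The main point to verify is the compatibility of the orbifold Chern character with the discrete-torsion sign twist on both the Chow and K-theory sides\,: one needs that the orbifold Chern character restricts to a ring isomorphism between the discrete-torsion-twisted theories. Since discrete torsion is defined by multiplying the orbifold product on each pair of summands indexed by $(g,h)$ by a sign depending on the cycle-types of $g$ and $h$, and since the Chern character is $\C$-linear and multiplicative, this compatibility is essentially formal\,: it reduces to the observation that rescaling each bigraded summand by a scalar commutes with applying the Chern character. I therefore expect this step to follow directly from the definitions rather than to constitute a serious obstacle; the heavy lifting has been done in establishing the motivic isomorphisms of Theorems~\ref{thm:mainAb} and~\ref{thm:mainKummer}.
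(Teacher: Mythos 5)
Your overall strategy is the same as the paper's: transport the motivic/Chow isomorphisms of Theorems~\ref{thm:mainAb} and~\ref{thm:mainKummer} to $K$-theory by composing with the classical Chern character on the resolution side and with the orbifold Chern character of Jarvis--Kaufmann--Kimura \cite{MR2285746} on the orbifold side; for topological $K$-theory, do the same with the Betti realization (\ie CHRC) and the topological Chern characters. That matches the paper's proof exactly.

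However, your handling of discrete torsion is not quite what the paper does, and as stated it leaves a gap. The theorem to be proved is about the \emph{untwisted} orbifold $K$-theory $K_{orb}([M/G])_{\C}$, but the ``compatibility check'' you propose, even once established, produces an isomorphism $K_{0}(Y)_{\C}\isom K_{orb,dt}([M/G])_{\C}$ with a discrete-torsion-\emph{twisted} $K$-theory. To reach the stated conclusion you would still need to identify $K_{orb,dt}([M/G])_{\C}$ with $K_{orb}([M/G])_{\C}$, and that identification is precisely the rescaling by $\sqrt{-1}^{\age(g)}$ on the summand indexed by $g$ (the $K$-theoretic analogue of Remark~\ref{rmk:roots}), which you never invoke. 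The paper avoids introducing a twisted $K$-theory altogether: it first applies Remark~\ref{rmk:roots} on the Chow side to obtain the \emph{untwisted} isomorphism $\CH^{*}(Y)_{\C}\isom \CH^{*}_{orb}([M/G])_{\C}$, and then composes with the (untwisted) orbifold Chern character $\ch_{orb}: K_{orb}([M/G])_{\Q}\isom \CH^{*}_{orb}([M/G])_{\Q}$ of \cite[Main result 3]{MR2285746} and the classical $\ch$ on the resolution. Similarly, for the topological statement the paper uses the cohomological isomorphism $H^{*}(Y,\C)\isom H^{*}_{orb}([M/G],\C)$ without discrete torsion, rather than the $\Q$-coefficient, dt-twisted statement of Theorem~\ref{cor:CHRCKummer} that you cite. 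So: either drop the discrete-torsion discussion entirely and pass to the untwisted theory over $\C$ first (as the paper does), or, if you insist on the twisted route, add the missing step identifying $K_{orb,dt}\otimes\C$ with $K_{orb}\otimes\C$.
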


	\subsection{On explicit descriptions of the Chow rings}
	Let us make some remarks on the way we understand Theorem \ref{thm:mainAb} and
	Theorem \ref{thm:mainKummer}. For each of them, the seemingly fancy right hand
	side of (\ref{eqn:mainAb}) and (\ref{eqn:mainKummer}) given by orbifold Chow
	ring is actually very concrete (see (\ref{eqn:OrbChowDefi}))\,: as groups, since
	all fixed loci are just various diagonals, they are direct sums of Chow groups
	of products of the abelian surface $A$, which can be handled by Beauville's
	decomposition of Chow rings of abelian varieties \cite{MR826463}\,; while the
	ring structures are given by the orbifold product which is extremely simplified
	in our cases (see (\ref{eqn:OrbChowProdIntro}))\,: all obstruction bundles
	$F_{g,h}$ are trivial and hence the orbifold products are either the
	intersection product pushed forward by inclusions or simply zero. 
	
	In short, given an abelian surface $A$, Theorem \ref{thm:mainAb} and Theorem
	\ref{thm:mainKummer} provide an explicit description of the Chow rings of
	$A^{[n]}$ and of $K_{n}(A)$ in terms of Chow rings of products of $A$ (together
	with some combinatoric rules specified by the orbifold product). To illustrate
	how explicit it is, we work out two simple examples in \S\ref{subsec:ToyEx}\,:
	the Chow ring of the Hilbert square of a K3 surface or an abelian surface and
	the Chow ring of the Kummer K3 surface associated to an abelian surface.

	\subsection{Motivation 2\,: Beauville's splitting property}
	The original motivation for the authors to study the Motivic HyperK\"ahler
	Resolution Conjecture \ref{conj:MHRC} was to understand the (rational) Chow
	rings, or more generally the Chow motives, of smooth projective holomorphic
	symplectic varieties, that is, of even-dimensional projective manifolds carrying
	a holomorphic 2-form which is symplectic (\ie non-degenerate at each point). As
	an attempt to unify his work on algebraic cycles on abelian varieties
	\cite{MR826463} and his result with Voisin on Chow rings of K3 surfaces
	\cite{MR2047674}, Beauville conjectured in \cite{MR2187148}, under the name of
	\emph{the splitting property}, that for a smooth projective holomorphic
	symplectic variety $X$, there exists a canonical multiplicative splitting of the
	conjectural Bloch--Beilinson--Murre filtration of the rational Chow ring (see
	Conjecture \ref{conj:SplittingPrincipleCh} for the precise statement). In this
	paper, we will understand the splitting property as in the following motivic
	version (see Definition \ref{def:MCK} and Conjecture
	\ref{conj:SplittingPrincipleMot})\,:
	
	\begin{conj}[Beauville's Splitting Property\,: motives]\label{conj:MSPIntro}
		Let $X$ be a smooth projective holomorphic symplectic variety of dimension $2n$.
		Then we have a canonical multiplicative Chow--K\"unneth decomposition of $\h(X)$
		of Bloch--Beilinson type, that is, a direct sum decomposition in the category of
		rational Chow motives\,:
		\begin{equation}\label{eqn:IntroDecomp}
		\h(X)=\bigoplus_{i=0}^{4n}\h^{i}(X)
		\end{equation}
		satisfying the following properties\,:
		\begin{enumerate}
			\item[(i)] (Chow--K\"unneth) The cohomology realization of the decomposition
			gives the K\"unneth decomposition\,: for each $0\leq i\leq 4n$,
			$H^{*}(\h^{i}(X))=H^{i}(X)$.
			\item[(ii)] (Multiplicativity) The product $\mu: \h(X)\otimes \h(X)\to \h(X)$
			given by the small diagonal $\delta_{X}\subset X\times X\times X$ respects the
			decomposition\,: the restriction of $\mu$ on the summand
			$\h^{i}(X)\otimes\h^{j}(X)$ factorizes through $\h^{i+j}(X)$.
			\item[(iii)] (Bloch--Beilinson--Murre) For any $i, j\in \N$,
			\begin{enumerate}
				\item[-] $\CH^{i}(\h^{j}(X))=0$ if $j<i$\,;
				\item[-] $\CH^{i}(\h^{j}(X))=0$ if $j>2i$\,;
				\item[-] the realization induces an injective map $\Hom_{\CHM}\left(\1(-i),
				\h^{2i}(X)\right)\to \Hom_{\Q-HS}\left(\Q(-i), H^{2i}(X)\right)$. 
			\end{enumerate}
		\end{enumerate}
	\end{conj}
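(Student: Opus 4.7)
The conjecture as stated is open in full generality, so I would aim to establish it for the two families at the heart of this paper, namely $X = A^{[n]}$ and $X = K_n(A)$ associated to an abelian surface $A$, using Theorems~\ref{thm:mainAb} and~\ref{thm:mainKummer} to transport a canonical multiplicative Chow--K\"unneth decomposition from the orbifold side, where Beauville's Fourier theory for abelian varieties is available.

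The key input is Beauville's decomposition: for an abelian variety $B$, one has a canonical multiplicative Chow--K\"unneth decomposition $\h(B) = \bigoplus_i \h^i(B)$, refined by weight pieces that are functorial with respect to morphisms of abelian varieties and compatible with products. For $g \in \gS_n$ acting on $A^n$, the fixed locus is a partial diagonal isomorphic to $A^{\ell(g)}$; for $g \in \gS_{n+1}$ acting on $A_0^{n+1}$, it is an abelian subvariety. In both cases each $M^g$ is an abelian variety, so Beauville's decomposition applies directly.

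The plan is then to equip the orbifold motive $\left( \bigoplus_g \h(M^g)(-\age(g)), \star_{orb} \right)^G$ with the grading whose $i$-th summand on $\h(M^g)(-\age(g))$ is the $(i - 2\age(g))$-th Beauville piece of $\h(M^g)$. I would verify, using the extreme simplicity of the orbifold product noted in the excerpt (trivial obstruction bundles, and inclusions $M^{\langle g,h \rangle} \hookrightarrow M^{gh}$ that are morphisms of abelian varieties up to translation), that $\star_{orb}$ is homogeneous for this grading: both pull-back and push-forward along such morphisms respect Beauville's weights, and the factors $c_{top}(F_{g,h})$ are trivial in our cases. Taking $G$-invariants preserves the grading, and multiplicativity of Beauville's decomposition on each factor then yields a multiplicative Chow--K\"unneth decomposition of $\h_{orb,dt}([M/G])$. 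Transporting through the isomorphism of algebra objects given by the MHRC produces the decomposition of $\h(X)$, with multiplicativity (ii) automatic because the transport is algebra-preserving.

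Condition (iii) would reduce to the corresponding Bloch--Beilinson--Murre-type properties for products of $A$: by construction each piece $\h^j(X)$ is identified with a direct summand of a Beauville piece of the Chow motive of some abelian variety, and the required vanishings and injectivity on $\CH^*(\h^j(X))$ are due to Beauville and Deninger--Murre for abelian varieties. The main obstacle I foresee is in the third paragraph: keeping track of age shifts, discrete-torsion signs, and $\gS_n$- or $\gS_{n+1}$-equivariance simultaneously across all pairs $(g,h)$, and verifying homogeneity of $\star_{orb}$ rigorously --- especially for $K_n(A)$, where the fixed loci are kernels of summation maps rather than straight products, so one must check that Beauville's decomposition interacts well with the natural isogenies and embeddings of such kernels. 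A secondary subtlety is canonicity of the resulting decomposition on $\h(X)$: one must verify that it is intrinsic to $X$, independent of the orbifold presentation used.
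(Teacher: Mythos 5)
Your approach to parts (i) and (ii) is essentially the paper's (Theorem~\ref{thm:MCK}): transport a multiplicative Chow--K\"unneth decomposition from the orbifold side, using the Beauville/Deninger--Murre decomposition on fixed loci and the triviality of obstruction bundles to verify that $\star_{orb}$ is homogeneous. Two corrections are warranted.

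First, there is a genuine gap in your treatment of condition (iii). You write that the required vanishings and injectivity on $\CH^{*}(\h^{j}(X))$ ``are due to Beauville and Deninger--Murre for abelian varieties.'' This is not the case: what Beauville and Deninger--Murre establish is the \emph{existence} of the multiplicative eigenspace decomposition of $\CH^{*}(B)$ and of the Chow--K\"unneth projectors, not the Bloch--Beilinson properties. The statement $\CH^{i}(B)_{s}=0$ for $s<0$ and the injectivity of $\cl$ on $\CH^{i}(B)_{0}$ is precisely Beauville's Conjecture~\ref{conj:BeauvilleAV}, which is still open. The paper correspondingly does \emph{not} prove condition (iii): Theorem~\ref{thm:MCKIntro} establishes only (i) and (ii), and Remark~\ref{rmk:MurreKummer} observes that Conjecture~\ref{conj:BeauvilleAV} \emph{would imply} (iii) for $A^{[n]}$ and $K_n(A)$ via exactly the reduction you sketch, but this remains conditional. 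Your proof proposal therefore overclaims.

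Second, a more minor but real technical point: for $g\in\gS_{n+1}$ acting on $A_0^{n+1}$, the fixed locus $M^{g}$ is in general \emph{not} an abelian subvariety; it is non-connected, being a disjoint union indexed by $d$-torsion points of $A$ (with $d=\gcd$ of the cycle lengths of $g$) of translates of abelian subvarieties, with no canonical origin. This is why the paper introduces the category of abelian torsors with torsion structure (Definition~\ref{def:atts}) and extends both symmetric distinguishedness and the Deninger--Murre decomposition to that setting (Remark~\ref{rmk:DecompATTS}), using the triviality of torsion translations on Chow groups (Lemma~\ref{lemma:TorsionTranslation}). You flag ``kernels of summation maps'' as a concern, which is the right instinct, but the precise issue to resolve is non-connectedness and the absence of a distinguished origin, handled by the \atts formalism; without it, ``Beauville's decomposition of $M^{g}$'' is not yet canonically defined.
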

	Such a decomposition naturally induces a (multiplicative) bigrading on the Chow
	ring $\CH^{*}(X)=\oplus_{i,s}\CH^{i}(X)_{s}$ by setting\,:
	\begin{equation}\label{eq:bigradingCH}
	\CH^{i}(X)_{s}:= \Hom_{\CHM}\left(\1(-i), \h^{2i-s}(X)\right),
	\end{equation}
	which is the original splitting that Beauville envisaged.
	
	Our main results Theorem \ref{thm:mainAb} and Theorem \ref{thm:mainKummer} allow
	us, for $X$ being a Hilbert scheme of an abelian surface or a generalized Kummer
	variety, to achieve in Theorem \ref{thm:MCKIntro} below partially the goal
	Conjecture \ref{conj:MSPIntro}\,: we construct the candidate direct sum
	decomposition (\ref{eqn:IntroDecomp}) satisfying the first two conditions (i)
	and (ii) in Conjecture \ref{conj:MSPIntro}, namely a \emph{self-dual
		multiplicative Chow--K\"unneth decomposition} (see Definition \ref{def:MCK}, \cf
	\cite{MR3460114}). The remaining Condition (iii) on Bloch--Beilinson--Murre
	properties is very much related to Beauville's \emph{Weak Splitting Property},
	which has already been proved in \cite{MR3356741} for the case of generalized
	Kummer varieties\,; see \cite{MR2187148}, \cite{MR2435839}, \cite{MR3351754},
	\cite{MR3579961} for the complete story and more details. 
	
	\begin{thm}[=Theorem \ref{thm:MCK} + Proposition
		\ref{prop:ChernClass}]\label{thm:MCKIntro}
		Let $A$ be an abelian surface and $n$ be a positive integer. Let $X$ be the
		corresponding $2n$-dimensional Hilbert scheme $A^{[n]}$ or generalized Kummer
		variety $K_{n}(A)$. Then $X$ has a canonical self-dual multiplicative
		Chow--K\"unneth decomposition induced by the isomorphisms of
		Theorems~\ref{thm:mainAb} and \ref{thm:mainKummer}, respectively.
		Moreover, via the induced canonical multiplicative bigrading on the (rational)
		Chow ring given in \eqref{eq:bigradingCH}, the $i$-th Chern class of $X$ lies in
		$\CH^{i}(X)_{0}$ for any $i$.
	\end{thm}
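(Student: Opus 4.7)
The plan is to transport to $\h(X)$ a canonical decomposition coming from Beauville's bigrading on abelian varieties, via the isomorphism of Theorem~\ref{thm:mainAb} or~\ref{thm:mainKummer}, and then verify that the resulting decomposition is Chow--K\"unneth, multiplicative, and self-dual. The input is the canonical decomposition of Beauville--Deninger--Murre for an abelian variety $B$, namely $\h(B) = \bigoplus_{i} \h^{i}(B)$, characterized by $[n]^{*}$ acting as multiplication by $n^{i}$ on $\h^{i}(B)$, and well-behaved under products, group homomorphisms, and duality.

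First I would equip each fixed locus with Beauville's decomposition. For $M = A^{n}$ (respectively $M = A^{n+1}_{0}$) and any $g \in G = \gS_{n}$ (respectively $\gS_{n+1}$), the fixed locus $M^{g}$ is a diagonal sub-abelian variety of $M$, isomorphic to a product of copies of $A$ up to isogeny. Beauville's decomposition gives $\h(M^{g}) = \bigoplus_{i} \h^{i}(M^{g})$; shifting each summand by $\age(g)$ and taking the direct sum over $g$ yields a grading on the algebra object $\bigoplus_{g} \h(M^{g})(-\age(g))$. Because $G$ acts on $M$ through homomorphisms of abelian varieties that permute the fixed loci, Beauville's decomposition is $G$-stable, hence descends to a grading on $\h_{orb,dt}([M/G])$; transporting via the MHRC isomorphism produces the desired decomposition $\h(X) = \bigoplus_{i} \h^{i}(X)$.

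The Chow--K\"unneth property is immediate from the fact that Beauville's bigrading realizes the K\"unneth decomposition in cohomology. For multiplicativity, the essential observation already highlighted in the introduction is that in our situation all obstruction bundles $F_{g,h}$ are trivial, so the orbifold product $\star_{orb}$ is built from restrictions to sub-abelian varieties, ordinary intersection products within these, and pushforwards along closed immersions of one sub-abelian variety into another. Each such operation preserves Beauville's bigrading by the standard functoriality of the Beauville--Deninger--Murre decomposition for morphisms of abelian varieties, and the discrete-torsion sign $\varepsilon(g,h) = \pm 1$ is a scalar that does not affect the grading. Self-duality will follow from the self-duality of Beauville's decomposition under Poincar\'e duality on each sub-abelian variety, together with the fact that the orbifold Poincar\'e pairing is a sum of the usual pairings on the various sectors.

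The Chern-class statement $c_{i}(X) \in \CH^{i}(X)_{0}$ is likely to be the main obstacle and requires more specific input. I would pull $c_{i}(TX)$ back through the MHRC isomorphism and express it, sector by sector, in terms of Chern classes of the tangent bundle $T_{M}$ and of the normal bundles of the various inclusions $M^{g} \hookrightarrow M$ involved in the orbifold construction. Since $T_{A}$ is trivial, each of these bundles has trivial Chern classes on $A$ and, by functoriality of the Beauville--Deninger--Murre bigrading under pullback along homomorphisms of abelian varieties, their restrictions to each $M^{g}$ lie in $\CH^{*}(M^{g})_{0}$; hence so does every component of the class corresponding to $c_{i}(X)$. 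The non-routine point is the precise identification of $c_{i}(TX)$ with an explicit class on the orbifold side: on the Hilbert-scheme side this can be made explicit via Lehn's description of $T_{A^{[n]}}$ (together with a parallel analysis for $K_{n}(A)$), and the compatibility of this identification with the MHRC isomorphism must be verified separately.
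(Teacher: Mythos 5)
Your outline of the Chow--K\"unneth part matches the paper's argument: transport the Beauville--Deninger--Murre decompositions of the fixed loci through the MHRC isomorphism, and use the triviality of all obstruction bundles $F_{g,h}$ to reduce multiplicativity of $\star_{orb,dt}$ to the compatibility of the Deninger--Murre grading with K\"unneth isomorphisms, pullbacks, and pushforwards along morphisms of abelian varieties (the paper's Lemmas~\ref{lemma:prodAV} and~\ref{lemma:morphAV}, together with the index computation using~\eqref{eqn:VirRkF}). One point you gloss over, and which the paper handles carefully, is that in Case~(B) the fixed loci $(A^{n+1}_0)^g$ are \emph{disconnected}---disjoint unions of torsion translates of abelian subvarieties---so a canonical Deninger--Murre decomposition is not automatic; the paper introduces the category of abelian torsors with torsion structure (\S\ref{subsec:SD-B}, Lemma~\ref{lemma:TorsionTranslation}, Remark~\ref{rmk:DecompATTS}) precisely to make these decompositions canonical and independent of the choice of origin. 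Your phrase ``isomorphic to a product of copies of $A$ up to isogeny'' does not address this.

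On the Chern class statement there is a genuine gap, and your route is not the paper's. You propose to ``express $c_i(TX)$, sector by sector, in terms of Chern classes of $T_M$ and of normal bundles of the inclusions $M^g\hookrightarrow M$'', note those are trivial since $T_A$ is trivial, and conclude. But the sector-$g$ component of the image of $c_i(X)$ under the MHRC isomorphism is $U^g_*c_i(X)$ (resp.\ $V^g_*c_i(X)$), the image under the incidence correspondence---it is \emph{not} a combination of Chern classes of bundles on $M^g$. The Chern classes of normal bundles you mention are the ones entering the definition of $\star_{orb}$ (Definition~\ref{def:OrbMot}(iii)--(v)); they have nothing to do with $c_i(TX)$ itself. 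So ``hence so does every component of the class corresponding to $c_i(X)$'' does not follow from the stated triviality. You partly acknowledge this (``the compatibility \ldots must be verified separately''), but what you leave unverified is precisely the substance of the claim. The paper's argument for Case~(B) is different in both tool and structure: it reduces the statement to showing that $V^g_*c_i(K_n(A))$ is \emph{symmetrically distinguished} on each component of $M^g$ (which, by the choice of symmetrically distinguished Chow--K\"unneth projectors, pins the class down modulo numerical equivalence by Proposition~\ref{prop:SDATTS}), and then proves symmetric distinguishedness by base change through the diagram~\eqref{eqn:diagram2}, the identity $c_i(K_n(A))=c_i(A^{[n+1]})|_{K_n(A)}$ from triviality of $T_A$, and Voisin's result \cite[Theorem~5.12]{MR3447105} that $q'_*p'^*c_i(A^{[n+1]})$ is a polynomial in big diagonals. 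None of this appears in your proposal, and the Lehn-style computation you suggest---even if it could be pushed through---is not what the paper does.
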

	The associated filtration $F^{j}\CH^{i}(X):=\oplus_{s\geq j}\CH^{i}(X)_{s}$ is
	supposed to satisfy the Bloch--Beilinson--Murre conjecture (see Conjecture
	\ref{conj:BB}). We point out in Remark \ref{rmk:MurreKummer} that Beauville's
	Conjecture \ref{conj:BeauvilleAV} on abelian varieties implies for $X$ in our
	two cases some Bloch--Beilinson--Murre properties\,: $\CH^{*}(X)_{s}=0$ for
	$s<0$ and the cycle class map restricted to $\CH^{*}(X)_{0}$ is injective.
	
	See Remark \ref{rmk:related} for previous related results.
	
	\subsection{Cup products \vs decomposition theorem}
	For a smooth projective morphism $\pi: \rX\to B$ Deligne shows in
	\cite{MR0244265} that one has an isomorphism 
	$$R\pi_*\Q \cong \bigoplus_i R^i\pi_*\Q[-i],$$ in the derived category of
	sheaves of $\Q-$vector spaces on $B$.
	Voisin \cite{MR2916291} shows that, although this isomorphism cannot in general
	be made compatible with the product structures on both sides, not even after
	shrinking $B$ to a Zariski open subset, it can be made so if $\pi$ is a smooth
	family of projective K3 surfaces. Her result is extended in \cite{VialHK} to
	relative Hilbert schemes of finite lengths of a smooth family of projective K3
	surfaces or abelian surfaces. As a by-product of our main result in this paper,
	we can similarly prove the case of generalized Kummer varieties.
	\begin{thm}[=Corollary \ref{cor:decompKummer}]  Let $\mathcal{A} \rightarrow B$
		be
		an abelian surface over $B$. Consider $\pi: K_n(\mathcal{A}) \rightarrow B$ the
		relative generalized Kummer variety. Then there exist a decomposition
		isomorphism
		\begin{equation}
		R\pi_*\Q \cong \bigoplus_i R^i\pi_*\Q[-i],
		\end{equation} 
		and a nonempty Zariski open subset $U$ of $B$, such that
		this decomposition becomes multiplicative for the restricted family
		over $U$.
	\end{thm}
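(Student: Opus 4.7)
The plan is to derive this from the existence of a \emph{relative} multiplicative Chow--K\"unneth (MCK) decomposition for $\pi: \rK_n(\rA)|_U \to U$ over a nonempty Zariski open subset $U \subset B$. This strategy was implemented by Voisin \cite{MR2916291} for smooth families of K3 surfaces and by \cite{VialHK} for relative Hilbert schemes of K3 or abelian surfaces: the cohomological realization of a relative self-dual MCK decomposition produces precisely a decomposition isomorphism $R\pi_*\Q \isom \oplus_i R^i\pi_*\Q[-i]$ whose multiplicativity on $U$ reflects the multiplicativity of the underlying MCK. A (non-multiplicative) Deligne decomposition valid over all of $B$ is glued with this to get the statement.

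First I would spread out the constructions of Theorems \ref{thm:mainKummer} and \ref{thm:MCKIntro} to families. The relative abelian scheme $\rA_0^{n+1} \to B$ (kernel of the summation map $\rA^{n+1} \to \rA$) carries a tautological $\gS_{n+1}$-action, and each fixed locus $(\rA_0^{n+1})^g$ is a smooth closed $B$-subscheme, namely a union of translated partial diagonals inside the fibered power. The obstruction bundles $F_{g,h}$ remain trivial in the relative setting, the age shifts are integers, and the inclusions and $\gS_{n+1}$-action are all defined over $B$. This allows one to define a relative orbifold motive
\[
\h_{orb,dt}\bigl([\rA_0^{n+1}/\gS_{n+1}]\bigr) \in \CHM(B)
\]
as an algebra object, by verbatim translation of Definition~\ref{def:OrbMot} to the relative setting over $B$. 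The relative Beauville--Deninger--Murre decomposition, which is available for any abelian scheme, gives a relative self-dual MCK decomposition of $\h(\rA_0^{n+1}/B)$, and since the orbifold product is built from relative inclusions and Tate twists, this decomposition descends through the $\gS_{n+1}$-action and the orbifold algebra structure to a relative MCK decomposition on the relative orbifold motive itself.

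The second step is to transfer this relative MCK across the analogue of Theorem~\ref{thm:mainKummer}. The isomorphism there is realized by an explicit algebraic correspondence $\Gamma$ (built, on the generalized-Kummer side, from the Bridgeland--King--Reid--Haiman tautological object and universal subschemes, and on the orbifold side from the natural inclusions of diagonals), and all the ingredients exist as relative constructions over $B$. The only potential loss is that a generic fiberwise identity may only extend to a cycle identity on the total space after shrinking $B$: by the standard spreading-out principle for algebraic cycles in smooth projective families, one obtains a nonempty Zariski open $U \subset B$ over which $\Gamma|_U$ furnishes a relative isomorphism of algebra objects $\h(\rK_n(\rA)|_U/U) \isom \h_{orb,dt}([\rA_0^{n+1}/\gS_{n+1}])|_U$ in $\CHM(U)$. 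Pulling back the relative MCK of the right-hand side produces a relative self-dual MCK decomposition of $\h(\rK_n(\rA)|_U/U)$.

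Finally, taking Betti realization of this relative MCK decomposition yields a direct sum decomposition of $R\pi_*\Q|_U$ whose summands are precisely the shifted $R^i\pi_*\Q[-i]|_U$ (by the Chow--K\"unneth property) and which is compatible with cup products (by the multiplicativity property). Starting from Deligne's (possibly non-multiplicative) decomposition $R\pi_*\Q \isom \oplus_i R^i\pi_*\Q[-i]$ on all of $B$ and modifying it over $U$ by the above to make it multiplicative there yields the stated theorem. The principal obstacle is the spreading-out step in the previous paragraph: one has to verify that the correspondence producing the isomorphism in Theorem~\ref{thm:mainKummer} has a sufficiently explicit, cycle-theoretic description on the total space of any smooth family of abelian surfaces so that its fiberwise identities propagate to an honest identity in $\CH^*(\rA_0^{n+1} \times_U \rK_n(\rA)|_U)$ after shrinking $B$, and this is where the constructions of \cite{VialHK} are most directly invoked.
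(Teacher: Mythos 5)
Your proposal is essentially correct and follows the same underlying strategy as the paper: obtain a multiplicative Chow--K\"unneth decomposition for the generalized Kummer variety (over a field), spread it out to relative projectors over a Zariski open $U\subset B$, and then appeal to Voisin's formal argument \cite[Lemma~2.1]{MR2916291} to turn the relative projectors into a multiplicative decomposition of $R\pi_*\Q$ over $U$. The paper packages all of this more economically: it simply notes that the generic fiber of $\pi$ (over the function field $k(B)$, characteristic $0$) admits an MCK decomposition by Theorem~\ref{thm:MCK}, and then invokes the purely formal Theorem~\ref{thm:dec}, which handles the spreading out and the passage to the derived category in one shot. By contrast, you reconstruct a relative orbifold motive, a relative Beauville--Deninger--Murre MCK on it, and a relative version of the isomorphism $\phi$ of Theorem~\ref{thm:mainKummer}, and only then spread out; this proves somewhat more than is needed and duplicates work that the paper delegates to Theorem~\ref{thm:dec}. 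A couple of minor corrections: the correspondence $\phi$ in Theorem~\ref{thm:mainKummer} is built from the incidence subvarieties $V^g\subset K_n(A)\times (A_0^{n+1})^g$ (de Cataldo--Migliorini style), not from the Bridgeland--King--Reid--Haiman equivalence; and you should acknowledge that the passage from relative Chow--K\"unneth projectors to a direct sum decomposition of $R\pi_*\Q|_U$ (your Betti realization step) is exactly where Voisin's Lemma~2.1 is used, rather than being automatic. Finally, note that the argument for multiplicativity of the relative decomposition again rests on spreading out the generic multiplicativity relation $\pi^k\circ\delta\circ(\pi^i\otimes\pi^j)=0$ after possibly shrinking $U$ further; you correctly flag this, and it is indeed the crux.
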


	\noindent\textbf{Convention and notation.} Throughout the paper, all varieties
	are defined over the field of complex numbers. 
	\begin{itemize}
		\item The notation $\CH$ (\resp $\CH_{\C}$) means Chow groups with
		\emph{rational} (\resp \emph{complex}) coefficients. $\CHM$ is the category of
		Chow motives over the complex numbers with \emph{rational} coefficients.
		\item For a variety $X$, its small diagonal, always denoted by $\delta_{X}$, is
		$\left\{(x,x,x)~\middle\vert~ x\in X\right\}\subset X\times X\times X$.
		\item For a smooth surface $X$, its Hilbert scheme of length-$n$ subschemes is
		always denoted by $X^{[n]}$. It is smooth of dimension $2n$ by \cite{MR0237496}.
		\item An (even) dimensional smooth projective variety is \emph{holomorphic
			symplectic} if it has a holomorphic symplectic (\ie non-degenerate at each
		point) 2-form. When talking about resolutions, we tend to use the word
		\emph{hyperK\"ahler} as its synonym, which usually (but not in this paper)
		requires also the `irreducibility', that is, the simple connectedness of the
		variety and the uniqueness up to scalars of the holomorphic symplectic 2-form.
		In particular, punctual Hilbert schemes of abelian surfaces are examples of
		holomorphic symplectic varieties. 
		\item An abelian variety is always supposed to be connected. Its non-connected
		generalization causes extra difficulty and is dealt with in \S\ref{subsec:SD-B}.
		\item When working with 0-cycles on an abelian variety $A$, to avoid confusion, 
		for a collection of points $x_{1}, \ldots, x_{m}\in A$, we will write
		$[x_{1}]+\cdots+[x_{m}]$ for the 0-cycle of degree $m$ (or equivalently, a point
		in $A^{(m)}$, the $m$-th symmetric product of $A$) and $x_{1}+\cdots +x_{m}$
		will stand for the usual sum using the group law of $A$, which is therefore a
		point in $A$.
	\end{itemize}
	
	\noindent\textbf{Acknowledgements.} We would like to thank Samuel Boissi\`ere,
	Alessandro Chiodo, Julien Grivaux, Bruno Kahn, Manfred Lehn,  Marc Nieper-Wi\ss
	kirchen, Yongbin Ruan,  Claire Voisin and Qizheng Yin for helpful discussions
	and email correspondences. We are especially thankful to the referee for a
	thorough reading of our manuscript, and for many useful remarks. The project was
	initiated when we were members of the I.A.S. for the special year \emph{Topology
		and Algebraic Geometry}  in 2014--15 (L.F. and Z.T. were funded by the NSF and
	C.V. by the Fund for Mathematics). We thank the Institute for the exceptional
	working conditions.

	\section[Orbifold Chow rings]{Orbifold motives and orbifold Chow
		rings}\label{sec:OrbDef}
	
	To fix the notation, we start by a brief reminder of the construction of pure
	motives (\cf \cite{MR2115000}). In order to work with Tate twists by age
	functions (Definition \ref{def:age}), we have to extend slightly the usual notion of pure
	motives by allowing twists by a rational number. 
	\begin{defi}[Chow motives with fractional Tate twists]\label{def:Mot}
		The category of \emph{Chow motives with fractional Tate twists} with rational
		coefficients, denoted by $\tilde\CHM$, has as objects finite direct sums of
		triples of the form $(X, p, n)$ with $X$ a connected smooth projective variety,
		$p\in \CH^{\dim X}(X\times X)$ a projector and $n\in \Q$ a \emph{rational}
		number. Given two objects $(X,p,n)$ and $(Y,q,m)$, the morphism space between
		them consists of correspondences\,:
		$$\Hom_{\tilde\CHM}\left((X,p,n), (Y,q,m)\right)\deff q\circ \CH^{\dim
			X+m-n}(X\times Y)\circ p,$$
		where we simply impose that all Chow groups of a variety with non-integer
		codimension are zero. The composition law of correspondences is the usual one.
		Identifying $(X,p,n)\oplus (Y,q,n)$ with $(X\coprod Y, p\coprod q, n)$ makes
		$\CHM$ a $\Q$-linear category. Moreover, $\CHM$ is a rigid symmetric mono\"idal
		pseudo-abelian category with unit $\1:=(\Spec\C, \Spec\C, 0)$, tensor product
		defined by $(X,p,n)\otimes (Y,q,m):=(X\times Y,p\times q, n+m)$ and duality
		given by $(X,p,n)^{\dual}\deff \left(X,{}^{t}p, \dim X-n\right)$. There is a
		natural contravariant functor $\h:\Smproj^{op}\to \CHM$ sending a smooth
		projective variety $X$ to its Chow motive $\h(X)=(X,\Delta_{X}, 0)$ and a
		morphism $f:X\to Y$ to its  transposed graph ${}^t\Gamma_{f}\in \CH^{\dim
			Y}(Y\times X)=\Hom_{\CHM}(\h(Y),\h(X))$.
	\end{defi}
	
	\begin{rmks}\label{rmks:MotiveGeneral}
		Some general remarks are in order.
		\begin{enumerate}
			\item[(i)] The category $\tilde\CHM_{\C}$ of \emph{Chow motives with fractional
				Tate twists} with complex coefficients is defined similarly by replacing all
			Chow groups with rational coefficients $\CH$ by Chow groups with complex
			coefficients $\CH_{\C}$ in the above definition.
			\item[(ii)] The usual category of Chow motives with rational (\resp complex)
			coefficients $\CHM$ (\resp $\CHM_{\C}$, \cf \cite{MR2115000}) is identified with
			the full subcategory of $\tilde\CHM$ (\resp $\tilde\CHM_{\C}$) consisting of
			objects $(X,p,n)$ with $n\in\Z$. 
			\item[(iii)] Thanks to the extension of the intersection theory (with rational
			coefficients) of Fulton \cite{MR1644323} to the so-called \emph{$\Q$-varieties}
			by Mumford \cite{MR717614}, the motive functor $\h$ defined above can actually
			be extended to the larger category of finite group quotients of smooth
			projective varieties, or more generally to $\Q$-varieties with global
			Cohen-Macaulay cover, see for example \cite[\S\S 2.2-2.3]{MR2067464}. Indeed,
			for global quotients one defines $\h(M/G):=(M, \frac{1}{|G|}\sum_{g\in
				G}{}^{t}\Gamma_{g}, 0) := \h(M)^G$. (Note that it is essential to work with
			rational coefficients.) Denoting $\pi : M \to M/G$ the quotient morphism and
			letting $X$ be an auxiliary variety, a morphism from $\h(X)$ to $\h(M/G)$ is a
			correspondence in $\CH^{\dim X}(X\times M/G)$, which under the above
			identification $\h(M/G)=\h(M)^G$, is regarded as a $G$-invariant element of $\CH^{\dim
				X}(X\times M)$ \emph{via} the pull-back $\id_X \times \pi^*$, where $\pi^*$ is
			defined in \cite[Example~1.7.6]{MR1644323}. The latter has the property that
			$\pi_*\pi^* = |G|\cdot\id$ while $\pi^*\pi_* = \sum_{g\in G}{}^{t}\Gamma_{g}$.
			It is useful to observe that if we replace $G$ by $G\times H$, where $H$ acts
			trivially on $M$, the pull-back $\pi^*$ changes by the factor $|H|$. We will
			avoid this kind of confusion by only considering \emph{faithful} quotients when
			dealing with Chow groups of quotient varieties. 
		\end{enumerate}
	\end{rmks}

	Let $M$ be an $m$-dimensional smooth projective complex variety equipped with a faithful
	action of a finite group $G$. We adapt the constructions in \cite{MR1971293} and
	\cite{MR2285746} to define the orbifold motive of the smooth proper
	Deligne--Mumford stack $[M/G]$. For any $g\in G$, $M^{g}:=\left\{x\in M ~|~
	gx=x\right\}$ is the fixed locus of the automorphism $g$, which is a smooth
	subvariety of $M$. The following notion is due to Reid (see \cite{MR1886756}).
	\begin{defi}[Age]\label{def:age}
		Given an element $g\in G$, let $r\in \N$ be its order. The \emph{age} of $g$,
		denoted by $\age(g)$, is the locally constant $\Q_{\geq 0}$-valued function on
		$M^{g}$ defined as follows. Let $Z$ be a connected component of $M^{g}$.
		Choosing any point $x\in Z$, we have the induced automorphism $g_{*}\in
		\GL(T_{x}M)$, whose eigenvalues, repeated according to multiplicities, are
		$$\left\{e^{2\pi\sqrt{-1}\frac{\alpha_{1}}{r}}, \cdots,
		e^{2\pi\sqrt{-1}\frac{\alpha_{m}}{r}}\right\},$$ with $0\leq \alpha_{i}\leq
		r-1$. One defines
		$$\age(g)|_{Z}\deff\frac{1}{r}\sum_{i=1}^{m}\alpha_{i}.$$
		It is obvious that the value of $\age(g)$ on $Z$ is independent of the choice of
		$x\in Z$ and it takes values in~$\N$ if $g_{*}\in \SL(T_{x}M)$. Also immediate
		from the definition, we have $\age(g)+\age(g^{-1})=\codim(M^{g}\subset M)$ as
		locally constant functions.
		Thanks to the natural isomorphism $h: M^{g}\to M^{hgh^{-1}}$ sending $x$ to
		$h.x$, for any $g, h\in G$, the age function is invariant under conjugation.
	\end{defi}
	
	\begin{expl}\label{expl:age}
		Let $S$ be a smooth projective variety of dimension $d$ and $n$ a positive
		integer. The symmetric group $\gS_{n}$ acts by permutation on $M=S^{n}$. For
		each $g\in \gS_{n}$, a straightforward computation (see \eqref{sec:agesym})
		shows that $\age(g)$ is the constant function $\frac{d}{2}(n-|O(g)|)$, where
		$O(g)$ is the set of orbits of $g$ as a permutation of $\{1,\ldots, n\}$. For
		example, when $S$ is a surface (\ie, $d=2$), the age in this case is always a
		non-negative integer and we have $\age(\id)=0$, $\age(12\cdots r)=r-1$,
		$\age(12)(345)=3$ \etc.
	\end{expl}

	Recall that an \emph{algebra object} in a symmetric mono\"idal category $(\rM,
	\otimes, \1)$ (for example, $\CHM$, $\tilde\CHM$ \etc ) is an object $A\in
	\Obj\rM$ together with a morphism $\mu: A\otimes A\to A$ in $\rM$, called the
	\emph{multiplication} or \emph{product structure}, satisfying the associativity
	axiom $\mu\circ (\mu\otimes \id)=\mu\circ (\id\otimes \mu)$.  An algebra object
	$A$ in $\rM$ is called \emph{commutative} if $\mu\circ \iota=\mu$, where $\iota:
	A\otimes A\to A\otimes A$ is the structural symmetry isomorphism of $\rM$. For
	each smooth projective variety $X$, its Chow motive $\h(X)$ is naturally a
	commutative algebra object in $\CHM$ (hence in $\tilde\CHM$, $\tilde\CHM_{\C}$,
	\etc) whose multiplication is given by the small diagonal $\delta_{X}\in
	\CH^{2\dim X}(X\times X\times X)=\Hom_{\CHM}\left(\h(X)\otimes \h(X),
	\h(X)\right)$.
	
	\begin{defi}[Orbifold Chow motive]\label{def:OrbMot}
		We define first of all an auxiliary (in general non-commutative) algebra object
		$\h(M,G)$ of $\tilde\CHM$ in several steps\,:
		\begin{enumerate}
			\item[(i)] As a Chow motive with fractional twists, $\h(M,G)$ is defined to be
			the direct sum over $G$, of the motives of fixed loci twisted \`a la Tate by
			$-\age$\,:
			$$\h(M,G)\deff \bigoplus_{g\in G} \h(M^{g})\left(-\age(g)\right).$$
			\item[(ii)] $\h(M, G)$ is equipped with a natural $G$-action\,: each element
			$h\in G$ induces for each $g\in G$ an isomorphism $h: M^{g}\to M^{hgh^{-1}}$ by
			sending $x$ to $h.x$, hence an isomorphism between the direct summands
			$\h(M^{g})(-\age(g))$ and $\h(M^{hgh^{-1}})(-\age(hgh^{-1}))$ by the conjugation
			invariance of the age function. 
			\item[(iii)] For any $g\in G$, let $r$ be its order. We have a natural
			automorphism $g_{*}$ of the vector bundle $TM|_{M^{g}}$. Consider its
			eigen-subbundle decomposition\,: $$TM|_{M^{g}}=\bigoplus_{j=0}^{r-1}W_{g,j},$$
			where $W_{g,j}$ is the subbundle associated to the eigenvalue
			$e^{2\pi\sqrt{-1}\frac{j}{r}}$. Define $$S_{g}\deff
			\sum_{j=0}^{r-1}\frac{j}{r}[W_{g,j}]\in K_{0}(M^{g})_{\Q}.$$ Note that the
			virtual rank of $S_{g}$ is nothing but $\age(g)$ by Definition \ref{def:age}.
			\item[(iv)] For any $g_{1}, g_{2}\in G$, let $M^{<g_{1},g_{2}>}= M^{g_{1}} \cap
			M^{g_{2}}$ and $g_{3}=g_{2}^{-1}g_{1}^{-1}$. Define the following element in
			$K_{0}(M^{<g_{1},g_{2}>})_{\Q}$\,: $$F_{g_{1},g_{2}}\deff
			\left.S_{g_{1}}\right\vert_{M^{<g_{1},g_{2}>}}+\left.S_{g_{2}}\right\vert_{M^{<g_{1},g_{2}>}}+\left.S_{g_{3}}\right\vert_{M^{<g_{1},g_{2}>}}+TM^{<g_{1},g_{2}>}-\left.TM\right\vert_{M^{<g_{1},g_{2}>}}.$$
			
			Note that its virtual rank is 
			\begin{equation}\label{eqn:VirRkF}
			\rk F_{g_{1}, g_{2}}=\age(g_{1})+\age(g_{2})-\age(g_{1}g_{2})-\codim(M^{<g_{1},
				g_{2}>}\subset M^{g_{1}g_{2}}).
			\end{equation}
			In fact, this class in the Grothendieck group is represented by a genuine
			obstruction vector bundle constructed in \cite{MR1971293} (\cf
			\cite{MR2285746}). In particular, $\age(g_{1})+\age(g_{2})-\age(g_{1}g_{2})$ is
			always an integer.
			\item[(v)] The product structure $\star_{orb}$ on $\h(M,G)$ is defined to be
			multiplicative with respect to the $G$-grading and for each $g_{1}, g_{2}\in G$,
			the \emph{orbifold product} $$\star_{orb}: \h(M^{g_{1}})(-\age(g_{1}))\otimes
			\h(M^{g_{2}})(-\age(g_{2}))\to \h(M^{g_{1}g_{2}})(-\age(g_{1}g_{2}))$$ is the
			correspondence determined by the algebraic cycle 
			$$\delta_{*}(c_{top}(F_{g_{1},g_{2}}))\in \CH^{\dim M^{g_{1}}+\dim
				M^{g_{2}}+\age(g_{1})+\age(g_{2})-\age(g_{1}g_{2})}(M^{g_{1}}\times
			M^{g_{2}}\times M^{g_{1}g_{2}}),$$
			where $\delta: M^{<g_{1}, g_{2}>}\to M^{g_{1}}\times M^{g_{2}}\times
			M^{g_{1}g_{2}}$ is the natural morphism sending $x$ to $(x,x,x)$ and $c_{top}$
			means the top Chern class of $F_{g_{1},g_{2}}$. One can check easily that the
			product structure $\star_{orb}$ is invariant under the action of $G$.
			\item[(vi)] The associativity of $\star_{orb}$ is non-trivial. The proof in
			\cite[Lemma 5.4]{MR2285746} is completely algebraic hence also works in our
			motivic case. 
			\item[(vii)] Finally,  the \emph{orbifold Chow motive} of $[M/G]$, denoted by
			$\h_{orb}([M/G])$, is the $G$-invariant subalgebra object\footnote{Here we use
				the fact that the category $\tilde\CHM$ is  $\Q-$linear and pseudo-abelian to
				define the $G$-invariant part $A^{G}$ of a $G$-object $A$ as the image of the
				projector $\frac{1}{|G|}\sum_{g\in G}g \in \End(A)$.} of $\h(M,G)$, which turns
			out to be a commutative algebra object in $\tilde\CHM$\,:
			\begin{equation}\label{eqn:OrbMot}
			\h_{orb}\left([M/G]\right) \deff \h(M, G)^{G}= \left(\bigoplus_{g\in G}
			\h(M^{g})\left(-\age(g)\right), \star_{orb}\right)^{G}
			\end{equation}
			We still use $\star_{orb}$ to denote the orbifold product on this sub-algebra
			object $\h_{orb}([M/G])$.
		\end{enumerate}
	\end{defi}
	
	\begin{rmk}\label{rmk:additiveOrbMot}
		With Definition \ref{def:OrbMot}(ii) in mind, the correspondence $$p :=
		\frac{1}{|G|}\sum_{h\in G} \Gamma_h \quad \in \bigoplus_{h\in G}\bigoplus_{g\in G} \CH^{\dim
			M^g}(M^g\times M^{hgh^{-1}})$$ defines an idempotent endomorphism of the Chow
		motive $\h(M,G) = \bigoplus_{g\in G} \h(M^{g})\left(-\age(g)\right)$.  Under
		this identification, and ignoring the algebra structure, the Chow motive
		$\h_{orb}\left([M/G]\right)$ is defined explicitly as the image of $p$ (which
		exists, since the category of Chow motives is pseudo-abelian). Composing a
		correspondence in $\Hom_{\CHM}((Y,q,m),\h(M,G))$ with $p$ amounts to
		symmetrizing. The orbifold product on $\h_{orb}\left([M/G]\right)$  is then
		given by the symmetrization of the orbifold product of $\h(M,G)$ (Definition
		\ref{def:OrbMot}(v)), that is, by $p\circ \star_{orb} \circ (p\otimes p) :
		\h(M,G)\otimes \h(M,G)\to \h(M,G)$. We note that $p$ is self-dual so that, by
		\cite[Lemma~3.3]{Vialabelian}, $p\circ \star_{orb} \circ (p\otimes p) =
		(p\otimes p\otimes p)_* \star_{orb}$ if $\star_{orb}$ is viewed as a cycle on
		$\h(M,G)\otimes \h(M,G)\otimes \h(M,G)$.
	\end{rmk}

	By replacing the rational equivalence relation by another adequate equivalence
	relation (\cf \cite{MR2115000}), the same construction gives the \emph{orbifold
		homological motives}, \emph{orbifold numerical motives}, \etc~ associated to a
	global quotient smooth proper Deligne--Mumford stack as algebra objects in the
	corresponding categories of pure motives (with fractional Tate twists).
	
	The definition of the orbifold Chow ring then follows in the standard way and
	agrees with the one in \cite{MR1971293}, \cite{MR2285746} and \cite{MR2450211}.
	
	\begin{defi}[Orbifold Chow ring]\label{def:OrbChow}
		The \emph{orbifold Chow ring} of $[M/G]$ is the commutative $\Q_{\geq 0}$-graded
		$\Q$-algebra $\CH^{*}_{orb}([M/G]):=\bigoplus_{i\in \Q_{\geq 0}}
		\CH_{orb}^{i}([M/G])$ with
		\begin{equation}
		\CH^{i}_{orb}([M/G]):=\Hom_{\tilde\CHM}(\1(-i), \h_{orb}([M/G]))
		\end{equation}
		The ring structure on $\CH^{*}_{orb}([M/G])$, called \emph{orbifold product},
		denoted again by $\star_{orb}$, is determined by the product structure
		$\star_{orb}: \h_{orb}([M/G])\otimes \h_{orb}([M/G])\to \h_{orb}([M/G])$ in
		Definition \ref{def:OrbMot}. \\
		More concretely, $\CH^{*}_{orb}([M/G])$ is the $G$-invariant sub-$\Q$-algebra of
		an auxiliary (non-commutative) finitely $\Q_{\geq 0}$-graded $\Q$-algebra
		$\CH^{*}(M, G)$, which is defined by $$\CH^{*}(M, G)\deff\left(\bigoplus_{g\in
			G}\CH^{*-\age(g)}(M^{g}), \star_{orb}\right),$$ where for two elements $g, h \in
		G$ and $\alpha\in \CH^{i-\age(g)}(M^{g})$, $\beta\in \CH^{j-\age(h)}(M^{h})$,
		their orbifold product is the following element in
		$\CH^{i+j-\age(gh)}(M^{gh})$\,:
		\begin{equation}\label{eqn:OrbProdChow}
		\alpha\star_{orb}\beta\deff
		\iota_{*}\left(\left.\alpha\right\vert_{M^{<g,h>}}\cdot \beta|_{M^{<g,h>}}\cdot
		c_{top}(F_{g,h})\right),
		\end{equation}
		where $\iota: M^{<g,h>}\inj M^{gh}$ is the natural inclusion.
	\end{defi}
	
	Similarly, the orbifold K-theory is defined as follows. Recall that for a smooth
	variety $X$ and for $F\in K_{0}(X)$, we have the \emph{Lambda operation}
	$\lambda_{t}\colon K_{0}(X)\to K_{0}(X)\llbracket t\rrbracket$, where $\lambda_t(F)$ is
	a formal power series $\sum_{i=0}^\infty t^i\lambda^i(F)$  subject to the
	multiplicativity relation $\lambda_{t}(F\oplus F') = \lambda_{t}(F) \cdot
	\lambda_{t}(F')$ for all objects $F,F'\in K(X)$, and such that, for any rank-$r$
	vector bundle $E$ over $X$, we have $\lambda_{t}([E]) = \sum_{i=0}^r
	t^i[\bigwedge^i E]$. \cf \cite[Chapter II, \S 4]{MR3076731}. Finally
	$\lambda_{-1}(F)$ is defined by evaluating at $t=-1$ in $\lambda_{t}(F)$ and is
	called the \emph{K-theoretic Euler class} of $F^{\vee}$\,; see also
	\cite[p.~34]{MR2285746}.
	\begin{defi}[Orbifold K-theory]\label{def:OrbK}
		The \emph{orbifold K-theory} of $[M/G]$, denoted by $K_{orb}([M/G])$, is the
		 sub-algebra of $G$-invariant elements of the $\Q$-algebra $K(M, G)$, which is defined by
		$$K(M, G)\deff\left(\bigoplus_{g\in G}K_{0}(M^{g}), \star_{orb}\right),$$ where for
		two elements $g, h \in G$ and $\alpha\in K_{0}(M^{g})$, $\beta\in K_{0}(M^{h})$, their
		orbifold product is the following element in $K_{0}(M^{gh})$\,:
		\begin{equation*}
		\alpha\star_{orb}\beta\deff
		\iota_{*}\left(\left.\alpha\right\vert_{M^{<g,h>}}\cdot \beta|_{M^{<g,h>}}\cdot
		\lambda_{-1}(F_{g,h}^{\dual})\right),
		\end{equation*}
		where $\iota: M^{<g,h>}\inj M^{gh}$ is the natural inclusion and
		$\lambda_{-1}(F_{g,h}^{\dual})$ is the \emph{K-theoretic Euler class} of
		$F_{g,h}$ as defined above.
	\end{defi}
	
	\begin{rmk}\label{rmk:Gorenstein}
		The main interest of the paper lies in the situation when the underlying
		singular variety of the orbifold has at worst Gorenstein singularities. Recall
		that an algebraic variety $X$ is \emph{Gorenstein} if it is Cohen--Macaulay and
		the dualizing sheaf is a line bundle, denoted $\omega_{X}$.  In the case of a
		global quotient $M/G$, being Gorenstein is implied by the local $G$-triviality of the canonical bundle $\omega_{M}$, which means that the stabilizer of each point $x\in M$ is contained in $\SL(T_{x}M)$. In this case, it is straightforward to check that the
		age function actually takes values in the integers $\Z$ and therefore the
		orbifold motive lies in the usual category of pure motives (without fractional
		twists) $\CHM$. In particular, the orbifold Chow ring and orbifold cohomology
		ring are $\Z$-graded. Example \ref{expl:age} exhibits a typical situation that
		we would like to study\,; see also Remark \ref{rmk:AgeIsInteger}.
	\end{rmk}
	
	\begin{rmk}[Non-global quotients]\label{rmk:OrbMotDM}
		In the broader setting of smooth proper Deligne--Mumford stacks which are not
		necessarily finite group global quotients, the orbifold Chow ring is still
		well-defined in \cite{MR2450211} but the down-to-earth construction as above,
		which is essential for the applications (\cf our slogan in \S\ref{sect:intro}),
		is lost (see however the equivariant treatment \cite{MR2667422}). Another
		problem is that the definition of the orbifold Chow motive in this general
		setting is neither available in the literature nor covered in this paper. In the case where the coarse moduli space is projective with Gorenstein singularities, the orbifold Chow motive is constructed in \cite[\S 2.3]{McKayFuTian} in the spirit of \cite{MR2450211}.
	\end{rmk}

	\section{Motivic HyperK\"ahler Resolution Conjecture}\label{sec:MHRC}
	
	\subsection{A motivic version of  the Cohomological HyperK\"ahler Resolution
		Conjecture}
	In \cite{MR1941583}, as part of the broader picture of \emph{stringy geometry
		and topology of orbifolds}, Yongbin Ruan proposed the Cohomological
	HyperK\"ahler Resolution Conjecture (CHRC) which says that the orbifold
	cohomology ring of a compact Gorenstein orbifold is isomorphic to the  Betti
	cohomology ring of a hyperK\"ahler crepant resolution of the underlying singular
	variety if one takes $\C$ as coefficients\,; see Conjecture \ref{conj:CHRC} in
	the introduction for the statement. As explained in Ruan \cite{MR2234886}, the
	plausibility of CHRC is justified by some considerations from theoretical
	physics as follows. Topological string theory predicts that the quantum
	cohomology theory of an orbifold should be equivalent to the quantum cohomology
	theory of a/any crepant resolution of (possibly some deformation of) the
	underlying singular variety. On the one hand, the orbifold cohomology ring
	constructed by Chen--Ruan \cite{MR2104605} is the classical part (genus zero
	with three marked points) of the quantum cohomology ring of the orbifold (see
	\cite{MR1950941})\,; on the other hand, the classical limit of the quantum
	cohomology of the resolution is the so-called \emph{quantum corrected}
	cohomology ring (\cite{MR2234886}). However, if the crepant resolution has a
	hyperK\"ahler structure, then all its Gromov--Witten invariants as well as the
	quantum corrections vanish and one expects therefore an equivalence, \ie an
	isomorphism of $\C$-algebras, between the orbifold cohomology of the orbifold
	and the usual Betti cohomology of the hyperK\"ahler crepant resolution.  
	
	Before moving on to a more algebro-geometric study, we have to recall some
	standard definitions and facts on (possibly singular) symplectic varieties (\cf
	\cite{MR1738060}, \cite{MR1863856})\,:
	
	\begin{defi}\label{def:SympSing}
		\begin{itemize}
			\item A \emph{symplectic form} on a smooth complex algebraic variety is a closed
			holomorphic 2-form that is non-degenerate at each point. A smooth variety is
			called \emph{holomorphic symplectic} or just \emph{symplectic} if it admits a
			symplectic form. Projective examples include deformations of Hilbert schemes of
			K3 surfaces and abelian surfaces and generalized Kummer varieties \etc. A
			typical non-projective example is provided by the cotangent bundle of a smooth
			variety.
			\item A (possibly singular) \emph{symplectic variety} is a normal complex
			algebraic variety such that its smooth part admits a symplectic form whose
			pull-back to a/any resolution extends to a holomorphic 2-form. A germ of such a
			variety is called a \emph{symplectic singularity}. Such singularities are
			necessarily rational Gorenstein \cite{MR1738060} and conversely,  by a result of
			Namikawa \cite{MR1863856}, a normal variety  is symplectic if and only if it has
			rational Gorenstein singularities and its smooth part admits a symplectic form.
			The main examples that we are dealing with are of the form of a quotient by a
			finite group of symplectic automorphisms of a smooth symplectic variety,
			\textit{e.g.}, the symmetric products $S^{(n)}=S^{n}/\gS_{n}$ of smooth
			algebraic surfaces $S$ with trivial canonical bundle. 
			\item Given a singular symplectic variety $X$, a \emph{symplectic resolution} or
			\emph{hyperK\"ahler resolution} is a resolution $f: Y\to X$ such that the
			pull-back of a symplectic form on the smooth part $X_{reg}$ extends to a
			symplectic form on $Y$. Note that a resolution is symplectic if and only if it
			is \emph{crepant}\,: $f^{*}\omega_{X}=\omega_{Y}$. The definition is independent
			of the choice of a symplectic form on $X_{reg}$. A symplectic resolution is
			always semi-small. The existence of symplectic resolutions and the relations
			between them form a highly attractive topic in holomorphic symplectic geometry.
			An interesting situation, which will not be touched upon in this paper however,
			is the normalization of the closure of a nilpotent orbit in a complex
			semi-simple Lie algebra, whose symplectic resolutions are extensively studied in
			the literature (see \cite{MR1943745}, \cite{MR3356756}). For examples relevant
			to this paper, see Examples \ref{exp:SympRes}.
		\end{itemize}
	\end{defi}

	Returning to the story of the HyperK\"ahler Resolution Conjecture, in order to
	study algebraic cycles and motives of holomorphic symplectic varieties,
	especially with a view towards Beauville's splitting property conjecture
	\cite{MR2187148} (see \S\ref{sec:splitting}), we would like to propose the
	motivic version of the CHRC\,; see Meta-Conjecture \ref{conj:MHRC} in the
	introduction for the general statement. As we are dealing exclusively with the
	global quotient case in this paper and its sequel, we will concentrate on this
	more restricted case and on the more precise formulation Conjecture
	\ref{conj:MHRCbis} in the introduction. 
	
	\begin{rmk}[Integral grading]\label{rmk:AgeIsInteger}
		We use the same notation as in Conjecture \ref{conj:MHRCbis}. Then, since $G$
		preserves a symplectic form (hence a canonical form) of $M$, the quotient
		variety $M/G$ has at worst Gorenstein singularities. As is pointed out in Remark
		\ref{rmk:Gorenstein}, this implies that the age functions take values in $\Z$,
		the orbifold motive $\h_{orb}\left(\left[M/G\right]\right)$ is in $\CHM$, the usual category of
		(rational) Chow motives and the orbifold Chow ring $\CH_{orb}^{*}\left(\left[M/G\right]\right)$ is
		integrally graded.
	\end{rmk}

	\begin{rmk}[K-theoretic analogue]\label{rmk:Ktheory}
		As is mentioned in the Introduction (Conjecture \ref{conj:KHRC}), we are also
		interested in the K-theoretic version of the HyperK\"ahler Resolution
		Conjecture (KHRC) proposed in \cite[Conjecture 1.2]{MR2285746}. We want to point
		out that in Conjecture \ref{conj:MHRCbis} above, the statement for Chow rings is
		more or less equivalent to KHRC\,; however, the full formulation for Chow
		motivic algebras is, on the other hand, strictly richer. In fact, in all cases
		that we are able to prove KHRC, in this paper as well as in the upcoming one
		\cite{MHRCK3}, we have to first solve MHRC on the motive level and deduce KHRC
		as a consequence. See \S \ref{sec:skeleton} for the proof of Theorem
		\ref{thm:KHRC}.
	\end{rmk}
	
	\begin{expls}\label{exp:SympRes}
		All examples studied in this paper are in the following situation\,: let $M$ and
		$G$ be as in Conjecture \ref{conj:MHRCbis} and 
		$Y$ be (the principal component of) the $G$-Hilbert scheme $G\!-\!\Hilb(M)$ of
		$G$-\emph{clusters} of $M$, that is, a 0-dimensional $G$-invariant subscheme of
		$M$ whose global functions form the regular $G$-representation (\cf
		\cite{MR1420598}, \cite{MR1838978}). In some interesting cases, $Y$ gives a
		symplectic resolution of $M/G$\,:
		\begin{itemize}
			\item  Let $S$ be a smooth algebraic surface and $G=\gS_{n}$ act on $M=S^{n}$ by
			permutation. By the result of Haiman \cite{MR1839919},
			$Y=\gS_{n}\!-\!\Hilb(S^{n})$ is isomorphic to the $n$-th punctual Hilbert scheme
			$S^{[n]}$, which is a crepant resolution, hence symplectic resolution if $S$ has
			trivial canonical bundle, of $M/G=S^{(n)}$, the $n$-th symmetric product. 
			\item Let $A$ be an abelian surface, $M$ be the kernel of the summation map $s:
			A^{n+1}\to A$ and $G=\gS_{n+1}$ acts on $M$ by permutations, then
			$Y=G\!-\!\Hilb(M)$ is isomorphic to the generalized Kummer variety $K_{n}(A)$
			and is a symplectic resolution of $M/G$.
			
		\end{itemize}
	\end{expls}
	
	Although both sides of the isomorphism in Conjecture \ref{conj:MHRCbis} are in
	the category $\CHM$ of motives with \emph{rational} coefficients, it is in
	general necessary to make use of roots of unity to realize such an isomorphism
	of algebra objects. However, in some situation, it is possible to stay in $\CHM$
	by making some sign change, which is related to the notion of \emph{discrete
		torsion} in theoretical physics\,:
	\begin{defi}[Discrete torsion]\label{def:dt}
		For any $g, h\in G$, let
		\begin{equation}\label{eqn:epsilon}
		\epsilon(g,h)\deff \frac{1}{2}\left(\age(g)+\age(h)-\age(gh)\right).
		\end{equation}
		It is easy to check that
		\begin{equation}\label{eqn:AssDT}
		\epsilon(g_{1}, g_{2})+ \epsilon(g_{1}g_{2},g_{3})=\epsilon(g_{1},
		g_{2}g_{3})+\epsilon(g_{2},g_{3}).
		\end{equation}
		In the case when $\epsilon(g,h)$ is an integer for all $g,h\in G$, we can define
		the \emph{orbifold Chow motive with discrete torsion}  of a global quotient
		stack $[M/G]$, denoted by $\h_{orb,dt}([M/G])$, by the following simple change
		of sign in Step (v) of Definition \ref{def:OrbMot}\,: the orbifold product with
		discrete torsion $$\star_{orb,dt}: \h(M^{g_{1}})(-\age(g_{1}))\otimes
		\h(M^{g_{2}})(-\age(g_{2}))\to \h(M^{g_{1}g_{2}})(-\age(g_{1}g_{2}))$$ is the
		correspondence determined by the algebraic cycle 
		$$(-1)^{\epsilon(g_{1}, g_{2})}\cdot\delta_{*}(c_{top}(F_{g_{1},g_{2}}))\in
		\CH^{\dim M^{g_{1}}+\dim
			M^{g_{2}}+\age(g_{1})+\age(g_{2})-\age(g_{1}g_{2})}(M^{g_{1}}\times
		M^{g_{2}}\times M^{g_{1}g_{2}}).$$
		Thanks to (\ref{eqn:AssDT}), $\star_{orb,dt}$ is still associative.
		Similarly, the \emph{orbifold Chow ring with discrete torsion} of $[M/G]$ is
		obtained by replacing Equation (\ref{eqn:OrbProdChow}) in Definition
		\ref{def:OrbChow} by
		\begin{equation}\label{eqn:dtChow}
		\alpha\star_{orb,dt}\beta
		\deff (-1)^{\epsilon(g,h)}\cdot i_{*}\left(\alpha|_{M^{<g,h>}}\cdot
		\beta|_{M^{<g,h>}}\cdot c_{top}(F_{g,h})\right),
		\end{equation}
		which is again associative by (\ref{eqn:AssDT}).
	\end{defi}
	
	Thanks to the notion of discrete torsions, we can have the following version of
	Motivic HyperK\"ahler Resolution Conjecture, which takes place in the category
	of rational Chow motives and involves only rational Chow groups.
	
	\begin{conj}[MHRC\,: global quotient case with discrete
		torsion]\label{conj:MHRCdt}
		In the same situation as Conjecture \ref{conj:MHRCbis}, suppose that
		$\epsilon(g,h)$ of Definition \ref{def:dt} is an integer for all $g,h\in G$. 
		Then we have an isomorphism of (commutative) algebra objects in the category  of
		Chow motives with \emph{rational} coefficients\,: $$\h(Y)\isom
		\h_{orb,dt}\left([M/G]\right) \text{ in } \CHM.$$ In particular, we have an
		isomorphism of graded $\Q$-algebras $$\CH^{*}(Y)\isom \CH_{orb,
			dt}^{*}([M/G]).$$
	\end{conj}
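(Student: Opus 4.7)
The plan is to address Conjecture \ref{conj:MHRCdt} only in the concrete situations covered by Theorems \ref{thm:mainAb} and \ref{thm:mainKummer}; the general conjecture is out of reach and the paper's own slogan already indicates that one must exploit the hidden stack structure of the singular symplectic model case by case. The central feature shared by both cases is that the resolution $Y\to M/G$ is \emph{semismall}, being (or factoring through) a Hilbert--Chow morphism, and this semismallness will force the shape of the isomorphism.

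First I would lift a de Cataldo--Migliorini style decomposition to Chow motives. Semismallness groups the strata of $M/G$ into pieces corresponding to conjugacy classes in $G$, with fibers over each open stratum given by products of projective spaces whose Chow motive is Tate. This yields an additive candidate isomorphism
\begin{equation*}
\h(Y) \;\isom\; \Bigl(\bigoplus_{g\in G}\h(M^{g})(-\age(g))\Bigr)^{G},
\end{equation*}
whose right-hand side is $\h_{orb,dt}([M/G])$ as a Chow motive, ignoring the algebra structure. The isomorphism should be realized by explicit correspondences $\Gamma_{g}\in\CH(M^{g}\times Y)$ built from Nakajima-type incidence cycles, or equivalently from the universal $G$-cluster on the Hilbert scheme side. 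Checking that $\bigoplus_{g}\Gamma_{g}$ is an additive isomorphism then reduces to a dimension count together with fiberwise transversality over each stratum.

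The main obstacle is multiplicativity. Writing $\delta_{Y}\in\CH(Y^{\times 3})$ for the small diagonal, one must establish an identity of the form
\begin{equation*}
(\Gamma_{g_{1}}\otimes\Gamma_{g_{2}}\otimes\Gamma_{g_{1}g_{2}})^{*}(\delta_{Y}) \;=\; (-1)^{\epsilon(g_{1},g_{2})}\,\delta_{*}\bigl(c_{top}(F_{g_{1},g_{2}})\bigr)
\end{equation*}
in $\CH(M^{g_{1}}\times M^{g_{2}}\times M^{g_{1}g_{2}})$ for every pair $(g_{1},g_{2})$. The top Chern class $c_{top}(F_{g_{1},g_{2}})$ should emerge from an excess intersection computation: when the three incidence correspondences are composed, the locus where they fail to meet transversally is the common fixed set $M^{\langle g_{1},g_{2}\rangle}$, and the excess normal bundle must be identified with the Jarvis--Kaufmann--Kimura obstruction bundle via its description in terms of the classes $S_{g}$. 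The discrete torsion sign should fall out of the orientation convention forced by the age shifts, using the cocycle identity \eqref{eqn:AssDT}.

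Finally, for the Kummer case I would transfer the result from $A^{[n+1]}\to A^{(n+1)}$ to $K_{n}(A)\to A^{n+1}_{0}/\gS_{n+1}$ by exploiting the isotrivial fibration $A^{[n+1]}\to A$ whose fiber over the origin is $K_{n}(A)$; restricting all correspondences to the summation kernel $A^{n+1}_{0}\subset A^{n+1}$ should convert the Hilbert scheme statement into the Kummer statement, with a consistency check against Nieper-Wi\ss{}kirchen's description of $H^{*}(K_{n}(A))$. The K-theoretic and cohomological HyperK\"ahler resolution statements (Theorems \ref{thm:KHRC} and \ref{cor:CHRCKummer}) then follow from the motivic isomorphism by applying the K-theoretic Chern character and Betti realization functors, respectively.
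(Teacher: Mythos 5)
Your Step (i)—lifting de Cataldo--Migliorini's semismall decomposition to Chow motives via incidence correspondences—matches the paper's approach (\S\ref{subsec:AddIso-A}, \S\ref{subsec:AddIso-B}), modulo the small imprecision that the fibers over relevant strata are Brian\c{c}on varieties (irreducible but badly singular), not products of projective spaces; fortunately \cite{MR2067464} only needs irreducibility. The genuine gap is in your handling of multiplicativity. You propose to establish the Chow-level identity $(\Gamma_{g_1}\otimes\Gamma_{g_2}\otimes\Gamma_{g_1g_2})^*(\delta_Y) = (-1)^{\epsilon(g_1,g_2)}\delta_*\bigl(c_{top}(F_{g_1,g_2})\bigr)$ directly by an excess intersection computation. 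This would require controlling the scheme-theoretic intersections of $\delta_Y$ with the incidence cycles $U^{g_i}$, which are singular, reducible in high-codimension strata, and far from a clean excess-intersection setup; nobody knows how to do this. The paper deliberately sidesteps this: it shows (Propositions \ref{prop:ZWsd} and \ref{prop:ZWsd-B}) that both $W$ (the pushforward of $\delta_Y$) and $Z$ (the orbifold product cycle) are \emph{symmetrically distinguished} in the sense of O'Sullivan \cite{MR2795752}, the key inputs being Voisin's structural result \cite[Prop.~5.6]{MR3447105} that $W$ is a polynomial in big diagonals, and the triviality of the obstruction bundles. Symmetric distinguishedness converts the desired rational equivalence into a numerical (hence homological) equivalence, which is then verified against the known cohomological HRC (Lehn--Sorger/Fantechi--G\"ottsche in Case (A), Nieper-Wi\ss kirchen in Case (B)). Without this mechanism you have no way to conclude in $\CH$ rather than just in $H^*$.

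You also underestimate the technical cost of the Kummer case. Restricting the Hilbert-scheme correspondences to $A^{n+1}_0$ is the right intuition for the cohomological realization (and essentially what the paper does in \S\ref{subsec:Realization-B} via the Nieper-Wi\ss kirchen isomorphism $H^*(A^{[n+1]})\otimes_{H^*(A)}\C\isom H^*(K_n(A))$), but on the Chow level the fixed loci $(A^{n+1}_0)^g$ are disconnected—finite unions of translates of abelian subvarieties—so O'Sullivan's theory does not apply out of the box. The paper has to develop the category of \emph{abelian torsors with torsion structure} (\S\ref{subsec:SD-B}), extend symmetric distinguishedness to it, and prove a compatibility lemma (Lemma \ref{lemma:SDonATTS}) to make the Step (ii) argument go through. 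Your proposal does not address this, and the Kummer-case argument would stall without it.
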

	
	\begin{rmk}\label{rmk:roots}
		It is easy to see that Conjecture \ref{conj:MHRCdt} implies Conjecture
		\ref{conj:MHRCbis}\,: to get rid of the discrete torsion sign change
		$(-1)^{\epsilon(g,h)}$, it suffices to multiply the isomorphism on each summand
		$\h(M^{g})(-\age(g))$, or $\CH(M^{g})$, by $\sqrt{-1}^{\age(g)}$, which involves
		of course the complex numbers (roots of unity at least).
	\end{rmk}

	\subsection{Toy examples}\label{subsec:ToyEx}
	To better illustrate the conjecture as well as the proof in the next section, we
	present in this subsection some explicit computations for two of the simplest
	nontrivial cases of MHRC. 
	\subsubsection{Hilbert squares of K3 surfaces}
	Let $S$ be a K3 surface or an abelian surface. Consider the involution $f$ on
	$S\times S$ flipping the two factors. The relevant Deligne--Mumford stack is $[S^{2}/f]$\,;
	its underlying singular symplectic variety is the second symmetric product
	$S^{(2)}$, and $S^{[2]}$ is its symplectic resolution. Let $\tilde{S^{2}}$ be
	the blowup of $S^{2}$ along its diagonal $\Delta_{S}$\,:
	\begin{displaymath}
	\xymatrix{
		E\ar[r]^{j} \ar[d]_{\pi} \cart & \tilde{S^{2}} \ar[d]^{\epsilon} \\
		\Delta_{S}\ar[r]_{\Delta} & S\times S
	}
	\end{displaymath}
	Then $f$ lifts to a natural involution on $\tilde{S^{2}}$ and the quotient is
	$$q: \tilde{S^{2}}\surj S^{[2]}.$$
	On the one hand, $\CH^{*}(S^{[2]})$ is identified, \emph{via} $q^{*}$, with the
	invariant part of $\CH^{*}(\tilde{S^{2}})$\,; on the other hand, by Definition
	\ref{def:OrbChow}, $\CH_{orb}^{*}([S^{2}/\gS_{2}])=\CH^{*}(S^{2},
	\gS_{2})^{inv}$. Therefore to check the MHRC \ref{conj:MHRCbis} or
	\ref{conj:MHRCdt} (at the level of Chow rings \emph{only}) in this case, we only
	have to show the following
	\begin{prop}\label{prop:Hilb2}
		We have an isomorphism of $\C$-algebras\,:
		$\CH^{*}(\tilde{S^{2}})_{\C}\isom \CH^{*}(S^{2}, \gS_{2})_{\C}$. In fact, taking
		into account the discrete torsion, there is an isomorphism of $\Q$-algebras
		$\CH^{*}(S^{[2]})\isom \CH^{*}_{orb,dt}([S^{2}/\gS_{2}])$.
	\end{prop}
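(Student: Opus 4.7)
The strategy is to compute both ring structures explicitly, sector by sector, and match them by the identity on each summand (up to a sign handled by discrete torsion). On the resolution side, apply the Chow blowup formula to $\epsilon\colon \tilde{S^{2}}\to S^{2}$ with center $\Delta_{S}$ of codimension $2$ to obtain the additive decomposition
\[
\CH^{*}(\tilde{S^{2}}) \;=\; \epsilon^{*}\CH^{*}(S^{2}) \;\oplus\; j_{*}\pi^{*}\CH^{*}(S).
\]
The involution $f$ on $S^{2}$ acts by $-1$ on the normal bundle $N_{\Delta_{S}/S^{2}}\cong TS$ (the anti-diagonal in $TS\oplus TS$), hence trivially on the projective bundle $E=\P(N_{\Delta_{S}/S^{2}})$; so the second summand is $\gS_{2}$-invariant and $\CH^{*}(S^{[2]})= \epsilon^{*}\CH^{*}(S^{2})^{\gS_{2}}\oplus j_{*}\pi^{*}\CH^{*}(S)$. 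On the orbifold side, $\age((12))=1$ and $M^{(12)}=\Delta_{S}\cong S$, so Definition~\ref{def:OrbChow} gives additively $\CH^{*}_{orb,dt}([S^{2}/\gS_{2}])=\CH^{*}(S^{2})^{\gS_{2}}\oplus\CH^{*-1}(S)$. Define the candidate $\phi$ to be the identity on each summand.

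For the product on the orbifold side, the only nontrivial case is the twisted-twisted sector. Formula~\eqref{eqn:VirRkF} yields $\rk F_{(12),(12)}=1+1-0-2=0$, and decomposing $TM|_{\Delta_{S}}=T\Delta_{S}\oplus N_{\Delta_{S}/S^{2}}$ into the $(\pm 1)$-eigenspaces of $(12)$ gives, directly from Definition~\ref{def:OrbMot}(iii)--(iv),
\[
F_{(12),(12)} \;=\; \tfrac12\,N + \tfrac12\,N + 0 + T\Delta_{S} - (T\Delta_{S}\oplus N) \;=\; 0 \quad \text{in}\ K_{0}(\Delta_{S}),
\]
so $c_{top}(F_{(12),(12)})=1$. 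Since $\epsilon((12),(12))=1$, the discrete-torsion product reads $\alpha\star_{orb,dt}\beta=-\iota_{*}(\alpha\cdot\beta)\in\CH^{*}(S^{2})^{\gS_{2}}$, where $\iota\colon\Delta_{S}\inj S^{2}$ is the diagonal inclusion.

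For the product on the resolution side, the self-intersection formula $j^{*}j_{*}= \,\cdot\, c_{1}(N_{E/\tilde{S^{2}}})=-h$ with $h=c_{1}(\sO_{E}(1))$ gives $(j_{*}\pi^{*}\alpha)\cdot(j_{*}\pi^{*}\beta)=-j_{*}(h\cdot\pi^{*}(\alpha\beta))$. Applying the excess intersection formula $\epsilon^{*}\Delta_{*}\gamma=j_{*}(c_{1}(\sE)\cdot\pi^{*}\gamma)$ with $\sE=\pi^{*}N_{\Delta/S^{2}}/\sO_{E}(-1)$ of rank $1$, hence $c_{1}(\sE)=\pi^{*}c_{1}(TS)+h$, yields
\[
j_{*}(h\cdot\pi^{*}\gamma) \;=\; \epsilon^{*}\Delta_{*}\gamma - j_{*}\pi^{*}(c_{1}(TS)\cdot\gamma).
\]
Crucially, $c_{1}(TS)=0$ for $S$ a K3 or abelian surface, whence $(j_{*}\pi^{*}\alpha)(j_{*}\pi^{*}\beta)=-\epsilon^{*}\Delta_{*}(\alpha\beta)$, which matches the discrete-torsion orbifold product through $\phi$. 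The mixed products $(\epsilon^{*}\beta)(j_{*}\pi^{*}\alpha)=j_{*}\pi^{*}(\Delta^{*}\beta\cdot\alpha)$ correspond tautologically to $\beta\star_{orb,dt}\alpha=\iota^{*}\beta\cdot\alpha$ (no sign, as $\epsilon(e,(12))=0$), and the untwisted sector matches trivially. Taking $\gS_{2}$-invariants gives the $\Q$-algebra isomorphism; for the $\C$-algebra version without discrete torsion, rescale $\phi$ on the twisted sector by $\sqrt{-1}$ to absorb the sign, as in Remark~\ref{rmk:roots}. The main technical point is the excess intersection computation, together with the essential use of $c_{1}(TS)=0$ both in trivializing $F_{(12),(12)}$ and in killing the correction term $j_{*}\pi^{*}(c_{1}(TS)\cdot\gamma)$; these two vanishings are parallel manifestations of the holomorphic symplectic condition, foreshadowing the general case.
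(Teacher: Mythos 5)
Your proof is correct and follows essentially the same route as the paper: decompose $\CH^{*}(\tilde{S^{2}})$ by the blowup formula, observe that all obstruction bundles are trivial so the orbifold product reduces to pushforward of intersection, and match the twisted-twisted product via $j^{*}j_{*} = \cdot\,c_{1}(N_{E/\tilde{S^{2}}})$ together with the excess intersection formula, where the key vanishing $c_{1}(T_{S})=0$ kills the correction term. The only (cosmetic) difference is in how the excess bundle is identified: the paper writes it as $T_{\pi}\otimes\sO_{\pi}(-1)\cong\sO_{\pi}(1)$ using $T_{\pi}\cong\sO_{\pi}(2)$, while you compute $c_{1}(\sE)=\pi^{*}c_{1}(T_{S})+h$ directly; both reduce to the same thing via $K_{S}=0$. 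Your additional observation that the involution acts by $-1$ on $N_{\Delta/S^{2}}$, hence trivially on $E$ and on the twisted summand, is a clean way to see that $j_{*}\pi^{*}\CH^{*}(S)$ lives entirely in the $\gS_{2}$-invariant part.
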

	\begin{proof}
		A straightforward computation using (iii) and (iv) of Definition
		\ref{def:OrbMot} shows that all obstruction bundles are trivial (at least in the
		Grothendieck group). Hence by Definition \ref{def:OrbChow}, $$\CH^{*}(S^{2},
		\gS_{2})=\CH^{*}(S^{2})\oplus \CH^{*-1}(\Delta_{S})$$
		whose ring structure is explicitly given by
		\begin{itemize}
			\item For any $\alpha\in \CH^{i}(S^{2}), \beta\in\CH^{j}(S^{2})$,
			$\alpha\star_{orb} \beta=\alpha\cdot \beta \in \CH^{i+j}(S^{2})$\,;
			\item For any $\alpha\in \CH^{i}(S^{2}), \beta\in \CH^{j}(\Delta_{S})$,
			$\alpha\star_{orb}\beta=\alpha|_{\Delta}\cdot \beta\in
			\CH^{i+j}(\Delta_{S})$\,;
			\item For any $\alpha\in \CH^{i}(\Delta_{S}), \beta\in\CH^{j}(\Delta_{S})$,
			$\alpha\star_{orb}\beta=\Delta_{*}(\alpha\cdot \beta)\in \CH^{i+j+2}(S^{2})$.
		\end{itemize}
		The blow-up formula (\cf for example, \cite[Theorem 9.27]{MR1997577}) provides
		an \apriori only additive isomorphism 
		$$(\epsilon^{*}, j_{*}\pi^{*}): \CH^{*}(S^{2})\oplus
		\CH^{*-1}(\Delta_{S})\lra{\isom} \CH^{*}(\tilde{S^{2}}),$$ whose inverse is
		given by $(\epsilon_{*}, -\pi_{*}j^{*})$.\\
		With everything given explicitly as above, it is straightforward to check that
		this isomorphism respects also the multiplication up to a sign change\,:
		\begin{itemize}
			\item For any $\alpha\in \CH^{i}(S^{2}), \beta\in\CH^{j}(S^{2})$, one has
			$\epsilon^{*}(\alpha\star_{orb}
			\beta)=\epsilon^{*}(\alpha\cdot\beta)=\epsilon^{*}(\alpha)\cdot\epsilon^{*}(\beta)$\,;
			\item For any $\alpha\in \CH^{i}(S^{2}), \beta\in \CH^{j}(\Delta_{S})$, the
			projection formula yields
			$$j_{*}\pi^{*}(\alpha\star_{orb}\beta)=j_{*}\pi^{*}(\alpha|_{\Delta}\cdot
			\beta)=j_{*}\left(j^{*}\epsilon^{*}(\alpha)\cdot
			\pi^{*}\beta\right)=\epsilon^{*}(\alpha)\cdot j_{*}\pi^{*}(\beta)\,;$$
			\item For any $\alpha\in \CH^{i}(\Delta_{S}), \beta\in\CH^{j}(\Delta_{S})$, we
			make a sign change\,: $\alpha\star_{orb,dt}\beta=-\Delta_{*}(\alpha\cdot
			\beta)$ and we get $$j_{*}\pi^{*}(\alpha)\cdot
			j_{*}\pi^{*}(\beta)=j_{*}\left(j^{*}j_{*}\pi^{*}\alpha\cdot
			\pi^{*}\beta\right)=j_{*}\left(c_{1}(N_{E/\tilde{S^{2}}})\cdot\pi^{*}\alpha\cdot\pi^{*}\beta\right)=-\epsilon^{*}\Delta_{*}(\alpha\cdot
			\beta)=\epsilon^{*}(\alpha\star_{orb,dt}\beta),$$ where in  the last but one
			equality one uses the excess intersection formula for the blowup diagram
			together with the fact that $N_{E/\tilde{S^{2}}}=\sO_{\pi}(-1)$ while the excess
			normal bundle is 
			$$\pi^{*}T_{S}/\sO_{\pi}(-1)\isom
			T_{\pi}\otimes\sO_{\pi}(-1)\isom\sO_{\pi}(1),$$ where one uses the assumption
			that $K_{S}=0$ to deduce that $T_{\pi}\isom\sO_{\pi}(2)$.
		\end{itemize}
		As the sign change is exactly the one given by discrete torsion (Definition
		\ref{def:dt}), we have an isomorphism of $\Q$-algebras 
		$$\CH^{*}(S^{[2]})\isom \CH^{*}_{orb,dt}([S^{2}/\gS_{2}]).$$
		By Remark \ref{rmk:roots}, this yields, without making any sign change, an 
		isomorphism of $\C$-algebras\,:
		$$\CH^{*}(S^{[2]})_{\C}\isom \CH^{*}_{orb}([S^{2}/\gS_{2}])_{\C},$$
		which concludes the proof. \qedhere
	\end{proof}

	\subsubsection{Kummer K3 surfaces}
	Let $A$ be an abelian surface. We always identify $A_{0}^{2}:=\Ker\left(A\times
	A\lra{+} A\right)$ with $A$ by $(x, -x)\mapsto x$. Under this identification,
	the associated Kummer K3 surface $S:=K_{1}(A)$ is a hyperK\"ahler crepant
	resolution of the symplectic quotient $A/f$, where $f$ is the involution of
	multiplication by $-1$ on $A$. Consider the blow-up of $A$ along the fixed locus
	$F$ which is the set of 2-torsion points of $A$\,:
	\begin{displaymath}
	\xymatrix{
		E\ar[r]^{j} \ar[d]_{\pi} \cart & \tilde{A} \ar[d]^{\epsilon} \\
		F  \ar[r]_{i} & A.
	}
	\end{displaymath}
	Then $S$ is the quotient of $\tilde A$ by $\tilde f$, the lifting of the
	involution $f$. As in the previous toy example, the MHRC at the level of Chow
	rings \emph{only} in the present situation is reduced to the following
	
	\begin{prop}
		We have an isomorphism of $\C$-algebras\,:
		$\CH^{*}(\tilde{A})_{\C}\isom \CH^{*}(A, \gS_{2})_{\C}$.  In fact, taking into
		account the discrete torsion, there is an isomorphism of $\Q$-algebras
		$\CH^{*}(K_{1}(A))\isom \CH^{*}_{orb,dt}\left(\left[A/\gS_{2}\right]\right)$.
	\end{prop}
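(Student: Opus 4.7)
The plan is to mirror the proof of Proposition \ref{prop:Hilb2} essentially verbatim, with the only differences coming from the fact that the fixed locus is now $0$-dimensional. First I would verify the easy part: the age computation. The non-trivial element $f = -\id$ has order $2$, and at each of the $16$ two-torsion points $x \in F$ the derivative $f_*$ acts on $T_x A$ as $-\id$, with eigenvalues $e^{i\pi}, e^{i\pi}$, giving $\age(f)=1$. The obstruction bundle $F_{f,f}$ over $F$ has $S_f = \frac{1}{2}[TA|_F]$, so $F_{f,f} = 2S_f + S_e - TA|_F + TF = 0$ in $K_0(F)_\Q$ (using $TF=0$), hence $c_{top}(F_{f,f}) = 1$. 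Therefore, by Definition~\ref{def:OrbChow},
\[
\CH^*(A,\gS_2) = \CH^*(A) \oplus \CH^{*-1}(F),
\]
with orbifold product: $\alpha\star_{orb}\beta = \alpha\cdot\beta$ for $\alpha,\beta \in \CH^*(A)$; $\alpha\star_{orb}\beta = \alpha|_F\cdot\beta$ for $\alpha \in \CH^*(A)$, $\beta \in \CH^*(F)$; and $\alpha\star_{orb}\beta = i_*(\alpha\cdot\beta)$ for $\alpha,\beta \in \CH^*(F)$.

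On the other side, the blow-up formula supplies the additive isomorphism
\[
(\epsilon^*, j_*\pi^*) : \CH^*(A) \oplus \CH^{*-1}(F) \xrightarrow{\isom} \CH^*(\tilde A),
\]
with inverse $(\epsilon_*, -\pi_*j^*)$. I would then verify multiplicativity (up to the sign of discrete torsion) in the three cases exactly as in Proposition~\ref{prop:Hilb2}: the first two cases are immediate from functoriality of pullback and from the projection formula $j_*(j^*\epsilon^*\alpha \cdot \pi^*\beta) = \epsilon^*\alpha \cdot j_*\pi^*\beta$.

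The main obstacle, and the source of the sign, is the third case where both classes live on $F$. Here one computes, via the self-intersection formula, $j^*j_*\pi^*\alpha = c_1(N_{E/\tilde A})\cdot \pi^*\alpha = -H\cdot\pi^*\alpha$, where $H = c_1(\sO_\pi(1))$. Thus
\[
j_*\pi^*\alpha \cdot j_*\pi^*\beta = j_*\bigl(-H\cdot\pi^*(\alpha\beta)\bigr).
\]
Meanwhile, by the excess intersection formula applied to the blow-up square, $\epsilon^* i_*(\alpha\beta) = j_*\bigl(c_{top}(\mathcal{E})\cdot \pi^*(\alpha\beta)\bigr)$ where the excess normal bundle is $\mathcal E = \pi^*N_{F/A}/N_{E/\tilde A} = \pi^*(TA|_F)/\sO_\pi(-1)$. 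Here the abelian structure of $A$ enters: since $TA$ is trivial, $c_1(\pi^* TA|_F) = 0$, so $\mathcal{E}$ is a line bundle with $c_1(\mathcal E) = -c_1(\sO_\pi(-1)) = H$. (This replaces the $K_S = 0$ computation of the K3 toy case, which yielded $T_\pi \cong \sO_\pi(2)$ globally; here the analogous identity is trivial since $F$ is $0$-dimensional, but the triviality of $TA$ still pins down the sign.) Consequently $\epsilon^* i_*(\alpha\beta) = j_*(H\cdot\pi^*(\alpha\beta)) = -j_*\pi^*\alpha \cdot j_*\pi^*\beta$, which after the discrete-torsion sign change $\alpha\star_{orb,dt}\beta = -i_*(\alpha\beta)$ gives exactly $\epsilon^*(\alpha\star_{orb,dt}\beta) = j_*\pi^*\alpha \cdot j_*\pi^*\beta$.

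To conclude, I would observe that this ring isomorphism is manifestly $\gS_2$-equivariant (the involution $\tilde f$ on $\tilde A$ corresponds to the $\gS_2$-action on $\CH^*(A,\gS_2)$ induced by the conjugation-action of Definition~\ref{def:OrbMot}(ii)), so taking $\gS_2$-invariants yields the desired isomorphism $\CH^*(K_1(A)) \isom \CH^*_{orb,dt}([A/\gS_2])$ of $\Q$-algebras. Finally, applying Remark~\ref{rmk:roots} (rescaling the summand indexed by $g$ by $\sqrt{-1}^{\age(g)}$) absorbs the discrete-torsion sign and delivers the $\C$-algebra isomorphism $\CH^*(\tilde A)_\C \isom \CH^*(A,\gS_2)_\C$.
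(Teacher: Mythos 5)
Your proof is correct and follows exactly the approach of the paper's (very terse) sketch, which simply mirrors the computation of Proposition~\ref{prop:Hilb2}; you have correctly computed $\age(f)=1$, verified that $F_{f,f}=0$ in $K_0(F)_\Q$, and identified the sign in the third multiplicativity check via $N_{E/\tilde A}=\sO_\pi(-1)$ and the rank-one excess bundle $\sO_\pi(1)$, matching the discrete torsion $(-1)^{\epsilon(f,f)}=-1$. The only cosmetic remark is that the vanishing $c_1(\pi^*TA|_F)=0$ holds simply because $F$ is zero-dimensional (so $\CH^1(F)=0$), regardless of the triviality of $TA$, a point you yourself note parenthetically.
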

	\begin{proof}
		As the computation is quite similar to that of Proposition \ref{prop:Hilb2}, we
		only give a sketch. 
		By Definition \ref{def:OrbChow}, $\age(\id)=0$, $\age(\tilde f)=1$ and
		$\CH^{*}(A, \gS_{2})=\CH^{*}(A)\oplus \CH^{*-1}(F)$
		whose ring structure is given by
		\begin{itemize}
			\item For any $\alpha\in \CH^{i}(A), \beta\in\CH^{j}(A)$, $\alpha\star_{orb}
			\beta=\alpha\bullet \beta \in \CH^{i+j}(A)$\,;
			\item For any $\alpha\in \CH^{i}(A), \beta\in \CH^{0}(F)$,
			$\alpha\star_{orb}\beta=\alpha|_{F}\bullet \beta\in \CH^{i}(F)$\,;
			\item For any $\alpha\in \CH^{0}(F), \beta\in\CH^{0}(F)$,
			$\alpha\star_{orb}\beta=i_{*}(\alpha\bullet \beta)\in \CH^{2}(A)$.
		\end{itemize}
		Again by the blow-up formula, we have an isomorphism
		$$(\epsilon^{*}, j_{*}\pi^{*}): \CH^{*}(A)\oplus \CH^{*-1}(F)\lra{\isom}
		\CH^{*}(\tilde{A}),$$ whose inverse is given by $(\epsilon_{*}, -\pi_{*}j^{*})$.
		It is now straightforward to check that they are moreover ring isomorphisms with
		the left-hand side equipped with the orbifold product. The sign change comes
		from the negativity of the self-intersection of (the components of) the
		exceptional divisor.
	\end{proof}

	\section{Main results and steps of the proofs}\label{sec:skeleton}
	
	The main results of the paper are the verification of Conjecture
	\ref{conj:MHRCdt}, hence Conjecture \ref{conj:MHRCbis} by Remark
	\ref{rmk:roots}, in the following two cases (A) and (B). See Theorem
	\ref{thm:mainAb} and Theorem \ref{thm:mainKummer} in the introduction for the
	precise statements. These two theorems are proved in \S\ref{sec:proof-A} and
	\S\ref{sec:proof-B} respectively. 
	
	Let $A$ be an abelian surface and $n$ be a positive integer.
	\begin{itemize}
		\item[\textbf{Case (A)}](Hilbert schemes of abelian surfaces)\\
		$M=A^{n}$ endowed with the natural action of $G=\gS_{n}$. The symmetric product
		$A^{(n)}=M/G$ is a singular symplectic variety and the Hilbert--Chow morphism
		$$\rho: Y=A^{[n]}\to A^{(n)}$$ gives a symplectic resolution. 
		\item[\textbf{Case (B)}](Generalized Kummer varieties)\\
		$M=A^{n+1}_{0}:=\Ker\left(A^{n+1}\lra{s} A\right)$ endowed with the natural
		action of $G=\gS_{n+1}$. The quotient $A^{n+1}_{0}/\gS_{n+1}=M/G$ is a singular
		symplectic variety. Recall that the \emph{generalized Kummer variety} $K_{n}(A)$
		is the fiber over $O_{A}$ of the isotrivial fibration $A^{[n+1]}\to
		A^{(n+1)}\lra{s} A$. The restriction of the Hilbert--Chow morphism
		$$Y=K_{n}(A)\to A^{n+1}_{0}/\gS_{n+1}$$ gives a symplectic resolution. 
	\end{itemize}

	Let us deduce the KHRC \ref{conj:KHRC} in these two cases from our main
	results\footnote{Note that our proof for KHRC passes through Chow rings, thus a
		direct geometric (sheaf-theoretic) description of the isomorphism between
		$K(Y)_{\C}$ and $K_{orb}([M/G])_{\C}$ is still missing.}\,:
	\begin{proof}[Proof of Theorem \ref{thm:KHRC}]
		Let $M$ and $G$ be either as in Case (A) or Case (B) above. Without using
		discrete torsion, we have an isomorphism of $\C$-algebras
		$\CH^{*}(M)_{\C}\isom \CH^{*}_{orb}([M/G])_{\C}$ by Theorems \ref{thm:mainAb} and \ref{thm:mainKummer}. An \emph{orbifold Chern character}
		is constructed in \cite{MR2285746}, which by \cite[Main result 3]{MR2285746}
		provides an isomorphism of $\Q$-algebras\,: $$\ch_{orb}:
		K_{orb}\left([M/G]\right)_{\Q}\lra{\isom}
		\CH^{*}_{orb}\left([M/G]\right)_{\Q}.$$ The desired isomorphism of algebras is
		then obtained by the composition of $\ch_{orb}$ (tensored with $\C$), the Chern
		character isomorphism $\ch: K\left(Y\right)_{\Q}\lra{\isom}
		\CH^{*}\left(Y\right)_{\Q}$ tensored with $\C$, and the isomorphism
		$\CH^{*}(M)_{\C}\isom \CH_{orb}^{*}([M/G])_{\C}$ from our main results.\\
		Similarly, for topological K-theory one uses the orbifold topological Chern
		character, which is also constructed in \cite{MR2285746}, $$\ch_{orb}:
		K^{top}_{orb}\left([M/G]\right)_{\Q}\lra{\isom} H^{*}_{orb}\left([M/G],
		\C\right),$$ together with $\ch: K^{top}\left(Y\right)_{\Q}\lra{\isom}
		H^{*}\left(Y, \Q\right)$ and the Cohomological HyperK\"ahler Resolution
		Conjecture\,:
		$$H^{*}_{orb}\left([M/G], \C\right)\isom H^{*}\left(Y, \C\right),$$
		which is proved in Case (A) in \cite{MR1971293} and \cite{MR2154668} based on
		\cite{MR1974889} and in Case (B) in Theorem \ref{cor:CHRCKummer}.
	\end{proof}

	In the rest of this section, we explain the main steps of the proofs of Theorem
	\ref{thm:mainAb} and Theorem \ref{thm:mainKummer}.
	For both cases, the proof proceeds in three steps. For each step, Case (A) is
	quite straightforward and Case (B) requires more subtle and technical arguments.

	\noindent{\textbf{Step (i)}.}
	
	Recall the notation $\h(M, G):=\oplus_{g\in G}\h(M^{g})(-\age(g))$. Denote by
	$$\iota:\h\left(M, G\right)^{G}\inj \h\left(M, G\right)\quad  \text{and}\quad p:
	\h\left(M, G\right)\surj \h\left(M, G\right)^{G}$$ the inclusion of and the
	projection onto the $G$-invariant part $\h\left(M, G\right)^{G}$, which is a
	direct factor of $\h\left(M, G\right)$ inside $\CHM$. We will first construct an
	\apriori just additive $G$-equivariant morphism of Chow motives $\h(Y)\to
	\h(M,G)$, given by some correspondences $\left\{(-1)^{\age(g)}U^{g}\in
	\CH(Y\times M^{g})\right\}_{g\in G}$ inducing an (additive) isomorphism
	$$\phi=p\circ\sum_{g}(-1)^{\age(g)}U^{g}: \h(Y)\lra{\isom}\h_{orb}([M/G])=\h(M,
	G)^{G}.$$ 
	The isomorphism $\phi$ will have the property that its inverse is
	$\psi:=(\frac{1}{|G|}\sum_{g}{}^{t}U^{g})\circ\iota$ (see Proposition
	\ref{prop:addisom} and Proposition \ref{prop:addisom-B} for Case (A) and (B)
	respectively). Note that since $\sum_{g}(-1)^{\age(g)}U^{g}$ is $G$-equivariant,
	we have $\iota \circ \phi = \sum_{g}(-1)^{\age(g)}U^{g}$ and likewise $\psi
	\circ p  = \frac{1}{|G|}\sum_{g}{}^{t}U^{g}$. Our goal is then to prove that
	these morphisms are moreover multiplicative (after the sign change by discrete
	torsion), \ie the following diagram is commutative:
	\begin{equation}\label{diag:mult1}
	\xymatrix{
		\h(Y)^{\otimes 2} \ar[d]_{\phi^{\otimes 2}}\ar[r]^{\delta_{Y}} & \h(Y)
		\ar[d]^{\phi}\\
		\h_{orb}([M/G])^{\otimes 2} \ar[r]_{\star_{orb,dt}} & \h_{orb}([M/G]),
	}
	\end{equation}
	where the algebra structure $\star_{orb,dt}$ on the Chow motive
	$\h_{orb}([M/G])$ is the symmetrization of the algebra structure
	$\star_{orb,dt}$ on $\h(M,G)$ defined in Definition \ref{def:dt} (in the same
	way that the algebra structure $\star_{orb}$ on the Chow motive
	$\h_{orb}([M/G])$ is the symmetrization of the algebra structure $\star_{orb}$
	on $\h(M,G)$\,; see Remark \ref{rmk:additiveOrbMot}).
	
	The main theorem will then be deduced from the following
	\begin{prop}\label{prop:Multiplicativity}
		Notation being as before, the following two algebraic cycles have the same
		symmetrization in $\CH\left(\left(\coprod_{g\in G}M^{g}\right)^{3}\right)$\,:
		\begin{itemize}
			\item
			$W:=\left(\frac{1}{|G|}\sum_{g}U^{g}\times\frac{1}{|G|}\sum_{g}U^{g}\times\sum_{g}(-1)^{\age(g)}U^{g}\right)_{*}\left(\delta_{Y}\right)$\,;
			\item The algebraic cycle $Z$ determining the orbifold product (Definition
			\ref{def:OrbMot}(v)) with the sign change by discrete torsion (Definition
			\ref{def:dt})\,: 
			$$Z|_{M^{g_{1}}\times M^{g_{2}}\times M^{g_{3}}}=\begin{cases}
			0 &\text{     if     } g_{3}\neq g_{1}g_{2}\\
			(-1)^{\epsilon(g_{1},g_{2})}\cdot\delta_{*}c_{top}(F_{g_{1}, g_{2}}) &\text{    
				if    } g_{3}=g_{1}g_{2}.
			\end{cases}
			$$
		\end{itemize}
		Here the \emph{symmetrization} of a cycle in $\left(\coprod_{g\in
			G}M^{g}\right)^{3}$ is the operation $$\gamma\mapsto (p\otimes p \otimes
		p)_*\gamma =  \frac{1}{|G|^{3}}\sum_{g_{1}, g_{2}, g_{3}\in G}(g_{1}, g_{2},
		g_{3})_{.}\gamma.$$
	\end{prop}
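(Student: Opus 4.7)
The plan is to translate the asserted identity into a componentwise check on the $G^3$-graded ambient variety $\bigl(\coprod_{g} M^g\bigr)^3$, to exploit the $G$-equivariance built into Step (i) so as to simplify the symmetrization, and then to handle the two regimes $g_3 = g_1 g_2$ and $g_3 \neq g_1 g_2$ by separate geometric arguments. On the component indexed by $(g_1, g_2, g_3) \in G^3$, multilinearity of push-forward gives
$$W|_{M^{g_1}\times M^{g_2}\times M^{g_3}} \;=\; \frac{(-1)^{\age(g_3)}}{|G|^2}\,\bigl(U^{g_1}\times U^{g_2}\times U^{g_3}\bigr)_{*}\delta_Y,$$
whereas $Z|_{M^{g_1}\times M^{g_2}\times M^{g_3}}$ vanishes off the stratum $g_3 = g_1 g_2$ and otherwise equals $(-1)^{\epsilon(g_1,g_2)}\delta_*c_{top}(F_{g_1,g_2})$. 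Because $\sum_g (-1)^{\age(g)} U^g$ is $G$-equivariant by the construction of Step (i), $W$ is already invariant under simultaneous conjugation on its three indices, so the full $G^3$-symmetrization reduces to an average over cosets of the diagonal subgroup. It therefore suffices to verify the componentwise identity up to simultaneous conjugation.

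The geometric heart of the argument is the case $g_3 = g_1 g_2$. Here the plan is to realize the push-forward $(U^{g_1}\times U^{g_2}\times U^{g_1g_2})_*\delta_Y$ as an excess intersection on a suitable fibre product built from the symplectic resolution $Y \to M/G$. Since each $U^g$ is an incidence correspondence between $Y$ and the fixed locus $M^g$, this fibre product should collapse, after reduction, onto the triple diagonal embedding $\delta: M^{\langle g_1,g_2\rangle} \hookrightarrow M^{g_1}\times M^{g_2}\times M^{g_1g_2}$, and the excess intersection formula will deliver a top Chern class of the excess normal bundle on the diagonal. The rank formula \eqref{eqn:VirRkF} is precisely the numerical shadow of the assertion that this excess normal bundle coincides with $F_{g_1,g_2}$ in $K$-theory, so that the top Chern classes agree. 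The scalar produced by the calculation, combined with the factor $(-1)^{\age(g_1 g_2)}/|G|^2$ coming from $W$ and the counting of cosets under the symmetrization, must reproduce the discrete torsion sign $(-1)^{\epsilon(g_1,g_2)}$, which will follow from the definition of $\epsilon$ and the age identity $\age(g)+\age(g^{-1})=\codim(M^g\subset M)$.

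For the complementary case $g_3 \neq g_1 g_2$, the plan is to show that a point in the set-theoretic support of $(U^{g_1}\times U^{g_2}\times U^{g_3})_*\delta_Y$ would have to correspond to a single $G$-cluster of $M/G$ whose associated conjugacy class simultaneously matches $g_1 g_2$ and $g_3$, which forces the push-forward to vanish off the product stratum. The main obstacle, and the reason why the two cases (A) and (B) require separate detailed treatments in \S\ref{sec:proof-A} and \S\ref{sec:proof-B}, is the explicit identification of the excess normal bundle with $F_{g_1,g_2}$ in the diagonal case: this rests on a local description of the symplectic resolution $Y$ over the stratum $M^{\langle g_1,g_2\rangle}/G$ which is case-specific, and in the generalized Kummer case (B) it is further complicated by the restriction to the kernel of the summation map, which breaks the direct product structure available in case (A).
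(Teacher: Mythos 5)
Your proposal takes a fundamentally different route from the paper and, as written, has gaps that the paper's actual argument is precisely designed to avoid. The paper proves Proposition~\ref{prop:Multiplicativity} \emph{indirectly}: Step~(ii) shows that $W$, $Z$, and their symmetrizations are \emph{symmetrically distinguished} in O'Sullivan's sense (Propositions~\ref{prop:ZWsd} and~\ref{prop:ZWsd-B}, resting on Voisin's result that $(U^{g_1}\times U^{g_2}\times U^{g_3})_*\delta_Y$ is a polynomial in big diagonals and on the category of \atts's in Case~(B)); Step~(iii) shows that the symmetrizations are \emph{homologically} equivalent by identifying the cohomological realization of $\phi$ with the known Lehn--Sorger/Fantechi--G\"ottsche isomorphism in Case~(A) and the Nieper-Wi\ss kirchen description in Case~(B); Theorem~\ref{thm:SD2} then upgrades homological equality to rational equivalence. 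Your proposal, by contrast, aims at a direct excess-intersection computation of $(U^{g_1}\times U^{g_2}\times U^{g_3})_*\delta_Y$ and contains no trace of the symmetrically-distinguished mechanism nor of the reduction to cohomology.

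Two specific steps in your plan are incorrect. First, you claim that because $W$ is $G^3$-invariant, ``the full $G^3$-symmetrization reduces to an average over cosets of the diagonal subgroup'' so that a componentwise check up to simultaneous conjugation suffices. This is not so: $Z$ is invariant only under the diagonal $\Delta(G)\subset G^3$, and the averaging over $G^3$ translates the support of $Z$ from the stratum $\{g_3 = g_1 g_2\}$ to components $M^{h_1 g_1 h_1^{-1}}\times M^{h_2 g_2 h_2^{-1}}\times M^{h_3 g_1 g_2 h_3^{-1}}$ for arbitrary $(h_1,h_2,h_3)$; the relation $g_3=g_1g_2$ is not preserved, so $\Sym(Z)$ is genuinely spread over components with $g_3\neq g_1g_2$ and there is no componentwise identity to verify. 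Second, and correspondingly, your claim that $W|_{M^{g_1}\times M^{g_2}\times M^{g_3}}$ vanishes for $g_3\neq g_1 g_2$ is false: its set-theoretic support consists of triples $(x^{(1)},x^{(2)},x^{(3)})$ having a common image in $M/G$, which is nonempty and in general carries a nonzero cycle class. Only the \emph{symmetrized} cycles agree, which is why the paper works with symmetrizations throughout and why the qualitative fact that everything is symmetrically distinguished is essential.

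Finally, the geometric heart of your plan --- realizing the pushforward as an excess intersection that ``collapses onto the triple diagonal embedding'' and identifying the excess normal bundle with $F_{g_1,g_2}$ --- is asserted rather than proved, and the collapse claim is not correct as stated: $U^{g_1}\times_Y U^{g_2}\times_Y U^{g_1g_2}$ does not reduce to the small diagonal on $M^{\langle g_1,g_2\rangle}$; the fibers of the incidence varieties over the deeper strata of $M^{g}/G$ are (products of) Brian\c{c}on varieties, the isospectral Hilbert scheme is singular, and the resulting triple fiber product has many irreducible components of various dimensions whose contributions must all be controlled. Carrying this out explicitly would essentially amount to a direct re-proof of the CHRC in these cases, which is a substantial theorem (Lehn--Sorger, Fantechi--G\"ottsche, Nieper-Wi\ss kirchen) obtained by entirely different methods. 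The paper sidesteps all of this: Voisin's qualitative result that the pushforwards are polynomials in big diagonals supplies symmetric distinguishedness, and the cohomological identities are borrowed rather than re-derived. To repair your argument you would need either to carry out the full excess-intersection computation over all strata (including the coefficients of every big-diagonal term), or to adopt the paper's indirect strategy via Theorem~\ref{thm:SD2}.
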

	
	\begin{proof}[Proposition \ref{prop:Multiplicativity} implies Theorems
		\ref{thm:mainAb} and \ref{thm:mainKummer}]
		The only thing to show is the commutativity of (\ref{diag:mult1}), 
		which is of course equivalent to the commutativity of the diagram
		\begin{equation*}
		\xymatrix{
			\h(Y)^{\otimes 2} \ar[r]^{\delta_{Y}} & \h(Y) \ar[d]^{\phi}\\
			\h_{orb}([M/G])^{\otimes 2}\ar[u]^{\psi^{\otimes 2}} \ar[r]_{\star_{orb,dt}} &
			\h_{orb}([M/G])
		}
		\end{equation*}
		By the definition of $\phi$ and $\psi$, we need to show that the following
		diagram is commutative\,: 
		\begin{equation}\label{diag:mult3}
		\xymatrix{
			\h(Y)^{\otimes 2} \ar[r]^{\delta_{Y}} & \h(Y)
			\ar[d]^{\sum_{g}(-1)^{\age(g)}U^{g}}\\
			\h(M, G)^{\otimes 2}\ar[u]^{(\frac{1}{|G|}\sum_{g}{}^{t}U^{g})^{\otimes 2}}  &
			\h(M, G)\ar[d]^{p}\\
			\h_{orb}([M/G])^{\otimes 2}\ar[u]^{\iota^{\otimes 2}} \ar[r]_{\star_{orb,dt}} &
			\h_{orb}([M/G])
		}
		\end{equation}
		It is elementary to see that the composition
		$\sum_{g}(-1)^{\age(g)}U^{g}\circ\delta_{Y}\circ
		(\frac{1}{|G|}\sum_{g}{}^{t}U^{g})^{\otimes 2}$ is the morphism (or
		correspondence) induced by the cycle $W$ in Proposition
		\ref{prop:Multiplicativity}\,; see \emph{e.g.} \cite[Lemma~3.3]{Vialabelian}. On
		the other hand,  $\star_{orb, dt}$ for $\h_{orb}([M/G])$ is by definition
		$p\circ Z\circ\iota^{\otimes 2}$. Therefore, the desired commutativity, hence
		also the main results, amounts to the equality $p\circ W\circ\iota^{\otimes
			2}=p\circ Z\circ\iota^{\otimes 2}$, which says exactly that the symmetrizations
		of $W$ and of $Z$ are equal in $\CH\left(\left(\coprod_{g\in
			G}M^{g}\right)^{3}\right)$.
	\end{proof}
	One is therefore reduced to show Proposition \ref{prop:Multiplicativity} in both
	cases (A) and (B).  
	
	\noindent\textbf{Step (ii).}
	
	We prove that $W$ on the one hand and $Z$ on the other hand, as well as their
	symmetrizations, are both symmetrically distinguished in the sense of O'Sullivan
	\cite{MR2795752} (see Definition \ref{def:SD}). To avoid confusion, let us point
	out that the cycle $W$ is already symmetrized. In Case (B) concerning the
	generalized Kummer varieties, we have to generalize the category of
	abelian varieties and the corresponding notion of symmetrically distinguished
	cycles, in order to deal with algebraic cycles on `non-connected abelian
	varieties' in a canonical way. By the result of O'Sullivan \cite{MR2795752} (see
	Theorem \ref{thm:SD} and Theorem \ref{thm:SD2}), it suffices for us to check
	that the symmetrizations of $W$ and $Z$ are numerically equivalent. 
	
	\noindent\textbf{Step (iii).}
	
	Finally, in Case (A), explicit computations of the cohomological realization of
	$\phi$ show that the induced (iso-)morphism $\phi: H^{*}(Y) \to
	H^{*}_{orb}([M/G])$ is the same as the one constructed in \cite{MR1974889}.
	While in Case (B),  based on the result of \cite{MR2578804}, one can prove that
	the cohomological realization of $\phi$ satisfies Ruan's original Cohomological
	HyperK\"ahler Resolution Conjecture. Therefore the symmetrizations of $W$ and
	$Z$ are homologically equivalent, which finishes the proof by Step (ii).
	
	\section{Case (A)\,: Hilbert schemes of abelian surfaces}\label{sec:proof-A}
	We prove Theorem \ref{thm:mainAb} in this section. Notations are as before\,:
	$M:=A^{n}$ with the action of $G:=\gS_{n}$ and the quotient $A^{(n)}:=M/G$. Then
	the Hilbert--Chow morphism $$\rho: A^{[n]}=:Y\to A^{(n)}$$ gives a symplectic
	resolution. 
	
	\subsection{A recap of $\gS_{n}$-equivariant geometry}
	To fix the convention and terminology, let us collect here a few basic facts
	concerning $\gS_n$-equivariant geometry\,:
	\subsubsection{} The conjugacy classes of the group $\gS_n$ consist of
	permutations of the same cycle type\,; hence the conjugacy classes are in
	bijection to partitions of $n$. The number of disjoint cycles whose composition
	is $g\in \gS_n$ is exactly the number $|O(g)|$ of orbits in $\{1,\ldots,n\}$
	under the permutation action of $g\in \gS_n$. We will say that $g\in \gS_{n}$ is
	\emph{of partition type} $\lambda$, denoted by  $g\in \lambda$, if the partition
	determined by $g$ is $\lambda$. 
	\subsubsection{}  Let  $X$ be a variety of pure dimension $d$. Given a
	permutation $g\in \gS_n$, the fixed locus $(X^n)^g:= \mathrm{Fix}_g(X^n)$ can be
	described explicitly as the following partial diagonal
	$$(X^n)^g = \left\{ (x_1,\ldots, x_n) \in X^n \ |\ x_i =x_j \text{ if $i$ and $j$
		belong to the same orbit under the action of $g$}\right\}.$$
	As in \cite{MR1971293},  we therefore have the natural identification 
	$$(X^{n})^{g}= X^{O(g)}.$$
	In particular, the codimension of $(X^n)^g$ in $X^n$ is $d(n-|O(g)|)$.
	\subsubsection{} \label{sec:agesym} Since $g$ and $g^{-1}$ belong to the same
	conjugacy class, it follows from $\age (g) + \age (g^{-1}) = \codim ((X^n)^g\subset X^{n})$
	that 
	\begin{equation*}\label{eq:agesym}
	\age (g) = \frac{d}{2}(n-|O(g)|),
	\end{equation*} as was stated in  Example \ref{expl:age}.
	\subsubsection{}  \label{sec:stab} Let $\sP(n)$ be the set of partitions of $n$.
	Given such a partition $\lambda=(\lambda_{1}\geq \cdots \geq
	\lambda_{l})=(1^{a_{1}}\cdots n^{a_{n}})$, where $l:=|\lambda|$ is the
	\emph{length} of $\lambda$ and $a_{i}=|\{j~|~ 1\leq j\leq n\,;\lambda_{j}=i\}|$,
	we define $\gS_{\lambda}:=\gS_{a_{1}}\times\cdots\times\gS_{a_{n}}$. For $g\in
	\gS_n$ a permutation of partition type $\lambda$, its centralizer $C(g)$, \ie
	the stabilizer under the action of $\gS_n$ on itself by conjugation, is
	isomorphic to the semi-direct product:
	$$C(g)\isom\left(\Z/\lambda_{1}\times\cdots\times\Z/\lambda_{l}\right)\rtimes
	\gS_{\lambda}.$$
	Note that the action of $C(g)$ on $X^n$ restricts to an action on $(X^n)^g =
	X^{O(g)} \simeq X^l$ and the action of the normal subgroup
	$\Z/\lambda_{1}\times\cdots\times\Z/\lambda_{l} \subseteq C(g)$ is trivial.
	We denote the quotient $X^{(\lambda)}:=(X^n)^g/C(g)=(X^n)^g/\gS_\lambda$, and
	we regard the motive $\h\left(X^{(\lambda)}\right)$ as the  direct summand
	$\h\left((X^n)^g\right)^{\gS_\lambda}$ inside $\h\left((X^n)^g\right)$ \emph{via} the pull-back along
	the projection $(X^n)^g \to (X^n)^g/\gS_\lambda$\,; see Remark
	\ref{rmks:MotiveGeneral}(iii).

\subsection{Step (i) -- Additive isomorphisms}\label{subsec:AddIso-A}
	
	In this subsection, we establish an isomorphism between $\h(Y)$ and
	$\h_{orb}([M/G])$ by using results of \cite{MR1919155}, and more specifically by
	constructing correspondences similar to the ones used therein.
	
	Let 
	\begin{equation}\label{eqn:U}
	U^{g}:=(A^{[n]}\times_{A^{(n)}}(A^{n})^{g})_{red}=\left\{(z,
	x_{1},\cdots,x_{n})\in A^{[n]}\times(A^{n})^{g} ~\middle\vert~
	\rho(z)=[x_{1}]+\cdots+[x_{n}]\right\}
	\end{equation}
	be the incidence variety, where $\rho: A^{[n]}\to A^{(n)}$ is the Hilbert--Chow
	morphism. As the notation suggests, $U^{g}$ is the fixed locus of the induced
	automorphism $g$ on the \emph{isospectral Hilbert scheme}
	$$U:=U^{\id}=A^{[n]}\times_{A^{(n)}}A^{n}=\left\{(z, x_{1},\cdots,x_{n})\in
	A^{[n]}\times A^{n} ~\middle\vert~ \rho(z)=[x_{1}]+\cdots+[x_{n}]\right\}.$$ 
	Note that $\dim U^{g}=n+|O(g)|=2n-\age(g)$ (\cite{MR0457432}) and
	$\dim\left(A^{[n]}\times (A^{n})^{g}\right)=2\dim U^{g}$. We consider the
	following correspondence for each $g\in G$,
	\begin{equation}\label{eqn:Gammag}
	\Gamma_{g}:=(-1)^{\age(g)}U^{g}\in \CH^{2n-\age(g)}\left(A^{[n]}\times
	(A^{n})^{g}\right),
	\end{equation}
	which defines a morphism of Chow motives\,:
	\begin{equation}\label{eqn:Gamma1}
	\Gamma:=\sum_{g\in G}\Gamma_{g}: \h(A^{[n]})\to \bigoplus_{g\in
		G}\h\left((A^{n})^{g}\right)(-\age(g))=:\h(A^{n}, \gS_{n}),
	\end{equation}
	where we used the notation from Definition \ref{def:OrbMot}.
	\begin{lemma}\label{lemma:GInv}
		The algebraic cycle $\Gamma$ in (\ref{eqn:Gamma1}) defines an
		$\gS_{n}$-equivariant morphism with respect to the trivial action on $A^{[n]}$
		and the action on $\h(A^{n}, \gS_{n})$ of Definition \ref{def:OrbMot}.
		
	\end{lemma}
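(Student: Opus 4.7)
The plan is to verify that $h \cdot \Gamma = \Gamma$ for every $h \in \gS_{n}$, where the action on the source $\h(A^{[n]})$ is trivial and the action on the target $\h(A^{n}, \gS_{n})$ is the one introduced in Definition \ref{def:OrbMot}(ii). This latter action permutes the summands according to conjugation: it sends $\h((A^{n})^{g})(-\age(g))$ isomorphically to $\h((A^{n})^{hgh^{-1}})(-\age(hgh^{-1}))$ via the pushforward along $h\cdot : (A^{n})^{g} \to (A^{n})^{hgh^{-1}}$. Consequently, the equivariance statement decomposes, component by component, into: for every $g, h \in \gS_{n}$ one has $h \circ \Gamma_{g} = \Gamma_{hgh^{-1}}$ as correspondences in $\CH^{*}(A^{[n]} \times (A^{n})^{hgh^{-1}})$.

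First I would observe that by the conjugation invariance of age (Definition \ref{def:age}), $(-1)^{\age(g)} = (-1)^{\age(hgh^{-1})}$, so the signs appearing in \eqref{eqn:Gammag} match on both sides. It therefore suffices to prove the cycle-level identity
\begin{equation*}
(\id_{A^{[n]}} \times h)_{*} U^{g} = U^{hgh^{-1}} \quad \text{in } A^{[n]} \times (A^{n})^{hgh^{-1}}.
\end{equation*}
For this, I would argue directly from the set-theoretic description \eqref{eqn:U}. A point $(z, x_{1}, \ldots, x_{n})$ lies in $U^{g}$ iff its tuple is fixed by $g$ and $\rho(z) = [x_{1}] + \cdots + [x_{n}]$. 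Applying $\id \times h$ sends it to $(z, x_{h^{-1}(1)}, \ldots, x_{h^{-1}(n)})$, whose tuple is tautologically fixed by $hgh^{-1}$, while the incidence condition with $\rho(z)$ is preserved because the unordered 0-cycle $[x_{1}] + \cdots + [x_{n}]$ is insensitive to permutations of the $x_{i}$. Hence $\id \times h$ restricts to a bijection $U^{g} \to U^{hgh^{-1}}$. Since it is an isomorphism between two reduced closed subschemes of the same dimension embedded in smooth ambient varieties, the bijection upgrades to an equality of fundamental cycles, as required.

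The content of the lemma is thus essentially a bookkeeping verification; the main obstacle is simply keeping track of the conventions for the $\gS_{n}$-action on ordered tuples (\ie left \vs right action) and the direction of the induced isomorphism between summands of $\h(A^{n}, \gS_{n})$. The genuine difficulties in the overall program of Theorem \ref{thm:mainAb} lie elsewhere, namely in proving that the morphism $\Gamma$ descends to an additive isomorphism $\phi$ onto $\h_{orb}([A^{n}/\gS_{n}])$, and in establishing the multiplicativity statement of Proposition \ref{prop:Multiplicativity}.
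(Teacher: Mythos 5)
Your proposal is correct and follows essentially the same route as the paper: reduce to showing $h\circ\Gamma_g = \Gamma_{hgh^{-1}}$ using conjugation-invariance of age, and then verify that $h$ carries $U^g$ isomorphically to $U^{hgh^{-1}}$ compatibly with the projections. The only difference is that the paper simply records this last fact as the commutativity of a small diagram, while you check it explicitly at the set-theoretic level.
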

	\begin{proof}
		For each $g, h\in G$, as the age function is invariant under conjugation, it
		suffices to show that the following composition is equal to
		$\Gamma_{hgh^{-1}}$\,: 
		$$\h(A^{[n]})\lra{\Gamma_{g}}
		\h\left((A^{n})^{g}\right)(-\age(g))\lra{h}\h\left((A^{n})^{hgh^{-1}}\right)(-\age(g)).$$
		This follows from the the fact that the following diagram
		\begin{displaymath}
		\xymatrix{
			A^{[n]} & U^{g}\ar[l] \ar[d]\ar[r]^{h}_{\isom} & U^{hgh^{-1}}\ar[d]\\
			& (A^{n})^{g} \ar[r]^{h}_{\isom} & (A^{n})^{hgh^{-1}}
		}
		\end{displaymath}
		is commutative.
	\end{proof}
	As before, $\iota:\h\left(A^{n}, G\right)^{G}\inj \h\left(A^{n}, G\right) $ and
	$p: \h\left(A^{n}, G\right)\surj \h\left(A^{n}, G\right)^{G}$ are the inclusion
	of and the projection onto the $G$-invariant part. Thanks to Lemma
	\ref{lemma:GInv}, we obtain the desired morphism 
	\begin{equation}\label{eqn:phi}
	\phi:=p\circ\Gamma: \h(A^{[n]})\to \h_{orb}([A^{n}/G])=\h\left(A^{n},
	G\right)^{G},
	\end{equation} 
	which satisfies $\Gamma=\iota\circ\phi$.

	Now one can reformulate the result of de Cataldo--Migliorini \cite{MR1919155},
	which actually works for all surfaces, as follows\,:
	\begin{prop}\label{prop:addisom}
		The morphism $\phi$ is an isomorphism, whose inverse is given by
		$\psi:=\frac{1}{n!}\left(\sum_{g\in G}{}^{t}U^{g}\right)\circ\iota$, where
		$${}^{t}U^{g}: \h\left((A^{n})^{g}\right)(-\age(g)) \to \h(A^{[n]})$$ is the
		transposed correspondence of $U^{g}$.
	\end{prop}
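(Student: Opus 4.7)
The plan is to reduce Proposition \ref{prop:addisom} to the motivic decomposition of $A^{[n]}$ due to de Cataldo--Migliorini \cite{MR1919155}. The first step is to identify both sides of the claimed isomorphism with the same explicit block decomposition indexed by partitions $\lambda \vdash n$. On the orbifold side, I regroup the direct sum $\bigoplus_{g \in \gS_n}\h((A^n)^g)(-\age(g))$ according to conjugacy classes of $\gS_n$. For a representative $g$ of partition type $\lambda$ with $l = l(\lambda)$ parts, \S\ref{sec:stab} tells us that the centralizer $C(g) \simeq (\prod_i \Z/\lambda_i) \rtimes \gS_\lambda$ acts on $(A^n)^g \simeq A^l$ through the quotient $\gS_\lambda$, and by Example \ref{expl:age} we have $\age(g) = n - l$ along the conjugacy class. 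Taking $\gS_n$-invariants class by class therefore yields
$$\h_{orb}([A^n/\gS_n]) \isom \bigoplus_{\lambda \vdash n} \h(A^{(\lambda)})(l(\lambda)-n),$$
the $\lambda$-block appearing as $\h((A^n)^g)^{\gS_\lambda}$ inside $\h((A^n)^g)$.

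Next, I invoke the main result of \cite{MR1919155}, which produces a canonical isomorphism
$$\h(A^{[n]}) \isom \bigoplus_{\lambda \vdash n} \h(A^{(\lambda)})(l(\lambda)-n)$$
realized block by block by an explicit correspondence built from the closure of a partial diagonal in $A^{[n]} \times A^{(\lambda)}$. On the $\lambda$-block, their correspondence coincides, up to the sign $(-1)^{n-l(\lambda)}$, with the push-forward of the incidence cycle $U^g$ of \eqref{eqn:U} along the quotient $(A^n)^g \surj A^{(\lambda)}$. Their sign convention matches $(-1)^{\age(g)}$, which is precisely the sign built into the correspondence $\Gamma_g$ of \eqref{eqn:Gammag}. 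Consequently, the de Cataldo--Migliorini isomorphism and the morphism $\phi$ are of the correct shape to be compared summand by summand.

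It remains to verify that $\phi = p \circ \sum_g \Gamma_g$ agrees block by block with this isomorphism and that $\psi$ is its two-sided inverse. On the $\lambda$-block, the projector $p = \frac{1}{n!}\sum_{h \in \gS_n}h$ sends the conjugacy-class sum $\sum_{g \in \lambda} \Gamma_g$ to an orbit-stabilizer multiple of the $\gS_\lambda$-symmetrization of a single $\Gamma_g$. Combined with the degree-$\prod_i \lambda_i$ quotient map $(A^n)^g \surj A^{(\lambda)}$, all combinatorial factors reassemble into a single nonzero scalar, which $\frac{1}{n!}$ in the definition of $\psi$ is designed to absorb. Once the identification is made, the two compositions $\phi \circ \psi$ and $\psi \circ \phi$ reduce to the intersection-theoretic identities $U^g \circ {}^tU^g$ and ${}^tU^g \circ U^g$ computed in \emph{loc.cit.} The main (and essentially only) obstacle is the combinatorial bookkeeping aligning the normalization $\frac{1}{n!}$ with the orbit-stabilizer counts $|[\lambda]| \cdot |\gS_\lambda| \cdot \prod_i \lambda_i = n!$ and with the generic degree of $U^g \to (A^n)^g$; no further geometric input beyond \cite{MR1919155} is required.
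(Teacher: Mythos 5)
Your argument is correct and matches the paper's own proof\,: both reduce to the de Cataldo--Migliorini motivic decomposition of $A^{[n]}$, identify $\h_{orb}([A^{n}/\gS_{n}])$ with $\bigoplus_{\lambda}\h(A^{(\lambda)})(|\lambda|-n)$ by regrouping the orbifold direct sum along conjugacy classes (this is precisely Lemma~\ref{lemma:relate}), and reconcile the normalizing constants using the orbit-stabilizer identity $|[\lambda]|\cdot|\gS_{\lambda}|\cdot\prod_{i}\lambda_{i}=n!$ together with the signs $(-1)^{\age(g)}=(-1)^{n-|\lambda|}$ so that $\phi$ and $\psi$ match the de Cataldo--Migliorini isomorphism and its inverse with coefficients $m_{\lambda}$. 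One minor imprecision\,: $\sum_{g\in\lambda}\Gamma_{g}$ is already $\gS_{n}$-equivariant by Lemma~\ref{lemma:GInv}, so the projector $p$ fixes it rather than producing a ``symmetrization'' of a single $\Gamma_{g}$, but this does not affect the combinatorial count and the rest proceeds as you describe.
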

	\begin{proof}
		Let $\lambda=(\lambda_{1}\geq \cdots \geq \lambda_{l}) \in \sP(n)$ be a
		partition of $n$ of length $l$
		and let $A^{\lambda}$ be $A^{l}$, equipped with the natural action of
		$\gS_{\lambda}$ and with the natural morphism to $A^{(n)}$ by sending $(x_{1},
		\cdots, x_{l})$ to $\sum_{j=1}^{l}\lambda_{j}[x_{j}]$. Define the incidence
		subvariety $U^{\lambda}:=(A^{[n]}\times_{A^{(n)}}A^{\lambda})_{red}$. Denote the
		quotient $A^{(\lambda)}:=A^{\lambda}/\gS_{\lambda}$
		and $U^{(\lambda)}:=U^{\lambda}/\gS_{\lambda}$, where the latter is also
		regarded as a correspondence between $A^{[n]}$ and $A^{(\lambda)}$. See
		Remark~\ref{rmks:MotiveGeneral}(iii) for the use of Chow motives of global
		quotients, and \eqref{sec:stab} for our case at hand (\ie, $A^{(\lambda)}$).\\
		The main theorem in \cite{MR1919155} asserts that the following correspondence
		is an isomorphism\,:
		$$\phi':=\sum_{\lambda\in \sP(n)} U^{(\lambda)}: \h(A^{[n]})\lra{\isom}
		\bigoplus_{\lambda\in \sP(n)}\h(A^{(\lambda)})(|\lambda|-n)\,;$$
		moreover, the inverse of $\phi'$ is given by 
		$$\psi':=\sum_{\lambda\in \sP(n)} \frac{1}{m_{\lambda}}\cdot{}^{t}U^{(\lambda)}:
		\bigoplus_{\lambda\in
			\sP(n)}\h(A^{(\lambda)})(|\lambda|-n)\lra{\isom}\h(A^{[n]}),$$ where
		$m_{\lambda}=(-1)^{n-|\lambda|}\prod_{j=1}^{|\lambda|}\lambda_{j}$ is a non-zero
		constant.
		To relate our morphism $\phi$ to the above isomorphism $\phi'$ as well as their
		inverses, one uses the following elementary
		\begin{lemma}\label{lemma:relate}
			One has a natural isomorphism\,:
			\begin{equation}\label{eq:canisoHilb}
			\left(\bigoplus_{g\in
				\gS_{n}}\h\left((A^{n})^{g}\right)(-\age(g))\right)^{\gS_{n}}\lra{\isom}\bigoplus_{\lambda\in
				\sP(n)}\h\left(A^{(\lambda)}\right)\left(|\lambda|-n\right).
			\end{equation}
		\end{lemma}
		\begin{proof} 
			By regrouping permutations by their partition types, we clearly have 
			$$\left(\bigoplus_{g\in
				\gS_{n}}\h\left((A^{n})^{g}\right)(-\age(g))\right)^{\gS_{n}}\cong
			\bigoplus_{\lambda\in \sP(n)}\left(\bigoplus_{g\in
				\lambda}\h\left((A^{n})^{g}\right)\right)^{\gS_{n}}\left(|\lambda|-n\right).$$
			So it suffices to give a natural isomorphism, for any fixed partition
			$\lambda\in \sP(n)$, between $\left(\bigoplus_{g\in
				\lambda}\h\left((A^{n})^{g}\right)\right)^{\gS_{n}}$ and
			$\h\left(A^{(\lambda)}\right)$. However, such an isomorphism of motives follows
			from the following isomorphism of quotient varieties\,:
			\begin{equation}\label{eqn:QuotientIsom}
			\left.\left(\coprod_{g\in \lambda} (A^{n})^{g}\right)
			~\middle/~\gS_{n}\right.\cong A^{\lambda}/\gS_{\lambda}=A^{(\lambda)},
			\end{equation}
			where the first isomorphism can be obtained by choosing a permutation $g_0\in
			\lambda$ and observing that the centralizer of $g_{0}$ is isomorphic to the
			semi-direct product
			$\left(\Z/\lambda_{1}\times\cdots\times\Z/\lambda_{l}\right)\rtimes
			\gS_{\lambda}$, where the normal subgroup
			$\Z/\lambda_{1}\times\cdots\times\Z/\lambda_{l}$ acts trivially. We remark that
			there are some other natural choices for the isomorphism in
			\eqref{eq:canisoHilb}, due to different points of view and convention\,; but
			they only differ from ours by a non-zero constant.
		\end{proof}
		
		Now it is easy to conclude the proof of Proposition \ref{prop:addisom}. The idea
		is to relate de Cataldo--Migliorini's isomorphisms $\phi'$, $\psi'$ recalled
		above to our morphisms $\phi$ and $\psi$. Given a partition $\lambda\in \sP(n)$,
		for any $g\in \lambda$, the isomorphism between $(A^{n})^{g}$ and $A^{\lambda}$
		will identify $U^{g}$ to $U^{\lambda}$. We have the following commutative
		diagram
		\begin{equation*}
		\xymatrix{
			\coprod_{g\in \lambda} U^{g} \ar[r]\ar@{->>}[d]^{q}& \coprod_{g\in \lambda}
			(A^{n})^{g}\ar@{->>}[d]^{q}\\
			U^{(\lambda)} \ar[r]\ar[d]& A^{(\lambda)}\\
			A^{[n]} &
		}
		\end{equation*}
		where the degree of the two quotient-by-$\gS_{n}$ morphisms $q$ are easily
		computed\,:
		$\deg(q)=\frac{n!}{\prod_{j=1}^{|\lambda|}\lambda_{j}}$.
		The natural isomorphism \eqref{eq:canisoHilb}  of Lemma \ref{lemma:relate} is
		simply given by 
		$$\frac{1}{\deg q}q_{*}\circ \iota\colon \h\left(\coprod_{g\in
			\lambda}(A^{n})^{g}\right)^{\gS_{n}}\lra{\isom} \h\left(A^{(\lambda)}\right),$$
		with inverse given by $p\circ q^{*}$ (in fact the image of $q^{*}$ is already
		$\gS_{n}$-invariant).
		Therefore the composition of $\phi$ with the natural isomorphism
		\eqref{eq:canisoHilb} is equal to  
		$$\sum_{\lambda\in \sP(n)}\left(\frac{1}{\deg q}q_{*}\circ \sum_{g\in \lambda}
		(-1)^{\age(g)}U^{g}\right)=\sum_{\lambda\in \sP(n)}\frac{1}{\deg q}q_{*}\circ
		q^{*}\circ (-1)^{|\lambda|-n}U^{(\lambda)}=\sum_{\lambda\in
			\sP(n)}(-1)^{|\lambda|-n}U^{(\lambda)},$$
		where we used the commutative diagram above for the first equality. As a
		consequence, $\phi$ is an isomorphism as $\phi'=\sum_{\lambda\in
			\sP(n)}U^{(\lambda)}$ is one.
		
		Similarly, the composition of the inverse of \eqref{eq:canisoHilb} with $\psi$
		is equal to 
		$$\sum_{\lambda\in \sP(n)}\left(\frac{1}{n!}\sum_{g\in \lambda} {}^{t}U^{g}\circ
		q^{*}\right)=\sum_{\lambda\in \sP(n)}\frac{1}{n!}\cdot {}^{t}U^{(\lambda)}\circ
		q_{*}\circ q^{*}=\sum_{\lambda\in \sP(n)}\frac{\deg
			q}{n!}\cdot{}^{t}U^{(\lambda)}=\sum_{\lambda\in
			\sP(n)}\frac{(-1)^{|\lambda|-n}}{m_{\lambda}}\cdot{}^{t}U^{(\lambda)}.$$
		Since $\psi'=\sum_{\lambda\in
			\sP(n)}\frac{1}{m_{\lambda}}\cdot{}^{t}U^{(\lambda)}$ is the inverse of $\phi'$
		by \cite{MR1919155} recalled above, $\psi$ is the inverse of $\phi$.
	\end{proof}
	
	Then to show Theorem \ref{thm:mainAb}, it suffices to prove Proposition
	\ref{prop:Multiplicativity} in this situation, which will be done in the next
	two steps.
	
	\subsection{Step (ii) -- Symmetrically distinguished cycles on abelian
		varieties}\label{subsec:SD-A}
	The following definition is due to O'Sullivan \cite{MR2795752}. Recall that all
	Chow groups are with rational coefficients. As in \loccit we denote in this
	section by $\bar \CH$ the $\Q$-vector space of algebraic cycles  modulo the
	numerical equivalence relation. 
	\begin{defi}[Symmetrically distinguished cycles \cite{MR2795752}]\label{def:SD}
		Let $A$ be an abelian variety and $\alpha\in \CH^{i}(A)$. For each integer
		$m\geq 0$, denote by $V_{m}(\alpha)$ the $\Q$-vector subspace of $\CH(A^{m})$
		generated by elements of the form
		$$p_{*}(\alpha^{r_{1}}\times \alpha^{r_{2}}\cdots\times \alpha^{r_{n}}),$$
		where $n\leq m$, $r_{j}\geq 0 $ are integers, and $p : A^{n}\to A^{m}$ is a
		closed immersion with each component $A^{n}\to A$ being either a projection or
		the composite of a projection with $[-1]: A\to A$. Then $\alpha$ is called
		\emph{symmetrically distinguished} if for every $m$ the restriction of the
		projection $\CH(A^{m})\to \bar\CH(A^{m})$ to $V_{m}(\alpha)$ is injective.
	\end{defi}
	Despite their seemingly complicated definition, symmetrically distinguished
	cycles behave very well. More precisely, we have
	
	\begin{thm}[O'Sullivan \cite{MR2795752}]\label{thm:SD}
		Let $A$ be an abelian variety.
		\begin{enumerate}
			\item[(i)] The symmetric distinguished cycles in $\CH^{i}(A)$ form a
			sub-$\Q$-vector space.
			\item[(ii)] The fundamental class of $A$ is symmetrically distinguished and the
			intersection product of two symmetrically distinguished cycles is symmetrically
			distinguished. They form therefore a graded sub-$\Q$-algebra of $\CH^{*}(A)$.
			\item[(iii)] Let $f:A \to B$ be a morphism of abelian varieties, then
			$f_{*}:\CH(A)\to \CH(B)$ and $f^{*}:\CH(B)\to \CH(A)$ preserve symmetrically
			distinguished cycles.
		\end{enumerate}
	\end{thm}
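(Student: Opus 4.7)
The plan is to establish (iii), then (ii) and (i) together via a multi-variable refinement of the definition of $V_m(\alpha)$.

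Part (iii) I would handle directly. Reducing to the case of homomorphisms by composing with translations -- and verifying separately that translations preserve symmetric distinction, since $[-1]$ and translations are built into the very generators of $V_m(\alpha)$ -- the generators $p_*(\alpha^{r_1} \times \cdots \times \alpha^{r_n})$ of $V_m(\alpha)$ are sent by $(f \times \cdots \times f)_*$ to generators of $V_m(f_*\alpha)$ of the same form, and an analogous statement holds for pull-back along a homomorphism. The injectivity of $V_m(\cdot) \to \bar\CH(A^m)$ therefore transfers along $f$ in both directions.

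For parts (i) and (ii), I would introduce the multi-variable space $V_m(\alpha_1, \ldots, \alpha_k) \subseteq \CH(A^m)$ generated by cycles $p_*(\gamma_1^{r_1} \times \cdots \times \gamma_n^{r_n})$, with each $\gamma_j \in \{\alpha_1, \ldots, \alpha_k\}$ and $p$ of the shape allowed in Definition \ref{def:SD}. The central technical claim is that if $\alpha_1, \ldots, \alpha_k$ are symmetrically distinguished, then the projection $V_m(\alpha_1, \ldots, \alpha_k) \to \bar\CH(A^m)$ is injective for every $m$. Granting this, part (i) follows by expanding $(\alpha+\beta)^{r_i}$ binomially and using bilinearity of the exterior product to realize $V_m(\alpha+\beta)$ as a subspace of $V_m(\alpha, \beta)$\,; part (ii) follows because $V_m(\alpha \cdot \beta) \subseteq V_m(\alpha, \beta)$, since $\alpha \cdot \beta = \delta_A^*(\alpha \times \beta)$ and pulling back along the diagonal is a composition of operations permitted in the definition. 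The fundamental class requires only a small direct verification.

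The main obstacle is the multi-variable injectivity claim. My strategy is to exploit Beauville's eigenspace decomposition $\CH^i(A) = \bigoplus_s \CH^i(A)_s$ under the actions of the isogenies $[n]$, together with its extension to $\CH(A^m)$ compatible with the Deninger--Murre multiplicative Chow--K\"unneth decomposition. Concretely, I would bigrade $V_m(\alpha_1, \ldots, \alpha_k)$ by the multi-degree in the $\alpha_j$'s and by Beauville weight, and isolate each component within $\CH(A^m)$ using the scalar action of $[n]_*$ on each eigenspace. Once each graded piece is separated, the injectivity modulo numerical equivalence on the full multi-variable space reduces, component by component, to the one-variable hypothesis applied to an appropriate summand. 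The delicate point -- and the real source of difficulty -- is verifying that the bigrading interacts properly with the external products and closed immersions used to form the generators, so that the one-variable injectivity genuinely propagates to the multi-variable setting and no cancellations among mixed terms can occur.
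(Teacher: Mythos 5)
This is a theorem the paper cites from O'Sullivan and does not prove, so there is no ``paper's own proof'' to compare against; what I can do is assess whether your sketch is viable. It has several real gaps. In part~(iii) you assert that $(f\times\cdots\times f)_*$ carries the generators $p_*(\alpha^{r_1}\times\cdots\times\alpha^{r_n})$ of $V_m(\alpha)$ to generators of $V_m(f_*\alpha)$ ``of the same form.'' This fails because pushforward is not a ring homomorphism: $f_*(\alpha^{r})\neq (f_*\alpha)^{r}$ in general, so the images are not among the prescribed generators of $V_m(f_*\alpha)$. Even setting that aside, knowing that $V_m(f_*\alpha)$ is a quotient (image) of $V_m(\alpha)$ under $(f^{\times m})_*$ does \emph{not} transfer injectivity of $V_m(\cdot)\to\bar\CH$ from source to target; injectivity does not pass through a surjection. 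Also, your remark that translations are ``built into the very generators'' of $V_m(\alpha)$ is incorrect: the definition only allows $[-1]$, not arbitrary translations, and indeed a general (non-torsion) translation does \emph{not} preserve symmetric distinction --- this is exactly why the paper needs Lemma~\ref{lemma:TorsionTranslation} and works with abelian torsors with torsion structure later.

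The more fundamental problem is in (i)--(ii). Your multi-variable injectivity claim --- that $V_m(\alpha_1,\ldots,\alpha_k)\to\bar\CH(A^m)$ is injective whenever each $\alpha_j$ is symmetrically distinguished --- is essentially the entire content of the theorem, and the Beauville-weight bigrading does not reduce it to the single-variable hypothesis. Separating by $[n]^*$-eigenvalue isolates graded pieces, but within a \emph{fixed} Beauville weight one still has mixed monomials such as $\alpha_1^{a}\alpha_2^{b}$, $\alpha_1^{a'}\alpha_2^{b'}$ with $a+b=a'+b'$; the one-variable hypothesis for $\alpha_1$ or $\alpha_2$ alone says nothing about rational-equivalence relations among those mixed terms that happen to vanish numerically. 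This is precisely the cancellation your last paragraph flags as ``the real source of difficulty,'' and nothing in the sketch addresses it. O'Sullivan's actual argument proceeds by an entirely different route: he first constructs the subalgebra of symmetrically distinguished cycles abstractly, as a canonical section (functorial in the abelian variety) of $\CH(A)\to\bar\CH(A)$ obtained from a Tannakian/representation-theoretic analysis of the motive of $A$, and only afterwards derives the explicit characterization in terms of the spaces $V_m(\alpha)$. The characterization via $V_m$ is the \emph{output}, not the input, of the proof, which is why trying to bootstrap the theorem directly from Definition~\ref{def:SD} runs into the circularity you are encountering.
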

	
	The reason why this notion is very useful in practice is that it allows us to
	conclude an equality of algebraic cycles modulo rational equivalence from an
	equality modulo numerical equivalence (or, \emph{a fortiori}, modulo homological
	equivalence)\,:
	\begin{thm}[O'Sullivan \cite{MR2795752}]\label{thm:SD2}
		The composition $\CH(A)_{sd}\inj \CH(A)\surj \bar\CH(A)$ is an isomorphism of
		$\Q$-algebras, where $\CH(A)_{sd}$ is the sub-algebra of symmetrically
		distinguished cycles. In other words, in each numerical class of algebraic cycle
		on $A$, there exists a unique symmetrically distinguished algebraic cycle modulo
		rational equivalence. In particular, a (polynomial of) symmetrically
		distinguished cycles is trivial in $\CH(A)$ if and only if it is numerically
		trivial. 
	\end{thm}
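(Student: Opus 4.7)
The plan is to establish the two directions of the claimed isomorphism separately; the ring structure on $\CH(A)_{sd}$ is already provided by Theorem~\ref{thm:SD}(ii).

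First I would show injectivity of $\CH(A)_{sd} \hookrightarrow \bar\CH(A)$. This is essentially immediate from Definition~\ref{def:SD}: if $\alpha \in \CH(A)_{sd}$ is numerically trivial, apply the definition with $m = n = 1$, $p = \id_A$, and $r_1 = 1$, so that $\alpha = p_*(\alpha)$ lies in $V_1(\alpha)$; since the restriction of the projection $\CH(A) \to \bar\CH(A)$ to $V_1(\alpha)$ is injective by hypothesis, $\alpha$ must already vanish in $\CH(A)$. In particular, the uniqueness part of the theorem, as well as the last sentence about trivial polynomials of symmetrically distinguished cycles, will follow at once from this step together with Theorem~\ref{thm:SD}(ii).

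For surjectivity, the strategy is to exploit Beauville's decomposition $\CH^i(A)_{\Q} = \bigoplus_s \CH^i(A)_{(s)}$, where $\CH^i(A)_{(s)}$ is the common eigenspace of $[n]^*$ with eigenvalue $n^{2i-s}$ for all $n \in \Z$. Since the multiplication maps $[n] \colon A \to A$ are morphisms of abelian varieties, Theorem~\ref{thm:SD}(iii) ensures that $[n]^*$ preserves $\CH(A)_{sd}$, and hence so do the Beauville projectors, which are polynomials in the $[n]^*$. One therefore obtains a compatible grading $\CH(A)_{sd} = \bigoplus_s \CH(A)_{sd,(s)}$, and it suffices to check that $\CH^i(A)_{sd,(s)} \to \bar\CH^i(A)_{(s)}$ is surjective for each $(i,s)$. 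The natural building blocks are the fundamental class of $A$, divisor classes, and in particular the first Chern class $\ell$ of the Poincaré bundle on $A \times \hat{A}$, all of which are seen to be symmetrically distinguished directly from the definition. Closure under intersection products and pushforward/pullback along morphisms of abelian varieties (Theorem~\ref{thm:SD}) then provides Fourier--Mukai transforms $\mathcal{F}(\gamma) = p_{A,*}(\exp(\ell) \cdot p_{\hat{A}}^*\gamma)$ and all their iterates as symmetrically distinguished cycles on $A$.

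The hardest step will be checking that these generators really exhaust every numerical class on each Beauville component. This would rely on two external inputs: Lieberman's theorem that numerical and homological equivalence coincide on abelian varieties, which reduces surjectivity to a statement in the cohomology ring $H^*(A,\Q) = \bigwedge^* H^1(A,\Q)$; and the fact that divisor classes together with their Fourier duals generate this graded ring, combined with the explicit compatibility of the Fourier--Mukai transform with the Beauville grading. Once these are in hand, each $\bar\CH^i(A)_{(s)}$ is hit by a symmetrically distinguished lift, and by the injectivity already established this lift is unique modulo rational equivalence, completing the proof.
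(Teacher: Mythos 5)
The paper does not prove Theorem~\ref{thm:SD2}\,; it is stated as an external input, cited directly from O'Sullivan's paper \cite{MR2795752}. So there is no in-paper argument to compare against, and your proposal should be judged as an independent reconstruction of O'Sullivan's theorem.

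Your injectivity argument is correct and is indeed the easy half\,: taking $m=n=1$, $p=\id_A$ and $r_1=1$ in Definition~\ref{def:SD} shows $\alpha\in V_1(\alpha)$, so a symmetrically distinguished cycle that is numerically trivial already vanishes in $\CH(A)$. The surjectivity sketch, however, has a genuine gap, and it is precisely there that the depth of O'Sullivan's theorem lies. Two problems. First, it is not ``seen directly from the definition'' that an arbitrary divisor class is symmetrically distinguished. In fact an antisymmetric divisor class (one lying in $\CH^1(A)_{(1)}\cong \Pic^0(A)_\Q$) is numerically trivial, and by the injectivity you just proved, the only symmetrically distinguished cycle in a numerically trivial class is zero\,; so such classes are \emph{not} symmetrically distinguished. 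Establishing symmetric distinguishedness even for \emph{symmetric} divisors is already a nontrivial theorem in O'Sullivan's paper. Second, and more seriously, the claim that ``divisor classes together with their Fourier duals generate'' the ring $\bar\CH^*(A)$ is false in general. The Fourier transform of (powers of) a symmetric divisor $\theta$ is again a polynomial in the dual divisor $\hat\theta$ (e.g. $\mathcal F(e^\theta)=\pm e^{-\hat\theta}$ for a principal polarization), so iterating Fourier and intersection never escapes the subalgebra of $\bar\CH^*(A)$ generated by divisor classes. But on abelian varieties with extra endomorphisms (for instance certain CM abelian varieties carrying algebraic Weil classes, or even powers of an abelian surface with quaternionic multiplication), $\bar\CH^*(A)$ strictly contains this subalgebra. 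So the generating set you propose does not exhaust $\bar\CH^*(A)$, and the reduction ``via Lieberman to a cohomological statement'' does not close the gap.

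O'Sullivan's actual proof of surjectivity does not proceed by exhibiting generators of $\bar\CH^*(A)$\,; it goes through the Tannakian/tensor-categorical structure of the category of motives of abelian type, Jannsen's semisimplicity of numerical motives, and Kimura finite-dimensionality, and is genuinely hard. If you want to cite or use the theorem, the right move --- and the one the paper makes --- is to invoke O'Sullivan's result as a black box rather than attempt to re-derive it.
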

	
	Returning to the proof of Theorem \ref{thm:mainAb}, it remains to prove
	Proposition \ref{prop:Multiplicativity}. Keep the same notation as in Step (i),
	we first prove that in our situation the two cycles in Proposition
	\ref{prop:Multiplicativity} are symmetrically distinguished.
	\begin{prop}\label{prop:ZWsd}
		The following two algebraic cycles, as well as their symmetrizations, 
		\begin{itemize}
			\item
			$W:=\left(\frac{1}{|G|}\sum_{g}U^{g}\times\frac{1}{|G|}\sum_{g}U^{g}\times\sum_{g}(-1)^{\age(g)}U^{g}\right)_{*}\left(\delta_{A^{[n]}}\right)$\,;
			\item The algebraic cycle $Z$ determining the orbifold product (Definition
			\ref{def:OrbMot}(v)) with the sign change by discrete torsion (Definition
			\ref{def:dt})\,: 
			$$Z|_{M^{g_{1}}\times M^{g_{2}}\times M^{g_{3}}}=\begin{cases}
			0 &\text{     if     } g_{3}\neq g_{1}g_{2}\\
			(-1)^{\epsilon(g_{1},g_{2})}\cdot\delta_{*}c_{top}(F_{g_{1}, g_{2}}) &\text{    
				if    } g_{3}=g_{1}g_{2}.
			\end{cases}
			$$
		\end{itemize}
		are symmetrically distinguished in $\CH\left(\left(\coprod_{g\in
			G}(A^{n})^{g}\right)^{3}\right)$.
	\end{prop}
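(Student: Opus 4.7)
The plan is to verify, component by component, that both cycles are symmetrically distinguished on the relevant product of abelian varieties. Since each fixed locus $M^g = (A^n)^g$ is canonically identified with $A^{|O(g)|}$, a product of copies of $A$, the whole ambient space $\coprod_{g\in G} M^g$ is a disjoint union of abelian varieties and the notion of symmetric distinction applies to each component. Throughout I will use Theorem~\ref{thm:SD} freely: the fundamental class is symmetrically distinguished, and symmetric distinction is stable under pullbacks, pushforwards along morphisms of abelian varieties, and intersection products.

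For $Z$, the argument is essentially routine. The tangent bundle $TM = TA^n$ is trivial and, as $g$ acts on its fibers linearly with constant eigenvalues, each eigen-subbundle $W_{g,j}$ is itself a trivial vector bundle on $M^g$. Consequently $S_g$ and $F_{g_1,g_2}$ are represented in the Grothendieck group by trivial bundles, so $c_{top}(F_{g_1,g_2})$ is either zero (if the rank is strictly positive) or the fundamental class of $M^{<g_1,g_2>}$ (if the rank vanishes); in either case it is symmetrically distinguished. The partial diagonal $\delta : M^{<g_1,g_2>} \hookrightarrow M^{g_1}\times M^{g_2}\times M^{g_1g_2}$ is a closed immersion of abelian varieties, so Theorem~\ref{thm:SD}(iii) ensures that each summand $(-1)^{\epsilon(g_1,g_2)}\delta_* c_{top}(F_{g_1,g_2})$ of $Z$ is symmetrically distinguished, scalar signs playing no role.

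For $W$, the work is more substantive. Unravelling the pushforward, the restriction of $W$ to $M^{g_1}\times M^{g_2}\times M^{g_3}$ is, up to signs and the scalar $|G|^{-2}$, the pushforward to $M^{g_1}\times M^{g_2}\times M^{g_3}$ of the refined intersection, cut out by $\delta_{A^{[n]}}$, of the three pullbacks of the incidence cycles $U^{g_i}\subset A^{[n]}\times M^{g_i}$. Set-theoretically this is supported on the locus where the three $0$-cycles $[x_1],[x_2],[x_3]$ on $A$ coincide, which decomposes --- indexed by pairs of bijections matching the orbits of $g_1$ to those of $g_2$ and $g_3$ --- into a union of graphs of linear isomorphisms among the $M^{g_i}$; the fundamental classes of these graphs are symmetrically distinguished, being images under morphisms of abelian varieties of fundamental classes. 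The plan is then to carry out the excess-intersection computation explicitly, accounting for the non-reduced fibers of the Hilbert--Chow morphism $\rho$ over the higher strata of $A^{(n)}$, and thereby express $W$ as a $\Q$-linear combination of such graph classes, possibly intersected with Chern classes of (the trivial) $TA^n$ restricted to partial diagonals --- each of which is symmetrically distinguished. The principal obstacle is the control of the multiplicities coming from the non-reduced fibers: one must verify that the scheme-theoretic structure of the isospectral Hilbert scheme $U$ over higher diagonals contributes only cycles obtained by pullback and pushforward along morphisms of abelian varieties. I would approach this via the explicit description of $U$ used in \cite{MR1919155} together with Haiman's local analysis \cite{MR1839919}.

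Finally, the symmetrizations of $W$ and $Z$ are dealt with for free: the $G^3$-action on $\left(\coprod_g M^g\right)^3$ consists of permutations of the coordinates of $A^n$, hence of morphisms of abelian varieties, so a further application of Theorem~\ref{thm:SD}(iii) shows that the average $\frac{1}{|G|^3}\sum_{(g_1,g_2,g_3)}(g_1,g_2,g_3)_{*}$ preserves symmetric distinction.
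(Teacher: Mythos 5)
Your treatment of $Z$ and of the symmetrizations matches the paper and is correct: the trivialization of $TA^n$ by invariant vector fields makes each eigenbundle $W_{g,j}$ globally trivial, so $c_{top}(F_{g_1,g_2})$ is the fundamental class or zero, and $\delta$ as well as the $G^3$-action are morphisms of abelian varieties, so Theorem~\ref{thm:SD} applies.

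The argument for $W$ contains a genuine gap. You correctly reduce to showing that, for each triple $(g_1,g_2,g_3)$, the class $\bigl(U^{g_1}\times U^{g_2}\times U^{g_3}\bigr)_*(\delta_{A^{[n]}})$ in $\CH\bigl((A^n)^{g_1}\times(A^n)^{g_2}\times(A^n)^{g_3}\bigr)$ is symmetrically distinguished, and you correctly observe that this cycle is supported on a union of partial diagonals, whose fundamental classes are symmetrically distinguished. But the step from the support to the actual cycle class --- precisely the excess-intersection computation you flag as ``the principal obstacle'' --- is not carried out: you state that you ``would approach this via the explicit description of $U$ \dots together with Haiman's local analysis,'' which is a plan, not a proof. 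This is where the real content lies: one must show that the refined intersection class is a polynomial in the big diagonals of $(A^n)^{g_1}\times(A^n)^{g_2}\times(A^n)^{g_3} \cong A^N$, with correctly controlled multiplicities, and this is not a formal consequence of triviality of the tangent bundle. The paper does not attempt to redo this computation; it invokes \cite[Proposition~5.6]{MR3447105}, a nontrivial result of Voisin which establishes exactly that $\bigl(U^{g_1}\times U^{g_2}\times U^{g_3}\bigr)_*(\delta_{A^{[n]}})$ lies in the subring generated by big diagonals. Without this input (or an equivalent, fully executed, multiplicities-and-excess argument), the symmetric distinction of $W$ remains unproved.

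One smaller imprecision: you describe the support of the refined intersection as a union of \emph{graphs of linear isomorphisms} among the $M^{g_i}$, indexed by bijections between orbit sets. When the partition types of $g_1,g_2,g_3$ differ, the matching is not by bijections of orbits but by refinements/mergings, and the relevant subvarieties are partial diagonals (images of diagonal embeddings of smaller powers of $A$), not graphs of isomorphisms. This does not affect the overall strategy since partial diagonals are also images of morphisms of abelian varieties, but as stated the set-theoretic description is too restrictive.
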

	\begin{proof}
		For $W$, it amounts to show that for any $g_{1}, g_{2}, g_{3}\in G$, we have
		that $\left(U^{g_{1}}\times U^{g_{2}}\times
		U^{g_{3}}\right)_{*}(\delta_{A^{[n]}})$ are symmetrically distinguished in
		$\CH\left((A^{n})^{g_{1}}\times(A^{n})^{g_{2}}\times(A^{n})^{g_{3}}\right)$.
		Indeed, by \cite[Proposition 5.6]{MR3447105}, $\left(U^{g_{1}}\times
		U^{g_{2}}\times U^{g_{3}}\right)_{*}(\delta_{A^{[n]}})$ is a polynomial of big
		diagonals of $(A^{n})^{g_{1}}\times(A^{n})^{g_{2}}\times(A^{n})^{g_{3}}=:A^{N}$.
		However, all big diagonals of $A^{N}$ are clearly symmetrically distinguished
		since $\Delta_{A}\in \CH(A\times A)$ is. By Theorem \ref{thm:SD}, $W$ is
		symmetrically distinguished.\\
		As for $Z$, for any fixed $g_{1}, g_{2}\in G$, $F_{g_{1},g_{2}}$ is easily seen
		to always be a trivial vector bundle, at least virtually, hence its top Chern
		class is either 0 or $1$ (the fundamental class), which is of course
		symmetrically distinguished. Also recall that (Definition \ref{def:OrbMot})
		$$\delta: (A^{n})^{<g_{1}, g_{2}>}\inj (A^{n})^{g_{1}}\times
		(A^{n})^{g_{2}}\times (A^{n})^{g_{1}g_{2}},$$
		which is a (partial) diagonal inclusion, in particular a morphism of abelian
		varieties. Therefore $\delta_{*}(c_{top}(F_{g_{1}, g_{2}}))$ is symmetrically
		distinguished by Theorem \ref{thm:SD}, hence so is $Z$.\\
		Finally, since any automorphism in $G\times G\times G$ preserves symmetrically
		distinguished cycles, symmetrizations of $Z$ and $W$ remain symmetrically
		distinguished.
	\end{proof}
	
	By Theorem \ref{thm:SD2}, in order to show Proposition
	\ref{prop:Multiplicativity}, it suffices to show  on the one hand that the
	symmetrizations of $Z$ and $W$ are both symmetrically distinguished, and on the
	other hand that they are numerically equivalent. The first part is exactly the
	previous Proposition \ref{prop:ZWsd} and we now turn to an \apriori stronger
	version of the second part in the following final step.

	\subsection{Step (iii) -- Cohomological
		realizations}\label{subsec:Realization-A}
	We will show in this subsection that the symmetrizations of the algebraic cycles
	$W$ and $Z$ have the same (rational) cohomology class. To this end,  it is
	enough to show the following
	\begin{prop}\label{prop:realisation-A}
		The cohomology realization of the (additive) isomorphism 
		$$\phi: \h(A^{[n]})\lra{\isom} \left(\oplus_{g\in
			G}\h((A^{n})^{g})(-\age(g))\right)^{\gS_{n}}$$ 
		is an isomorphism of $\Q$-algebras $$\bar\phi: H^{*}(A^{[n]})\lra{\isom}
		H^{*}_{orb,dt}\left([A^{n}/\gS_{n}]\right)=\left(\bigoplus_{g\in
			G}H^{*-2\age(g)}\left((A^{n})^{g}\right), \star_{orb, dt}\right)^{\gS_{n}}.$$
		In other words, $\Sym(W)$ and $\Sym(Z)$ are homologically equivalent.
	\end{prop}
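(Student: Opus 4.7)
The plan is to derive this cohomological statement from the already-established Cohomological HyperK\"ahler Resolution Conjecture for $A^{[n]}$ (as an instance of the case of a smooth projective surface with trivial canonical bundle), by identifying our additive isomorphism $\phi$ with the multiplicative isomorphism constructed by Lehn--Sorger \cite{MR1974889} and Fantechi--G\"ottsche \cite{MR1971293} / Uribe \cite{MR2154668}. Concretely, Lehn--Sorger give a purely combinatorial description of the ring $H^*(S^{[n]})$ for an arbitrary smooth projective surface $S$ in terms of a Frobenius-algebra construction on $H^*(S)$, and Fantechi--G\"ottsche / Uribe check that, when $K_S=0$, this description coincides with $H^*_{orb,dt}([S^n/\gS_n])$. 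Specializing to $S=A$, this already produces a graded ring isomorphism
\[
\bar\phi_{LS}\colon H^{*}(A^{[n]}) \xrightarrow{\;\isom\;} H^{*}_{orb,dt}\bigl([A^{n}/\gS_{n}]\bigr).
\]
It then suffices to check that this isomorphism coincides with the cohomological realization $\bar\phi$ of the correspondence $\phi=p\circ\sum_g (-1)^{\age(g)}U^g$ constructed in \S\ref{subsec:AddIso-A}.

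First, I would rewrite $\bar\phi$ using the reformulation $\phi'=\sum_{\lambda\in\sP(n)}U^{(\lambda)}$ of de Cataldo--Migliorini proved in Proposition \ref{prop:addisom}, together with the canonical identification \eqref{eq:canisoHilb}. Under this identification, $\bar\phi$ is transported to the sum of cohomological correspondences $\sum_{\lambda}(-1)^{|\lambda|-n}\,[U^{(\lambda)}]$, i.e.\ to the de Cataldo--Migliorini isomorphism on cohomology up to the prescribed signs. Second, I would match the de Cataldo--Migliorini isomorphism with the Nakajima-operator description of $H^{*}(A^{[n]})$: the classes $[U^{(\lambda)}]$ are, by their incidence-variety definition, precisely (up to combinatorial factors depending only on $\lambda$) the images in $H^{*}(A^{[n]})$ of products of Nakajima creation operators $\mathfrak{q}_{\lambda_1}\cdots\mathfrak{q}_{\lambda_l}$ applied to the vacuum, where the classes in $H^{*}(A^{\lambda})$ index the Nakajima tensor. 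Third, the construction of $\bar\phi_{LS}$ by Lehn--Sorger is also formulated via this same Nakajima basis on the left and via the partition-type decomposition $\bigoplus_{\lambda}H^{*}(A^{(\lambda)})$ on the right, and the sign change in the product on the right is precisely the one introduced by discrete torsion. This chain of identifications shows $\bar\phi=\bar\phi_{LS}$ up to a universal rescaling that respects the grading by $\lambda$, hence $\bar\phi$ is multiplicative.

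The equivalent reformulation is that $\Sym(W)$ and $\Sym(Z)$ coincide as cohomology classes in $H^{*}((\coprod_{g}(A^{n})^{g})^{3})$. This follows because, once $\bar\phi$ is known to be a ring isomorphism for the orbifold product with discrete torsion, the commutativity of diagram \eqref{diag:mult1} holds in cohomology, which is exactly the equality $[W]=[Z]$ after symmetrization.

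The main obstacle will be bookkeeping rather than conceptual: carefully normalizing the identification \eqref{eq:canisoHilb} and matching the combinatorial constants $m_{\lambda}=(-1)^{n-|\lambda|}\prod\lambda_j$ from \cite{MR1919155} with the sign conventions $(-1)^{\epsilon(g,h)}$ introduced by discrete torsion in Definition \ref{def:dt}, and with the Lehn--Sorger sign conventions in $H^{*}(A^{[n]})$. Since all three normalizations are governed by the same data (the partition type, equivalently the age), this bookkeeping amounts to a single universal sign check on each Nakajima monomial; once done, the multiplicativity is an immediate consequence of the already-proved CHRC for surfaces with trivial canonical bundle, and no further geometric input on $A$ is needed beyond $K_A=0$.
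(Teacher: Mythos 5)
Your proposal follows essentially the same strategy as the paper's proof: both appeal to Fantechi--G\"ottsche's verification of CHRC for Hilbert schemes of surfaces with trivial canonical bundle (built on Lehn--Sorger), and both reduce the statement to checking that the cohomological realization of $\phi$ agrees with the Fantechi--G\"ottsche isomorphism by expressing everything in the Nakajima basis via the de Cataldo--Migliorini incidence correspondences. The "bookkeeping" you defer is exactly where the paper does its real work, namely the recurrence identity relating $[U^{\lambda}]$ to products of Nakajima correspondences (which requires the dimension estimates of \cite[p.~181]{MR1681097} because the incidence schemes are not a priori irreducible) and the explicit computation of $(U^{g})_{*}\circ (U^{\lambda})^{*}$ from \cite[Props.~5.1.3--5.1.4]{MR1919155} that produces the $n!\cdot\Sym$ and the matching signs; you should be aware these are the non-trivial verifications rather than pure normalization-chasing.
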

	
	Before we proceed to the proof of Proposition \ref{prop:realisation-A},
	we need to do some preparation on the Nakajima operators (\cf \cite{MR1711344}).
	Let $S$ be a smooth projective surface.
	Recall that given a cohomology class $\alpha\in H^{*}(S)$, the Nakajima operator
	$\p_{k}(\alpha): H^{*}(S^{[r]})\to H^{*}(S^{[r+k]})$, for any $r\in \N$, is by
	definition $\beta\mapsto
	{I_{r;k}}^{*}(\alpha\times\beta):=q_{*}\left(p^{*}(\alpha\times\beta)\cdot
	[I_{r; k}]\right)$, where  $p\colon S^{[r+k]}\times S\times S^{[r]} \to S\times
	S^{[r]}$,  $q\colon S^{[r+k]}\times S\times S^{[r]} \to S^{[r+k]}$ are the
	natural projections and the cohomological correspondence $I_{r;k}$ is defined as
	the unique irreducible component of maximal dimension of the incidence subscheme
	$$\left\{(\xi', x, \xi)\in S^{[r+k]}\times S\times S^{[r]} ~\middle\vert~
	\xi\subset \xi'~;~ \rho(\xi')=\rho(\xi)+k[x]\right\}.$$
	Here and in the sequel, $\rho$ is always the Hilbert--Chow morphism. To the best
	of our knowledge, it is still not known whether the above incidence subscheme is
	irreducible but we do know that there is only one irreducible component with
	maximal dimension ($=2r+k+1$), \cf \cite[\S 8.3]{MR1711344}, \cite[Lemma
	1.1]{MR1681097}.
	
	For our purpose, we need to consider the following generalized version of such
	correspondences in a similar fashion as in \cite{MR1681097}. Following \loccit,
	the short hand $S^{[n_{1}],\cdots,[n_{r}]}$ means the product $S^{[n_{1}]}\times
	\cdots\times S^{[n_{r}]}$. A sequence of $[1]$'s of length $n$ is denoted by
	$[1]^{n}$. For any $r, n, k_{1}, \cdots, k_{n}\in \N$, we consider the closed
	subscheme of $S^{[r+\sum k_{i}], [1]^{n}, [r]}$ whose closed points are given by
	(see \cite{MR1681097} for the natural scheme structure)\,:
	$$J_{r; k_{1}, \cdots, k_{n}}:=\left\{(\xi', x_{1}, \cdots, x_{n}, \xi)
	~\middle\vert~ \xi\subset \xi'~;~ 
	\rho(\xi')=\rho(\xi)+\sum_{i=1}^{n}k_{i}[x_{i}]\right\}.$$
	As far as we know, the irreducibility of $J_{r; k_{1}, \cdots, k_{n}}$ is
	unknown in general, but we will only need its component of maximal dimension. To
	this end, we consider the following locally closed subscheme of $S^{[r+\sum
		k_{i}], [1]^{n}, [r]}$ by adding an open condition\,:
	$$I_{r; k_{1}, \cdots, k_{n}}^{0}:=\left\{(\xi', x_{1}, \cdots, x_{n}, \xi)
	~\middle\vert~ \xi\subset \xi'~;~ x_{i}\text{'s are distinct and disjoint from }
	\xi~;~ \rho(\xi')=\rho(\xi)+\sum_{i}k_{i}[x_{i}]\right\}.$$
	Let $I_{r; k_{1}, \cdots, k_{n}}$ be its Zariski closure. By Brian\c{c}on
	\cite{MR0457432} (\cf also \cite[Lemma 1.1]{MR1681097}), $I_{r; k_{1}, \cdots,
		k_{n}}$ is irreducible of dimension $2r+n+\sum k_{i}$ and it is the unique
	irreducible component of maximal dimension of $J_{r; k_{1}, \cdots, k_{n}}$.
	In particular, the correspondence $I_{r;k}$ used by Nakajima mentioned above is
	the special case when $n=1$. Let us also mention that when $r=0$, we actually
	have that $J_{0; k_{1},\cdots, k_{n}}$ is irreducible (\cite[Remark
	2.0.1]{MR1919155}), and hence is equal to $I_{0; k_{1},\cdots, k_{n}}$.
	
	For any $r, n, m, k_{1}, \cdots, k_{n}, l_{1}, \cdots, l_{m}\in \N$, consider
	the following diagram analogous to the one found on \cite[p.~181]{MR1681097}.
	\begin{equation*}
	\xymatrix{
		S^{[r+\sum k_{i}+\sum l_{j}], [1]^{m},[r+\sum k_{i}]} & S^{[r+\sum k_{i}+\sum
			l_{j}], [1]^{m},[r+\sum k_{i}], [1]^{n}, [r]} \ar[l]_-{p_{123}}\ar[r]^-{p_{345}}
		\ar[d]^-{p_{1245}} & S^{[r+\sum k_{i}], [1]^{n}, [r]}\\
		& S^{[r+\sum k_{i}+\sum l_{j}], [1]^{m+n}, [r]}&
	}
	\end{equation*}
	By a similar argument as in \cite[p.~181]{MR1681097} (actually easier since we
	only need weaker dimension estimates), we see that
	\begin{itemize}
		\item  $p_{1245}$ induces an isomorphism from $p_{123}^{-1}\left(I^{0}_{r+\sum
			k_{i};l_{1}, \cdots, l_{m}}\right) \cap p_{345}^{-1}\left(I^{0}_{r; k_{1},
			\cdots, k_{n}}\right)$ to $I^{0}_{r;k_{1}, \cdots, k_{n}, l_{1}, \cdots,
			l_{m}}$\,;
		\item the complement of $I^{0}_{r;k_{1}, \cdots, k_{n}, l_{1}, \cdots, l_{m}}$
		in $p_{1245}\left(p_{123}^{-1}\left(I^{0}_{r+\sum k_{i};l_{1}, \cdots,
			l_{m}}\right) \cap p_{345}^{-1}\left(I^{0}_{r; k_{1}, \cdots,
			k_{n}}\right)\right)$ is of dimension $<2r+n+m+\sum k_{i}+\sum l_{j}=\dim
		I^{0}_{r;k_{1}, \cdots, k_{n}, l_{1}, \cdots, l_{m}}$\,;
		\item the intersection of $p_{123}^{-1}\left(I^{0}_{r+\sum k_{i};l_{1}, \cdots,
			l_{m}}\right)$ and $p_{345}^{-1}\left(I^{0}_{r; k_{1}, \cdots, k_{n}}\right)$ is
		transversal at the generic point of $p_{123}^{-1}\left(I_{r+\sum k_{i};l_{1},
			\cdots, l_{m}}\right)\cap p_{345}^{-1}\left(I_{r; k_{1}, \cdots, k_{n}}\right)$.
	\end{itemize} 
	Combining these, we have in particular that 
	\begin{equation}\label{eqn:recurrenceI}
	p_{1245,*}\left(p_{123}^{*}\left[I_{r+\sum k_{i};l_{1}, \cdots,
		l_{m}}\right]\cdot p_{345}^{*}\left[I_{r; k_{1}, \cdots,
		k_{n}}\right]\right)=\left[I_{r;k_{1}, \cdots, k_{n}, l_{1}, \cdots,
		l_{m}}\right].
	\end{equation}
	We will only need the case $r=0$ and $m=1$ in the proof of Proposition
	\ref{prop:realisation-A}.

	\begin{proof}[Proof of Proposition \ref{prop:realisation-A}]
		The existence of an isomorphism of $\Q$-algebras between the two cohomology
		rings $H^{*}(A^{[n]})$ and $H^{*}_{orb,dt}([A^{n}/\gS_{n}])$ is established by
		Fantechi and G\"ottsche \cite[Theorem 3.10]{MR1971293} based on the work of Lehn
		and Sorger \cite{MR1974889}. Therefore by the definition of $\phi$ in Step (i),
		it suffices to show that the cohomological correspondence 
		$$\Gamma_{*}:=\sum_{g\in \gS_{n}}(-1)^{\age(g)}{U^{g}}_{*}: H^{*}(A^{[n]})\to
		\bigoplus_{g\in \gS_{n}}H^{*-2\age(g)}\left((A^{n})^{g}\right)$$ coincides with
		the following inverse of the isomorphism $\Psi$ used in Fantechi--G\"ottsche
		\cite[Theorem 3.10]{MR1971293}
		\begin{eqnarray*}
			\Phi: H^{*}(A^{[n]}) &\to& \bigoplus_{g\in
				\gS_{n}}H^{*-2\age(g)}\left((A^{n})^{g}\right)\\
			\p_{\lambda_{1}}(\alpha_{1})\cdots\p_{\lambda_{l}}(\alpha_{l})\1 &\mapsto&
			n!\cdot\Sym(\alpha_{1}\times\cdots\times\alpha_{l}),
		\end{eqnarray*}
		Let us explain the notations from \cite{MR1971293} in the above formula\,:
		$\alpha_{1}, \ldots, \alpha_{l}\in H^{*}(A)$, $\times$ stands for the exterior
		product $\prod\pr_{i}^{*}(-)$, $\p$ is the Nakajima operator, $\1\in
		H^{0}(A^{[0]})\isom \Q$ is the fundamental class of the point,
		$\lambda=(\lambda_{1}, \ldots, \lambda_{l})$ is a partition of $n$, $g\in
		\gS_{n}$ is a permutation of type $\lambda$ with a numbering of orbits of $g$
		(as a permutation) chosen\,: $\{1, \ldots, l\}\lra{\sim}O(g)$, such that
		$\lambda_{j}$ is the length of the $j$-th orbit, then the class
		$\alpha_{1}\times\cdots\times\alpha_{l}$ is placed in the direct summand indexed
		by $g$ and $\Sym$ means the symmetrization operation $\frac{1}{n!}\sum_{h\in
			\gS_{n}}h$. Note that $\Sym(\alpha_{1}\times\cdots\times\alpha_{l})$ is
		independent of the choice of $g$, numbering \etc
		
		A repeated use of \eqref{eqn:recurrenceI} with $r=0$ and $m=1$, combined with
		the projection formula, yields that 
		$$\p_{\lambda_{1}}(\alpha_{1})\cdots\p_{\lambda_{l}}(\alpha_{l})\1=I^{*}_{0;\lambda_{1},
			\cdots,\lambda_{l}}(\alpha_{1}\times\cdots\times\alpha_{l})={U^{\lambda}}^{*}(\alpha_{1}\times\cdots\times\alpha_{l}),$$
		where the second equality comes from the definition and the irreducibility of
		$U^{\lambda}$ (\cf \cite[Remark 2.0.1]{MR1919155}). As a result, one only has to
		show that 
		\begin{equation}\label{eqn:composition}
		\sum_{g\in \gS_{n}}(-1)^{\age(g)}{U^{g}}_{*}\circ
		{U^{\lambda}}^{*}(\alpha_{1}\times\cdots\times\alpha_{l})=n!\cdot\Sym(\alpha_{1}\times\cdots\times\alpha_{l}).
		\end{equation}
		Indeed, for a given $g\in G$, if $g$ in not of type $\lambda$, then by
		\cite[Proposition 5.1.3]{MR1919155}, we know that
		${U^{g}}_{*}\circ{U^{\lambda}}^{*}=0$.  For any $g\in G$ of type $\lambda$, fix
		a numbering $\varphi: \{1, \dots, l\}\lra{\sim}O(g)$ such that
		$|\varphi(j)|=\lambda_{j}$ and let $\tilde\varphi:A^{\lambda}=A^{l}\to A^{O(g)}$
		be the induced isomorphism. Then denoting by $q: A^{\lambda}\surj A^{(\lambda)}$
		the quotient map by $\gS_{\lambda}$, the computation \cite[Proposition
		5.1.4]{MR1919155} implies that for such $g\in\lambda$,
		\begin{eqnarray*}
			&&{U^{g}}_{*}\circ{U^{\lambda}}^{*}(\alpha_{1}\times\cdots\times\alpha_{l})\\
			&=&\tilde\varphi_{*}\circ {U^{\lambda}}_{*}\circ
			{U^{\lambda}}^{*}(\alpha_{1}\times\cdots\times\alpha_{l})\\
			&=& m_\lambda \cdot \tilde\varphi_{*}\circ q^{*}\circ
			q_{*}(\alpha_{1}\times\cdots\times\alpha_{l})\\
			&=& m_\lambda\cdot\deg(q)\cdot \Sym(\alpha_{1}\times\cdots\times\alpha_{l})\\
			&=& m_\lambda \cdot |\gS_{\lambda}|\cdot
			\Sym(\alpha_{1}\times\cdots\times\alpha_{l}),
		\end{eqnarray*}
		where $m_\lambda = (-1)^{n-|\lambda|}\prod_{i=1}^{|\lambda|}\lambda_{i}$ as
		before.
		Putting those together, we have
		\begin{eqnarray*}
			&&\sum_{g\in
				\gS_{n}}(-1)^{\age(g)}{U^{g}}_{*}{U^{\lambda}}^{*}(\alpha_{1}\times\cdots\times\alpha_{l})\\
			&=&\sum_{g\in
				\lambda}(-1)^{n-|\lambda|}{U^{g}}_{*}{U^{\lambda}}^{*}(\alpha_{1}\times\cdots\times\alpha_{l})\\
			&=& \sum_{g\in \lambda}\left(\prod_{i=1}^{|\lambda|}\lambda_{i}\right)\cdot
			|\gS_{\lambda}|\cdot \Sym(\alpha_{1}\times\cdots\times\alpha_{l})\\
			&=& n!\cdot\Sym(\alpha_{1}\times\cdots\times\alpha_{l}),
		\end{eqnarray*}
		where the last equality is the orbit-stabilizer formula for the action of
		$\gS_n$ on itself by conjugation.
		The desired equality (\ref{eqn:composition}), hence also the Proposition, is
		proved. 
	\end{proof}
	As explained in \S \ref{sec:skeleton}, the proof of Theorem \ref{thm:mainAb} is
	now complete\,: Proposition \ref{prop:ZWsd} and Proposition
	\ref{prop:realisation-A} together imply that $\Sym(W)$ and $\Sym(Z)$ are
	rationally equivalent using Theorem \ref{thm:SD2}. Therefore Proposition
	\ref{prop:Multiplicativity} holds in our situation Case (A), which means exactly
	that the isomorphism $\phi$ in Proposition \ref{prop:addisom} (defined in
	(\ref{eqn:phi})) is also multiplicative with respect to the product structure on
	$\h\left(A^{[n]}\right)$ given by the small diagonal and the orbifold product with sign
	change by discrete torsion on $\h(A^{n}, \gS_{n})^{\gS_{n}}$.

	\section{Case (B)\,: Generalized Kummer varieties} \label{sec:proof-B}
	We prove Theorem \ref{thm:mainKummer} in this section. Notation is as in the
	beginning of \S\ref{sec:skeleton}\,: 
	$$M=A^{n+1}_{0}:=\Ker\left(A^{n+1}\lra{+} A\right)$$ which is non-canonically
	isomorphic to $A^{n}$, with the action of $G=\gS_{n+1}$ and the quotient
	$X:=A^{(n+1)}_{0}:=M/G$. Then the restriction of the Hilbert--Chow morphism to
	the \emph{generalized Kummer variety} $$K_{n}(A)=:Y\lra{f} A^{(n+1)}_{0}$$ is a
	symplectic resolution.

	\subsection{Step (i) -- Additive isomorphisms}\label{subsec:AddIso-B}
	We use the result in \cite{MR2067464} to establish an additive isomorphism
	$\h(Y)\lra{\isom}\h_{orb}([M/G])$.
	
	Recall that a morphism $f: Y\to X$ is called \emph{semi-small} if for all
	integer $k\geq 0$, the codimension of the locus $\left\{x\in X~|~\dim
	f^{-1}(x)\geq k\right\}$ is at least $2k$. In particular, $f$ is generically
	finite. Consider a (finite) Whitney stratification $X=\coprod_{a}X_{a}$ by
	connected strata, such that for any $a$, the restriction $f|_{f^{-1}(X_{a})}:
	f^{-1}(X_{a})\to X_{a}$ is a topological fiber bundle of fiber dimension
	$d_{a}$. Then the semismallness condition says that $\codim X_{a}\geq 2 d_{a}$
	for any $a$. In that case, a stratum $X_{a}$ is said to be \emph{relevant} if
	the equality holds\,: $\codim X_{a}=2d_{a}$.
	
	The result we need for Step (i) is de Cataldo--Migliorini \cite[Theorem
	1.0.1]{MR2067464}. Let us only state their theorem in the special case where all
	fibers over relevant strata are irreducible, which is enough for our purpose\,:
	
	\begin{thm}[\cite{MR2067464}]\label{thm:dCM}
		Let $f:Y\to X$ be a semi-small morphism of complex projective varieties with $Y$
		being smooth. 
		Suppose that all fibers over relevant strata are irreducible and that for each
		connected relevant stratum $X_{a}$ of codimension $2d_{a}$ (and fiber dimension
		$d_{a}$), the normalization $\bar Z_{a}$ of the closure $\bar X_{a}$ is
		projective and admits a stratification with strata being finite group quotients
		of smooth varieties.
		Then (the closure of) the incidence subvarieties between $X_{a}$ and $Y$ induce
		an isomorphism of Chow motives\,:
		$$\bigoplus_{a}\h\left(\bar Z_{a}\right)(-d_{a})\isom \h(Y).$$
		Moreover, the inverse isomorphism is again given by the incidence subvarieties
		but with different non-zero coefficients.
	\end{thm}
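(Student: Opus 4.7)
The plan is to exhibit explicit algebraic correspondences realizing the decomposition, and then verify they are mutually inverse. For each relevant stratum $X_a$ with normalization $\nu_a\colon \bar Z_a \to \bar X_a$, define the incidence subvariety $I_a \subseteq \bar Z_a \times Y$ as the closure (or principal component) of $\{(z,y) : \nu_a(z) = f(y)\}$. Since $f$ is semismall and the fibers over the relevant stratum $X_a$ are irreducible of dimension $d_a$, the variety $I_a$ is irreducible of dimension $\dim \bar Z_a + d_a$, so that $[I_a]$ has the correct codimension to define a morphism $[I_a]\colon \h(\bar Z_a)(-d_a) \to \h(Y)$. Set $\phi \deff \sum_a [I_a]$, and look for an inverse of the form $\psi \deff \sum_a c_a \cdot {}^t[I_a]$ for rational constants $c_a$ to be pinned down by the requirement $\psi\circ\phi=\id$.

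The key computation is then ${}^t[I_b]\circ [I_a] \in \CH(\bar Z_a \times \bar Z_b)$, which by the standard composition-of-correspondences formula is a pushforward from the intersection cycle in $\bar Z_a \times Y \times \bar Z_b$ supported on the fiber product $I_a \times_Y I_b$. A dimension count based on semismallness forces this fiber product to have smaller-than-expected dimension unless $a=b$, so the contribution vanishes for $a\neq b$. For $a=b$, the fiber product $I_a\times_Y I_a$ contains the diagonal $\Delta_{\bar Z_a}$ as its unique top-dimensional component (the remaining components of $\bar Z_a\times_X \bar Z_a$ are of smaller dimension since $\nu_a$ is birational), yielding ${}^t[I_a]\circ[I_a]=m_a\,[\Delta_{\bar Z_a}]$ for a positive integer $m_a$ computed from the degree of the generic fiber. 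Choosing $c_a \deff 1/m_a$ gives $\psi\circ\phi=\id$.

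The opposite composition $\phi\circ\psi=\id_{\h(Y)}$ amounts to a decomposition of the diagonal of $Y$ as $\sum_a c_a\,[I_a]\circ{}^t[I_a]$ in $\CH(Y\times Y)$, and is the genuinely hard step. Cohomologically, this identity is a direct consequence of the Beilinson--Bernstein--Deligne decomposition theorem applied to the semismall map $f$ (the local systems on relevant strata reducing to constants by the irreducible-fiber assumption), but the task is to promote the identity from cohomology to rational equivalence. The strategy of \cite{MR2067464} is an induction on the partially ordered set of strata\,: at each step one removes a minimal closed stratum $\bar X_{a_0}$, invokes the inductive statement on the complement $X\setminus \bar X_{a_0}$, and repairs the error term over $\bar X_{a_0}$ via an explicit local computation near the generic fiber. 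The main obstacle is that the varieties $\bar Z_a$ are in general singular, so refined intersection theory is required\,; the hypothesis that each $\bar Z_a$ admits a stratification by finite group quotients of smooth varieties is precisely what places us inside the framework of Mumford's $\Q$-varieties recalled in Remark~\ref{rmks:MotiveGeneral}(iii), thereby making the Chow-motivic manipulations legitimate.
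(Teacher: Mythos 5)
The paper offers no proof of this statement: it is imported verbatim as a black-box citation from de Cataldo--Migliorini \cite{MR2067464}, with the remark following it pointing to \cite[\S 2.5]{MR2067464} for the assertion about the inverse correspondence and its coefficients. So there is no ``paper's own proof'' against which to compare your sketch; what you have written is a reconstruction of the argument in the cited reference.

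As such a reconstruction it is broadly faithful: the incidence correspondences, the semismallness-driven dimension count showing $\psi\circ\phi$ is diagonal, the identification of $\phi\circ\psi=\id_{\h(Y)}$ as the nontrivial direction, and the induction on the stratification are indeed the ingredients of de Cataldo--Migliorini's proof. One point to correct, though: the self-composition coefficient $m_a$ appearing in ${}^t[I_a]\circ[I_a]=m_a\,\Delta_{\bar Z_a}$ is \emph{not} ``a positive integer computed from the degree of the generic fiber.'' Since $f$ restricted over $X_a$ has $d_a$-dimensional fibers, $I_a \to \bar Z_a$ is not generically finite, and $m_a$ is rather the refined self-intersection number of the exceptional fiber class inside the fiber of $f$ over a generic point of $X_a$. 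This number can be negative --- as is visible in the paper itself, where for the Hilbert scheme the coefficients are $m_\lambda = (-1)^{n-|\lambda|}\prod_j\lambda_j$. Your argument for the vanishing of the off-diagonal terms ${}^t[I_b]\circ[I_a]$ with $a\neq b$ is also somewhat glossed over: it is not a pure dimension count on $I_a\times_Y I_b$, but requires the strict semismallness inequality for non-relevant intersections of strata closures, which is where the ``relevant strata'' hypothesis really does its work.
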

	
	\begin{rmks}
		\begin{itemize}
			\item The normalizations $\bar Z_{a}$ are singular, but they are $\Q$-varieties,
			for which the usual intersection theory works with rational coefficients (see
			Remark \ref{rmks:MotiveGeneral}).
			\item The statement about the correspondence inducing isomorphisms as well as
			the (non-zero) coefficients of the inverse correspondence is contained in
			\cite[\S 2.5]{MR2067464}.
			\item Since any symplectic resolution of a (singular) symplectic variety is
			semi-small, the previous theorem applies to the situation of Conjectures
			\ref{conj:MHRCbis} and \ref{conj:MHRCdt}.
			\item  Note that the correspondence in \cite{MR1919155} which is used in
			\S\ref{sec:proof-A} for Case (A) is a special case of Theorem \ref{thm:dCM}. 
			\item Theorem \ref{thm:dCM} is used in \cite{XuZeKummer} to deduce a motivic
			decomposition of generalized Kummer varieties equivalent to the Corollary
			\ref{cor:dCM-B} below.
		\end{itemize}
		
	\end{rmks}
	
	Let us start by making precise a Whitney stratification for the (semi-small)
	symplectic resolution $Y=K_{n}(A)\to X=A^{(n+1)}_{0}$. Notations are as in the
	proof of Proposition \ref{prop:addisom}. Let $\sP(n+1)$ be the set of partitions
	of $n+1$, then $$X=\coprod_{\lambda\in\sP(n+1)}X_{\lambda},$$ where the locally
	closed strata are defined by 
	$$X_{\lambda}:=\left\{\sum_{i=1}^{|\lambda|}\lambda_{i}[x_{i}]\in
	A^{(n+1)}~\middle\vert~
	\substack{\sum_{i=1}^{|\lambda|}\lambda_{i}x_{i}=0\\x_{i} \text{  distinct
	}}\right\},$$ with normalization of closure being $${\bar Z_{\lambda}}={\bar
		X_{\lambda}}^{norm}=A^{(\lambda)}_{0}:=A^{\lambda}_{0}/\gS_{\lambda}, $$ where 
	\begin{equation}\label{eqn:Alambda0}
	A^{\lambda}_{0}=\left\{(x_{1}, \ldots, x_{|\lambda|})\in A^{\lambda}
	~\middle\vert~ \sum_{i=1}^{|\lambda|}\lambda_{i}x_{i}=0\right\}.
	\end{equation}
	It is easy to see that $\dim X_{\lambda}=\dim A_{0}^{\lambda}=2(|\lambda|-1)$
	while the fibers over $X_{\lambda}$ are isomorphic to a product of Brian{\c c}on
	varieties (\cite{MR0457432}) $\prod_{i=1}^{|\lambda|}\mathfrak B_{\lambda_{i}}$,
	which is irreducible of dimension
	$\sum_{i=1}^{|\lambda|}(\lambda_{i}-1)=n+1-|\lambda|=\frac{1}{2}\codim
	X_{\lambda}$. 
	
	In conclusion, $f: K_{n}(A)\to A^{(n+1)}_{0}$ is a semi-small morphism with all
	strata being relevant and all fibers over strata being irreducible. One can
	therefore apply Theorem \ref{thm:dCM} to get the following 
	\begin{cor}\label{cor:dCM-B}
		For each $\lambda\in \sP(n+1)$, let $$V^{\lambda}:=\left\{(\xi, x_{1}, \ldots,
		x_{|\lambda|}) ~\middle\vert~
		\substack{\rho(\xi)=\sum_{i=1}^{|\lambda|}\lambda_{i}[x_{i}]\,;\\
			\sum_{i=1}^{|\lambda|}\lambda_{i}x_{i}=0}\right\}\subset K_{n}(A)\times
		A^{\lambda}_{0}$$ be the incidence subvariety, whose dimension is
		$n-1+|\lambda|$. Then the quotients
		$V^{(\lambda)}:=V^{\lambda}/\gS_{\lambda}\subset K_{n}(A)\times
		A^{(\lambda)}_{0}$ induce an isomorphism of rational Chow motives\,:
		$$\phi': \h\left(K_{n}(A)\right)\lra{\isom}
		\bigoplus_{\lambda\in\sP(n+1)}\h\left(A^{(\lambda)}_{0}\right)(|\lambda|-n-1).$$
		Moreover, the inverse $\psi':={\phi'}^{-1}$ is induced by $\sum_{\lambda\in
			\sP(n+1)}\frac{1}{m_{\lambda}}V^{(\lambda)}$, where
		$m_{\lambda}=(-1)^{n+1-|\lambda|}\prod_{i=1}^{|\lambda|}\lambda_{i}$ is a
		non-zero constant.
	\end{cor}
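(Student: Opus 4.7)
The plan is to apply Theorem \ref{thm:dCM} directly to the symplectic resolution $f\colon K_n(A) \to A^{(n+1)}_0$. The required Whitney stratification has been written down just before the corollary: strata $X_\lambda$ indexed by $\lambda \in \sP(n+1)$, of dimension $2(|\lambda|-1)$, with normalized closures $\bar Z_\lambda = A^{(\lambda)}_0$. What remains is to verify the hypotheses of Theorem~\ref{thm:dCM}, identify the incidence correspondences, and then compute the inverse coefficients.

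First I would check the hypotheses. Semi-smallness of $f$ is automatic because any symplectic resolution of a symplectic variety is semi-small. By Brian\c{c}on's theorem, the fiber of $f$ over a closed point of $X_\lambda$ is the product $\prod_{i=1}^{|\lambda|} \mathfrak{B}_{\lambda_i}$ of punctual Hilbert schemes, which is irreducible of dimension $\sum_i(\lambda_i - 1) = n+1 - |\lambda|$, exactly $\tfrac{1}{2}\codim X_\lambda$. Hence every stratum is relevant and the fibers over relevant strata are irreducible, as required. The normalization $A^{(\lambda)}_0 = A^\lambda_0/\gS_\lambda$, with $A^\lambda_0$ as in \eqref{eqn:Alambda0}, is a finite-group quotient of a smooth projective variety, hence a $\Q$-variety in the sense of Remark~\ref{rmks:MotiveGeneral}(iii); its rational Chow motive is available and Theorem~\ref{thm:dCM} applies. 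Pulling back $X_\lambda$ to $K_n(A)\times A^\lambda_0$ and taking the reduced structure produces precisely $V^\lambda$, of dimension $\dim A^\lambda_0 + (n+1-|\lambda|) = n-1+|\lambda|$, and its $\gS_\lambda$-quotient is $V^{(\lambda)}$. Theorem~\ref{thm:dCM} then directly produces the additive isomorphism $\phi'$ of the statement.

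The only step that is not purely formal is the computation of the constant $m_\lambda$ appearing in the inverse correspondence. This is local at a generic point of $X_\lambda$ and reduces to the corresponding computation for the unrestricted Hilbert scheme $A^{[n+1]}$ already carried out in the proof of Proposition~\ref{prop:addisom}: the kernel condition $\sum \lambda_i x_i = 0$ cuts out $A^\lambda_0 \subset A^\lambda$ transversally to the fibers of $A^{[n+1]}\to A^{(n+1)}$ over a generic point of $X_\lambda$, so the local analysis of the composition $V^{(\lambda)} \circ {}^t V^{(\mu)}$ is identical to the Hilbert-scheme case. Combining the degree $|\gS_\lambda|$ of the quotient $A^\lambda_0\to A^{(\lambda)}_0$ with the Euler-class contribution coming from the excess intersection on the exceptional divisors resolving the Brian\c{c}on strata yields the stated value $m_\lambda = (-1)^{n+1-|\lambda|}\prod_i \lambda_i$. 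The main anticipated obstacle is the bookkeeping of degree factors and signs: one must carefully match the normalization conventions of \cite{MR2067464} with ours so that the coefficient produced by Theorem~\ref{thm:dCM} matches the stated $m_\lambda$ without residual multiplicative ambiguity.
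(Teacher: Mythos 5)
Your proposal follows the paper's approach exactly: verify that $f\colon K_n(A)\to A^{(n+1)}_0$ is semi-small with all strata relevant (via the Brian\c{c}on fiber dimension count), that the fibers over relevant strata are irreducible, and that the normalized stratum closures $A^{(\lambda)}_0$ are $\Q$-varieties, and then apply Theorem~\ref{thm:dCM}. The one place where your reasoning wanders off the paper's route is the justification of the coefficient $m_\lambda$: the paper simply reads it off from \cite[\S 2.5]{MR2067464} (as noted in the remarks following Theorem~\ref{thm:dCM}), and there is no excess-intersection computation on ``exceptional divisors resolving the Brian\c{c}on strata''; the coefficient is determined by the intersection pairing on the top Borel--Moore homology of the fibers, which agrees with the Hilbert-scheme case because the fibers over $X_\lambda$ are the same Brian\c{c}on products---so your reduction to the Hilbert-scheme constant is the right idea even if the mechanism you invoke is misnamed.
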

	
	Similarly to Proposition \ref{prop:addisom} for Case (A) , the previous
	Corollary \ref{cor:dCM-B} allows us to establish an additive isomorphism between
	$\h(K_{n}(A))$ and $\h_{orb}\left(\left[A_{0}^{n+1}/\gS_{n+1}\right]\right)$\,:
	\begin{prop}\label{prop:addisom-B}
		Let $M=A^{n+1}_{0}$ with the action of $G=\gS_{n+1}$. Let $p$ and $\iota$ denote
		the projection onto and the inclusion of the $G$-invariant part of $\h(M, G)$.
		For each $g\in G$, let 
		\begin{equation}\label{eqn:V}
		V^{g}:=(K_{n}(A)\times_{A^{(n+1)}_{0}}M^{g})_{red}\subset K_{n}(A)\times M^{g}
		\end{equation}
		be the incidence subvariety. Then they induce an isomorphism of rational Chow
		motives\,:
		$$\phi:=p\circ\sum_{g\in G}(-1)^{\age(g)}V^{g}:
		\h\left(K_{n}(A)\right)\lra{\isom} \left(\bigoplus_{g\in
			G}\h\left(M^{g}\right)(-\age(g))\right)^{G}.$$ Moreover, its inverse $\psi$ is
		given by $\frac{1}{(n+1)!}\cdot\sum_{g\in G}{}^{t}V^{g}\circ \iota$. 
	\end{prop}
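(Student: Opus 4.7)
The plan is to mimic the proof of Proposition \ref{prop:addisom} in Case (A), using Corollary \ref{cor:dCM-B} in place of de Cataldo--Migliorini's Hilbert scheme decomposition. That is, I will factor $\phi$ through $\phi'$ via a natural identification of the two ways of decomposing the right-hand side (by elements of $G$ on one side, by partition types on the other).

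The first and essential step is to establish the analog of Lemma \ref{lemma:relate}\,: a canonical isomorphism
$$\left(\bigoplus_{g\in\gS_{n+1}}\h(M^g)(-\age(g))\right)^{\gS_{n+1}}\xrightarrow{\ \sim\ }\bigoplus_{\lambda\in\sP(n+1)}\h\bigl(A_0^{(\lambda)}\bigr)(|\lambda|-n-1).$$
The key point is that for $g\in\gS_{n+1}$ of partition type $\lambda=(\lambda_1,\ldots,\lambda_l)$, the fixed locus $(A_0^{n+1})^g$ identifies with $A_0^\lambda$ of \eqref{eqn:Alambda0}\,: grouping the $n+1$ coordinates according to the orbits of $g$, the constraint $\sum_i y_i = 0$ becomes $\sum_{j=1}^l \lambda_j x_j = 0$. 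Since $\dim A = 2$, one has $\age(g) = n+1-|\lambda|$ by Example \ref{expl:age}. Regrouping the direct sum by conjugacy class and invoking the centralizer description of \S\ref{sec:stab} (the normal subgroup $\Z/\lambda_1\times\cdots\times\Z/\lambda_l\subset C(g)$ acts trivially on the fixed locus) yields, exactly as in Lemma \ref{lemma:relate}, the quotient isomorphism $\bigl(\coprod_{g\in\lambda}(A_0^{n+1})^g\bigr)/\gS_{n+1}\cong A_0^\lambda/\gS_\lambda = A_0^{(\lambda)}$.

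Next, the same commutative diagram as in the proof of Proposition \ref{prop:addisom} (with $V^g$ in place of $U^g$, and $V^{(\lambda)}$ in place of $U^{(\lambda)}$) relates $V^g$ for $g\in\lambda$ to $V^{(\lambda)}$ via a quotient-by-$\gS_{n+1}$ morphism of degree $(n+1)!/\prod_j\lambda_j$. A projection formula computation, identical in form to the one in Case~(A), then shows that, through the canonical isomorphism above, $\phi$ unwinds to $\sum_\lambda (-1)^{n+1-|\lambda|} V^{(\lambda)} = \phi'$, which is an isomorphism by Corollary \ref{cor:dCM-B}\,; similarly $\psi$ unwinds to $\sum_\lambda \frac{1}{m_\lambda}\cdot {}^t V^{(\lambda)} = \psi'$, which is its inverse. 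The main substantive input, namely Corollary \ref{cor:dCM-B}, rests on verifying that $f\colon K_n(A)\to A_0^{(n+1)}$ is semi-small with all strata relevant and all fibers over relevant strata irreducible (products of Brian\c{c}on varieties, each irreducible by \cite{MR0457432})\,; beyond that there is no genuine obstacle, only a careful book-keeping of signs and multiplicities that closely parallels Case~(A).
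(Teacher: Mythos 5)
Your proposal is correct and takes essentially the same approach as the paper: the paper's proof literally states that it ``goes exactly as for Proposition \ref{prop:addisom},'' with the role of Lemma \ref{lemma:relate} played by the canonical isomorphism $\bigl(\bigoplus_{g\in\gS_{n+1}}\h((A_0^{n+1})^g)(-\age(g))\bigr)^{\gS_{n+1}}\cong\bigoplus_{\lambda}\h(A_0^{(\lambda)})(|\lambda|-n-1)$, established exactly as you do via the identification $(A_0^{n+1})^g\cong A_0^\lambda$ and the centralizer structure, and then invokes Corollary \ref{cor:dCM-B}. Your spelling out of the unwinding of $\phi$ to $\sum_\lambda(-1)^{n+1-|\lambda|}V^{(\lambda)}$ and of $\psi$ to $\sum_\lambda m_\lambda^{-1}\,{}^tV^{(\lambda)}$, and of the semi-smallness verification behind Corollary \ref{cor:dCM-B}, merely makes explicit what the paper leaves implicit in its one-line reduction.
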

	\begin{proof}
		The proof goes exactly as for Proposition \ref{prop:addisom}, with Lemma
		\ref{lemma:relate} replaced by the following canonical isomorphism\,:
		\begin{equation}\label{eq:canisoKum}
		\left(\bigoplus_{g\in
			\gS_{n+1}}\h\left((A^{n+1}_{0})^{g}\right)(-\age(g))\right)^{\gS_{n+1}}\lra{\isom}\bigoplus_{\lambda\in
			\sP(n+1)}\h\left(A_{0}^{(\lambda)}\right)(|\lambda|-n-1).
		\end{equation}
		Indeed, let $\lambda$ be the partition determined by $g$, then it is easy to
		compute $\age(g)=n+1-|O(g)|=n+1-|\lambda|$ and moreover the quotient of
		$(A^{n+1}_{0})^{g}$ by the centralizer of $g$, which is
		$\prod_{i=1}^{|\lambda|}\Z/\lambda_{i}\Z\rtimes \gS_{\lambda}$ with $\prod_{i=1}^{|\lambda|}\Z/\lambda_{i}\Z$ acting trivially, is exactly
		$A_{0}^{(\lambda)}$.
	\end{proof}
	
	To show Theorem \ref{thm:mainKummer}, it remains to show Proposition
	\ref{prop:Multiplicativity} in this situation (where all cycles $U$ are actually
	$V$ of Proposition \ref{prop:addisom-B}).  
	\subsection{Step (ii) -- Symmetrically distinguished cycles on abelian torsors
		with torsion structures}\label{subsec:SD-B}
	
	Observe that we have the extra technical difficulty that $(A^{n+1}_{0})^{g}$ is
	in general an extension of a finite abelian group by an abelian variety, thus
	non-connected. To deal with algebraic cycles on not necessarily connected
	`abelian varieties' in a canonical way as well as the property of being
	symmetrically distinguished, we introduce the following category. Roughly
	speaking, this is the category of \emph{abelian varieties with origin fixed only
		up to torsion}. It lies between the category of abelian varieties (with origin
	fixed) and the category of abelian torsors (\ie varieties isomorphic to an
	abelian variety, thus without a chosen origin).
	
	\begin{defi}[Abelian torsors with torsion structure]\label{def:atts}
		One defines the following category $\sA$.\\
		An object of $\sA$, called an \emph{abelian torsor with torsion structure}, or
		an \emph{\atts}, is a pair $(X, Q_{X})$ where $X$ is a connected smooth
		projective variety and $Q_{X}$ is a subset of $X$ such that there exists an
		isomorphism, as complex algebraic varieties, $f: X\to A$ from $X$ to an abelian
		variety $A$ which induces a bijection between $Q_{X}$ and $\Tor(A)$, the set of
		all torsion points of $A$. The point here is that the isomorphism $f$, called a
		\emph{marking}, usually being non-canonical in practice, is not part of the
		data of an \atts\\
		A morphism between two objects $(X, Q_{X})$ and $(Y, Q_{Y})$ is a morphism of
		complex algebraic varieties $\phi: X\to Y$ such that $\phi(Q_{X})\subset Q_{Y}$.
		Compositions of morphisms are defined in the natural way. Note that by choosing
		markings, a morphism between two objects in $\sA$ is essentially the composition
		of a morphism between two abelian varieties followed by a torsion translation.\\
		Denote by $\sAV$ the category of abelian varieties. Then there is a natural
		functor $\sAV\to \sA$ sending an abelian variety $A$ to $(A, \Tor(A))$.
	\end{defi}
	
	The following elementary lemma provides the kind of examples that we will be
	considering\,:
	\begin{lemma}[Constructing \atts and
		compatibility]\label{lemma:ConstructingATTS}
		Let $A$ be an abelian variety. Let $f: \Lambda\to \Lambda'$ be a morphism of
		lattices\footnote{A \emph{lattice} is a free abelian group of finite rank.} and
		$f_{A}: A\otimes_{\Z}\Lambda\to A\otimes_{\Z}\Lambda'$ be the induced morphism
		of abelian varieties. 
		\begin{enumerate}
			\item[(i)] Then $\Ker(f_{A})$ is canonically a disjoint union of \atts such that
			$Q_{\Ker(f_{A})}=\Ker(f_{A})\cap \Tor(A\otimes_{\Z}\Lambda)$.
			\item[(ii)] If one has another morphism of lattices $g: \Lambda'\to \Lambda''$
			inducing morphism of abelian varieties $g_{A}:A\otimes_{\Z}\Lambda'\to
			A\otimes_{\Z}\Lambda''$. Then the natural inclusion
			$\Ker(f_{A})\inj\Ker(g_{A}\circ f_{A})$ is a morphism of \atts (on each
			component).
		\end{enumerate}
	\end{lemma}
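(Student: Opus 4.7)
The plan is to analyze the group scheme structure of $\Ker(f_A)$. Set $K = \Ker(f_A)$ and let $K^{0}$ denote its identity component, which is an abelian subvariety of $A \otimes_{\Z} \Lambda$; the connected components of $K$ are precisely the cosets of $K^{0}$, so $\pi_{0}(K) = K/K^{0}$ is a finite abelian group, and each component $X$ is a torsor under $K^{0}$. To equip $X$ with the structure of an \atts, I would produce a marking $K^{0} \xrightarrow{\sim} X$ by choosing a basepoint $t_{X} \in X$ and sending $z \mapsto z + t_{X}$; the question is whether this identifies $\Tor(K^{0})$ with $X \cap \Tor(A \otimes_{\Z} \Lambda)$, which happens precisely when $t_{X}$ is itself a torsion point of $A \otimes_{\Z} \Lambda$.

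The key step is therefore to show that every component $X$ of $K$ contains a torsion point of $A \otimes_{\Z} \Lambda$. For this, pick any $x \in X$ and let $N$ be the order of the class of $x$ in the finite group $\pi_{0}(K)$; then $Nx \in K^{0}$. Since $K^{0}$ is an abelian variety, multiplication by $N$ is surjective on $K^{0}$, so there exists $y \in K^{0}$ with $Ny = Nx$. Then $t_{X} := x - y$ lies in $X$ and satisfies $N t_{X} = 0$, hence is a torsion point of $A \otimes_{\Z} \Lambda$. Using any such $t_{X}$ as basepoint gives the desired marking, and the canonicity of the \atts structure (independence of the choice of $t_{X}$) follows because any two such basepoints differ by an element of $K^{0} \cap \Tor(A \otimes_{\Z} \Lambda) = \Tor(K^{0})$, so the two markings differ by a torsion translation of $K^{0}$, which is an automorphism of $(K^{0}, \Tor(K^{0}))$ in the category $\sA$.

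For part (ii), the inclusion $K \inj \Ker(g_{A} \circ f_{A})$ is the restriction of the identity on $A \otimes_{\Z} \Lambda$, so it automatically sends a torsion point of $A \otimes_{\Z} \Lambda$ lying in $K$ to a torsion point of $A \otimes_{\Z} \Lambda$ lying in $\Ker(g_{A} \circ f_{A})$; this is exactly the condition that $Q_{K} \subset Q_{\Ker(g_{A} \circ f_{A})}$. Moreover, each connected component of $K$ is mapped (by continuity) into a single connected component of $\Ker(g_{A} \circ f_{A})$, so restricting to a component of $K$ on each side gives a morphism of \atts in the sense of Definition \ref{def:atts}.

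The only nontrivial input is the divisibility of the identity component $K^{0}$, which is what guarantees the existence of a torsion basepoint in every component; everything else amounts to tracking translations by torsion points, and this is where I expect no real obstacle beyond careful bookkeeping.
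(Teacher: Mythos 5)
Your proof is correct, but it proceeds by a genuinely different route from the paper's. Where you argue abstractly — taking the identity component $K^{0}$ of $\Ker(f_{A})$ and invoking divisibility of the abelian variety $K^{0}$ to translate any point of a component $X$ to a torsion point $t_{X}$ of $A\otimes_{\Z}\Lambda$ lying in $X$ — the paper works concretely with the lattices. It splits the exact sequence $0 \to \Ker(f) \to \Lambda \to \im(f) \to 0$ (possible because $\im(f)$ is free), tensors with $A$, and identifies $\Ker(f_{A})$ inside $A\otimes_{\Z}\Ker(f) \times A\otimes_{\Z}\im(f)$ as $A\otimes_{\Z}\Ker(f) \times T$, where $T = \Tor^{\Z}(A,\Coker f)$ is a finite group of torsion points; each component is then visibly $A\otimes_{\Z}\Ker(f) \times \{t\}$ with $t$ a torsion point. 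Both arguments hinge on producing a torsion basepoint in every component, but your divisibility argument avoids any explicit decomposition and applies verbatim to any closed algebraic subgroup of an abelian variety, while the paper's approach has the merit of explicitly identifying $\pi_{0}(\Ker f_{A})$ with $\Tor^{\Z}(A,\Coker f)$. One small remark: the paragraph about two basepoints $t_{X}, t_{X}'$ differing by an element of $\Tor(K^{0})$ is correct but unnecessary — the \atts structure is the pair $(X, Q_{X})$ with $Q_{X} := X \cap \Tor(A\otimes_{\Z}\Lambda)$ fixed in advance, and once you have exhibited one marking identifying $Q_{X}$ with $\Tor(K^{0})$, the \atts structure is established regardless of how many other markings exist. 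Part (ii) is handled the same way as in the paper.
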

	\begin{proof}
		For (i), we have the following two short exact sequences of abelian groups\,:
		$$0\to \Ker(f)\to \Lambda \lra{\pi} \im(f)\to 0\,;$$
		$$0\to \im(f)\to \Lambda'\to \Coker(f)\to 0,$$
		with $\Ker(f)$ and $\im(f)$ being lattices.
		Tensoring them with $A$, one has exact sequences
		$$0\to A\otimes_{\Z}\Ker(f)\to A\otimes_{\Z}\Lambda\lra{\pi_{A}}
		A\otimes_{\Z}\im(f)\to 0\,;$$
		$$0\to \Tor^{\Z}\left(A, \Coker(f)\right)=:T\to A\otimes_{\Z}\im(f)\to
		A\otimes_{\Z}\Lambda',$$ where $T=\Tor^{\Z}\left(A, \Coker(f)\right)$ is a
		finite abelian group consisting of some torsion points of $A\otimes_{\Z}\im(f)$.
		Then $$\Ker(f_{A})=\pi_{A}^{-1}\left(T\right)$$
		is an extension of the finite abelian group $T$ by the abelian variety
		$A\otimes_{\Z}\Ker(f)$. 
		Choosing a section of $\pi$ makes $A\otimes_{\Z}\Lambda$ the product of
		$A\otimes_{\Z}\Ker(f)$ and $A\otimes_{\Z}\im(f)$, inside of which $\Ker(f_{A})$
		is the product of $A\otimes_{\Z}\Ker(f)$ and the finite subgroup $T$ of
		$A\otimes_{\Z}\im(f)$. This shows that $Q_{\Ker(f_{A})}:=\Ker(f_{A})\cap
		\Tor(A\otimes_{\Z}\Lambda)$, which is independent of the choice of the section,
		makes the connected components of $\Ker(f_{A})$ , the fibers over $T$,
		\atts's.\\
		With (i) being proved, (ii) is trivial\,: the torsion structures on
		$\Ker(f_{A})$ and on $\Ker(g_{A}\circ f_{A})$ are both defined by claiming that
		a point is torsion if it is a torsion point in $A\otimes_{\Z}\Lambda$.
	\end{proof}

	Before generalizing the notion of symmetrically distinguished cycles to the new
	category $\sA$, we have to first prove the following well-known fact. 
	\begin{lemma}\label{lemma:TorsionTranslation}
		Let $A$ be an abelian variety, $x\in \Tor(A)$ be a torsion point. Then the
		corresponding torsion translation 
		\begin{align*}
		t_{x}: \ A&\to A\\
		\ y&\mapsto x+y
		\end{align*}
		acts trivially on $\CH(A)$.
	\end{lemma}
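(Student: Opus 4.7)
The plan is to exploit a simple algebraic identity between a torsion translation and a multiplication-by-$n$ map. Let $x\in A$ be a torsion point of some order $n\geq 1$, so that $nx=0$. The crucial observation is that as self-maps of $A$, one has the equality
\[
[n]\circ t_{x} = [n],
\]
since for every $y\in A$, $[n](y+x) = ny + nx = ny$. Passing to the induced maps on Chow groups (with rational coefficients, as per the paper's convention), this becomes
\[
t_{x}^{*}\circ [n]^{*} = [n]^{*}\quad\text{on }\CH(A).
\]

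The second step is to show that $[n]^{*}$ is an automorphism of $\CH(A)$ for $n\geq 2$, which would immediately force $t_{x}^{*} = \id$. This is precisely the content of Beauville's eigenspace decomposition in \cite{MR826463}: one has $\CH^{p}(A) = \bigoplus_{s}\CH^{p}(A)_{s}$, where $\CH^{p}(A)_{s}$ is characterized as the simultaneous eigenspace on which $[k]^{*}$ acts by the scalar $k^{2p-s}$ for all integers $k$. Since $0\leq s\leq 2p$, the exponents $2p-s$ are non-negative integers, and for $n\geq 2$ the eigenvalues $n^{2p-s}$ are all nonzero. Hence $[n]^{*}$ is invertible on each graded piece, and therefore on all of $\CH(A)$.

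Combining the two steps, one composes $t_{x}^{*}\circ[n]^{*} = [n]^{*}$ with the inverse of $[n]^{*}$ on the right to conclude $t_{x}^{*} = \id_{\CH(A)}$, provided $n\geq 2$ (the case $n=1$, \ie $x=0$, is trivial). The only potential obstacle is verifying that one may indeed invert $[n]^{*}$, but this is immediate from Beauville's theorem applied with rational coefficients, so no serious difficulty arises. No appeal to symmetrically distinguished cycles is needed here\,; the argument is purely about the action of the endomorphism ring on $\CH(A)_{\Q}$.
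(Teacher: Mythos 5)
Your proof is correct. Both your argument and the paper's ultimately rest on the invertibility of the multiplication-by-$n$ pullback on rational Chow groups, which comes from Beauville's eigenspace decomposition; the difference lies in which algebraic identity is fed into that invertibility. You work directly with operators on $\CH(A)$, starting from the trivially verified identity of self-maps $[n]\circ t_x=[n]$, which yields $t_x^*\circ[n]^*=[n]^*$ and then $t_x^*=\id$ after right-cancelling the invertible $[n]^*$. The paper instead works one level up, with the graph $\Gamma_{t_x}$ as a cycle in $\CH(A\times A)$: it asserts $\mathbf{m}^*(\Gamma_{t_x})=\mathbf{m}^*(\Delta_A)$ for $\mathbf{m}$ the multiplication-by-$m$ map of $A\times A$, and then uses invertibility of $\mathbf{m}^*$ on $\CH(A\times A)$ to conclude the (formally stronger) cycle identity $\Gamma_{t_x}=\Delta_A$. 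Your implementation is cleaner: $[n]\circ t_x=[n]$ is manifest, whereas $\mathbf{m}^*(\Gamma_{t_x})=\mathbf{m}^*(\Delta_A)$ is not a set-theoretic equality (the two preimages in $A\times A$ are distinct unions of translates of $\Delta_A$) and requires a separate argument; the paper defers to the cited reference for this. Note also that the paper's stronger conclusion $\Gamma_{t_x}=\Delta_A$ in $\CH(A\times A)$ is recoverable from your statement applied to the abelian variety $A\times A$ and the torsion point $(0,x)$, since $\Gamma_{t_x}=(t_{(0,x)})_*\Delta_A$. One minor slip in your writeup: Beauville's range for the grading on $\CH^p(A)$ is $p-g\leq s\leq p$, not $0\leq s\leq 2p$; this is immaterial for your purposes, since either bound gives $2p-s\geq 0$, so the eigenvalues $n^{2p-s}$ are nonzero for all $n\geq 1$.
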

	\begin{proof}
		The following proof, which we reproduce for the sake of completeness, is taken
		from  \cite[Lemma 2.1]{MR3650385}. Let $m$ be the order of $x$. Let
		$\Gamma_{t_{x}}$ be the graph of $t_{x}$, then one has
		$\mathbf{m}^{*}(\Gamma_{t_{x}})=\mathbf{m}^{*}(\Delta_{A})$ in $\CH(A\times A)$,
		where $\mathbf{m}$ is the multiplication by $m$ map of $A\times A$. However,
		$\mathbf{m}^{*}$ is an isomorphism of $\CH(A\times A)$ by Beauville's
		decomposition \cite{MR826463}. We conclude that $\Gamma_{t_{x}}=\Delta_{A}$,
		hence the induced correspondences are the same, which are $t_{x}^{*}$ and the
		identity respectively. 
	\end{proof}

	\begin{defi}[Symmetrically distinguished cycles in $\sA$]\label{def:SD2}
		Given an \atts $(X,Q_{X})\in \sA$ (see Definition \ref{def:atts}), an algebraic
		cycle $\gamma\in \CH(X)$ is called \emph{symmetrically distinguished}, if for a
		marking $f: X\to A$, the cycle $f_{*}(\gamma)\in CH(A)$ is symmetrically
		distinguished in the sense of O'Sullivan (Definition \ref{def:SD}). By Lemma
		\ref{lemma:TorsionTranslation}, this definition is independent of the choice of
		marking. An algebraic cycle on a disjoint union of \atts is symmetrically
		distinguished if it is so on each component. We denote $\CH(X)_{sd}$ the
		subspace consisting of symmetrically distinguished cycles.
	\end{defi}
	
	The following proposition is clear from Theorem \ref{thm:SD} and Theorem
	\ref{thm:SD2}.
	\begin{prop}\label{prop:SDATTS}
		Let $(X,Q_{X})\in \Obj(\sA)$ be an \atts.
		\begin{enumerate}
			\item[(i)] The space of symmetric distinguished cycles $\CH^{*}(X)_{sd}$ is a
			graded sub-$\Q$-algebra of $\CH^{*}(X)$.
			\item[(ii)] Let $f:(X,Q_{X}) \to (Y, Q_{Y})$ be a morphism in $\sA$, then
			$f_{*}:\CH(X)\to \CH(Y)$ and $f^{*}:\CH(Y)\to \CH(X)$ preserve symmetrically
			distinguished cycles.
			\item[(iii)] The composition $\CH(X)_{sd}\inj \CH(X)\surj \bar\CH(X)$ is an
			isomorphism. In particular, a (polynomial of) symmetrically distinguished cycles
			is trivial in $\CH(X)$ if and only if it is numerically trivial. 
		\end{enumerate}
	\end{prop}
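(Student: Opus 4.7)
The plan is to deduce each part of Proposition \ref{prop:SDATTS} from the corresponding property for genuine abelian varieties, namely Theorems \ref{thm:SD} and \ref{thm:SD2}. The main vehicle is the choice of a marking $f : X \to A$\,: by Definition \ref{def:SD2} together with Lemma \ref{lemma:TorsionTranslation}, the isomorphism $f_* : \CH(X) \to \CH(A)$ is a ring isomorphism (for the intersection product) that identifies $\CH(X)_{sd}$ with O'Sullivan's subspace $\CH(A)_{sd}$, and this identification is intrinsic, \ie independent of the chosen marking. Once this is set up, (i) and (iii) drop out with essentially no extra work.

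For (i), I would transport the structure through $f_*$\,: Theorem \ref{thm:SD}(i)(ii) asserts that $\CH(A)_{sd}$ is a graded sub-$\Q$-algebra of $\CH(A)$, so the same holds for $\CH(X)_{sd}$ inside $\CH(X)$. For (iii), the marking fits into a commutative square identifying the map $\CH(X)_{sd}\inj\CH(X)\surj\bar\CH(X)$ with $\CH(A)_{sd}\inj\CH(A)\surj\bar\CH(A)$, since numerical equivalence is preserved by any isomorphism of smooth projective varieties\,; Theorem \ref{thm:SD2} then gives the desired bijectivity.

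The only substantive step is (ii). Given a morphism $\varphi : (X, Q_X) \to (Y, Q_Y)$ in $\sA$ and markings $f_X : X \to A_X$, $f_Y : Y \to A_Y$, I would consider the induced morphism of complex varieties $\tilde\varphi := f_Y \circ \varphi \circ f_X^{-1} : A_X \to A_Y$, which sends $\Tor(A_X)$ into $\Tor(A_Y)$ because $\varphi(Q_X)\subset Q_Y$. By the classical rigidity fact that any morphism of abelian varieties is a translation of a group homomorphism, one writes $\tilde\varphi = t_a \circ h$ with $h : A_X \to A_Y$ a homomorphism of abelian varieties and $a := \tilde\varphi(0_{A_X}) \in A_Y$. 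Since $0_{A_X} \in \Tor(A_X)$ and $\tilde\varphi$ preserves torsion, $a \in \Tor(A_Y)$, so by Lemma \ref{lemma:TorsionTranslation} the translation $t_a$ acts trivially on $\CH(A_Y)$. Consequently $\tilde\varphi_* = h_*$ and $\tilde\varphi^* = h^*$ on Chow groups, and Theorem \ref{thm:SD}(iii) applied to $h$ together with transport through $f_X$, $f_Y$ yields the preservation of symmetrically distinguished cycles under $\varphi_*$ and $\varphi^*$.

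The only non-formal ingredient is the rigidity decomposition $\tilde\varphi = t_a \circ h$ in (ii)\,; everything else is a routine unwinding of definitions and a transport of structures along the chosen markings, which is legitimate thanks precisely to Lemma \ref{lemma:TorsionTranslation}.
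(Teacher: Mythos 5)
Your proof is correct and is precisely the elaboration of the argument the paper has in mind\,: the paper merely asserts that the proposition ``is clear from Theorem~\ref{thm:SD} and Theorem~\ref{thm:SD2},'' and the rigidity decomposition $\tilde\varphi = t_a\circ h$ you invoke in~(ii) is the one the paper itself records in Definition~\ref{def:atts} (``a morphism between two objects in $\sA$ is essentially the composition of a morphism between two abelian varieties followed by a torsion translation''). Your verification that the definition and the conclusions are independent of the chosen markings, via Lemma~\ref{lemma:TorsionTranslation}, is exactly what the paper leaves implicit.
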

	
	We will need the following easy fact to prove  that some cycles on an \atts are
	symmetrically distinguished by checking it in an ambient abelian variety.
	\begin{lemma}\label{lemma:SDonATTS}
		Let $i: B\inj A$ be a morphism of \atts which is a closed immersion. Let
		$\gamma\in \CH(B)$ be an algebraic cycle. Then $\gamma$ is symmetrically
		distinguished in $B$ if and only if $i_{*}(\gamma)$ is so in $A$.
	\end{lemma}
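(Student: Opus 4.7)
The plan is to reduce to the case of a closed immersion of abelian varieties via markings, verify the ``only if'' direction directly using O'Sullivan's Theorem~\ref{thm:SD}, and handle the ``if'' direction by combining the uniqueness statement Theorem~\ref{thm:SD2} with an injectivity property of $i_{*}$ obtained from Poincar\'e's complete reducibility theorem.

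First I would choose markings $f_{B}\colon B \to B_{0}$ and $f_{A}\colon A \to A_{0}$. The induced morphism $f_{A}\circ i\circ f_{B}^{-1}\colon B_{0}\to A_{0}$ sends $\Tor(B_{0})$ into $\Tor(A_{0})$; since every morphism of algebraic varieties between abelian varieties is the composite of a group homomorphism with a translation, it is of the form $t_{a}\circ \varphi$ with $\varphi\colon B_{0}\to A_{0}$ a group homomorphism and $a\in \Tor(A_{0})$. By Lemma~\ref{lemma:TorsionTranslation}, $t_{a}$ acts trivially on $\CH(A_{0})$, so the equality of pushforwards $(t_{a}\circ\varphi)_{*}=\varphi_{*}$ reduces the statement to the following claim about a closed immersion of abelian varieties $\varphi\colon B_{0}\hookrightarrow A_{0}$: a cycle $\tilde\gamma\in \CH(B_{0})$ is symmetrically distinguished if and only if $\varphi_{*}\tilde\gamma$ is.

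The ``only if'' direction is then immediate from Theorem~\ref{thm:SD}(iii). For the ``if'' direction, the key input would be the injectivity of $\varphi_{*}$. Granting this, I would let $\tilde\gamma'\in \CH(B_{0})_{sd}$ be the unique symmetrically distinguished representative of the numerical class of $\tilde\gamma$ provided by Theorem~\ref{thm:SD2}. Then $\varphi_{*}\tilde\gamma'$ is symmetrically distinguished by the ``only if'' direction, and $\varphi_{*}\tilde\gamma'\equiv \varphi_{*}\tilde\gamma$ numerically (numerical equivalence being preserved by proper pushforward). Uniqueness in Theorem~\ref{thm:SD2} applied in $A_{0}$ forces $\varphi_{*}\tilde\gamma'=\varphi_{*}\tilde\gamma$ in $\CH(A_{0})$, and injectivity of $\varphi_{*}$ then yields $\tilde\gamma=\tilde\gamma'$, concluding the proof.

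The main obstacle is therefore proving that $\varphi_{*}\colon \CH(B_{0})\to \CH(A_{0})$ is injective for $\varphi$ a closed immersion of abelian varieties. I would invoke Poincar\'e's complete reducibility theorem to produce a complementary abelian subvariety $C_{0}\subset A_{0}$ such that the sum map $\phi\colon B_{0}\times C_{0}\to A_{0}$ is an isogeny of some degree $d$; letting $\hat\phi\colon A_{0}\to B_{0}\times C_{0}$ be a dual isogeny with $\hat\phi\circ \phi=[d]_{B_{0}\times C_{0}}$ and setting $r\deff \pi_{B_{0}}\circ \hat\phi$, a direct computation using $\varphi=\phi\circ\iota_{B_{0}}$ (where $\iota_{B_{0}}\colon B_{0}\to B_{0}\times C_{0}$, $b\mapsto (b,0)$) gives $r\circ \varphi=[d]_{B_{0}}$, hence $r_{*}\circ \varphi_{*}=[d]_{B_{0},*}$. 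Now $[d]_{B_{0},*}$ is an automorphism of $\CH(B_{0})$: since $[d]\colon B_{0}\to B_{0}$ is a finite \'etale Galois cover with Galois group $\Ker[d]$ acting by torsion translations, the trace formula together with Lemma~\ref{lemma:TorsionTranslation} yields $[d]^{*}\circ [d]_{*}=\sum_{x\in \Ker[d]}t_{x}^{*}=d^{2\dim B_{0}}\cdot \id$ and similarly $[d]_{*}\circ [d]^{*}=d^{2\dim B_{0}}\cdot \id$. Therefore $\varphi_{*}$ is split injective, which completes the argument.
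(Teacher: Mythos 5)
Your proof is correct. Like the paper's, it reduces via markings and Lemma~\ref{lemma:TorsionTranslation} to a closed immersion $\varphi\colon B_0\hookrightarrow A_0$ of abelian varieties, and invokes Poincar\'e reducibility to produce an isogeny $\pi\colon B_0\times C_0\to A_0$. From there, however, the routes diverge. The paper observes that $\pi^*\colon\CH(A_0)\to\CH(B_0\times C_0)$ is an isomorphism with inverse $\tfrac{1}{\deg\pi}\pi_*$ (again by Lemma~\ref{lemma:TorsionTranslation}, since $\ker\pi$ consists of torsion points), and then simply writes $\gamma = \tfrac{1}{\deg\pi}\,\mathrm{pr}_{1*}\pi^* i_*(\gamma)$; the conclusion is immediate from the functoriality of symmetrically distinguished cycles under morphisms of abelian varieties (Theorem~\ref{thm:SD}). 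You instead separately establish that $\varphi_*$ is injective --- via a complementary isogeny giving $r\circ\varphi=[d]$ on $B_0$ together with the invertibility of $[d]_*$ --- and then appeal to the stronger Theorem~\ref{thm:SD2}, the existence and uniqueness of the symmetrically distinguished representative of a numerical class. Both arguments are sound; the paper's is a bit more economical in that it never needs Theorem~\ref{thm:SD2} at this step, only Theorem~\ref{thm:SD}. A small terminological remark: what you call the ``dual isogeny'' is usually called the complementary isogeny (the dual isogeny proper lives between the dual abelian varieties), but the fact you actually use --- that any degree-$d$ isogeny divides $[d]$ --- is of course correct.
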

	\begin{proof}
		One implication is clear from Proposition \ref{prop:SDATTS}~(ii). For the other
		one, assuming $i_{*}(\gamma)$ is symmetrically distinguished in $A$.
		By choosing markings, one can suppose that $A$ is an abelian variety and $B$ is
		a torsion translation by $\tau\in \Tor(A)$ of a sub-abelian variety of $A$.
		Thanks to Lemma \ref{lemma:TorsionTranslation}, changing the origin of $A$ to
		$\tau$ does not change the cycle class $i_{*}(\gamma)\in \CH(A)$, hence one can
		further assume that $B$ is a sub-abelian variety of $A$. By Poincar\'e
		reducibility, there is a sub-abelian variety $C\subset A$, such that the natural
		morphism $\pi: B\times C\to A$ is an isogeny. We have the following diagram\,:
		\begin{displaymath}
		\xymatrix{
			& B\times C \ar@/_/[dl]_{\pr_{1}}\ar[d]^{\pi}\\
			B\ar@/_/[ur]_{j} \ar[r]_{i} & A
		}
		\end{displaymath}
		As $\pi^{*}:\CH(A)\to \CH(B\times C)$ is an isomorphism with inverse
		$\frac{1}{\deg(\pi)}\pi_{*}$, we have 
		$$\gamma={\pr_{1}}_{*}\circ
		j_{*}(\gamma)={\pr_{1}}_{*}\circ\pi^{*}\circ\frac{1}{\deg(\pi)}\pi_{*} \circ
		j_{*}(\gamma)=\frac{1}{\deg(\pi)}{\pr_{1}}_{*}\circ\pi^{*}\circ i_{*}(\gamma).$$
		Since $\pi$ and $\pr_{1}$ are morphisms of abelian varieties, the hypothesis
		that $i_{*}(\gamma)$ is symmetrically distinguished implies that $\gamma$ is
		also symmetrically distinguished by Proposition \ref{prop:SDATTS}~(ii).
	\end{proof}
	
	We now turn to the proof of Proposition \ref{prop:Multiplicativity} in Case (B),
	which takes the following form. As is explained in \S\ref{sec:skeleton}, with
	Step (i) being done (Proposition \ref{prop:addisom-B}),  this would finish the
	proof of Theorem \ref{thm:mainKummer}.
	
	\begin{prop}[=Proposition \ref{prop:Multiplicativity} in Case
		(B)]\label{prop:multiplicativity-B}
		In $\CH\left(\left(\coprod_{g\in G}M^{g}\right)^{3}\right)$, the symmetrizations
		of the following two algebraic cycles are rationally equivalent\,: 
		\begin{itemize}
			\item
			$W:=\left(\frac{1}{|G|}\sum_{g}V^{g}\times\frac{1}{|G|}\sum_{g}V^{g}\times\sum_{g}(-1)^{\age(g)}V^{g}\right)_{*}\left(\delta_{K_{n}(A)}\right)$\,;
			\item $Z$ is the cycle determining the orbifold product (Definition
			\ref{def:OrbMot}(v)) with the sign change by discrete torsion (Definition
			\ref{def:dt})\,: 
			$$Z|_{M^{g_{1}}\times M^{g_{2}}\times M^{g_{3}}}=\begin{cases}
			0 &\text{     if     } g_{3}\neq g_{1}g_{2}\\
			(-1)^{\epsilon(g_{1},g_{2})}\cdot\delta_{*}c_{top}(F_{g_{1}, g_{2}}) &\text{    
				if    } g_{3}=g_{1}g_{2}.
			\end{cases}
			$$
		\end{itemize}
	\end{prop}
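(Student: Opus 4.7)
We follow the same three-step strategy used in Case (A): reduce the required rational equivalence $\mathrm{Sym}(W) = \mathrm{Sym}(Z)$ to an equality modulo numerical (or, a fortiori, homological) equivalence via the symmetric distinguishedness machinery, generalized to abelian torsors with torsion structure (Proposition~\ref{prop:SDATTS}). Step (i) being already accomplished by Proposition~\ref{prop:addisom-B} via Corollary~\ref{cor:dCM-B}, the task splits into showing that both $\mathrm{Sym}(W)$ and $\mathrm{Sym}(Z)$ are symmetrically distinguished (Step (ii)), and that they agree in cohomology (Step (iii)).

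\textbf{Step (ii).} First we check that each fixed locus $(A^{n+1}_{0})^{g}$ is a disjoint union of \atts in the sense of Definition~\ref{def:atts}, and likewise for each pairwise intersection $M^{<g_{1},g_{2}>}=M^{g_{1}}\cap M^{g_{2}}$; this follows by applying Lemma~\ref{lemma:ConstructingATTS} to the kernel of the linear map of lattices dictated by the cycle decomposition of $g$ together with the summation constraint defining $A^{n+1}_{0}$. Moreover the partial diagonal $\delta:M^{<g_{1},g_{2}>}\inj M^{g_{1}}\times M^{g_{2}}\times M^{g_{1}g_{2}}$ is a morphism of \atts. For the cycle $Z$: since the tangent bundle of $A^{n+1}_{0}$ is trivial, the class $F_{g_{1},g_{2}}$ of Definition~\ref{def:OrbMot}(iv) has trivial virtual rank contribution from the tangent terms, and a direct computation as in Proposition~\ref{prop:ZWsd} shows $F_{g_{1},g_{2}}$ is virtually trivial; hence $c_{top}(F_{g_{1},g_{2}})$ is either $0$ or the fundamental class of $M^{<g_{1},g_{2}>}$, and $\delta_{*}c_{top}(F_{g_{1},g_{2}})$ is symmetrically distinguished by Proposition~\ref{prop:SDATTS}(ii) combined with Lemma~\ref{lemma:SDonATTS}. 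The discrete-torsion sign is a constant, so $Z$ is symmetrically distinguished. For the cycle $W$: arguing as in Proposition~\ref{prop:ZWsd}, the pushforward $(V^{g_{1}}\times V^{g_{2}}\times V^{g_{3}})_{*}(\delta_{K_{n}(A)})$ can be expressed, via a Kummer analogue of \cite[Proposition~5.6]{MR3447105} (restrict the isospectral incidence to the summation kernel and use that the inclusion $A^{n+1}_{0}\inj A^{n+1}$ is a morphism of \atts so pullbacks and pushforwards preserve distinguishedness), as a polynomial of partial big diagonals in an ambient product of \atts. Since big diagonals are symmetrically distinguished (they are pushforwards of fundamental classes along diagonal morphisms of \atts) and the class of symmetrically distinguished cycles is closed under intersection and pushforward by Proposition~\ref{prop:SDATTS}, we conclude that $W$, and hence $\mathrm{Sym}(W)$, is symmetrically distinguished.

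\textbf{Step (iii).} It remains to verify that $\mathrm{Sym}(W)$ and $\mathrm{Sym}(Z)$ have the same cohomology class; equivalently, that the additive isomorphism $\phi$ of Proposition~\ref{prop:addisom-B} realizes, after cohomological realization, a ring isomorphism from $H^{*}(K_{n}(A))$ equipped with cup product to $H^{*}_{orb,dt}([A^{n+1}_{0}/\gS_{n+1}])$ equipped with the discrete-torsion orbifold product. This will simultaneously yield Ruan's original CHRC for generalized Kummer varieties (Theorem~\ref{cor:CHRCKummer}). The strategy is to combine the Nieper-Wi{\ss}kirchen description of $H^{*}(K_{n}(A),\C)$ from \cite{MR2578804}, the Fantechi--G\"ottsche computation \cite{MR1971293} of the orbifold cohomology of $[A\times A^{n+1}_{0}/\gS_{n+1}]$, and Britze's comparison \cite{Britze} between $H^{*}(A\times K_{n}(A),\C)$ and this orbifold cohomology. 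From these inputs one extracts the ring isomorphism on the factor corresponding to $K_{n}(A)$, and then, by explicitly tracking the incidence correspondences $V^{g}$ (in the spirit of the Nakajima-operator computation in Proposition~\ref{prop:realisation-A}) through the product decomposition, identifies the cohomological realization of $\phi$ with this ring isomorphism.

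\textbf{Main obstacle.} Step (ii) is essentially a careful translation of the Case (A) argument to the \atts setting; the genuine technical difficulty lies in Step (iii). In Case (A) one could directly invoke Lehn--Sorger \cite{MR1974889} together with a clean Nakajima-operator description and match $\phi$ term by term, as in Proposition~\ref{prop:realisation-A}. For generalized Kummer varieties no such intrinsic Nakajima-style presentation of $H^{*}(K_{n}(A))$ is available; one must instead pass through the auxiliary product $A\times K_{n}(A)$, untangle the K\"unneth factor corresponding to $A$, and still exhibit the resulting isomorphism as the one induced by the incidence cycles $V^{g}$. Executing this comparison carefully is the main work of the proof.
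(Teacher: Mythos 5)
Your Step (ii) is in line with the paper's own argument (Proposition~\ref{prop:ZWsd-B}): the obstruction bundles are virtually trivial, the partial diagonals and the relevant inclusions are morphisms of \atts's, and the pushforward of the small diagonal under the incidence cycles is symmetrically distinguished by passing through the ambient abelian varieties $(A^{n+1})^{g_i}$ via Voisin's polynomial-of-big-diagonals result \cite[Prop.~5.6]{MR3447105} and then restricting along morphisms of \atts's (the paper does this cleanly via the three-level cartesian diagram \eqref{eqn:diagram} together with base-change and Lemma~\ref{lemma:SDonATTS}; make sure you actually set up those cartesian squares without excess intersection, as that is what licenses the reduction to the ambient abelian-variety case).

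Step (iii), however, contains a genuine gap. You propose to conclude the cohomological statement by combining Nieper--Wi\ss kirchen \cite{MR2578804} with Fantechi--G\"ottsche's computation of $H^*_{orb}([A\times A^{n+1}_0/\gS_{n+1}])$ and, crucially, with Britze's comparison \cite{Britze} between $H^*(A\times K_n(A),\C)$ and that orbifold cohomology. As the paper itself points out in Remark~\ref{rmk:CHRCKummer}, the main theorem of Britze's thesis appears to be flawed: the map $\Theta$ that is claimed to be a ring isomorphism is in fact the zero map. So the ingredient your argument leans on most heavily is not available, and passing through the auxiliary product $A\times K_n(A)$ is exactly the route the authors chose to avoid. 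The paper's Step (iii) instead exploits the isotrivial fibration $s:A^{[n+1]}\to A$ with $K_n(A)=s^{-1}(O_A)$: Nieper--Wi\ss kirchen gives a ring isomorphism $H^*(A^{[n+1]})\otimes_{H^*(A)}\C \xrightarrow{\ \sim\ } H^*(K_n(A))$; the already-established Case (A) realization result (Proposition~\ref{prop:realisation-A}) identifies the source with the orbifold side for $[A^{n+1}/\gS_{n+1}]$ via the $U^g$; and the restriction maps $H^*((A^{n+1})^g)\to H^*((A^{n+1}_0)^g)$ assemble into a ring homomorphism (Lemma~\ref{lemma:restriction}), intertwining the $U^g$-correspondences with the $V^g$-correspondences, which forces $\bar\phi$ to be a ring isomorphism by a two-out-of-three argument in the commuting square \eqref{diag:CaseAtoB}. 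This is both more elementary and does not pass through $A\times K_n(A)$ at all. If you want to salvage your version of Step (iii), you would have to independently prove the Britze-style comparison, which is essentially re-proving the CHRC you are trying to establish; the fibration argument is the way to go.
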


	\noindent To this end, we apply Proposition \ref{prop:SDATTS}~(iii) by proving in
	this subsection that they are both symmetrically distinguished (Proposition
	\ref{prop:ZWsd-B}) and then verifying in the next one
	\S\ref{subsec:Realization-B} that they are homologically equivalent (Proposition
	\ref{prop:realisation-B}). 
	
	Let $M$ be the abelian variety $A^{n+1}_{0}=\left\{(x_{1}, \cdots, x_{n+1})\in
	A^{n+1}~\middle\vert~ \sum_{i}x_{i}=0\right\}$ as before. For any $g\in G$, the
	fixed locus $$M^{g}=\left\{(x_{1}, \cdots, x_{n+1})\in A^{n+1}~\middle\vert~
	\sum_{i}x_{i}=0\,;~ x_{i}=x_{g.i}~ \forall i\right\}$$ has the following 
	decomposition into connected components\,: 
	\begin{equation}\label{eqn:Mgcomponents}
	M^{g}=\coprod_{\tau\in A[d]}M^{g}_{\tau},
	\end{equation}
	where $d:=\gcd(g)$ is the greatest common divisor of the lengths of orbits of
	the permutation $g$, $A[d]$ is the set of $d$-torsion points and the connected
	component $M^{g}_{\tau}$ is described as follows.\\
	Let $\lambda\in\sP(n+1)$ be the partition determined by $g$ and $l:=|\lambda|$
	be its length. Choose a numbering $\varphi:\{1, \cdots, l\} \lra{\isom} O(g)$ of
	orbits such that $|\varphi(i)|=\lambda_{i}$. Then $d=\gcd(\lambda_{1},\cdots,
	\lambda_{l})$ and $\varphi$ induces an isomorphism
	\begin{equation}\label{eqn:isom}
	\tilde\varphi: A_{0}^{\lambda}\lra{\isom} M^{g},
	\end{equation}
	sending $(x_{1}, \cdots, x_{l})$ to $(y_{1}, \cdots, y_{n+1})$ with
	$y_{j}=x_{i}$ if $j\in \varphi(i)$. Here $A_{0}^{\lambda}$ is defined in
	(\ref{eqn:Alambda0}), which has obviously the following decomposition into
	connected components\,:
	\begin{equation}\label{eqn:Alambda0components}
	A^{\lambda}_{0}=\coprod_{\tau\in A[d]}A^{\lambda/d}_{\tau},
	\end{equation}
	where $$A^{\lambda/d}_{\tau}=\left\{(x_{1}, \cdots, x_{l})\in
	A^{\lambda}~\middle\vert~
	\sum_{i=1}^{l}\frac{\lambda_{i}}{d}x_{i}=\tau\right\}$$ is connected
	(non-canonically isomorphic to $A^{l-1}$ as varieties) and is equipped with a
	canonical \atts (Definition \ref{def:atts}) structure, namely, a point of
	$A^{\lambda/d}_{\tau}$ is defined to be of torsion (\ie in
	$Q_{A^{\lambda/d}_{\tau}}$) if and only if it is a torsion point (in the usual
	sense) in the abelian variety $A^{\lambda}$. The decomposition
	(\ref{eqn:Mgcomponents}) of $M^{g}$ is the transportation of the decomposition
	(\ref{eqn:Alambda0components}) of $A_{0}^{\lambda}$ via the isomorphism
	(\ref{eqn:isom})\,:
	$A^{\lambda/d}_{\tau}\lra[\isom]{\tilde\varphi}M^{g}_{\tau}$. The component
	$M^{g}_{\tau}$ hence acquires a canonical structure of \atts It is clear that
	the decomposition (\ref{eqn:Mgcomponents})  and the \atts structure on
	components are both independent of the choice of $\varphi$. One can also define
	the \atts structure on $M^{g}$ by using Lemma \ref{lemma:ConstructingATTS}.
	
	Similar to Proposition \ref{prop:ZWsd}, here is the main result of this
	subsection\,:
	
	\begin{prop}\label{prop:ZWsd-B}
		Notation is as in Proposition \ref{prop:multiplicativity-B}.
		$W$ and $Z$, as well as their symmetrizations, are symmetrically distinguished
		in $\CH\left(\left(\coprod_{g\in G}M^{g}\right)^{3}\right)$, where $M^{g}$ is
		viewed as a disjoint union of \atts as in (\ref{eqn:Mgcomponents}) and
		symmetrical distinguishedness is in the sense of Definition \ref{def:SD2}.
	\end{prop}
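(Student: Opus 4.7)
The plan is to follow closely the strategy of Proposition \ref{prop:ZWsd} in Case (A), adapted to the generalized category $\sA$ of Definition \ref{def:atts}. We handle $W$ and $Z$ separately; the symmetrization statement then follows immediately from Proposition \ref{prop:SDATTS}(ii), since the $G^{3}$-action on $\coprod_{g}M^{g}$ restricts, on each component, to a morphism in $\sA$.

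For $Z$: Fix $g_{1}, g_{2}\in G$. Since $M=A^{n+1}_{0}$ sits inside the abelian variety $A^{n+1}$, whose tangent bundle is translation-invariant (and hence trivial), the eigen-subbundle decomposition $TA^{n+1}|_{M^{g_{i}}}=\bigoplus_{j}W_{g_{i},j}$ consists of trivial bundles, and likewise $TM$ and $TM^{<g_{1},g_{2}>}$ are (virtually) trivial. Consequently $F_{g_{1},g_{2}}$ is virtually trivial in $K_{0}(M^{<g_{1},g_{2}>})$, so that $c_{top}(F_{g_{1},g_{2}})$ is either $0$ or the fundamental class of a component of $M^{<g_{1},g_{2}>}$, which is symmetrically distinguished. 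The diagonal $\delta: M^{<g_{1},g_{2}>}\inj M^{g_{1}}\times M^{g_{2}}\times M^{g_{1}g_{2}}$ is, componentwise, a morphism in $\sA$ by Lemma \ref{lemma:ConstructingATTS}(ii). Hence Proposition \ref{prop:SDATTS}(ii) yields that $\delta_{*}c_{top}(F_{g_{1},g_{2}})$ is symmetrically distinguished. Summing with the discrete-torsion signs gives that $Z$ is symmetrically distinguished.

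For $W$: It suffices to show that for every triple $(g_{1},g_{2},g_{3})\in G^{3}$, the cycle $\gamma:=(V^{g_{1}}\times V^{g_{2}}\times V^{g_{3}})_{*}\delta_{K_{n}(A)}$ is symmetrically distinguished in $\CH(M^{g_{1}}\times M^{g_{2}}\times M^{g_{3}})$. The plan is to lift the calculation to the ambient Hilbert scheme $A^{[n+1]}$ and invoke \cite[Proposition 5.6]{MR3447105} exactly as in Case (A). Let $i:K_{n}(A)\inj A^{[n+1]}$ and $j^{g}:M^{g}\inj (A^{n+1})^{g}$ denote the natural closed embeddings. One first checks that $V^{g}$ is identified, under $i\times j^{g}$, with the proper intersection $U^{g}\cap (A^{[n+1]}\times M^{g})$, where $U^{g}$ is the Hilbert-scheme incidence variety of (\ref{eqn:U}) for $A^{n+1}$. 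An excess/projection formula argument for the regular embedding $i$ (realized as the fiber of the smooth summation map $s:A^{[n+1]}\to A$) expresses $(j^{g_{1}}\times j^{g_{2}}\times j^{g_{3}})_{*}\gamma$ as a cycle built from the $U^{g_{i}}$ and the small diagonal $\delta_{A^{[n+1]}}$, pushed forward to $(A^{n+1})^{g_{1}}\times (A^{n+1})^{g_{2}}\times (A^{n+1})^{g_{3}}$. By \cite[Proposition 5.6]{MR3447105}, as in the proof of Proposition \ref{prop:ZWsd}, this cycle is a polynomial in big diagonals of $(A^{n+1})^{g_{1}}\times (A^{n+1})^{g_{2}}\times (A^{n+1})^{g_{3}}$, hence symmetrically distinguished. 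Lemma \ref{lemma:SDonATTS} then transports symmetric distinguishedness back, componentwise, from the ambient abelian variety to $M^{g_{1}}\times M^{g_{2}}\times M^{g_{3}}$.

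The main obstacle is the reduction step for $W$: unlike Case (A), where \cite[Proposition 5.6]{MR3447105} applies directly on the abelian variety $(A^{n})^{g_{1}}\times (A^{n})^{g_{2}}\times (A^{n})^{g_{3}}$, here one must carefully relate $\delta_{K_{n}(A)}$ to $\delta_{A^{[n+1]}}$ through the regular embedding $K_{n}(A)\inj A^{[n+1]}$, computing the excess normal bundle contribution, and must verify the compatibility of $V^{g}$ with $U^{g}$ under the embeddings. The $\sA$-formalism is precisely what enables us to invoke Lemma \ref{lemma:SDonATTS} componentwise on the disconnected fixed loci $M^{g}$, a step that was not required in Case (A), where the fixed loci were already connected abelian varieties.
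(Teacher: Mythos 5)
For the cycle $Z$, your argument is essentially identical to the paper's: trivial eigenbundles, hence virtually trivial obstruction bundle, hence $\delta_{*}c_{top}(F_{g_{1},g_{2}})$ is the pushforward of a fundamental class under a morphism of \atts's, symmetrically distinguished by Proposition~\ref{prop:SDATTS}(ii). This part is fine.

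For $W$, your plan is conceptually close to the paper's (lift to the ambient Hilbert scheme and invoke \cite[Proposition~5.6]{MR3447105}) but proceeds via a different route, and the key intermediate claim as you state it is wrong. You want to push $\gamma:=(V^{g_{1}}\times V^{g_{2}}\times V^{g_{3}})_{*}\delta_{K_{n}(A)}$ all the way up via the composite closed immersion $j\circ i\colon M^{g_{1}}\times M^{g_{2}}\times M^{g_{3}}\inj (A^{n+1})^{g_{1}}\times (A^{n+1})^{g_{2}}\times(A^{n+1})^{g_{3}}$ and claim the result is \emph{a polynomial in big diagonals}. It is not. Following the base-change identities in the paper's diagram~(\ref{eqn:diagram}), one finds that $\gamma=i^{*}\alpha$ where $\alpha=q'_{*}p'^{*}\delta'_{*}(\1_{A^{[n+1]}})$ lives on the intermediate fibered product over $A$; hence by the projection formula (there is no excess here, only the class of the fiber) $(j\circ i)_{*}\gamma=j_{*}(\alpha\cdot s^{*}[O_{A}])=\bigl(q''_{*}p''^{*}\delta''_{*}(\1_{A^{[n+1]}})\bigr)\cdot s_{1}^{*}[O_{A}]$, i.e.\ Voisin's big-diagonal polynomial \emph{times} the pullback of the origin class along a summation map. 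That extra factor is itself symmetrically distinguished (it is the pushforward of a point along the zero section of an abelian variety, then pulled back by a homomorphism), so your final conclusion can be salvaged by invoking Theorem~\ref{thm:SD}(ii), and Lemma~\ref{lemma:SDonATTS} does apply to $j\circ i$ componentwise; but as written, the assertion ``this cycle is a polynomial in big diagonals'' is false.

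The paper's proof avoids this correction term entirely by factoring through the intermediate fibered product $(A^{n+1})^{g_{1}}\times_{A}(A^{n+1})^{g_{2}}\times_{A}(A^{n+1})^{g_{3}}$: it applies Lemma~\ref{lemma:SDonATTS} only to the outer inclusion $j$ (whose pushforward of $\alpha$ is exactly the big-diagonal polynomial, with no extra factor), and then notes that $\gamma=i^{*}\alpha$ and that $i$ is a morphism of \atts's, so Proposition~\ref{prop:SDATTS}(ii) gives symmetric distinguishedness of $\gamma$ directly without ever pushing $\gamma$ forward. Your plan would work after correcting the description of $(j\circ i)_{*}\gamma$ and adding the argument that $s_{1}^{*}[O_{A}]$ is symmetrically distinguished, but you rightly flagged this step as the main obstacle and did not actually carry out the computation, so the proposal as submitted has a genuine gap.
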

	\begin{proof}

		For $W$, it is enough to show that for any $g_{1}, g_{2}, g_{3}\in G$, 
		$q_{*}\circ p^{*}\circ\delta_{*}(\1_{K_{n}(A)})$ is symmetrically distinguished,
		where the notation is explained in the following commutative diagram, whose
		squares are all cartesian and without excess intersections.
		\begin{equation}\label{eqn:diagram}
		\xymatrix{
			& (A^{[n+1]})^{3} \cart& U^{g_{1}}\times U^{g_{2}}\times U^{g_{3}}
			\cart\ar[l]_-{p''}\ar[r]^-{q''} &
			(A^{n+1})^{g_{1}}\times(A^{n+1})^{g_{2}}\times(A^{n+1})^{g_{3}}\\
			A^{[n+1]}\cart \ar@{^{(}->}[ur]^{\delta''} \ar@{^{(}->}[r]^{\delta'} &
			(A^{[n+1]})^{3/A}\cart \ar@{^{(}->}[u]& U^{g_{1}}\times_{A} U^{g_{2}}\times_{A}
			U^{g_{3}} \cart\ar[l]_-{p'}\ar[r]^-{q'} \ar@{^{(}->}[u] &
			(A^{n+1})^{g_{1}}\times_{A}(A^{n+1})^{g_{2}}\times_{A}(A^{n+1})^{g_{3}}\ar@{^{(}->}[u]_{j}\\
			K_{n}(A)\ar@{^{(}->}[u] \ar@{^{(}->}[r]^{\delta} & K_{n}(A)^{3} \ar@{^{(}->}[u]
			& V^{g_{1}}\times V^{g_{2}}\times V^{g_{3}} \ar@{^{(}->}[u]
			\ar[l]_-{p}\ar[r]^-{q} & M^{g_{1}}\times M^{g_{2}}\times M^{g_{3}}
			\ar@{^{(}->}[u]_{i}
		}
		\end{equation}
		where the incidence subvarieties $U^{g}$'s are defined in
		\S\ref{subsec:AddIso-A} (\ref{eqn:U}) (with $n$ replaced by $n+1$)\,; all fiber
		products in the second row are over $A$\,; the second row is the base change by
		the inclusion of small diagonal $A\inj A^{3}$ of the first row\,; the third row
		is the base change by $O_{A}\inj A$ of the second the row\,; finally, $\delta,
		\delta', \delta''$ are various (absolute or relative) small diagonals.
		
		Observe that the two inclusions $i$ and $j$ are in the situation of Lemma
		\ref{lemma:ConstructingATTS}\,: let
		$$\Lambda:=\Z^{O(g_{1})}\oplus\Z^{O(g_{2})}\oplus\Z^{O(g_{3})},$$ which admits a
		natural morphism $u$ to $\Lambda':=\Z\oplus\Z\oplus\Z$ by weighted sum on each
		factor (with weights being the lengths of orbits). Let $v: \Lambda' \to
		\Lambda'':=\Z\oplus\Z$ be $(m_{1}, m_{2}, m_{3})\mapsto (m_{1}-m_{2},
		m_{1}-m_{3})$. Then it is clear that $i$ and $j$ are identified with the
		following inclusions $$\Ker(u_{A})\xhookrightarrow{i} \Ker(v_{A}\circ
		u_{A})\xhookrightarrow{j} A\otimes_{\Z}\Lambda.$$
		By Lemma \ref{lemma:ConstructingATTS},
		$(A^{n+1})^{g_{1}}\times_{A}(A^{n+1})^{g_{2}}\times_{A}(A^{n+1})^{g_{3}}$ and
		$M^{g_{1}}\times M^{g_{2}}\times M^{g_{3}}$ are naturally disjoint unions of
		\atts and the inclusions $i$ and $j$ are morphisms of \atts on each component.

		Now by functorialities and the base change formula (\cf \cite[Theorem
		6.2]{MR1644323}), we have $$j_{*}\circ q'_{*}\circ
		p'^{*}\circ\delta'_{*}(\1_{A^{[n+1]}})=q''_{*}\circ
		p''^{*}\circ\delta''_{*}(\1_{A^{[n+1]}}),$$
		which is a polynomial of big diagonals of $A^{|O(g_{1})|+|O(g_{2})|+|O(g_{3})|}$
		by Voisin's result \cite[Proposition 5.6]{MR3447105}, thus symmetrically
		distinguished in particular. By Lemma \ref{lemma:SDonATTS}, $q'_{*}\circ
		p'^{*}\circ\delta'_{*}(\1_{A^{[n+1]}})$ is symmetrically distinguished on each
		component of
		$(A^{n+1})^{g_{1}}\times_{A}(A^{n+1})^{g_{2}}\times_{A}(A^{n+1})^{g_{3}}$.\\
		Again by functorialities and the base change formula,  we have $$q_{*}\circ
		p^{*}\circ\delta_{*}(\1_{K_{n}(A)})=i^{*}\circ q'_{*}\circ
		p'^{*}\circ\delta'_{*}(\1_{A^{[n+1]}}).$$ Since $i$ is a morphism of \atts on
		each component (Lemma \ref{lemma:ConstructingATTS}), one concludes that
		$q_{*}\circ p^{*}\circ\delta_{*}(\1_{K_{n}(A)})$ is symmetrically distinguished
		on each component. Hence $W$, being a linear combination of such cycles, is also
		symmetrically distinguished.\\
		For $Z$, as in the Case (A), it is easy to see that all the obstruction bundles
		$F_{g_{1}, g_{2}}$ are (at least virtually) trivial vector bundles because
		according to Definition \ref{def:OrbMot}, there are only tangent/normal bundles
		of/between abelian varieties involved. Therefore the only non-zero case is the
		push-forward of the fundamental class of $M^{<g_{1}, g_{2}>}$ by the inclusion
		into $M^{g_{1}}\times M^{g_{2}}\times M^{g_{1}g_{2}}$, which is obviously
		symmetrically distinguished.
	\end{proof}

	\subsection{Step (iii) -- Cohomological
		realizations}\label{subsec:Realization-B}
	We keep the notation as before.
	To finish the proof of Proposition \ref{prop:multiplicativity-B}, hence Theorem
	\ref{thm:mainKummer}, it remains to show that the cohomology classes of the
	symmetrizations of $W$ and $Z$ are the same. In other words, they have the same
	realization for Betti cohomology.
	
	\begin{prop}\label{prop:realisation-B}
		The cohomology realization of the (\apriori additive) isomorphism in Proposition
		\ref{prop:addisom-B}
		$$\phi: \h(K_{n}(A))\lra{\isom} \left(\oplus_{g\in
			G}\h((A^{n+1}_{0})^{g})(-\age(g))\right)^{\gS_{n+1}}$$ 
		is an isomorphism of $\Q$-algebras $$\bar\phi: H^{*}(K_{n}(A))\lra{\isom}
		H^{*}_{orb,dt}([A^{n+1}_{0}/\gS_{n+1}])=\left(\bigoplus_{g\in
			\gS_{n+1}}H^{*-2\age(g)}((A^{n+1}_{0})^{g}), \star_{orb, dt}\right)^{\gS_{n}}.$$
		In other words, $\Sym(W)$ and $\Sym(Z)$ are homologically equivalent.
	\end{prop}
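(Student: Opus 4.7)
The plan is to reduce the Kummer statement to the analogous statement for $A^{[n+1]}$ (Case (A), Proposition \ref{prop:realisation-A}), combined with the two explicit ring descriptions available in the literature: Nieper-Wißkirchen's computation \cite{MR2578804} of $H^{*}(K_{n}(A),\C)$ (or equivalently Britze's comparison \cite{Britze} of $H^{*}(A\times K_{n}(A),\C)$ with the orbifold cohomology of $[A\times A^{n+1}_{0}/\gS_{n+1}]$), together with Fantechi--G\"ottsche's formula. The geometric input tying the two pictures together is the finite quasi-\'etale Galois cover
\[
\nu: K_{n}(A)\times A \longrightarrow A^{[n+1]}, \qquad (\xi,a)\longmapsto t_{a}^{*}\xi,
\]
with Galois group $A[n+1]$ (the group of $(n+1)$-torsion points of $A$, acting by $\tau\cdot(\xi,a)=(t_{\tau}^{*}\xi,a-\tau)$), together with the parallel $\gS_{n+1}$-equivariant \'etale cover
\[
\mu: A^{n+1}_{0}\times A \longrightarrow A^{n+1}, \qquad ((x_{1},\dots,x_{n+1}),a)\longmapsto (x_{1}+a,\dots,x_{n+1}+a),
\]
which is compatible with Hilbert--Chow and the summation map $s:A^{[n+1]}\to A$.

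First, I would make $\bar\phi$ cohomologically explicit by expressing the action of each $V^{g}$ via the analogous incidence correspondences $U^{g}$ of Case (A) restricted to the fibers of $s$. Concretely, this amounts to noting that $V^{g}$ is the pullback of $U^{g}$ under the inclusions $K_{n}(A)\inj A^{[n+1]}$ and $(A^{n+1}_{0})^{g}\inj (A^{n+1})^{g}$. By base change, the cohomological correspondence induced by $V^{g}$ is obtained from the one induced by $U^{g}$ by restriction along the appropriate fibers. Applying K\"unneth, one gets compatible isomorphisms
\[
H^{*}(A^{[n+1]}) \;\isom\; \bigl(H^{*}(K_{n}(A))\otimes H^{*}(A)\bigr)^{A[n+1]}
\]
and, on the orbifold side (via pullback along $\mu$ on each twisted sector $(A^{n+1})^{g}=(A^{n+1}_{0})^{g}\times A$),
\[
H^{*}_{orb,dt}\bigl([A^{n+1}/\gS_{n+1}]\bigr) \;\isom\; \bigl(H^{*}_{orb,dt}\bigl([A^{n+1}_{0}/\gS_{n+1}]\bigr)\otimes H^{*}(A)\bigr)^{A[n+1]}.
\]

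Second, I would invoke Proposition \ref{prop:realisation-A} applied with $n$ replaced by $n+1$: the cohomological realization $\bar\phi_{A^{[n+1]}}$ is a ring isomorphism. The explicit formula from Fantechi--G\"ottsche/Lehn--Sorger used in that proof is $A[n+1]$-equivariant and K\"unneth-compatible with respect to the splitting above; hence passing to $A[n+1]$-invariants and tensoring down by $H^{*}(A)$ yields the desired ring isomorphism $\bar\phi$ for $K_{n}(A)$. Alternatively (and essentially equivalently), one can appeal directly to Nieper-Wi\ss kirchen's presentation of $H^{*}(K_{n}(A),\C)$ by Nakajima-type generators and relations and verify by a direct computation that the images of $\bar\phi$ applied to the generators $\p_{\lambda_{1}}(\alpha_{1})\cdots\p_{\lambda_{l}}(\alpha_{l})\1$ (restricted to $K_{n}(A)$) agree with the Fantechi--G\"ottsche generators of the orbifold cohomology, as was done in the proof of Proposition \ref{prop:realisation-A}.

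The main obstacle will be bookkeeping with the discrete torsion signs and the shifts by age under the $A[n+1]$-descent: one must verify that the sign $(-1)^{\epsilon(g_{1},g_{2})}$ defining $\star_{orb,dt}$ on $[A^{n+1}_{0}/\gS_{n+1}]$ matches the restriction of the one on $[A^{n+1}/\gS_{n+1}]$ sector-by-sector, which is automatic since the age function depends only on $g\in \gS_{n+1}$ and not on whether we work on $A^{n+1}$ or on $A^{n+1}_{0}$. A secondary technical point, which is handled by \cite{MR2578804}, is that the K\"unneth-type splitting of $H^{*}(A^{[n+1]})$ over $H^{*}(A)$ becomes multiplicative only after passing to the $A[n+1]$-invariant subring, so the ring structure on $H^{*}(K_{n}(A),\C)$ one obtains at the end is indeed the genuine cup-product and not a twisted variant.
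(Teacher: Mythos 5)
Your high-level plan — reduce to Case (A) via the geometry of the fibration $s\colon A^{[n+1]}\to A$ and invoke Nieper-Wißkirchen's description of $H^*(K_n(A),\C)$ — is indeed the same strategy the paper uses, and your observation that $V^g$ is the restriction of the Case (A) incidence $U^g$ along $K_n(A)\hookrightarrow A^{[n+1]}$ and $(A^{n+1}_0)^g \hookrightarrow (A^{n+1})^g$ is exactly what is needed to make the relevant diagram commute. But the specific route you propose via the Galois cover $\nu\colon K_n(A)\times A\to A^{[n+1]}$ has a genuine gap at the descent step, and the paper's proof takes a different and cleaner algebraic path. Passing to $A[n+1]$-invariants of the Case (A) isomorphism only gives a ring isomorphism of $A[n+1]$-invariant subalgebras; since $A[n+1]$ acts trivially on $H^*(A)$ and on $H^*(A^{[n+1]})$ but \emph{not} on $H^*(K_n(A))$ (it permutes the classes of the exceptional divisors, already for the Kummer $K3$), the operation you call ``tensoring down by $H^*(A)$'' lands you in $H^*(K_n(A))^{A[n+1]}$ and its orbifold analogue, not in the full rings. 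The passage from there to $H^*(K_n(A))$ is precisely the content of Nieper-Wißkirchen's theorem, which you relegate to a ``secondary technical point.'' It is, rather, the crux; the Galois cover does not eliminate the need for it and in fact obscures it.

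The paper instead works directly with the quotient algebra $H^*(A^{[n+1]})\otimes_{H^*(A)}\C$: Nieper-Wißkirchen's theorem identifies it with $H^*(K_n(A))$ as a ring via the restriction map $R$, the Case (A) isomorphism $\Phi$ is a ring isomorphism and passes to the quotient by $s^*H^{>0}(A)$, and the sector-wise restriction map $r$ from $H^*(A^{n+1},\gS_{n+1})$ to $H^*(A^{n+1}_0,\gS_{n+1})$ is shown in Lemma \ref{lemma:restriction} to be a ring homomorphism for the orbifold products (because the obstruction bundles are trivial and one can apply base change). These fit into a commutative square together with $\bar\phi$; since three of the four arrows are already known to be ring isomorphisms or ring homomorphisms and $\bar\phi$ is a linear isomorphism, it follows at once that $\bar\phi$ is a ring isomorphism. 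Lemma \ref{lemma:restriction} — that the naive restriction is compatible with $\star_{orb}$ — has no counterpart in your proposal and would need to be supplied even for your alternative Nakajima-generator computation; without it you cannot conclude that the target of $\bar\phi$ really carries the orbifold product you want.
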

	\begin{proof}
		We use Nieper--Wi\ss kirchen's following description \cite{MR2578804} of the
		cohomology ring $H^{*}(K_{n}(A),\C)$. Let $s: A^{[n+1]}\to A$ be the composition
		of the Hilbert--Chow morphism followed by the summation map. Recall that $s$ is
		an isotrivial fibration. In the sequel, if not specified, all cohomology groups
		are with complex coefficients. We have a commutative diagram\,:
		\begin{equation*}
		\xymatrix{
			H^{*}(A) \ar[r]^{s^{*}} \ar[d]_{\epsilon} & H^{*}(A^{[n]})\ar[d]^{restr.}\\
			\C \ar[r] & H^{*}(K_{n}(A))
		}
		\end{equation*}
		where the upper arrow $s^{*}$ is the pull-back by $s$, the lower arrow is the
		unit map sending 1 to the fundamental class $\1_{K_{n}(A)}$, $\epsilon$ is the
		quotient by the ideal consisting of elements of strictly positive degree and the
		right arrow is the restriction map. The commutativity comes from the fact that
		$K_{n}(A)=s^{-1}(O_{A})$ is a fiber. Thus one has a ring homomorphism
		$$R: H^{*}(A^{[n]})\otimes_{H^{*}(A)} \C\to H^{*}(K_{n}(A)).$$
		Then \cite[Theorem 1.7]{MR2578804} asserts that this is an isomorphism of
		$\C$-algebras.\\
		Now consider the following diagram\,:
		\begin{equation}\label{diag:CaseAtoB}
		\xymatrix{
			H^{*}(A^{[n+1]})\otimes_{H^{*}(A)}\C \ar[r]^{R}_{\isom} \ar[d]_{\Phi}^{\isom} &
			H^{*}(K_{n}(A)) \ar[d]^{\bar\phi}_{\isom} \\
			\left(\oplus_{g\in
				\gS_{n+1}}H^{*-2\age(g)}((A^{n+1})^{g})\right)^{\gS_{n+1}}\otimes_{H^{*}(A)}\C
			\ar[r]_-{r} & \left(\oplus_{g\in
				\gS_{n+1}}H^{*-2\age(g)}((A_{0}^{n+1})^{g})\right)^{\gS_{n+1}},
		}
		\end{equation}
		\begin{itemize}
			\item As just stated, the upper arrow is an isomorphism of $\C$-algebras, by
			Nieper--Wi\ss kirchen \cite[Theorem 1.7]{MR2578804}.
			\item The left arrow $\Phi$ comes from the ring isomorphism (which is exactly
			CHRC \ref{conj:CHRC} for Case (A), see \S \ref{subsec:Realization-A})\,:
			$$H^{*}(A^{[n+1]})\lra{\isom} \left(\oplus_{g\in
				\gS_{n+1}}H^{*-2\age(g)}((A^{n+1})^{g})\right)^{\gS_{n+1}},$$
			established in \cite{MR1971293} based on \cite{MR1974889}. By (the proof of)
			Proposition \ref{prop:realisation-A}, this isomorphism is actually induced by
			$\sum_{g}(-1)^{\age(g)}\cdot U^{g}_{*}: H(A^{[n+1]})\to
			\oplus_{g}H((A^{n+1})^{g})$ with $U^{g}$ the incidence subvariety defined in
			(\ref{eqn:U}).
			Note that on the lower-left term of the diagram, the ring homomorphism
			$H^{*}(A)\to \left(\oplus_{g\in
				\gS_{n+1}}H^{*-2\age(g)}((A^{n+1})^{g})\right)^{\gS_{n+1}}$ lands in the summand
			indexed by $g=\id$, and the map $H^{*}(A)\to H^{*}(A^{n+1})^{\gS_{n+1}}$ is
			simply the pull-back by the summation map $A^{(n+1)}\to A$.
			\item The right arrow is the morphism $\bar\phi$ in question. It is already
			shown in Step (i) Proposition \ref{prop:addisom-B} to be an isomorphism of
			vector spaces. The goal is to show that it is also multiplicative.
			\item The lower arrow $r$ is defined as follows. On the one hand, let the image
			of the unit $1\in \C$ be the fundamental class of $A_{0}^{(n+1)}$ in the summand
			indexed by $g=\id$. On the other hand, for any $g\in \gS_{n+1}$, we have a
			natural restriction map $H^{*-2\age(g)}((A^{n+1})^{g})\to
			H^{*-2\age(g)}((A^{n+1}_{0})^{g})$. They will induce a ring homomorphism
			$H^{*}(A^{n+1}, \gS_{n+1})_{\C}\to H^{*}(A_{0}^{n+1}, \gS_{n+1})_{\C}$ by Lemma
			\ref{lemma:restriction} below, which is easily seen to be compatible with the
			$\gS_{n+1}$-action and the ring homomorphisms from $H^{*}(A)$, hence $r$ is a
			well-defined homomorphism of $\C$-algebras.
			\item To show the commutativity of the diagram (\ref{diag:CaseAtoB}), the case
			for the unit $1\in \C$ is easy to check. For the case of $H^{*}(A^{[n+1]})$, it
			suffices to remark that for any $g$ the following diagram is commutative
			\begin{equation*}
			\xymatrix{
				H^{*}(A^{[n+1]}) \ar[r]^{restr.}  \ar[d]_{U^{g}_{*}}  &
				H^{*}(K_{n}(A))\ar[d]_{V^{g}_{*}}\\
				H((A^{n+1})^{g}) \ar[r]_{restr} & H((A_{0}^{n+1})^{g}) 
			}
			\end{equation*}
			where $V^{g}$ is the incidence subvariety defined in (\ref{eqn:V}).
		\end{itemize}
		In conclusion, since in the commutative diagram (\ref{diag:CaseAtoB}), $\Phi, R$
		are isomorphisms of $\C$-algebras, $r$ is a homomorphism of $\C$-algebra and $
		\bar\phi$ is an isomorphism of vector spaces, we know that they are all
		isomorphisms of algebras. Thus Proposition \ref{prop:realisation-B} is proved
		assuming the following\,:
		
		\begin{lemma}\label{lemma:restriction}
			The natural restriction maps $H^{*-2\age(g)}((A^{n+1})^{g})\to
			H^{*-2\age(g)}((A^{n+1}_{0})^{g})$ for all $g\in \gS_{n+1}$ induce a ring
			homomorphism $H^{*}(A^{n+1}, \gS_{n+1})\to H^{*}(A_{0}^{n+1}, \gS_{n+1})$, where
			their product structures are given by the orbifold product (see Definition
			\ref{def:OrbMot} or \ref{def:OrbChow}).
		\end{lemma}
		
		\begin{proof}
			This is straightforward by definition. Indeed, for any $g_{1}, g_{2}\in
			\gS_{n+1}$ together with $\alpha\in H((A^{n+1})^{g_{1}})$ and $\beta\in
			H((A^{n+1})^{g_{2}})$, since the obstruction bundle $F_{g_{1}, g_{2}}$ is a
			trivial vector bundle, we have
			$$\alpha\star_{orb}\beta=\begin{cases}i_{*}\left(\alpha|_{(A^{n+1})^{<g_{1},
					g_{2}>}} \cup \beta|_{(A^{n+1})^{<g_{1}, g_{2}>}}\right)  &\text{     if     } 
			\rk F_{g_{1}, g_{2}}=0\\
			0 &\text{      if    } \rk F_{g_{1}, g_{2}}\neq 0
			\end{cases}
			$$
			where $i: (A^{n+1})^{<g_{1}, g_{2}>}\inj (A^{n+1})^{g_{1}g_{2}}$ is the natural
			inclusion. 
			Therefore by the base change for the cartesian diagram without excess
			intersection\,:
			\begin{equation*}
			\xymatrix{
				(A_{0}^{n+1})^{<g_{1}, g_{2}>} \ar@{^{(}->}[r]^{i_{0}} \ar@{^{(}->}[d] &
				(A_{0}^{n+1})^{g_{1}g_{2}}\ar@{^{(}->}[d]\\
				(A^{n+1})^{<g_{1}, g_{2}>} \ar@{^{(}->}[r]_{i}  & (A^{n+1})^{g_{1}g_{2}}
			}
			\end{equation*}
			we have\,: 
			\begin{eqnarray*}
				&&\alpha\star_{orb}\beta|_{(A^{n+1}_{0})^{g_{1}g_{2}}}\\
				&=& \begin{cases} {i_{0}}_{*}\left(\left(\alpha|_{(A^{n+1})^{<g_{1}, g_{2}>}}
					\cup \beta|_{(A^{n+1})^{<g_{1},
							g_{2}>}}\right)~\middle\vert~_{(A_{0}^{n+1})^{<g_{1}, g_{2}>}}\right)=
					{i_{0}}_{*}\left(\alpha|_{(A_{0}^{n+1})^{<g_{1}, g_{2}>}} \cup
					\beta|_{(A_{0}^{n+1})^{<g_{1}, g_{2}>}}\right)&\text{     if     }  \rk
					F_{g_{1}, g_{2}}=0\\
					0 &\text{      if    } \rk F_{g_{1}, g_{2}}\neq 0
				\end{cases}\\
				&=&   \alpha|_{(A_{0}^{n+1})^{g_{1}}} \star_{orb} \beta|_{(A_{0}^{n+1})^{g_{2}}}
					\end{eqnarray*}
			which means that the restriction map is a ring homomorphism.
		\end{proof}
		The proof of Proposition \ref{prop:realisation-B} is finished.
	\end{proof}
	
	Now the proof of Theorem \ref{thm:mainKummer} is complete\,: by Proposition
	\ref{prop:ZWsd-B} and Proposition \ref{prop:realisation-B}, we know that, thanks
	to Proposition \ref{prop:SDATTS}(iii), the symmetrizations of $Z$ and $W$ in
	Proposition \ref{prop:multiplicativity-B} are rationally equivalent, which
	proves Proposition \ref{prop:Multiplicativity} in Case (B). Hence the
	isomorphism $\phi$ in Proposition \ref{prop:addisom-B} is an isomorphism of
	algebra objects between the motive of the generalized Kummer variety
	$\h(K_{n}(A))$ and the orbifold Chow motive $\h_{orb}\left(\left[A^{n+1}_{0}/\gS_{n+1}\right]\right)$.
	\qed\\

	We would like to note the following corollary obtained by applying the
	cohomological realization functor to Theorem \ref{thm:mainKummer}.
	
	\begin{cor}[CHRC\,: Kummer case]
		The Cohomological HyperK\"ahler Resolution Conjecture is true for Case (B),
		namely, one has an isomorphism of $\Q$-algebras\,:
		$$H^{*}(K_{n}(A),\Q)\isom H^{*}_{orb,dt}\left([A^{n+1}_{0}/\gS_{n+1}]\right).$$
	\end{cor}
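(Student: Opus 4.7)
The plan is to apply the Betti cohomology realization functor $H^{*}(-, \Q) : \CHM \to \mathrm{GrVec}_{\Q}$ directly to the isomorphism of commutative algebra objects in $\CHM$ produced by Theorem~\ref{thm:mainKummer}:
\[
\h(K_{n}(A)) \isom \h_{orb,dt}\left(\left[A_{0}^{n+1}/\gS_{n+1}\right]\right).
\]
Because this realization is a $\Q$-linear symmetric monoidal functor, it sends an isomorphism of (commutative) algebra objects to an isomorphism of graded commutative $\Q$-algebras between $H^{*}(K_{n}(A), \Q)$ and the realization of the right-hand orbifold Chow motive. Thus the entire content to check is that this realization coincides, as a graded $\Q$-algebra, with the orbifold cohomology ring $H^{*}_{orb,dt}([A_{0}^{n+1}/\gS_{n+1}])$ of Fantechi--G\"ottsche.

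Additively this identification is immediate from Definition~\ref{def:OrbMot}: the realization sends each summand $\h((A_{0}^{n+1})^{g})(-\age(g))$ to $H^{*-2\age(g)}((A_{0}^{n+1})^{g})$ (the Tate twist accounts for the age shift) and preserves $\gS_{n+1}$-invariants. For the multiplicative structure, one unpacks Definition~\ref{def:OrbMot}(v) together with the sign of Definition~\ref{def:dt}: the motivic orbifold product on $\h(M^{g_{1}})(-\age(g_{1})) \otimes \h(M^{g_{2}})(-\age(g_{2}))$ is the correspondence $(-1)^{\epsilon(g_{1},g_{2})}\, \delta_{*} c_{top}(F_{g_{1},g_{2}})$, where $\delta : M^{<g_{1},g_{2}>} \inj M^{g_{1}} \times M^{g_{2}} \times M^{g_{1}g_{2}}$. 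The cohomological realization of this correspondence is precisely the operation that restricts both classes to $M^{<g_{1},g_{2}>}$, cups with $c_{top}(F_{g_{1},g_{2}})$, pushes forward along $M^{<g_{1},g_{2}>} \inj M^{g_{1}g_{2}}$, and multiplies by the same sign $(-1)^{\epsilon(g_{1},g_{2})}$ --- that is, the orbifold cup product with discrete torsion in the sense of \cite{MR1971293}.

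There is no obstacle here: the argument is essentially the content of Proposition~\ref{prop:realisation-B}, which was established en route to Theorem~\ref{thm:mainKummer} (using Nieper--Wi\ss kirchen's description of $H^{*}(K_{n}(A),\C)$). The present corollary simply records the ring-theoretic shadow of the motivic statement, so no further input is needed beyond what has already been proved.
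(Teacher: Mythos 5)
Your proposal is correct and follows precisely the route taken in the paper: the corollary is stated there as being ``obtained by applying the cohomological realization functor to Theorem~\ref{thm:mainKummer},'' which is exactly your argument, with the identification of the realization of the orbifold motive with the Fantechi--G\"ottsche orbifold cohomology ring being the content of (the proof of) Proposition~\ref{prop:realisation-B}, as you note.
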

	\begin{rmk}\label{rmk:CHRCKummer}
		This result has never appeared in
		the literature. It is presumably not hard to check CHRC in the case of
		generalized Kummer varieties directly based on the cohomology result of
		Nieper--Wi\ss kirchen \cite{MR2578804}, which is of course one of the key
		ingredients used in our proof. It is also generally believed that the main
		result of Britze's Ph.D. thesis \cite{Britze} should also imply this result.
		However, the proof of its main result \cite[Theorem 40]{Britze} seems to be
		flawed\,: the linear map $\Theta$ constructed in the last line of Page 60, which
		is claimed to be the desired ring isomorphism, is actually the zero map.
		Nevertheless, the authors believe that it is feasible to check CHRC in this case
		with the very explicit description of the ring structure of
		$H^{*}(K_{n}(A)\times A)$ obtained in \cite{Britze}. 
	\end{rmk}

	\section{Application 1\,: Towards Beauville's splitting
		property}\label{sec:splitting}
	In this section, a \emph{holomorphic symplectic} variety is always assumed to be
	smooth projective unless stated otherwise and we require neither the simple
	connectedness nor the uniqueness up to scalar of the holomorphic symplectic
	2-form. Hence examples of holomorphic symplectic varieties include projective
	deformations of Hilbert schemes of K3 or abelian surfaces, generalized Kummer
	varieties \etc. 
	\subsection{Beauville's Splitting Property}
	
	Based on \cite{MR826463} and \cite{MR2047674}, Beauville envisages in
	\cite{MR2187148} the following \emph{Splitting Property} for all holomorphic
	symplectic varieties.
	\begin{conj}[Splitting Property\,: Chow rings]\label{conj:SplittingPrincipleCh}
		Let $X$ be a holomorphic symplectic variety of dimension $2n$. Then one has a
		canonical bigrading of the rational Chow ring $\CH^{*}(X)$, called
		\emph{multiplicative splitting of $\CH^{*}(X)$ of Bloch--Beilinson type}\,: for
		any $0\leq i\leq 4n$,
		\begin{equation}\label{eqn:decompCH}
		\CH^{i}(X)=\bigoplus_{s=0}^{i} \CH^{i}(X)_{s},
		\end{equation}
		which satisfies\,:
		\begin{itemize}
			\item (Multiplicativity) $\CH^{i}(X)_{s}\bullet \CH^{i'}(X)_{s'}\subset
			\CH^{i+i'}(X)_{s+s'}$\,;
			\item (Bloch--Beilinson) The associated ring filtration
			$F^{j}\CH^{i}(X):=\bigoplus_{s\geq j}\CH^{i}(X)_{s}$ satisfies the
			Bloch--Beilinson conjecture (\cf \cite[Conjecture 11.21]{MR1997577} for
			example). In particular\,:
			\begin{itemize}
				\item ($F^{1}=\CH_{hom}$) The restriction of the cycle class map $\cl:
				\bigoplus_{s>0}\CH^{i}(X)_{s}\to H^{2i}(X,\Q)$ is zero\,;
				\item (Injectivity) The restriction of the cycle class map $\cl:
				\CH^{i}(X)_{0}\to H^{2i}(X,\Q)$ is injective.
			\end{itemize}
		\end{itemize}
	\end{conj}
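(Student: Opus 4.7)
The statement is open for general holomorphic symplectic varieties and would imply the full Bloch--Beilinson conjecture, so a proof in full generality is inaccessible. For the two classes treated in this paper, namely $X = A^{[n]}$ and $X = K_n(A)$ attached to an abelian surface $A$, the plan is to transport Beauville's decomposition \cite{MR826463} of the Chow rings of abelian varieties across the algebra isomorphism of Theorems~\ref{thm:mainAb} and \ref{thm:mainKummer}, and use it to manufacture the required bigrading on $\CH^*(X)$.

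First, on the orbifold side each fixed locus $M^g$ is (a disjoint union of pieces that are) either an abelian variety, in Case~(A), or an \atts in the sense of Definition~\ref{def:atts}, in Case~(B), and thus carries a canonical Beauville grading $\CH^*(M^g) = \bigoplus_s \CH^*(M^g)_s$ defined by the eigenspaces of the multiplication-by-$N$ maps. Summing over $g\in G$, Tate-twisting by $\age(g)$, and taking $G$-invariants yields a canonical bigrading on $\CH^*_{orb,dt}([M/G])$, which I would then pull back to $\CH^*(X)$ via the isomorphism \eqref{eqn:mainAb} or \eqref{eqn:mainKummer}.

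Second, I would check multiplicativity. Thanks to formula \eqref{eqn:dtChow} and the triviality of all the obstruction bundles $F_{g,h}$ already exploited in Propositions~\ref{prop:ZWsd} and~\ref{prop:ZWsd-B}, the orbifold product reduces to a pullback along a diagonal inclusion followed by a pushforward along a closed immersion $\iota\colon M^{\langle g,h\rangle}\hookrightarrow M^{gh}$. Both operations preserve the Beauville grading since they arise from morphisms of abelian varieties (or of \atts, the torsion-translation ambiguity being killed by Lemma~\ref{lemma:TorsionTranslation}). This step also delivers a self-dual multiplicative Chow--K\"unneth decomposition, so clauses (i) and (ii) of Conjecture~\ref{conj:MSPIntro} --- i.e.\ the multiplicative splitting portion of Conjecture~\ref{conj:SplittingPrincipleCh} --- would hold unconditionally.

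The third step, verifying the Bloch--Beilinson clause, is where the main obstacle lies. After transport, both $\CH^*(X)_s = 0$ for $s<0$ and injectivity of the cycle class map on $\CH^*(X)_0$ translate into the corresponding statements on products of $A$, which constitute Beauville's own conjecture on $0$-cycles of abelian varieties --- a deep and still open problem. My proof of Conjecture~\ref{conj:SplittingPrincipleCh} in Cases~(A) and~(B) is therefore \emph{conditional} on Beauville's conjecture for $A$; unconditionally, one can at best recover partial Bloch--Beilinson-type properties, such as the weak splitting property of \cite{MR3356741} for $K_n(A)$, by combining the present machinery with arguments \`a la \cite{MR3579961}.
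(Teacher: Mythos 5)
The statement you are asked about is a \emph{conjecture} (Beauville's Splitting Property), which the paper itself does not prove in general; your proposal correctly identifies this, and for the two cases treated in the paper your plan coincides with the paper's actual partial results: Theorem~\ref{thm:MCK} (multiplicative Chow--K\"unneth decomposition via the Deninger--Murre decomposition of the $\h(M^g)$, with multiplicativity checked through the triviality of the obstruction bundles and the degree bookkeeping from \eqref{eqn:VirRkF}) together with Remark~\ref{rmk:MurreKummer} (the Bloch--Beilinson clause being conditional on Beauville's Conjecture~\ref{conj:BeauvilleAV} for powers of $A$). Your phrasing in terms of Beauville's eigenspace grading is the Chow-ring shadow of the paper's motivic Chow--K\"unneth formulation via \eqref{eqn:bigradingCH}, so the two approaches are essentially the same.
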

	
	We would like to reformulate (and slightly strengthen) Conjecture
	\ref{conj:SplittingPrincipleCh} by using the language of Chow motives as
	follows, which is, we believe, more fundamental. Let us first of all introduce
	the following  notion, which was introduced in \cite{MR3460114} and which avoids
	any mentions to the Bloch--Beilinson conjecture.
	\begin{defi}[Multiplicative Chow--K\"unneth decomposition]\label{def:MCK}
		Given a smooth projective variety $X$ of dimension $n$, a \emph{self-dual
			multiplicative Chow--K\"unneth decomposition} is a direct sum decomposition in
		the category $\CHM$ of Chow motives with rational coefficients\,:
		\begin{equation}\label{eqn:Decomp}
		\h(X)=\bigoplus_{i=0}^{2n}\h^{i}(X)
		\end{equation}
		satisfying the following two properties\,:
		\begin{itemize}
			\item (Chow--K\"unneth) The cohomology realization of the decomposition gives
			the K\"unneth decomposition\,: for each $0\leq i\leq 2n$,
			$H^{*}(\h^{i}(X))=H^{i}(X)$.
			\item (Self-duality) The dual motive $\h^i(X)^\vee$ identifies with
			$\h^{2n-i}(X)(n)$.
			\item (Multiplicativity) The product $\mu: \h(X)\otimes \h(X)\to \h(X)$ given by
			the small diagonal $\delta_{X}\subset X\times X\times X$ respects the
			decomposition\,: the restriction of $\mu$ on the summand
			$\h^{i}(X)\otimes\h^{j}(X)$ factorizes through $\h^{i+j}(X)$.
		\end{itemize}
		Such a decomposition induces a (multiplicative) bigrading of the rational Chow
		ring $\CH^{*}(X)=\oplus_{i,s}\CH^{i}(X)_{s}$ by setting\,:
		\begin{equation}\label{eqn:bigradingCH}
		\CH^{i}(X)_{s}:=\CH^{i}(\h^{2i-s}(X)):= \Hom_{\CHM}\left(\1(-i),
		\h^{2i-s}(X)\right).
		\end{equation}
		Conjecturally (\emph{cf.} \cite{MR1265533}), the associated ring filtration
		$F^{j}\CH^{i}(X):=\bigoplus_{s\geq j}\CH^{i}(X)_{s}$ satisfies the
		Bloch--Beilinson conjecture.
		
		By the definition of motives (\cf \ref{def:Mot}), a multiplicative
		Chow--K\"unneth decomposition is equivalent to a collection of
		self-correspondences $\left\{\pi^{0}, \cdots, \pi^{2\dim X}\right\}$, where
		$\pi^{i}\in \CH^{\dim X}(X\times X)$, satisfying
		\begin{itemize}
			\item $\pi^{i}\circ\pi^{i}=\pi^{i}, \forall i$\,;
			\item $\pi^{i}\circ\pi^{j}=0, \forall i\neq j$\,;
			\item $\pi^{0}+\cdots+\pi^{2\dim X}=\Delta_{X}$\,;
			\item $\im(\pi^{i}_{*}: H^{*}(X)\to H^{*}(X))=H^{i}(X)$\,;
			\item $\pi^{k}\circ\delta_{X}\circ (\pi^{i}\otimes\pi^{j})=0, \forall k\neq
			i+j$.
		\end{itemize}
		The induced multiplicative bigrading on the rational Chow ring $\CH^{*}(X)$ is
		given by $$\CH^{i}(X)_{s}:=\im\left(\pi^{2i-s}_{*}: \CH^{i}(X)\to
		\CH^{i}(X)\right).$$
		The above Chow--K\"unneth decomposition is self-dual if the transpose of $\pi^i$
		is equal to $\pi^{2\dim X-i}$.
	\end{defi}
	
	For later use, we need to generalize the previous notion for Chow motive
	algebras\,:
	\begin{defi}\label{def:MCKmotive}
		Let $\h$ be an (associative but not-necessarily commutative) algebra object in
		the category $\CHM$ of rational Chow motives. Denote by $\mu: \h\otimes\h\to \h$
		its multiplication structure. A \emph{multiplicative Chow--K\"unneth
			decomposition} of $\h$ is a direct sum decomposition 
		$$\h=\bigoplus_{i\in \Z} \h^{i},$$ such that 
		\begin{itemize}
			\item (Chow--K\"unneth) the cohomology realization gives the K\"unneth
			decomposition\,:  $H^{i}(\h)=H^{*}(\h^{i})$ for all $i\in \Z$\,;
			\item (Multiplicativity) the restriction of $\mu$ to $\h^{i}\otimes \h^{j}$
			factorizes through $\h^{i+j}$ for all $i, j\in \Z$.
		\end{itemize}
	\end{defi}

	Now one can enhance Conjecture \ref{conj:SplittingPrincipleCh} to the
	following\,:
	\begin{conj}[Motivic Splitting Property = Conjecture
		\ref{conj:MSPIntro}]\label{conj:SplittingPrincipleMot}
		Let $X$ be a holomorphic symplectic variety of dimension $2n$. Then we have a
		canonical (self-dual) multiplicative Chow--K\"unneth decomposition of $\h(X)$\,:
		\begin{equation*}
		\h(X)=\bigoplus_{i=0}^{4n}\h^{i}(X)
		\end{equation*}
		which is moreover \emph{of Bloch--Beilinson--Murre type}, that is, for any $i,
		j\in \N$,
		\begin{enumerate}
			\item[(i)] $\CH^{i}(\h^{j}(X))=0$ if $j<i$\,;
			\item[(ii)] $\CH^{i}(\h^{j}(X))=0$ if $j>2i$\,;
			\item[(iii)] the realization induces an injective map $\Hom_{\CHM}\left(\1(-i),
			\h^{2i}(X)\right)\to \Hom_{\Q-HS}\left(\Q(-i), H^{2i}(X)\right)$. 
		\end{enumerate}
	\end{conj}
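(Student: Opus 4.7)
The conjecture asks for a canonical self-dual multiplicative Chow--K\"unneth decomposition of Bloch--Beilinson--Murre type on \emph{every} holomorphic symplectic variety, so the plan must both produce the decomposition in each known class of examples and then address how the construction is meant to be canonical (\ie stable under deformation and compatible with products). My plan has four stages. First, I would settle the conjecture on each known deformation class of irreducible holomorphic symplectic manifolds. For the $A^{[n]}$ and $K_n(A)$ classes, the present paper (Theorem~\ref{thm:MCKIntro}) already supplies the candidate decomposition, so the work is to verify the three BBM conditions. The Chow--K\"unneth projectors come from the motivic isomorphism with $\h_{orb,dt}$, and after transport to the orbifold side the summands $\h^j(X)$ are identified with weight-$j$ pieces of $\h((A^n)^g)(-\age(g))$ (respectively for $A^{n+1}_0$). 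Using Beauville's conjecture for abelian varieties (Remark~\ref{rmk:MurreKummer}), properties (i) and (ii) then reduce to vanishing statements about $\CH_s$ of a product of abelian varieties in the Beauville--Deninger--Murre grading, while (iii) reduces to the injectivity of the cycle class map on the weight-$0$ part of abelian varieties. For the $\mathrm{K3}^{[n]}$ class I would import the sequel paper \cite{MHRCK3} in place of Theorem~\ref{thm:mainAb} and argue identically, with the role of Beauville's conjecture on abelian varieties played by the Beauville--Voisin decomposition of $\CH^*(S)$ and its extension to $S^n$.

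Second, I would treat the remaining two known deformation classes (O'Grady's $\mathrm{OG}_6$ and $\mathrm{OG}_{10}$). Since both arise as symplectic resolutions of singular moduli spaces of sheaves on abelian, respectively K3, surfaces, the natural route is to put a similar ``hidden stack'' structure on the singular model and prove a version of MHRC for it; in the $\mathrm{OG}_6$ case the relevant singular model is itself a quotient by a finite group acting on a moduli space of sheaves on an abelian surface, which brings it close to Case (B) of the present paper, and one can hope to recycle the Steps (i)--(iii) skeleton of \S\ref{sec:skeleton} together with the symmetrically distinguished cycles technology of \S\ref{subsec:SD-B}.

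Third, to cover all holomorphic symplectic varieties (not merely the irreducible ones admitted by the conjecture), I would appeal to the Beauville--Bogomolov decomposition theorem: after an \'etale cover any such $X$ splits as a product of an abelian variety, irreducible Calabi--Yau factors (which in the symplectic setting do not appear) and irreducible holomorphic symplectic factors. Since multiplicative Chow--K\"unneth decompositions of Bloch--Beilinson--Murre type are stable under products (the tensor product of MCK projectors is again a self-dual MCK projector, and both BBM vanishing and injectivity survive tensor products by a standard Künneth argument using the surjectivity of the exterior-product map on the weight-$0$ Chow pieces), and since Beauville's grading on abelian varieties is the paradigmatic example satisfying (i)--(iii) modulo his standard conjecture, this reduces the general case to the finite list of irreducible factors, and then to the finite-group-equivariant descent from the \'etale cover. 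The equivariant descent step is harmless because we work with rational coefficients.

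Fourth, and this is the main obstacle, the conjecture demands that the decomposition be \emph{canonical}, which morally means it should be invariant under deformation inside each locally trivial deformation family of HK manifolds. For this I would try to promote the constructed projectors $\pi^i$ to \emph{relative} correspondences on $\rX\times_B \rX$ over a smooth base $B$ parametrising the deformations, patterning the argument on the relative small-diagonal statement in Corollary~\ref{cor:decompKummer} and on Voisin's relative decomposition theorem over a Zariski open of $B$. The serious difficulty, which I expect to be the true bottleneck, is precisely this canonicity/deformation-invariance step: existing technology (symmetrically distinguished cycles, O'Sullivan's theorem) lives on abelian varieties and does not transfer to a general deformation of $A^{[n]}$ or $K_n(A)$, while on the other side the Bloch--Beilinson conjecture itself is open. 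In the absence of a general replacement for O'Sullivan, the realistic output of this program is thus: the conjecture holds for all HK manifolds of $\mathrm{K3}^{[n]}$, $\mathrm{Kum}_n$, $\mathrm{OG}_6$ or $\mathrm{OG}_{10}$ type \emph{that admit an algebraic model as a symplectic resolution of a global finite-group quotient of an abelian or K3 variety}, with (iii) conditional on the relevant standard conjectures, and the full conjecture will remain open until either deformation-invariance of MCK or an analogue of O'Sullivan's theorem becomes available on a general hyperK\"ahler manifold.
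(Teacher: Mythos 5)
The statement you are asked to prove is a \emph{conjecture}, and the paper contains no proof of it; it is stated as the motivic form of Beauville's splitting property and is only partially established, for the two families $X=A^{[n]}$ and $X=K_n(A)$, in Theorem \ref{thm:MCK} together with Proposition \ref{prop:ChernClass} (the existence of a canonical self-dual multiplicative Chow--K\"unneth decomposition), while the Bloch--Beilinson--Murre conditions (i)--(iii) remain open even in those cases and are only shown in Remark \ref{rmk:MurreKummer} to follow from Beauville's Conjecture \ref{conj:BeauvilleAV} on abelian varieties. Your proposal is honest about exactly this, and the part of it that overlaps with what the paper actually does is faithful to the paper's method: transport the Beauville--Deninger--Murre decomposition of the fixed loci (abelian varieties, resp.\ \atts's) through the isomorphism $\h(X)\isom\h_{orb,dt}$, check multiplicativity using the triviality of the obstruction bundles and the behaviour of the Deninger--Murre pieces under pullback and pushforward along morphisms of abelian varieties, and reduce (i)--(iii) to the corresponding statements for $\CH^i(A^\lambda)_s$ (resp.\ $\CH^i(A^\lambda_0)_s$), of which the $s<0$ vanishing and $s=0$ injectivity are precisely Beauville's conjecture. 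One small precision worth recording: condition (i) of the conjecture corresponds to the vanishing of $\CH^i(X)_s$ for $s>i$, which on the abelian side is Beauville's \emph{unconditional} vanishing $\CH^i(B)_s=0$ for $s>i$, so (i) comes for free from the decomposition formulas of Remark \ref{rmk:MurreKummer}, whereas (ii) and (iii) genuinely require Conjecture \ref{conj:BeauvilleAV}.

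The remaining three stages of your program (other deformation classes via \cite{MHRCK3} and the O'Grady examples, reduction to irreducible factors via Beauville--Bogomolov, and the deformation-invariance/canonicity issue) go beyond anything in the paper and should not be presented as part of a proof; they are a reasonable research outline, and your identification of the bottleneck --- the absence of an analogue of O'Sullivan's theorem away from abelian-type motives, and the unconditional nature of the Bloch--Beilinson--Murre conditions --- matches the paper's own assessment of why the conjecture is only attacked in the two global-quotient cases. In short: there is no gap to point out relative to the paper, because the paper proves strictly less than the statement; what you can legitimately claim is Theorem \ref{thm:MCK} plus the conditional Remark \ref{rmk:MurreKummer}, and your write-up of that portion is essentially the paper's argument.
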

	
	One can deduce Conjecture \ref{conj:SplittingPrincipleCh} from Conjecture
	\ref{conj:SplittingPrincipleMot} via (\ref{eqn:bigradingCH}). Note that the
	range of $s$ in (\ref{eqn:decompCH}) follows from the first two
	Bloch--Beilinson--Murre conditions in Conjecture
	\ref{conj:SplittingPrincipleMot}.
	
	\subsection{Splitting Property for abelian varieties}\label{subsec:AV}
	Recall that for an abelian variety $B$ of dimension $g$, using Fourier transform
	\cite{MR726428}, Beauville \cite{MR826463} constructs a multiplicative bigrading
	on $\CH^{*}(B)$\,:
	\begin{equation}
	\CH^{i}(B)=\bigoplus_{s=i-g}^{i}\CH^{i}(B)_{s}, ~\text{for any } 0\leq i\leq g
	\end{equation}
	where
	\begin{equation}\label{eqn:BeauvilleDecompAV}
	\CH^{i}(B)_{s}:=\left\{\alpha\in \CH^{i}(B) ~\middle\vert~
	\mathbf{m}^{*}\alpha=m^{2i-s}\alpha\,;~\forall m\in \Z\right\},
	\end{equation}
	is the simultaneous eigenspace for all $\mathbf{m}: B\to B$, the multiplication
	by $m\in \Z$ map.
	
	Using similar idea as in \loccit, Deninger and Murre \cite{MR1133323}
	constructed a multiplicative Chow--K\"unneth decomposition (Definition
	\ref{def:MCK})
	\begin{equation}\label{eqn:DeningerMurre}
	\h(B)=\bigoplus_{i=0}^{2g}\h^{i}(B),
	\end{equation}
	with (by \cite{MR1609325}) 
	\begin{equation}\label{eqn:Kings}
	\h^{i}(B)\isom\Sym^{i}(\h^{1}(B)).
	\end{equation}
	Moreover, one may choose such a multiplicative Chow--K\"unneth decomposition to
	be symmetrically distinguished\,; see \cite[Chapter~7]{MR3460114}.
	This Chow--K\"unneth decomposition induces, via (\ref{eqn:bigradingCH}),
	Beauville's bigrading (\ref{eqn:BeauvilleDecompAV}). That such a decomposition
	satisfies the Bloch--Beilinson condition is the following conjecture of
	Beauville \cite{MR726428} on $\CH^{*}(B)$ , which is still largely open.
	\begin{conj}[Beauville's conjecture on abelian
		varieties]\label{conj:BeauvilleAV}
		Notation is as above. Then
		\begin{itemize}
			\item $\CH^{i}(B)_{s}=0$ for $s<0$\,;
			\item The restriction of the cycle class map $\cl: \CH^{i}(B)_{0}\to
			H^{2i}(B,\Q)$ is injective.
		\end{itemize}
	\end{conj}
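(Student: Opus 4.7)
The plan is to attack the two parts of Beauville's conjecture separately, exploiting the Fourier--Mukai transform and the Deninger--Murre multiplicative Chow--K\"unneth decomposition $\h(B) = \bigoplus_i \h^i(B)$ with $\h^i(B) \isom \Sym^i(\h^1(B))$ (where $\h^1(B)$ is understood as an odd Kimura object).

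For the vanishing in part (i), I would first reformulate the statement motivically: by \eqref{eqn:bigradingCH}, $\CH^i(B)_s = \Hom_{\CHM}(\1(-i), \Sym^{2i-s}(\h^1(B)))$, so the assertion becomes $\Hom_{\CHM}(\1(-i), \Sym^j(\h^1(B))) = 0$ for every $j > 2i$. The Fourier self-duality $\mathcal F(\CH^i(B)_s) = \CH^{g-i+s}(\hat B)_s$ of \cite{MR726428} allows one to assume $i \leq (g+s)/2$, cutting the problem in half. I would then attempt an induction on the dimension $g$, fixing an auxiliary polarization and exploiting the K\"unnemann-style motivic hard Lefschetz for polarized abelian varieties, together with isogeny decompositions into products of lower-dimensional factors, so as to reduce ultimately to the case of elliptic curves, where the vanishing is classical.

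For the injectivity in part (ii), I would use O'Sullivan's theory of symmetrically distinguished cycles (Theorems \ref{thm:SD} and \ref{thm:SD2}). The Deninger--Murre projectors $\pi^i$ may be chosen to be symmetrically distinguished (see \cite[Chapter~7]{MR3460114}), so the operator $(\pi^{2i})_*$ preserves the subspace $\CH^i(B)_{sd}$ of symmetrically distinguished cycles. Since homological triviality implies numerical triviality, Theorem \ref{thm:SD2} shows that any symmetrically distinguished and homologically trivial cycle vanishes. The injectivity of the cycle class map on $\CH^i(B)_0$ would thus follow from the inclusion $\CH^i(B)_0 \subseteq \CH^i(B)_{sd}$.

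The main obstacle is that both assertions are essentially equivalent to the Bloch--Beilinson--Murre conjecture specialized to abelian varieties, which remains wide open. In particular, the inclusion $\CH^i(B)_0 \subseteq \CH^i(B)_{sd}$ required in the second step is not known for $g \geq 4$ and cannot be extracted formally from O'Sullivan's machinery. A realistic intermediate program is therefore to establish both parts for distinguished classes---products of elliptic curves (the original setting of \cite{MR826463}), Jacobians of low-genus curves, or abelian varieties of CM type---while the general case appears to require essentially new input beyond the Fourier transform and Kimura finite-dimensionality.
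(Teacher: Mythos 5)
This statement is explicitly labelled a \emph{conjecture} in the paper, and the paper neither proves it nor claims to: immediately after stating it, the authors note that Beauville's conjecture on abelian varieties ``is still largely open.'' Its role in the paper is purely as a \emph{hypothesis} in Remark~\ref{rmk:MurreKummer}, where Conjecture~\ref{conj:BeauvilleAV} for powers of $A$ is shown to \emph{imply} Conjecture~\ref{conj:BB} for $A^{[n]}$ and $K_n(A)$. You correctly recognize at the end of your proposal that the statement is unprovable with current technology, and that conclusion, not the preliminary reductions, is the substantive content of a ``review'' of this item.

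A few remarks on your outline. Your reduction strategy for part~(i) overreaches at one point: an isogeny decomposition writes $B$ as isogenous to a product of \emph{simple} abelian varieties, not elliptic curves, so you cannot ``reduce ultimately to the case of elliptic curves'' for a general $B$. This is precisely why the known cases of the vanishing are restricted (Beauville's original results handle $i\in\{0,1,2,g-2,g-1,g\}$; finite-dimensionality handles abelian varieties isogenous to products of curves). For part~(ii), your identification of the obstruction is exactly right: O'Sullivan's Theorem~\ref{thm:SD2} gives injectivity of the cycle class map on $\CH^i(B)_{sd}$, and the Deninger--Murre projectors may be taken symmetrically distinguished so that $\CH^i(B)_{sd}$ is preserved by $(\pi^{2i})_*$, but the required inclusion $\CH^i(B)_0 \subseteq \CH^i(B)_{sd}$ is genuinely open in general and cannot be wrung out of O'Sullivan's formalism. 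So your proposal is not a proof and does not claim to be; it is an accurate diagnosis of why the conjecture resists current methods, which is consistent with how the paper treats it.
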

	
	\begin{rmk}\label{rmk:DecompATTS}
		As torsion translations  act trivially on the Chow rings of abelian varieties
		(Lemma \ref{lemma:TorsionTranslation}), the Beauville--Deninger--Murre
		decompositions (\ref{eqn:BeauvilleDecompAV}) and (\ref{eqn:DeningerMurre})
		naturally extend to the slightly broader context of \emph{abelian torsors with
			torsion structure} (see Definition \ref{def:atts}). 
	\end{rmk}

	We collect some facts about the Beauville--Deninger--Murre decomposition
	(\ref{eqn:DeningerMurre}) for the proof of Theorem \ref{thm:MCK} in the next
	subsection. By choosing markings for \atts's, thanks to Lemma
	\ref{lemma:TorsionTranslation}, we see that \atts's can be endowed with
	multiplicative Chow--K\"unneth decompositions consisting of Chow--K\"unneth
	projectors that are symmetrically distinguished, and  enjoying the properties
	embodied in the two following lemmas. Their proofs are reduced immediately to
	the case of abelian varieties, which are certainly well-known.
	
	\begin{lemma}[K\"unneth]\label{lemma:prodAV}
		Let $B$ and $B'$ be two abelian varieties (or more generally \atts's), then the
		natural isomorphism $\h(B)\otimes \h(B')\isom \h(B\times B')$ identifies the
		summand $\h^{i}(B)\otimes \h^{j}(B)$ as a direct summand of $\h^{i+j}(B\times
		B')$ for any $i, j\in\N$.\qed
	\end{lemma}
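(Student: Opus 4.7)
The plan is to reduce to the case of honest abelian varieties and then appeal to the explicit characterization of the Deninger--Murre projectors via the multiplication-by-$m$ endomorphisms. First, choose markings $f: B\to A$ and $f': B'\to A'$ of the two \atts's. By Lemma~\ref{lemma:TorsionTranslation}, torsion translations act trivially on Chow groups, so the Chow--K\"unneth decompositions on $B$, $B'$ (and on $B\times B'$, marked by $f\times f'$) are independent of the choice of markings and correspond, under $f_*$, $f'_*$, $(f\times f')_*$, to the Deninger--Murre decompositions of $A$, $A'$, $A\times A'$. Thus it suffices to prove the statement for abelian varieties.

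Next, recall from Beauville~\cite{MR826463} and Deninger--Murre~\cite{MR1133323} that the projector $\pi^i_A\in \CH^{\dim A}(A\times A)$ cutting out $\h^i(A)$ is characterized by the property that its image is precisely the simultaneous eigenspace of $\mathbf{m}^*$ on $\h(A)$ of eigenvalue $m^i$ for every $m\in \Z$ (equivalently, $\mathbf{m}^*\circ \pi^i_A = m^i\cdot \pi^i_A$ for all $m$, together with $\sum_i \pi^i_A = \Delta_A$). I would then observe that under the identification $\h(A)\otimes \h(A')\isom \h(A\times A')$, the exterior product $\pi^i_A\otimes \pi^j_{A'}$ is an idempotent self-correspondence of $A\times A'$, and that the multiplication-by-$m$ map on $A\times A'$ factors as $\mathbf{m}_A\times \mathbf{m}_{A'}$, so that
\begin{equation*}
\mathbf{m}^*_{A\times A'}\circ (\pi^i_A\otimes \pi^j_{A'}) \;=\; (\mathbf{m}^*_A\circ \pi^i_A)\otimes (\mathbf{m}^*_{A'}\circ \pi^j_{A'}) \;=\; m^{i+j}\cdot (\pi^i_A\otimes \pi^j_{A'}).
\end{equation*}
Hence the image of $\pi^i_A\otimes \pi^j_{A'}$ lies in the $m^{i+j}$-eigenspace of $\mathbf{m}^*_{A\times A'}$ for every $m$, which is precisely $\h^{i+j}(A\times A')$.

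Finally I would verify that the family $\{\pi^i_A\otimes \pi^j_{A'}\}_{i,j}$ is a system of mutually orthogonal idempotents with $\sum_{i,j}\pi^i_A\otimes \pi^j_{A'}=\Delta_{A\times A'}$, and that
\begin{equation*}
\pi^k_{A\times A'} \;=\; \sum_{i+j=k}\pi^i_A\otimes \pi^j_{A'},
\end{equation*}
since both sides are projectors onto the $m^k$-eigenspace summing to $\Delta_{A\times A'}$, and are determined by these properties. This exhibits $\h^i(A)\otimes \h^j(A')$ as a direct summand of $\h^{i+j}(A\times A')$, as desired. There is no serious obstacle here: the only subtlety is tracking the marking-independence in the \atts step, which is dispatched by Lemma~\ref{lemma:TorsionTranslation}; the remainder is the standard K\"unneth-type compatibility of the Deninger--Murre decomposition and can alternatively be deduced from the identification $\h^i(A)\cong \Sym^i(\h^1(A))$ of~\cite{MR1609325} combined with $\h^1(A\times A')\cong \h^1(A)\oplus \h^1(A')$.
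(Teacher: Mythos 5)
Your proof is correct and follows the route the paper implicitly takes: reduce to honest abelian varieties by choosing markings (using Lemma~\ref{lemma:TorsionTranslation} for well-definedness), then invoke the standard compatibility of the Deninger--Murre projectors with products, namely $\pi^k_{A\times A'} = \sum_{i+j=k}\pi^i_A\otimes\pi^j_{A'}$. The paper merely asserts the abelian-variety case is well-known; your eigenvalue argument via $\mathbf{m}^*$ (or equivalently the $\h^i\cong\Sym^i\h^1$ identification you mention at the end) is precisely the standard justification.
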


	\begin{lemma}\label{lemma:morphAV}
		Let $f: B\to B'$ be a morphism of abelian varieties (or more generally \atts's)
		of dimension $g$, $g'$ respectively. 
		\begin{itemize}
			\item The pull back $f^{*}:={}^{t}\Gamma_{f}: \h(B')\to \h(B)$ sends
			$\h^{i}(B')$ to $\h^{i}(B)$\,;
			\item The push forward $f_{*}:=\Gamma_{f}: \h(B)\to \h(B')$ sends $\h^{i}(B)$ to
			$\h^{i+2g'-2g}(B')$.\qed
		\end{itemize} 
	\end{lemma}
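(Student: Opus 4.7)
The plan is to reduce everything to a statement about morphisms of abelian varieties and then use the eigenspace characterization of the Beauville--Deninger--Murre decomposition. First, by choosing markings on $B$ and $B'$, any morphism of \atts's $f\colon B\to B'$ becomes a composition of a group homomorphism between abelian varieties and a torsion translation. Because torsion translations act trivially on Chow groups (Lemma~\ref{lemma:TorsionTranslation}), and hence on motives with respect to the canonical Chow--K\"unneth projectors, there is no loss of generality in assuming that $B$ and $B'$ are abelian varieties and that $f$ is a homomorphism.

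Next I would invoke the standard fact (due to Beauville \cite{MR826463} and Deninger--Murre) that the summand $\h^{i}(B)$ is characterized inside $\h(B)$ as the simultaneous eigenspace of the operators $[m]^{*}$ for $m\in\Z$ with eigenvalue $m^{i}$, \emph{i.e.}\ the projector $\pi^{i}$ is a polynomial in the $[m]^{*}$'s. Equivalently, using the relation $[m]_{*}\circ[m]^{*}=m^{2g}\cdot\id$ on $\h(B)$ (with $g=\dim B$), one sees that $[m]_{*}$ acts on $\h^{i}(B)$ as multiplication by $m^{2g-i}$.

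Since $f$ is a group homomorphism one has the commutation relations $f\circ[m]_{B}=[m]_{B'}\circ f$, which pass to the motivic level as
\begin{equation*}
f^{*}\circ [m]^{*}_{B'}=[m]^{*}_{B}\circ f^{*}\quad\text{and}\quad [m]_{B'\,*}\circ f_{*}=f_{*}\circ [m]_{B\,*}.
\end{equation*}
From the first identity, for $\alpha\in\h^{i}(B')$ we get $[m]^{*}_{B}(f^{*}\alpha)=f^{*}([m]^{*}_{B'}\alpha)=m^{i}\,f^{*}\alpha$, so $f^{*}\alpha\in\h^{i}(B)$. From the second, for $\beta\in\h^{i}(B)$ we compute $[m]_{B'\,*}(f_{*}\beta)=f_{*}([m]_{B\,*}\beta)=m^{2g-i}f_{*}\beta$, and setting this equal to $m^{2g'-j}f_{*}\beta$ forces $j=i+2g'-2g$, that is, $f_{*}\beta\in\h^{i+2g'-2g}(B')$.

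There is essentially no serious obstacle here: the only mild subtlety is the reduction from \atts's to abelian varieties, which relies on the invariance of the Chow--K\"unneth projectors $\pi^{i}\in\CH^{g}(B\times B)$ under torsion translations of either factor (a special case of Lemma~\ref{lemma:TorsionTranslation} applied to $B\times B$). Once that is recorded, both claims follow formally from the eigenspace description of the Deninger--Murre decomposition, so the argument is essentially a bookkeeping of eigenvalues.
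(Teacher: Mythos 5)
Your argument is correct and matches the (unwritten) proof the paper has in mind: the authors declare the lemma with a \qed and a remark that, after reducing to abelian varieties via markings and Lemma~\ref{lemma:TorsionTranslation}, the statements are well-known; your eigenspace computation using $[m]^{*}$ and the relation $[m]_{*}\circ[m]^{*}=m^{2g}\cdot\id$ is precisely the standard argument they are alluding to. The reduction step for \atts's is also handled correctly, since a torsion translation on $B'$ acts trivially on $\CH(B\times B')$ so that $\Gamma_{f}$ equals $\Gamma_{f_0}$ for the underlying homomorphism $f_0$.
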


	\subsection{Candidate decompositions in Case (A) and (B)}
	In the sequel, let $A$ be an abelian surface and we consider the holomorphic
	symplectic variety $X$ which is either $A^{[n]}$ or $K_{n}(A)$. We construct a
	canonical Chow--K\"unneth decomposition of $X$  and show that it is self-dual
	and multiplicative. In Remark \ref{rmk:MurreKummer}, we observe that this
	decomposition can be expressed in terms of the Beauville--Deninger--Murre
	decomposition of the Chow motive of $A$, and as a consequence we note that
	Beauville's Conjecture \ref{conj:BeauvilleAV} for powers of $A$ implies the
	Bloch--Beilinson conjecture for $X$.
	
	Let us start with the existence of a self-dual multiplicative Chow--K\"unneth
	decomposition\,:
	\begin{thm}\label{thm:MCK}
		Given an abelian surface $A$, let $X$ be\\ Case (A)\,: the $2n$-dimensional
		Hilbert scheme $A^{[n]}$\,; or
		Case (B)\,: the $n$-th generalized Kummer variety $K_{n}(A)$.\\
		Then $X$ has a canonical self-dual multiplicative Chow--K\"unneth decomposition.
	\end{thm}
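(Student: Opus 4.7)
The strategy is to produce a self-dual multiplicative Chow--Künneth decomposition on $\h_{orb,dt}([M/G])$ and transport it back to $\h(X)$ via the algebra isomorphism $\phi\colon \h(X) \isom \h_{orb,dt}([M/G])$ given by Theorem~\ref{thm:mainAb} (resp.~Theorem~\ref{thm:mainKummer}), where $(M,G)=(A^n,\gS_n)$ in Case~(A) and $(M,G)=(A_0^{n+1},\gS_{n+1})$ in Case~(B). Since $\phi$ is an isomorphism of algebra objects in $\CHM$, a self-dual multiplicative CK decomposition on the target transports to one of the same type on $\h(X)$; so it suffices to construct such a decomposition on $\h_{orb,dt}([M/G])$.

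For each $g\in G$, the fixed locus $M^g$ is a (disjoint union of) abelian torsor(s) with torsion structure: in Case~(A) it is a partial diagonal in $A^n$, and in Case~(B) this follows from Lemma~\ref{lemma:ConstructingATTS}. By Remark~\ref{rmk:DecompATTS}, every component carries a canonical, self-dual, Deninger--Murre multiplicative CK decomposition $\h(M^g)=\bigoplus_i \h^i(M^g)$ with symmetrically distinguished projectors. I grade the auxiliary motive $\h(M,G)=\bigoplus_{g\in G}\h(M^g)(-\age(g))$ by
\[
\h^k(M,G) := \bigoplus_{g\in G}\h^{k-2\age(g)}(M^g)(-\age(g)),
\]
so that the Tate twist by $-\age(g)$ is absorbed as a cohomological shift of $2\age(g)$. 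Since $\age$ is conjugation-invariant and, for every $h\in G$, the isomorphism $M^g\to M^{hgh^{-1}}$, $x\mapsto h\cdot x$ is a morphism of ATTS on each component, Lemma~\ref{lemma:morphAV} shows this grading is $G$-equivariant. Passing to $G$-invariants (the discrete-torsion sign change respects the grading) yields the candidate CK decomposition of $\h_{orb,dt}([M/G])$, whose cohomological realization recovers the Künneth decomposition of $H^*(X)$ by Propositions~\ref{prop:realisation-A} and~\ref{prop:realisation-B}.

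The main obstacle is multiplicativity for the orbifold product. For $\alpha\in \h^i(M^{g_1})(-\age(g_1))$ and $\beta\in \h^j(M^{g_2})(-\age(g_2))$, the product is, up to the discrete-torsion sign, the class $\delta_*(\alpha|_{M^{<g_1,g_2>}}\cdot \beta|_{M^{<g_1,g_2>}}\cdot c_{top}(F_{g_1,g_2}))$. In both cases, the obstruction bundle $F_{g_1,g_2}$ is virtually trivial (as already observed in the proofs of Propositions~\ref{prop:ZWsd} and~\ref{prop:ZWsd-B}), so $c_{top}(F_{g_1,g_2})$ is either zero or the fundamental class, and the nonzero case occurs precisely when $\rk F_{g_1,g_2}=0$, i.e.~when $\age(g_1)+\age(g_2)-\age(g_1g_2) = \codim(M^{<g_1,g_2>}\subset M^{g_1g_2})=:c$ by \eqref{eqn:VirRkF}. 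In that case, restriction is pullback along a morphism of ATTS, preserving the Deninger--Murre grading by Lemma~\ref{lemma:morphAV}; the internal product is multiplicative for Deninger--Murre by construction; and $\delta_*$ is pushforward along an inclusion of ATTS of codimension $c$, shifting the grading by $2c$ (again Lemma~\ref{lemma:morphAV}). The total grading of the output is therefore
\[
(i+j+2c)+2\age(g_1g_2)=(i+2\age(g_1))+(j+2\age(g_2)),
\]
matching the sum of the gradings of $\alpha$ and $\beta$, so the orbifold product sends $\h^a(M,G)\otimes\h^b(M,G)$ into $\h^{a+b}(M,G)$, as required.

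Self-duality is then formal: the Deninger--Murre decomposition on each $M^g$ is self-dual, and the identities $M^g=M^{g^{-1}}$ together with $\age(g)+\age(g^{-1})=\dim M-\dim M^g$ imply that the duality pairing on $\h(M,G)$ interchanges the summands indexed by $g$ and $g^{-1}$ and sends $\h^k(M,G)$ to $\h^{2\dim X - k}(M,G)(\dim X)$; this identification is $G$-equivariant and descends to the invariants. Transporting through $\phi$ equips $\h(X)$ with the desired canonical self-dual multiplicative Chow--Künneth decomposition.
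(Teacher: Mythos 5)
Your proof is correct and follows essentially the same route as the paper's: you transport a Chow--K\"unneth decomposition from the orbifold side via the algebra isomorphism, grade $\h(M,G)$ by $\h^k := \bigoplus_g \h^{k-2\age(g)}(M^g)(-\age(g))$ using Deninger--Murre on each $M^g$ (as an abelian variety or ATTS), and deduce multiplicativity from the virtual triviality of $F_{g_1,g_2}$ together with Lemmas~\ref{lemma:prodAV} and~\ref{lemma:morphAV}, with the same degree bookkeeping via \eqref{eqn:VirRkF}. The only difference is that you spell out the $G$-equivariance of the grading and the self-duality computation in slightly more detail than the paper does; the substance is identical.
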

	\begin{rmk}\label{rmk:related}
		The existence of a self-dual multiplicative Chow--K\"unneth decomposition of
		$A^{[n]}$ is not new\,: it was previously obtained by Vial in \cite{VialHK}. 
		As for the generalized Kummer varieties, if one ignores the multiplicativity of
		the Chow--K\"unneth decomposition, which is of course the key point here, then
		it follows rather directly from De Cataldo and Migliorini's result
		\cite{MR2067464} as explained in \S\ref{subsec:AddIso-B} (see Corollary
		\ref{cor:dCM-B}) and is explicitly written down by Z. Xu \cite{XuZeKummer}. 
	\end{rmk}
	
	\begin{proof}[Proof of Theorem \ref{thm:MCK}]
		The following proof works for both cases. Let $M:=A^{n}$, $G:=\gS_{n}$,
		$X:=A^{[n]}$ in Case (A) and $M:=A^{n+1}_{0}$, $G:=\gS_{n+1}$, $X:=K_{n}(A)$ in
		Case (B). Thanks to Theorem \ref{thm:mainAb} and Theorem \ref{thm:mainKummer},
		we have an isomorphism of motive algebras\,:
		$$\h(X) \stackrel{\isom}{\longrightarrow} \left(\bigoplus_{g\in G}
		\h\left(M^{g}\right)(-\age(g)) , \star_{orb, dt}\right)^{G}$$ 
		whose inverse on each direct summand  $ \h\left(M^{g}\right)(-\age(g))$ is given
		by a rational multiple of the  transpose of the induced morphism $\h(X) \to
		\h\left(M^{g}\right)(-\age(g))$.
		It thus suffices to prove that each direct summand has a self-dual
		Chow--K\"unneth decomposition in the sense of Definition \ref{def:MCK}, and that
		the induced Chow--K\"unneth decomposition on the motive algebra
		$$\h:=\bigoplus_{g\in G} \h\left(M^{g}\right)(-\age(g)), \text{with }
		\star_{orb, dt} \text{as the product},$$ is multiplicative in the sense of
		Definition \ref{def:MCKmotive}. To this end, for each $g\in G$, an application
		of Deninger--Murre's decomposition (\ref{eqn:DeningerMurre}) to $M^{g}$, which
		is an abelian variety in Case (A) and a disjoint union of \atts in Case (B),
		gives us a self-dual multiplicative Chow--K\"unneth decomposition
		$$\h(M^{g})=\bigoplus_{i=0}^{2\dim M^{g}} \h^{i}(M^{g}).$$
		Now we define for each $i\in \N$, 
		\begin{equation}\label{eqn:hi}
		\h^{i}:=\bigoplus_{g\in G} \h^{i-2\age(g)}(M^{g})(-\age(g)).
		\end{equation}
		Here by convention, $\h^{j}(M^{g})=0$ for $j<0$, hence in (\ref{eqn:hi}),
		$\h^{i}=0$ if $i-2\age(g)>2\dim(M^{g})$ for any $g\in G$, that is, when
		$i>\max_{g\in G}\{4n-2\age(g)\}=4n$.
		
		Then obviously, as a direct sum of  Chow--K\"unneth decompositions,
		$$\h=\bigoplus_{i=0}^{4n}\h^{i}$$ is a  Chow--K\"unneth decomposition. It is
		self-dual because each $M^g$ has dimension $2n-2\age (g)$. It remains to show
		the multiplicativity condition that $\mu: \h^{i}\otimes \h^{j}\to \h$ factorizes
		through $\h^{i+j}$, which is equivalent to say that for any $i, j\in\N$ and $g,
		h\in G$, the orbifold product $\star_{orb}$ (discrete torsion only changes a
		sign thus irrelevant here)  restricted to the summand
		$\h^{i-2\age(g)}(M^{g})(-\age(g))\otimes\h^{j-2\age(h)}(M^{h})(-\age(h))$
		factorizes through $h^{i+j-2\age(gh)}(M^{gh})(-\age(gh))$. Thanks to the fact
		that the obstruction bundle $F_{g,h}$ is always a trivial vector bundle in both
		of our cases, we know that (see Definition \ref{def:OrbMot}) $\star_{orb}$ is
		either zero when $\rk(F_{g,h})\neq 0$\,; or when $\rk(F_{g,h})=0$, is defined 
		as the correspondence from $M^{g}\times M^{h}$ to $M^{gh}$ given by the
		following composition
		\begin{equation}\label{eqn:degree}
		\h(M^{g})\otimes\h(M^{h})\lra{\isom} \h(M^{g}\times
		M^{h})\lra{\iota_{1}^{*}}\h(M^{<g,h>})\lra{{\iota_{2}}_{*}}\h(M^{gh})(\codim(\iota_{2})),
		\end{equation}
		where 
		\begin{displaymath}
		\xymatrix{
			M^{gh} & M^{<g, h>} \ar@{_{(}->}[l]_{\iota_{2}} \ar@{^{(}->}[r]^{\iota_{1}} &
			M^{g}\times M^{h}
		}
		\end{displaymath}
		are morphisms of abelian varieties in Case (A) and morphisms of \atts's in Case
		(B). Therefore, one can suppose further that $\rk(F_{g,h})=0$, which implies by
		using (\ref{eqn:VirRkF}) that the Tate twists match\,:
		$$\codim(\iota_{2})-\age(g)-\age(h)=-\age(gh).$$
		Now Lemma \ref{lemma:prodAV} applied to the first isomorphism in
		(\ref{eqn:degree}) and Lemma \ref{lemma:morphAV} applied to the last two
		morphisms in (\ref{eqn:degree}) show that, omitting the Tate twists, the summand
		$\h^{i-2\age(g)}(M^{g})\otimes\h^{j-2\age(h)}(M^{h})$ is sent by $\mu$ inside
		the summand $h^{k}(M^{gh})$, with the index
		$$k=i+j-2\age(g)-2\age(h)+2\dim(M^{gh})-2\dim(M^{<g,h>})=i+j-2\age(gh),$$ where
		the last equality is by Equation (\ref{eqn:VirRkF}) together with the assumption
		$\rk(F_{g,h})=0$. \\
		In conclusion, we get a multiplicative Chow--K\"unneth decomposition
		$\h=\bigoplus_{i=0}^{4n}\h^{i}$ with $\h^{i}$ given in (\ref{eqn:hi})\,; hence a
		multiplicative Chow--K\"unneth decomposition for its $G$-invariant part of the
		sub-motive algebra $\h(X)$.
	\end{proof}

	The decomposition in Theorem \ref{thm:MCK} is supposed to be Beauville's
	splitting of the Bloch--Beilinson--Murre filtration on the rational Chow ring of
	$X$. In particular,
	\begin{conj}(Bloch--Beilinson for $X$)\label{conj:BB}
		Notation is as in Theorem \ref{thm:MCK}. Then for all $i\in \N$,
		\begin{itemize}
			\item $\CH^{i}(X)_{s}=0$ for $s<0$\,;
			\item The restriction of the cycle class map $\cl: \CH^{i}(X)_{0}\to
			H^{2i}(X,\Q)$ is injective.
		\end{itemize}
	\end{conj}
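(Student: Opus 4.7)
The plan is to reduce Conjecture~\ref{conj:BB} for $X=A^{[n]}$ or $X=K_n(A)$ to Beauville's Conjecture~\ref{conj:BeauvilleAV} applied to the various fixed loci $M^g$, using the explicit structure of the multiplicative Chow--K\"unneth decomposition constructed in Theorem~\ref{thm:MCK}.

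First, I would transport the question to the orbifold side. By Theorems~\ref{thm:mainAb} and~\ref{thm:mainKummer}, we have an isomorphism of graded $\Q$-algebras
\begin{equation*}
\CH^{*}(X) \;\isom\; \left(\bigoplus_{g\in G}\CH^{*-\age(g)}(M^{g}),\,\star_{orb,dt}\right)^{G},
\end{equation*}
and by construction the decomposition $\h(X)=\bigoplus_i \h^i(X)$ of Theorem~\ref{thm:MCK} is the $G$-invariant part of the direct sum of the Deninger--Murre decompositions $\h(M^g)=\bigoplus_j \h^j(M^g)$, shifted by $\age(g)$. Consequently, the induced bigrading on the Chow ring of $X$ matches, via the above isomorphism, the bigrading
\begin{equation*}
\CH^{i}(M^{g})_{s} := \im\bigl(\pi^{2i-s,g}_* : \CH^{i}(M^g)\to \CH^{i}(M^g)\bigr)
\end{equation*}
coming from Beauville's grading \eqref{eqn:BeauvilleDecompAV} on each abelian variety (or, in Case (B), on each \atts; see Remark~\ref{rmk:DecompATTS}). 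This reduces the two claims of Conjecture~\ref{conj:BB} to the statements:\ $(\dagger)$ $\CH^{i-\age(g)}(M^{g})_{s}=0$ for $s<0$ and every $g\in G$, and $(\ddagger)$ the cycle class map is injective on $\CH^{i-\age(g)}(M^g)_0$ for every $g\in G$.

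Next, I would invoke Conjecture~\ref{conj:BeauvilleAV} applied to each $M^g$. In Case (A), the fixed locus $M^g=(A^n)^g$ is canonically a product of copies of $A$, so one needs Beauville's conjecture for the abelian varieties $A^{|O(g)|}$. In Case (B), each $M^g$ is a disjoint union of \atts's whose connected components are, after choosing markings and using Lemma~\ref{lemma:TorsionTranslation}, isogenous to products of copies of $A$; hence the graded pieces and the kernel of the cycle class map are unchanged from the abelian variety case, and again one needs Beauville's conjecture for $A^{|O(g)|-1}$. Granting $(\dagger)$ and $(\ddagger)$, the final step is to observe that taking $G$-invariants is exact and commutes with the cycle class map, so the two properties pass from the orbifold side to $\CH^*(X)$.

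The main obstacle is clearly that Beauville's Conjecture~\ref{conj:BeauvilleAV} itself is widely open: only the vanishing $\CH^i(B)_{s}=0$ for $s<0$ in very small codimension (for instance $i\le 1$ or close to $\dim B$) is known in full generality, while the injectivity of the cycle class map on $\CH^i(B)_0$ is established only in sporadic cases. Thus, unconditionally, the best one can hope to extract along these lines is Beauville's \emph{weak} splitting property in the sense of \cite{MR3356741}, namely injectivity of the cycle class map restricted to the sub-algebra generated by divisors and Chern classes---which, combined with Proposition~\ref{prop:ChernClass} and the multiplicativity of the decomposition, already gives $\CH^{*}(X)_{0}$-injectivity for the subring generated by $c_i(X)$ and divisor classes. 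A genuine proof of Conjecture~\ref{conj:BB} will therefore require either a breakthrough on Beauville's conjecture for powers of the fixed abelian surface $A$, or a direct input specific to holomorphic symplectic geometry that bypasses it.
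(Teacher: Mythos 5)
Your proposal correctly recognizes that the statement is labeled as a \emph{Conjecture} in the paper and is not proved unconditionally there either; the paper only offers Remark~\ref{rmk:MurreKummer}, which shows that Beauville's Conjecture~\ref{conj:BeauvilleAV} for powers of $A$ implies Conjecture~\ref{conj:BB}, by the exact same mechanism you describe: identifying the bigrading $\CH^{i}(X)_{s}$ via \eqref{eqn:hi}, \eqref{eq:canisoHilb} and \eqref{eq:canisoKum} with the $G$-invariant part of the Beauville--Deninger--Murre graded pieces on the fixed loci $A^{\lambda}$ or $A^{\lambda}_{0}$, and then passing through the exactness of $G$-invariants. Your analysis is therefore correct and takes essentially the same route as the paper's own remark, with the same honest caveat that Beauville's conjecture on abelian varieties remains open, so no unconditional proof is available.
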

	As a first step towards this conjecture, let us make the following
	
	\begin{rmk}\label{rmk:MurreKummer}
		\emph{Beauville's conjecture \ref{conj:BeauvilleAV} on abelian varieties implies
			Conjecture \ref{conj:BB}. } 
		Indeed, keep the same notation as before. From \eqref{eqn:hi} (together with the
		canonical isomorphisms \eqref{eq:canisoHilb} and \eqref{eq:canisoKum}), we
		obtain 
		$$\CH^{i}(A^{[n]})_{s}=\CH^{i}(\h^{2i-s}(A^{[n]}))=\left(\bigoplus_{g\in\gS_{n}}\CH^{i-\age(g)}(\h^{2i-s-2\age(g)}(A^{O(g)}))\right)^{\gS_{n}}=\bigoplus_{\lambda\in
			\sP(n)}\CH^{i+|\lambda|-n}(A^{\lambda})^{\gS_{\lambda}}_{s}\,;$$
		$$\CH^{i}(K_{n}A)_{s}=\CH^{i}(\h^{2i-s}(K_{n}A))=\left(\bigoplus_{g\in\gS_{n+1}}\CH^{i-\age(g)}(\h^{2i-s-2\age(g)}(A_{0}^{O(g)}))\right)^{\gS_{n+1}}=\bigoplus_{\lambda\in
			\sP(n+1)}\CH^{i+|\lambda|-n-1}(A^{\lambda}_{0})^{\gS_{\lambda}}_{s},$$
		in two cases respectively, whose vanishing ($s<0$) and injectivity into
		cohomology by cycle class map ($s=0$) follow directly from those of
		$A^{\lambda}$ or $A^{\lambda}_{0}$.\\
		In fact, \cite[Theorem 3]{Vialabelian} proves more generally that the second
		point of Conjecture \ref{conj:BeauvilleAV} (the injectivity of the cycle class
		map $\cl: \CH^{i}(B)_{0}\to H^{2i}(B,\Q)$ for all complex abelian varieties)
		implies Conjecture \ref{conj:BB} for all smooth projective complex varieties $X$
		whose Chow motive is of abelian type, which is the case for a generalized Kummer
		variety by  Proposition \ref{prop:addisom-B}. Of course, one has to check that
		our definition of $\CH^{i}(X)_{0}$ here coincides with the one in
		\cite{Vialabelian}, which is quite straightforward.
	\end{rmk}

	The Chern classes of a (smooth) holomorphic symplectic variety $X$ are also
	supposed to be in $\CH^{i}(X)_{0}$ with respect to Beauville's conjectural
	splitting. We can indeed check this in both cases considered here\,:
	
	\begin{prop}\label{prop:ChernClass}
		Set-up as in Theorem \ref{thm:MCK}. The Chern class $c_i(X)$ belongs to
		$\CH^i(X)_0$ for all $i$.
	\end{prop}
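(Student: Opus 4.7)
The goal is to show that for each $i$ the Chern class $c_i(X) \in \CH^i(X)$ lies in the degree-$0$ piece $\CH^i(X)_0$ of the bigrading \eqref{eq:bigradingCH} attached to the canonical multiplicative Chow--K\"unneth decomposition built in Theorem~\ref{thm:MCK}. My first step is to translate this claim through the isomorphism $\phi$. By construction of the decomposition $\h(X) = \bigoplus_i \h^i(X)$ in the proof of Theorem~\ref{thm:MCK}, each direct summand $\h^i(X)$ is cut out by summing, over $g\in G$, the $(i{-}2\age(g))$-th Deninger--Murre projector on $M^g$ (with the appropriate Tate twist). Consequently, $c_i(X) \in \CH^i(X)_0$ if and only if for every $g\in G$ the $g$-component of $\phi_*(c_i(X)) \in \bigoplus_g \CH^{i-\age(g)}(M^g)(-\age(g))$ lies in the Beauville--Deninger--Murre degree-$0$ subspace $\CH^{i-\age(g)}(M^g)_0$ (respectively in the \atts analogue defined in \S\ref{subsec:SD-B} in Case (B)).

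Next, I would compute those components explicitly. Unwinding the definition of $\phi = p \circ \sum_{g}(-1)^{\age(g)} V^g$ and using the projection formula, the $g$-component of $\phi_*(c_i(X))$ is, up to sign and symmetrization by the normalizer of $g$, the push--pull $(V^g)_*\bigl(c_i(X)\bigr)$. The key point is that since $TA$ is trivial, the Chern classes of $TX$ are tautological: in Case~(A) they can be written, via Lehn's formulas (or equivalently via Nakajima operators already used in \S\ref{subsec:Realization-A}), as universal polynomial expressions in classes pulled back from $A$ through the universal subscheme $\Xi \subset A^{[n]} \times A$; in Case~(B) they descend from the Hilbert scheme via the fiber inclusion $K_n(A) \hookrightarrow A^{[n+1]}$ over the summation morphism. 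Transporting these tautological expressions through the incidence correspondences $V^g$ rewrites each $(V^g)_*(c_i(X))$ as a polynomial, with rational coefficients, in big diagonals on $M^g$ (more precisely, in push-forwards along diagonal inclusions of abelian \atts's of fundamental classes).

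For the last step I would invoke the very structure of the Beauville--Deninger--Murre decomposition. Fundamental classes of abelian varieties (or \atts's) lie in Beauville weight $0$, and by Lemma~\ref{lemma:morphAV}, push-forward and pull-back along homomorphisms preserve Beauville weight; hence polynomials in diagonal classes, obtained by pushing and pulling fundamental classes along morphisms of abelian varieties (or \atts's), lie in $\CH(M^g)_0$. Combined with Step~2, this shows that every $g$-component of $\phi_*(c_i(X))$ sits in degree~$0$, and hence $c_i(X) \in \CH^i(X)_0$.

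The main obstacle is Step~2: obtaining a sufficiently clean description of $(V^g)_*(c_i(X))$ in terms of diagonal classes on $M^g$. One expects this to follow from a stringy/orbifold Grothendieck--Riemann--Roch statement, which in our very special setting (the ambient tangent bundle $TM = TA^{\oplus n}$ is trivial, all fixed-locus tangent bundles are trivial, and all obstruction bundles $F_{g,h}$ are virtually trivial) collapses to a statement purely about diagonal inclusions. In Case~(A) this reduces, via Haiman's description of $\gS_n\text{-}\Hilb(S^n)$, to an explicit Nakajima calculus computation, while Case~(B) is obtained by restriction to the kernel of the summation map, compatibly with the Beauville decomposition.
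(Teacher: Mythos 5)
Your overall strategy is sound and it takes a route genuinely different from the paper's. You argue \emph{directly} that each $g$-component $(V^g)_*(c_i(X))$ lies in Beauville weight $0$, by realizing it as a polynomial in big diagonals: each big diagonal is a pushforward of a fundamental class along a morphism of \atts's and hence of weight~$0$ by Lemma~\ref{lemma:morphAV}, and weight~$0$ is stable under products by multiplicativity of the Beauville--Deninger--Murre decomposition, and under the further restriction $i^*$ along the \atts inclusion $M^g\hookrightarrow (A^{n+1})^g$. The paper instead shows that $(V^g)_*(c_i(X))$ is \emph{symmetrically distinguished} and then argues indirectly: the projectors $\pi^j_{M^g}$ are symmetrically distinguished, $c_i(X)$ has the right cohomological degree, so for $j\neq 2i$ the classes $(\pi^j_{M^g})_*(V^g)_*(c_i(X))$ are numerically trivial symmetrically distinguished cycles, hence zero by Proposition~\ref{prop:SDATTS}(iii). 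Your version avoids invoking O'Sullivan's rigidity theorem for this step, at the cost of needing the multiplicativity and functoriality of the Beauville decomposition (which the paper has anyway via Lemmas~\ref{lemma:prodAV} and \ref{lemma:morphAV}); the two arguments have essentially the same strength.

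The step you flag as the ``main obstacle'' is, however, a real gap in the proposal as written: you do not actually establish that $(V^g)_*(c_i(X))$ is a polynomial in big diagonals, but only speculate that it should follow from a stringy Grothendieck--Riemann--Roch formula or Nakajima calculus. This is precisely the input the paper takes from the literature: by Voisin's result \cite[Theorem~5.12]{MR3447105}, applied through the base-change diagram~\eqref{eqn:diagram2} (using that $T_{K_n(A)} = T_{A^{[n+1]}}|_{K_n(A)}$ because $TA$ is trivial), the class $q'_*p'^*(c_i(A^{[n+1]}))$ is a polynomial of big diagonals of $A^{|O(g)|}$, and $(V^g)_*(c_i(K_n(A))) = i^*q'_*p'^*(c_i(A^{[n+1]}))$. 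Once you cite that result, your weight-$0$ argument closes. (For Case (A) the paper simply cites \cite{VialHK}; your diagonal argument would treat both cases uniformly.)
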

	\begin{proof}
		In Case (A), that is, in the case where $X$ is the Hilbert scheme $A^{[n]}$,
		this is proved in \cite{VialHK}. Let us now focus on Case (B), that is, on the
		case where $X$ is the generalized Kummer variety $K_n(A)$. 
		Let $\{\pi^i : 0\leq i \leq 2n\}$ be the Chow--K\"unneth decomposition of
		$K_n(A)$ given by~\eqref{eqn:hi}. We have to show that $c_i(K_n(A)) =
		(\pi^{2i})_* c_i(K_n(A))$, or equivalently that $(\pi^{j})_* c_i(K_n(A)) = 0$ as
		soon as $(\pi^{j})_* c_i(K_n(A))$ is homologically trivial.
		By Proposition \ref{prop:addisom-B}, it suffices to show that for any $g\in G$
		$(\pi^{j}_{M^g})_*((V^g)_*c_i(K_n(A))) = 0$ as soon as
		$(\pi^j_{M^g})_*((V^g)_*c_i(K_n(A))) $ is homologically trivial. Here, recall
		that \eqref{eqn:Mgcomponents} makes $M^g$  a disjoint union of \atts and that
		$\pi^j_{M^g}$ is a Chow--K\"unneth projector on $M^g$ which is symmetrically
		distinguished on each component of $M^g$.
		By Proposition \ref{prop:SDATTS}, it is enough to show that $(V^g)_*(
		c_i(K_n(A))$ is symmetrically distinguished on each component of $M^g$.
		As in the proof of Proposition \ref{prop:ZWsd-B}, we have  for any $g\in G$ the
		following 
		commutative diagram, whose squares are cartesian and without excess
		intersections\,:
		\begin{equation}\label{eqn:diagram2}
		\xymatrix{ 
			A^{[n+1]}\cart & U^{g} \cart\ar[l]_-{p'}\ar[r]^-{q'} & (A^{n+1})^{g}\\
			K_{n}(A)\ar@{^{(}->}[u] & V^{g} \ar@{^{(}->}[u] \ar[l]_-{p}\ar[r]^-{q} & M^{g}
			\ar@{^{(}->}[u]_{i}
		}
		\end{equation}
		where the incidence subvariety $U^{g}$ is defined in \S\ref{subsec:AddIso-A}
		(\ref{eqn:U}) (with $n$ replaced by $n+1$) and  the bottom row is the base
		change by $O_{A}\inj A$ of the top row.
		Note that $c_i(K_n(A)) = c_i(A^{[n+1]})|_{K_n(A)}$, since the tangent bundle of
		$A$ is trivial. Therefore, by functorialities and the base change formula (\cf
		\cite[Theorem 6.2]{MR1644323}), we have $$ (V^g)_*( c_i(K_n(A)) := q_{*} \circ
		p^{*}( c_i(K_n(A)) = i^*\circ q'_{*} \circ p'^{*}(c_i({A^{[n+1]}})).$$
		By Voisin's result \cite[Theorem 5.12]{MR3447105}, $q'_{*} \circ
		p'^{*}(c_i({A^{[n+1]}}))$ is a polynomial of big diagonals of $A^{|O(g)|}$ ,
		thus symmetrically distinguished in particular. It follows from Proposition
		\ref{prop:SDATTS} that $(V^g)_*( c_i(K_n(A)) $ is symmetrically distinguished on
		each component of $M^g$. This concludes the proof of the proposition.
	\end{proof}

	\section{Application 2\,: Multiplicative decomposition theorem of rational
		cohomology}
	
	Deligne's decomposition theorem states the following\,:
	\begin{thm}[Deligne \cite{MR0244265}]
		Let $\pi : \mathcal{X} \rightarrow B$ be a smooth projective
		morphism. In the derived category of sheaves of $\Q$-vector spaces on $B$,
		there is a decomposition (which is non-canonical in general)
		\begin{equation} \label{eq deligne}
		R\pi_*\Q \cong \bigoplus_i R^i\pi_*\Q[-i].
		\end{equation}
	\end{thm}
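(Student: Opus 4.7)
The plan is to prove Deligne's decomposition theorem by combining the relative hard Lefschetz theorem with an algebraic criterion that characterizes when a complex in the derived category splits as the direct sum of its (shifted) cohomology sheaves. The classical input is a polarization coming from the projectivity of $\pi$, which will supply a Lefschetz operator on $R\pi_*\Q$ with exactly the properties needed to produce splitting projectors.

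First, I would fix a $\pi$-relatively ample line bundle $\sL$ on $\rX$, whose existence follows from the projectivity of $\pi$. Cup product with $c_1(\sL)$ defines a morphism $L : R\pi_*\Q \to R\pi_*\Q[2]$ in $D^b_c(B)$. Let $n$ denote the relative dimension of $\pi$. The relative hard Lefschetz theorem asserts that for every $i \geq 0$ the iterated operator $L^i : R^{n-i}\pi_*\Q \to R^{n+i}\pi_*\Q$ is an isomorphism of local systems on $B$; this reduces by proper base change to the classical hard Lefschetz theorem on each smooth projective fiber of $\pi$.

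Second, I would invoke Deligne's algebraic criterion from \cite{MR0244265}: any object $K$ of a $\Q$-linear triangulated category equipped with an endomorphism $L : K \to K[2]$ whose iterates induce the Lefschetz isomorphisms on the graded cohomology objects admits a (non-canonical) decomposition $K \isom \bigoplus_i \mathcal{H}^i(K)[-i]$. Applied to $K = R\pi_*\Q$ with the operator constructed above, this yields the desired isomorphism. Concretely, the splittings are produced by an inductive procedure on the truncation filtration: using the $\mathfrak{sl}_2$-structure furnished by $L$, one exhibits polynomial expressions in $L$ together with the Lefschetz inverse on primitive parts that split each distinguished triangle of the form $\tau_{\leq i} R\pi_*\Q \to \tau_{\leq i+1} R\pi_*\Q \to R^{i+1}\pi_*\Q[-i-1]$.

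The main obstacle, and the heart of Deligne's original argument, is precisely this last step: the fiberwise primitive decomposition determines the splitting of each local system $R^i\pi_*\Q$ into Lefschetz pieces, but lifting this coherently to the level of the complex $R\pi_*\Q$ requires a genuinely derived-categorical construction of the projectors. The key ingredient which makes this possible is that the Lefschetz operator is defined not merely on the cohomology sheaves but on the entire complex, allowing one to invert $L^{n-i}$ on the primitive part in the derived category. Once this is achieved, the degeneration of the Leray spectral sequence falls out as an immediate corollary, and the stated decomposition follows.
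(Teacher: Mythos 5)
Your proposal correctly reconstructs the classical argument, and it is essentially Deligne's own proof from the cited reference \cite{MR0244265}: choose a relatively ample class, obtain a Lefschetz operator $L$ on $R\pi_*\Q$, verify relative hard Lefschetz fiberwise via proper base change, and then invoke Deligne's abstract splitting criterion for an object of a triangulated category equipped with such an $L$. The paper does not reprove this theorem (it is stated as a cited classical fact), so there is no internal proof to compare against; your sketch matches the standard route and is sound.
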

	Both sides of \eqref{eq deligne} carry a cup-product\,: on the
	right-hand side the cup-product is the direct sum of the usual
	cup-products $R^i\pi_*\Q \otimes R^j\pi_*\Q \rightarrow R^{i+j}\pi_*\Q
	$ defined on local systems, while on the left-hand side the derived
	cup-product $R\pi_*\Q \otimes R\pi_*\Q \rightarrow R\pi_*\Q $ is induced by the
	(derived) action of the relative small diagonal $\delta \subset \mathcal{X}
	\times_B\mathcal{X} \times_B \mathcal{X}$ seen as a relative correspondence from
	$\mathcal{X} \times_B\mathcal{X} $ to $\mathcal{X}$. As explained in
	\cite{MR2916291}, the isomorphism \eqref{eq deligne} does not respect
	the cup-product in general. Given a family of smooth projective
	varieties $\pi : \mathcal{X} \rightarrow B$, Voisin \cite[Question
	0.2]{MR2916291} asked if there exists a decomposition as in \eqref{eq
		deligne} which is multiplicative, \emph{i.e.}, which is compatible
	with cup-product, maybe over a nonempty Zariski open subset of $B$. By
	Deninger--Murre \cite{MR1133323}, there does exist such
	a decomposition for an abelian scheme $\pi : \mathcal{A} \rightarrow
	B$. The main result of \cite{MR2916291} is\,:
	
	\begin{thm}[Voisin \cite{MR2916291}] \label{thm dec voisin} For any
		smooth projective family $\pi : \mathcal{X} \rightarrow B$ of K3
		surfaces, there exist a decomposition isomorphism as in \eqref{eq
			deligne} and a nonempty Zariski open subset $U$ of $B$, such that
		this decomposition becomes multiplicative for the restricted family
		$\pi|_U : \mathcal{X}|_U \rightarrow U$.
	\end{thm}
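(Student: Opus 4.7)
The plan is to lift the question from the derived category of $B$ to the rational Chow group of the relative triple fibered product $\mathcal X \times_B \mathcal X \times_B \mathcal X$, and to exploit there the Beauville--Voisin relation for the small diagonal of a K3 surface. The guiding observation is that any decomposition of the form \eqref{eq deligne} is the Betti realization of a \emph{relative Chow--K\"unneth decomposition}, namely a collection of mutually orthogonal idempotents $\Pi^i \in \CH^2(\mathcal X \times_B \mathcal X)$ summing to the relative diagonal; the cup-product compatibility at the level of $D^b_c(B)$ will then follow once one establishes the analogous compatibility of these relative correspondences with the relative small diagonal $\delta_{\mathcal X/B} \in \CH^4(\mathcal X \times_B \mathcal X \times_B \mathcal X)$.

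First, after shrinking $B$ to a Zariski open $U$, I would construct a relative Beauville--Voisin class $\mathcal O \in \CH^2(\mathcal X|_U)$ that specializes fiberwise to the canonical degree-one $0$-cycle $o_S$ of Beauville--Voisin (for instance, by taking a suitable relative complete intersection of sections supported on a relative rational curve).  Since $R^1\pi_*\Q = R^3\pi_*\Q = 0$ for a K3 family, I only need three projectors, and I would define
\[
\Pi^0 \deff \mathcal O \times_U \mathcal X, \qquad \Pi^4 \deff \mathcal X \times_U \mathcal O, \qquad \Pi^2 \deff \Delta_{\mathcal X/U} - \Pi^0 - \Pi^4.
\]
Their restrictions to each fiber are the usual Chow--K\"unneth projectors of a single K3, so they induce on cohomology the K\"unneth decomposition. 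The crucial step is to spread the Beauville--Voisin identity
\[
\delta_S \ = \sum_{\{i,j,k\}=\{1,2,3\}} \Delta_{ij}\cdot p_k^{*}o_S \ - \sum_{1 \leq i<j \leq 3} p_i^{*}o_S \cdot p_j^{*}o_S  \quad \text{in } \CH^4(S^3)
\]
to a relation of the same shape in $\CH^4(\mathcal X \times_U \mathcal X \times_U \mathcal X)$ (possibly after shrinking $U$ further), where $\Delta_{ij}$ now denotes the relative partial diagonals and $p_i$ the relative projections. From such a relation a direct computation yields the orthogonality and idempotency of the $\Pi^i$ in $\CH^2(\mathcal X \times_U \mathcal X)$ and, more importantly,
\[
\Pi^c \circ \delta_{\mathcal X/U} \circ (\Pi^a \otimes \Pi^b) = 0 \quad \text{whenever } c \neq a+b,
\]
which is exactly the multiplicativity of the relative Chow--K\"unneth decomposition. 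Taking Betti realizations and invoking Deligne's decomposition theorem produces the claimed multiplicative splitting of $R\pi_*\Q|_U$.

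The main obstacle is the spreading step. Fiberwise rational equivalence over \emph{all} of $B$ does not automatically lift to rational equivalence over a dense open; what rescues the argument is Voisin's principle that a cycle on a smooth projective family whose fiberwise Chow class vanishes identically must be supported, after restriction to a Zariski open $U\subset B$, on the preimage of a proper closed subset of $B$, and therefore vanishes after a further shrinking. Applying this to the difference between the two sides of the would-be relative Beauville--Voisin identity reduces everything to the pointwise case, already known. Once the identity holds in $\CH^4(\mathcal X \times_U \mathcal X \times_U \mathcal X)$, the passage to $D^b_c(U)$ is formal\,: the multiplicativity of the relative correspondences $\{\Pi^i\}$ translates verbatim into the multiplicativity of the induced decomposition isomorphism, because the cup-product in $R\pi_*\Q|_U$ is by definition the action of $\delta_{\mathcal X/U}$.
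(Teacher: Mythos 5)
Your argument is essentially Voisin's original proof in \cite{MR2916291}, which the paper invokes as a black box; the paper's own Theorem~\ref{thm:dec} abstracts precisely this route (spread out a multiplicative Chow--K\"unneth decomposition of the generic fiber, then pass to the derived category via \cite[Lemma~2.1]{MR2916291}), with the K3-specific input --- the Beauville--Voisin $0$-cycle and the small-diagonal relation --- being exactly what makes the Beauville--Voisin Chow--K\"unneth projectors $\pi^0=o\times S$, $\pi^4=S\times o$, $\pi^2=\Delta_S-\pi^0-\pi^4$ multiplicative, as recalled just before Theorem~\ref{thm:dec} with reference to \cite[Proposition~8.14]{MR3460114}. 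The one point worth tightening in your write-up is the spreading step: rather than invoking ``fiberwise vanishing over all closed points'', it is cleaner (and avoids countability subtleties) to note that the Beauville--Voisin identity holds on the generic fiber $X_\eta$, a K3 surface over $\C(B)$, and then use $\CH(X_\eta)\cong\varinjlim_{U}\CH(\mathcal X|_U)$ to obtain the relation, hence the relative multiplicative Chow--K\"unneth decomposition, over some nonempty open $U\subset B$.
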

	
	As implicitly noted in \cite[Section 4]{VialHK}, Voisin's Theorem \ref{thm dec
		voisin} holds more generally for any smooth projective family $\pi : \mathcal{X}
	\rightarrow B$ whose generic fiber admits a multiplicative Chow--K\"unneth
	decomposition (K3 surfaces do have a multiplicative Chow--K\"unneth
	decomposition\,; this follows by suitably
	reinterpreting, as in \cite[Proposition 8.14]{MR3460114}, the vanishing of the
	modified
	diagonal cycle of Beauville--Voisin \cite{MR2047674} as the multiplicativity of
	the Beauville--Voisin Chow--K\"unneth decomposition.)\,:
	
	\begin{thm}\label{thm:dec} Let  $\pi : \mathcal{X} \rightarrow B$ be 
		a smooth projective family, and assume that the generic fiber $X$ of $\pi$
		admits a multiplicative Chow--K\"unneth decomposition. Then there exist a
		decomposition isomorphism as in \eqref{eq
			deligne} and a nonempty Zariski open subset $U$ of $B$, such that
		this decomposition becomes multiplicative for the restricted family
		$\pi|_U : \mathcal{X}|_U \rightarrow U$.
	\end{thm}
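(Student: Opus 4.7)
The plan is to mimic Voisin's argument from \cite{MR2916291} and its extension by Vial \cite{VialHK}, using the fact that a multiplicative Chow--K\"unneth decomposition on the generic fiber spreads out to a relative decomposition over a Zariski open of $B$. Concretely, let $X$ denote the generic fiber of $\pi$ and let $\{\pi^0,\ldots,\pi^{2d}\}\subset \CH^d(X\times X)$ be a multiplicative Chow--K\"unneth decomposition, where $d=\dim X$. These are finitely many cycle classes, each satisfying finitely many polynomial identities (mutual orthogonality, summing to $\Delta_X$, and the multiplicativity relation $\pi^k\circ \delta_X\circ (\pi^i\otimes\pi^j)=0$ for $k\neq i+j$).

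First, I would spread out each $\pi^i$ to a relative self-correspondence $\Pi^i\in \CH^d(\mathcal{X}\times_B\mathcal{X})$ over the generic point, and then, using that rational equivalence is detected at finite level and a quasi-projective variety has only countably many divisorial obstructions, shrink $B$ to a nonempty Zariski open $U$ on which the orthogonality relations $\Pi^i\circ\Pi^j=\delta_{ij}\Pi^i$, the completeness relation $\sum_i \Pi^i=\Delta_{\mathcal{X}/U}$, and the multiplicativity relation $\Pi^k\circ \delta_{\mathcal{X}/U}\circ(\Pi^i\otimes \Pi^j)=0$ for $k\neq i+j$ all hold in $\CH(\mathcal{X}|_U\times_U\mathcal{X}|_U)$ and in $\CH(\mathcal{X}|_U\times_U\mathcal{X}|_U\times_U\mathcal{X}|_U)$ respectively. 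This is the standard spreading technique and is essentially immediate given that the identities hold on the generic fiber.

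Second, I would let each $\Pi^i$ act on $R\pi_*\Q|_U$ \emph{via} the correspondence action (pullback-cap-pushforward along the two projections $\mathcal{X}|_U\times_U \mathcal{X}|_U\to \mathcal{X}|_U$), obtaining idempotent endomorphisms whose sum is the identity, hence a decomposition $R\pi_*\Q|_U\cong \bigoplus_i (\Pi^i)_*R\pi_*\Q|_U$ in $D_c^b(U)$. Restricting to any fiber $X_b$ recovers the Chow--K\"unneth decomposition of $H^*(X_b)$, so $(\Pi^i)_*R\pi_*\Q|_U$ is a local system concentrated in degree $i$, equal to $R^i\pi_*\Q[-i]$. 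This gives a decomposition as in \eqref{eq deligne}, and after extending the identity from $U$ back to $B$ (keeping the form of the statement over $U$) we obtain the first half of the claim.

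Finally, multiplicativity over $U$ is the content of the relation $\Pi^k\circ \delta_{\mathcal{X}/U}\circ(\Pi^i\otimes \Pi^j)=0$ for $k\neq i+j$\,: indeed, the derived cup-product on $R\pi_*\Q|_U$ is by definition the action of the relative small diagonal $\delta_{\mathcal{X}/U}$, so after decomposing source and target by the $\Pi^i$'s this relation says precisely that the cup-product $R^i\pi_*\Q[-i]\otimes R^j\pi_*\Q[-j]\to R\pi_*\Q|_U$ lands in the summand $R^{i+j}\pi_*\Q[-i-j]$. The main technical obstacle is the spreading step\,: one must argue carefully that all the Chow-theoretic identities defining a multiplicative Chow--K\"unneth decomposition (which live in various Chow groups over the generic point of $B$) can be made to hold simultaneously in the relative Chow groups after shrinking $B$\,; this is where, as in \cite{MR2916291}, one uses the continuity/limit properties of Chow groups under restriction to a generic point to find a common Zariski open $U$ witnessing all the finitely many identities at once.
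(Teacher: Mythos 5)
Your proposal is correct and takes essentially the same approach as the paper's proof: spread out the multiplicative Chow--K\"unneth decomposition of the generic fiber to a relative Chow--K\"unneth decomposition $\{\Pi^i\}$ over a nonempty Zariski open $U\subset B$, invoke Voisin's idempotent-splitting argument (\cite[Lemma 2.1]{MR2916291}) to realize a decomposition $R(\pi|_U)_*\Q\cong\bigoplus_i R^i(\pi|_U)_*\Q[-i]$, and observe that the multiplicativity relation $\Pi^k\circ\delta\circ(\Pi^i\otimes\Pi^j)=0$ for $k\neq i+j$ holds after a further shrinking because it holds generically. The only cosmetic point is that the spreading step does not require any countability argument about ``divisorial obstructions'': since $\CH(\mathcal{X}_\eta)$ is the filtered colimit of $\CH(\mathcal{X}|_U)$ over nonempty opens $U$ and the decomposition involves only finitely many relations, a single $U$ suffices.
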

	\begin{proof} By
		spreading out a multiplicative Chow--K\"unneth decomposition of $X$, there
		exist a sufficiently small but
		nonempty Zariski open subset $U$ of $B$ and relative correspondences $\Pi^i \in
		\CH^{\dim_B\mathcal{X}}(\mathcal{X}|_U \times_U \mathcal{X}|_U)$,
		$0\leq i \leq 2\dim_B\mathcal{X}$, forming a relative Chow--K\"unneth
		decomposition, meaning that 
		$\Delta_{\mathcal{X}|_U/U} = \sum_i \Pi^i$, $\Pi^i \circ \Pi^i =
		\Pi^i$, $\Pi^i \circ \Pi^j = 0$ for $i\neq j$, and $\Pi^i$ acts as
		the identity on $R^i(\pi^{[n]}|_U)_*\Q$ and as zero on
		$R^j(\pi^{[n]}|_U)_*\Q$ for $j\neq i$. 
		By \cite[Lemma 2.1]{MR2916291}, the relative idempotents $\Pi^i$
		induce a decomposition in the derived category $$R(\pi|_U)_*\Q \cong
		\bigoplus_{i=0}^{4n} H^i(R(\pi|_U)_*\Q)[-i] = \bigoplus_{i=0}^{4n}
		R^i(\pi|_U)_*\Q[-i]$$ with the property that $\Pi^i$ acts as the identity
		on the summand $H^i(R(\pi|_U)_*\Q)[-i]$ and acts as zero on the
		summands $H^j(R(\pi|_U)_*\Q)[-j]$ for $j \neq i$.  In order to
		establish the existence of a decomposition as in \eqref{eq deligne} that
		is multiplicative and hence to conclude the proof of the theorem, we thus have
		to show that $\Pi^k \circ \delta \circ (\Pi^i \times \Pi^j)$ acts as zero on
		$R(\pi|_U)_*\Q \otimes R(\pi|_U)_*\Q$, after possibly further shrinking $U$,
		whenever $k\neq i+j$. But more is true\,: being generically multiplicative, the
		relative Chow--K\"unneth decomposition $\{\Pi^i\}$ is multiplicative, that is,
		$\Pi^k \circ \delta \circ (\Pi^i \times \Pi^j) = 0$  whenever $k\neq i+j$, after
		further shrinking $U$ if necessary. The theorem is now proved.
	\end{proof}

	As a corollary, we can extend Theorem
	\ref{thm dec voisin} to families of generalized Kummer varieties\,:
	\begin{cor}\label{cor:decompKummer}
		Let $\pi : \mathcal{A} \rightarrow B$ be
		an abelian surface over $B$. Consider Case (A)\,: $ \mathcal{A}^{[n]}
		\rightarrow B$
		the relative Hilbert scheme of length-$n$ subschemes on $\mathcal{A}
		\rightarrow B$\,; or Case (B)\,: $K_n(\mathcal{A}) \rightarrow B$ the relative
		generalized Kummer variety. Then, in both cases, there exist a decomposition
		isomorphism  as in \eqref{eq
			deligne} and a nonempty Zariski open subset $U$ of $B$, such that
		this decomposition becomes multiplicative for the restricted family
		over $U$.
	\end{cor}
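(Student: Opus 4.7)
The plan is to deduce Corollary \ref{cor:decompKummer} as an immediate consequence of Theorem \ref{thm:dec} combined with Theorem \ref{thm:MCK}. Theorem \ref{thm:dec} is a general statement: whenever the generic fiber of a smooth projective family $\pi: \mathcal X \to B$ admits a multiplicative Chow--K\"unneth decomposition, one can spread out the Chow--K\"unneth projectors to relative correspondences over a Zariski open $U \subset B$, invoke \cite[Lemma 2.1]{MR2916291} to obtain the Deligne decomposition from these relative idempotents, and then shrink $U$ further so that the generic multiplicativity relation $\Pi^{k}\circ \delta \circ (\Pi^{i}\times \Pi^{j}) = 0$ for $k \neq i+j$ holds identically on the open part. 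Thus the entire content of the corollary reduces to verifying the hypothesis on the generic fiber.

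This verification is exactly what Theorem \ref{thm:MCK} provides. In Case (A), the generic fiber of $\mathcal A^{[n]} \to B$ is the Hilbert scheme $A^{[n]}$ of the generic abelian surface $A$ of the family $\mathcal A \to B$; in Case (B), the generic fiber of $K_{n}(\mathcal A) \to B$ is the generalized Kummer variety $K_{n}(A)$. In both cases, Theorem \ref{thm:MCK} supplies a canonical self-dual multiplicative Chow--K\"unneth decomposition, built from the isomorphism with the orbifold motive established in Theorems \ref{thm:mainAb} and \ref{thm:mainKummer} together with the Deninger--Murre decomposition of $\h(M^{g})$ for each $g \in G$, as in the formula \eqref{eqn:hi}.

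Accordingly, the steps to carry out are: (i) state that both families fall under the hypothesis of Theorem \ref{thm:dec} by applying Theorem \ref{thm:MCK} to the generic fiber; (ii) spread out the Chow--K\"unneth projectors $\pi^{i}$ on the generic fiber to relative projectors $\Pi^{i} \in \CH^{\dim_{B}\mathcal X}(\mathcal X|_{U}\times_{U}\mathcal X|_{U})$ over some nonempty Zariski open $U \subset B$; (iii) invoke \cite[Lemma 2.1]{MR2916291} to obtain the decomposition $R\pi_{*}\Q \cong \bigoplus_{i} R^{i}\pi_{*}\Q[-i]$ over $U$; (iv) use that the relation $\Pi^{k}\circ \delta_{\mathcal X/U} \circ (\Pi^{i}\times \Pi^{j}) = 0$ for $k\neq i+j$ holds generically, hence after shrinking $U$ it holds everywhere on $U$, yielding the desired compatibility with cup-products.

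Since both ingredients are already in place, there is essentially no obstacle: the only point requiring minor care is the standard spreading-out argument and the shrinking of $U$ to propagate a generic vanishing of cycle classes to a global vanishing; this is entirely analogous to the argument given in the proof of Theorem \ref{thm:dec} above and presents no genuine difficulty. In particular, the corollary is a formal consequence of the main motivic results of the paper.
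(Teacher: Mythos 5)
Your proposal is correct and follows exactly the paper's own argument: apply Theorem \ref{thm:MCK} to produce a multiplicative Chow--K\"unneth decomposition on the generic fiber, then invoke Theorem \ref{thm:dec}. The only point you omit is the paper's parenthetical caveat that Theorem \ref{thm:MCK} was stated over $\C$ while the generic fiber lives over the function field of $B$, so one must observe that the proof of Theorem \ref{thm:MCK} carries over to any base field of characteristic zero.
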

	\begin{proof}
		The generic fiber of $\mathcal{A}^{[n]} \rightarrow B$
		(\emph{resp.}~$K_n(\mathcal{A}) \rightarrow B$) is the $2n$-dimensional Hilbert
		scheme  (\emph{resp.}~generalized Kummer variety) attached to the abelian
		surface that is the generic fiber of $\pi$. By Theorem \ref{thm:MCK}, it admits
		a multiplicative Chow--K\"unneth decomposition. (Strictly speaking, we only
		established Theorem \ref{thm:MCK} for Hilbert schemes of abelian surfaces and
		generalized Kummer varieties over the complex numbers\,; however, the proof carries through over any base field of characteristic zero.) We conclude by invoking
		Theorem \ref{thm:dec}. 
	\end{proof}

	\bibliographystyle{amsplain}
	\bibliography{biblio_fulie}

\end{document}